\newtheorem{theorem}{Theorem}
\newtheorem*{theorem*}{Theorem}
\newtheorem{corollary}[theorem]{Corollary}
\newtheorem{lemma}[theorem]{Lemma}
\newtheorem{proposition}[theorem]{Proposition}
\newtheorem*{problem}{Problem}
\newtheorem*{example}{Example}
\theoremstyle{definition}
\newtheorem{definition}[theorem]{Definition}
\newtheorem{remark}[theorem]{Remark}
\newtheorem{claim}{Claim}
\newcommand{\N}{\mathbb{N}}
\newcommand{\Z}{\mathbb{Z}}
\def\rr{\mathcal{R}}
\def\F{\mathcal{F}}
\def\T{\mathbb{T}}
\def\rr{\mathcal{R}}
\def \QQ {{\bf Q}}
\def \RP {{\bf RP}}
\def \KK {{\bf K}}
\def \id {\textrm{id}}
\title[Directional dynamical cubes]{Directional dynamical cubes for minimal $\Z^{d}$-systems}
\author{Christopher Cabezas}
\address{Departamento de Ingenier\'ia Matem\'atica, Universidad de Chile, Av. Beauchef 851, Santiago, Chile}
\email{ccabezas@dim.uchile.cl}
\thanks{This work was funded by Proyecto/Grant PIA AFB-170001 and Math-AmSud DYSTIL}
\author{Sebasti\'an Donoso}
\address{Instituto de Ciencias de la Ingenier\'ia, Universidad de O'Higgings, Av. Lib. Bernardo O'Higgins 611, Rancagua, Chile}
\email{sebastian.donoso@uoh.cl}
\thanks{The second author is also supported by Fondecyt Iniciaci\'on Grant 11160061}
\author{Alejandro Maass}
\address{Departamento de Ingenier\'{\i}a Matem\'atica and Centro de Modelamiento Matem\'atico, Universidad de Chile \& UMI-CNRS 2807, Av. Beauchef 851, Santiago, Chile}
\email{amaass@dim.uchile.cl}
\begin{document}
\begin{abstract}
We introduce the notions of \emph{directional dynamical cubes} and \emph{directional regionally proximal relation} defined via these cubes for a minimal $\Z^d$-system $(X,T_1,\ldots,T_d)$. We study the structural properties of systems that satisfy the so called \emph{unique closing parallelepiped property} and we characterize them in several ways. In the distal case, we build the maximal factor of a $\Z^d$-system 
$(X,T_1,\ldots,T_d)$ that satisfies this property by taking the quotient with respect to the directional regionally proximal relation. Finally, we completely describe distal $\Z^d$-systems that enjoy the unique closing parallelepiped property and provide explicit examples. 
\end{abstract}
\maketitle 
	
\section{Introduction}
The study of {\em cubical structures} or {\em cubespaces} was initiated by Host and Kra in \cite{host2005nonconventional} as a fundamental tool to show the $L^2$ convergence of Furstenberg multiple averages for a single transformation. Later, Host, Kra and Maass in \cite{host2010nilsequences} introduced  cube structures in topological dynamics given by a single homeomorphism proving a structure theorem for transitive systems with the \emph{unique completion property} (this condition is referred in this work as the {\em unique closing parallelepiped property}). The main result is that this property characterizes inverse limits of minimal rotations on nilmanifolds. 

Soon after, the notion of cubespaces became a major theme of research in the development of {\em higher order Fourier analysis}, starting from Antol\'in-Camarena and Szegedys's axiomatic formulation of them \cite{Szegedy2010}. Further description of such theory was given by Candela in \cite{candela2017nil,candela2017nilcompact} and another point of view was developed by Gutman, Manners and Varj\'u  in \cite{gutman2016structurei,gutman2016structureii,gutman2016structureiii}. In all these works a key feature is the study of the unique closing parallelepiped property and the conclusion (roughly speaking) is that a compact space whose cubes have the unique closing parallelepiped property has a {\em nilstructure}. 

Interestingly, the study of {\em dynamical cubes}, \emph{i.e.}, those defined in \cite{host2005nonconventional} and \cite{host2010nilsequences}, has reached an independent interest, mainly because of his various applications that range from the construction of nilfactors, recognizing patterns in number theory, the study of recurrence in topological dynamics and even the study of pointwise convergence of averages in ergodic theory \cite{Donoso-Sun-2016,donoso_sun_2018-2Ptw,donoso-sun-2018-Pointw,gutman2016structureiii, huang2016nil}. Also, extensions to general group actions have been recently carried out in \cite{Glasner20181004}. 
  
In this work, we consider a slightly different (but related) notion that we call {\em directional dynamical cubes}. Such notion first appeared (without a name) in the work of Host \cite{host2009ergodic}, where he gave a proof of Tao's $L^2$ convergence of Furstenberg multiple averages for commuting transformations \cite{tao2008norm}. Motivated by those works, the study of directional cubes in topological dynamics  started in \cite{donoso2014dynamical}, where the authors introduced for a minimal $\mathbb{Z}^{2}$-system 
$(X,T_1,T_2)$ a space of cubes denoted by $\QQ_{T_1,T_2}(X)$ and proved a structure theorem for systems with the unique closing parallelepiped property (here $X$ is a compact metric space and $T_1,T_2:X\to X$ are commuting homeomorphisms). Additionally, the authors 
introduced the notion of $(T_1,T_2)$-regionally proximal relation $\mathcal{R}_{T_1,T_2}(X)$ associated to the cube structure $\QQ_{T_1,T_2}(X)$. This is a strong variant of the classical regionally proximal relation of order one $\RP^{[1]}(X,\left\langle T_1,T_2\right\rangle)$ for $\Z^{2}$-systems. In the distal case they proved that $\mathcal{R}_{T_1,T_2}(X)$ is an equivalence relation and that 
$(X/{\mathcal{R}_{T_1,T_2}(X)}, T_1,T_2)$ is the maximal factor with the unique closing parallelepiped property.

The purpose of this article is to extend the program of research started in \cite{donoso2014dynamical} to arbitrary $\Z^{d}$-systems, \emph{i.e.}, $d$ commuting homeomorphisms $T_{1},\ldots,T_{d}\colon X\to X$ of a compact metric space $X$ with $d\geq 2$. 

We define the \emph{space of directional cubes } $\QQ_{T_{1},\ldots,T_{d}}(X)$ as the closure in $X^{2^{d}}$ of the points $$(T_{1}^{n_{1}\varepsilon_{1}}\cdots T_{d}^{n_{d}\varepsilon_{d}}x)_{\varepsilon\in \{0,1\}^{d}},$$ where $x\in X$ and $\textbf{n}=(n_{1},\ldots,n_{d})\in \Z^{d}$. Here we point out that since the transformations $T_j$ could be different, the space of directional cubes $\QQ_{T_{1},\ldots,T_{d}}(X)$ has much less symmetries than its relative ones defined for a single homeomorphism. 

As in previous works we are interested in $\Z^d$-systems with the unique closing parallelepiped property, as a way of understanding the topological counterpart of the characteristic factors introduced by Host in \cite{host2009ergodic} for an arbitrary number of  commuting transformations. We also introduce the 
$(T_{1},\ldots,T_{d})$ -\emph{regionally proximal relation} $\mathcal{R}_{T_{1},\ldots,T_{d}}(X)$ of 
$(X,T_{1},\ldots,T_{d})$ associated to these cube structures in order to give a complete description of systems with the unique closing parallelepiped property and define the maximal factor with this property for any $\Z^{d}$-system. We defer the precise definitions of the relation to Section \ref{sec:RegRelation}. 

We show the following structure theorem for minimal distal systems.

\begin{theorem}
\label{StructThm}
Let $(X,T_{1},\ldots,T_{d})$ be a minimal distal $\Z^{d}$-system. The following statements are equivalent:
\begin{enumerate}[(1)]
\item $(X,T_{1},\ldots,T_{d})$ has the unique closing parallelepiped property, \emph{i.e.}, if $\textbf{x},\textbf{y}\in \QQ_{T_{1},\ldots,T_{d}}(X)$ have $2^{d}-1$ coordinates in common, then $\textbf{x}=\textbf{y}$.
\item $\mathcal{R}_{T_{1},\ldots,T_{d}}(X)=\Delta_{X}$.
\item The structure of $(X,T_{1},\ldots,T_{d})$ can be described as follows: (i) it is a factor of a minimal distal $\Z^{d}$-system $(Y,T_{1},\ldots,T_{d})$ which is a joining of $\Z^{d}$-systems $(Y_{1},T_{1},\ldots,T_{d}),\ldots,(Y_{d},T_{1},\ldots,T_{d})$, where for each $i\in \{1,\ldots,d\}$ the action of $T_i$ on $Y_i$ is the identity; (ii) for each $i,j\in \{1,\ldots,d\}$, $i< j$, there exists a $\Z^{d}$-system
$(Y_{i,j},T_{1},\ldots,T_{d})$ which is a common factor of $(Y_{i},T_{1},\ldots,T_{d})$ and $(Y_{j},T_{1},\ldots,T_{d})$ and where $T_i$ and $T_j$ act as the identity; and (iii) $Y$ is jointly relatively independent with respect to the systems $\left ( (Y_{i,j},T_{1},\ldots,T_{d}): \ i,j \in \{1,\ldots,d \}, \ i<j \right )$.
\end{enumerate}
\end{theorem}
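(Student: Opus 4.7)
The plan is to split the argument into (1) $\Leftrightarrow$ (2), then the structural direction (2) $\Rightarrow$ (3), and finally the easier converse (3) $\Rightarrow$ (1).

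The equivalence (1) $\Leftrightarrow$ (2) should fall out essentially from the definition of $\mathcal{R}_{T_1,\ldots,T_d}(X)$ given in Section~\ref{sec:RegRelation}: unwinding the definition, a pair $(x,y)$ lies in the relation precisely when $x$ and $y$ appear as the differing entries of two cubes in $\QQ_{T_1,\ldots,T_d}(X)$ agreeing on $2^d-1$ coordinates, so triviality of the relation is a direct restatement of UCPP. The only subtlety is that the relation is phrased at a distinguished coordinate (say $\vec{1}$), so a small symmetry argument, exploiting minimality and the cube axioms, is needed to verify it is independent of that choice.

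The heart of the proof is (2) $\Rightarrow$ (3), which is the constructive step. For each $i$, let $Y_i$ be the maximal factor of $X$ on which $T_i$ acts as the identity; in the minimal distal setting this is well defined (for instance as the quotient by the equivalence ``same $\langle T_i\rangle$-orbit closure''). For $i<j$, let $Y_{i,j}$ denote the maximal common factor of $Y_i$ and $Y_j$, equivalently the maximal factor of $X$ on which both $T_i$ and $T_j$ act trivially. Define $Y$ as the orbit closure in $Y_1\times\cdots\times Y_d$ of the diagonal image of a point of $X$; the natural projection exhibits $X$ as a factor of $Y$. The delicate point is establishing that $Y$ is jointly relatively independent over the family $(Y_{i,j})_{i<j}$, and this is where UCPP enters crucially. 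The strategy is to translate the independence statement into a cube-lifting property: any matching data on the $Y_{i,j}$ factors should extend uniquely to a point of $Y$, and UCPP forces exactly the kind of uniqueness needed once one realizes such a matching as $2^d-1$ vertices of a directional cube. Distality is used throughout to lift cubes and to ensure the intermediate quotients exist.

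For (3) $\Rightarrow$ (1), first verify UCPP for $Y$ directly from its structure. Given two cubes in $\QQ_{T_1,\ldots,T_d}(Y)$ agreeing on $2^d-1$ vertices, project to each $Y_i$: since $T_i$ acts trivially on $Y_i$, the $Y_i$-component of any cube depends only on $(\varepsilon_j)_{j\neq i}$, so it is already determined by $2^{d-1}$ of the known vertices. Joint relative independence over the $Y_{i,j}$'s then forces the $d$-tuple of determined projections to combine consistently into a single point of $Y$, pinning down the missing vertex. Finally, UCPP descends from $Y$ to $X$ through a standard distal lifting argument: any two cubes of $X$ sharing $2^d-1$ coordinates lift, by minimality and distality, to two cubes of $Y$ also sharing $2^d-1$ coordinates, whence they agree in $Y$ and therefore in $X$.

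The main obstacle is expected to be the joint relative independence assertion in (2) $\Rightarrow$ (3). Unlike the single-transformation case of \cite{host2010nilsequences}, the directional cube $\QQ_{T_1,\ldots,T_d}(X)$ has much less built-in symmetry, so the combinatorial identification of matching data with $2^d-1$ cube vertices has to be done carefully for each pair $(i,j)$, and the glueing across pairs requires an induction on $d$ or an appeal to a minimality/distality argument to rule out the nontrivial fibers that would otherwise obstruct independence.
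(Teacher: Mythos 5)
Your construction in the step $(2)\implies(3)$ does not work. You take $Y_i$ to be the maximal factor of $X$ on which $T_i$ acts as the identity and let $Y$ be the orbit closure in $Y_1\times\cdots\times Y_d$ of the diagonal image of a point of $X$. That set $Y$ is the image of $X$ under $x\mapsto(\pi_1(x),\ldots,\pi_d(x))$, hence a \emph{factor} of $X$, and there is no reason for $X$ to be a factor of it. A concrete failure: take $X=\T$ with $T_1x=x+\alpha$, $T_2x=x+\beta$ and $\alpha,\beta$ irrational. This system is minimal, distal and has the unique closing parallelepiped property, but since $(\T,T_1)$ is already minimal, the maximal factor on which $T_1$ is the identity is the one-point system, so your $Y$ is trivial and cannot map onto $X$. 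The systems required by (3) are not factors of $X$ at all: the paper takes $Y=\KK_{T_{1},\ldots,T_{d}}^{x_{0}}$, the closed face-orbit of a continuity point $x_0$, which is an extension of $X$ via the last coordinate, and $Y_j=\KK_{T_{1},\ldots,T_{j-1},T_{j+1},\ldots,T_{d}}^{x_{0}}$, its projection onto the coordinates with $\varepsilon_j=0$, on which the face transformation of $T_j$ acts trivially; the joint relative independence over the pairwise common factors is then Theorem \ref{KRelInd}, proved by an elaborate duplication/gluing induction. In the rotation example this yields $Y\cong\T^2$ mapping onto $X$ by $(a,b)\mapsto x_0+a+b$, which is what (3) asks for.

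Two further steps are much less routine than you suggest. First, $(2)\implies(1)$ is not a restatement of the definition: $\mathcal{R}_{T_{j}}(X)$ is defined through a very particular configuration $\textbf{z}(x,y,\textbf{a}_{*},j)$ with duplicated faces, and identifying membership in $\mathcal{R}_{T_{1},\ldots,T_{d}}(X)$ with the existence of $\textbf{a}_{*}$ such that $(x,\textbf{a}_{*}),(y,\textbf{a}_{*})\in\QQ_{T_{1},\ldots,T_{d}}(X)$ is exactly Theorem \ref{prop2}, whose proof is a multi-step insertion/gluing construction resting on distality. Second, the descent of the unique closing parallelepiped property from $Y$ to $X$ is not a ``standard distal lifting argument'': lifting one cube is Proposition \ref{cube1}, but lifting \emph{two} cubes so that they still agree on $2^{d}-1$ coordinates is precisely the hard point; the paper circumvents it by proving $\pi\times\pi(\mathcal{R}_{T_{1},\ldots,T_{d}}(Y))=\mathcal{R}_{T_{1},\ldots,T_{d}}(X)$ (Theorem \ref{teo2}) and passing through condition (2). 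Indeed, whether the property passes to factors outside the distal setting is stated as an open problem in the introduction, so this step cannot be waved through as standard.
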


We will see in Section \ref{sec:ProofThm} that $Y$, the $Y_j$'s and the $Y_{i,j}$'s are given explicitly by the cube structure. 

When $d=2$ we have that $(Y_{1,2},T_{1},T_{2})$ is the trivial system (because it is minimal and the actions are identities). Then, relative independence implies that $(Y,T_1,T_2)$ can be seen as the direct product of the systems $(Y_{1},T_1,T_2)$ and $(Y_{2},T_1,T_2)$, so Theorem \ref{StructThm} generalizes the structure theorem proved by Donoso and Sun for minimal distal systems in \cite{donoso2014dynamical} when $d=2$. 
Also, the structure in the previous theorem can be thought of as a topological version of \emph{pleasant extensions} developed in a measure theoretical setting by Austin in \cite{austin2010multiple}.

We also prove,

\begin{theorem}
Let $(X,T_1,\ldots,T_d)$ be a minimal distal $\Z^d$-system. Then, the relation $\rr_{T_{1},\ldots,T_{d}}(X)$ is an equivalence relation and $(X/\rr_{T_{1},\ldots,T_{d}}(X),T_1,\ldots,T_d)$ is the {\em maximal factor }of $(X,T_1,\ldots,T_d)$ with the unique closing parallelepiped property. This means that $(X/\rr_{T_{1},\ldots,T_{d}}(X),T_1,\ldots,T_d)$ has the unique closing parallelepiped property and that any factor of 
$(X,T_1,\ldots,T_d)$ that enjoys that property is necessarily a factor of $(X/\rr_{T_{1},\ldots,T_{d}}(X),T_1,\ldots,T_d)$.
\end{theorem}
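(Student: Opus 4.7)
The plan is to decompose the statement into three tasks: (i) verify that $\mathcal{R}:=\rr_{T_{1},\ldots,T_{d}}(X)$ is a closed, $\Z^{d}$-invariant equivalence relation, so that the quotient $(X/\mathcal{R},T_{1},\ldots,T_{d})$ is a well-defined minimal distal $\Z^{d}$-system; (ii) prove that this quotient has the unique closing parallelepiped property (UCPP); and (iii) establish maximality among factors with UCPP.

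For (i), reflexivity, closedness, and simultaneous $\Z^{d}$-invariance should be direct consequences of the construction of $\QQ_{T_{1},\ldots,T_{d}}(X)$ as a closed subset of $X^{2^{d}}$ that contains the constant cubes and is preserved by the diagonal $\Z^{d}$-action. Symmetry should follow from an involution of $\{0,1\}^{d}$ swapping the two distinguished coordinates that define $\mathcal{R}$. The main obstacle is transitivity, which is where distality enters in an essential way. The plan is to adapt the Host--Kra--Maass / Donoso--Sun template for higher-order regionally proximal relations: given witnessing cubes for $(x,y)$ and $(y,z)$, lift and concatenate them along a common face, using that in the distal setting minimal subsets of $X\times X\times X$ are abundant enough to synchronize the shared coordinates. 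One then takes a limit to produce a cube witnessing $(x,z)\in\mathcal{R}$. Distality is crucial to guarantee that no coordinates collapse during the limiting step (so the cube does not degenerate into a constant one witnessing a trivial relation).

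For (ii), let $\pi\colon X\to X/\mathcal{R}$ be the quotient map. Since factor maps of $\Z^{d}$-systems send diagonal orbits to diagonal orbits, and $\QQ_{T_{1},\ldots,T_{d}}$ is the closure of such orbits, we obtain $\pi^{\times 2^{d}}(\QQ_{T_{1},\ldots,T_{d}}(X))\subseteq \QQ_{T_{1},\ldots,T_{d}}(X/\mathcal{R})$; a standard minimality and lifting argument upgrades this to equality. This compatibility lets us lift witnesses of $\rr_{T_{1},\ldots,T_{d}}(X/\mathcal{R})$ to pairs of cubes in $X$ whose distinguished coordinates are $\mathcal{R}$-related representatives. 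Hence any pair $([x],[y])\in \rr_{T_{1},\ldots,T_{d}}(X/\mathcal{R})$ satisfies $[x]=[y]$, so $\rr_{T_{1},\ldots,T_{d}}(X/\mathcal{R})=\Delta_{X/\mathcal{R}}$. Applying Theorem \ref{StructThm}, implication $(2)\Rightarrow(1)$, yields the UCPP on $X/\mathcal{R}$.

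For (iii), suppose $q\colon X\to Z$ is a factor map onto a minimal distal system $(Z,T_{1},\ldots,T_{d})$ with the UCPP. By Theorem \ref{StructThm}, $\rr_{T_{1},\ldots,T_{d}}(Z)=\Delta_{Z}$. Since $q^{\times 2^{d}}$ maps directional cubes to directional cubes, it also maps $\mathcal{R}$-related pairs in $X$ to $\rr_{T_{1},\ldots,T_{d}}(Z)$-related pairs in $Z$, hence to points on the diagonal. Therefore $q$ is constant on $\mathcal{R}$-equivalence classes and factors through $X/\mathcal{R}$, yielding maximality. The hardest step of the whole proof is transitivity in (i); once this is established, parts (ii) and (iii) are formal consequences of Theorem \ref{StructThm} combined with the standard compatibility of directional cubes under factor maps.
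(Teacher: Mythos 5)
Your three-part architecture (equivalence relation, the quotient has the property, maximality) is exactly the paper's (Theorem \ref{EquivRel} plus Corollary \ref{prop5}), and your part (iii) is essentially correct: the inclusion $q\times q(\rr_{T_{1},\ldots,T_{d}}(X))\subseteq \rr_{T_{1},\ldots,T_{d}}(Z)$ really is immediate from cube compatibility, and that is all maximality needs. The problem is that the two steps you describe as routine are where the real content lies, and as sketched neither goes through. For transitivity: given witnesses $\textbf{z}(x,y,\textbf{a}_{*},j)$ and $\textbf{z}(y,z,\textbf{b}_{*},j)$ in $\QQ_{T_{1},\ldots,T_{d}}(X)$, the faces you would want to concatenate along do not match --- the $j$-upper face of the first is built from $\textbf{a}_{*}$ and the $j$-lower face of the second from $\textbf{b}_{*}$, and these are unrelated --- so no gluing applies, and ``synchronizing via minimal subsets of $X\times X\times X$'' is not a mechanism that produces a cube of the required duplicated-face shape. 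The missing idea is the characterization theorem (Theorem \ref{prop2}): $(x,y)\in\rr_{T_{1},\ldots,T_{d}}(X)$ iff $(x,y_{*}^{[d]})\in\QQ_{T_{1},\ldots,T_{d}}(X)$ iff (\emph{replacement property}) for every $\textbf{a}_{*}\in X_{*}^{[d]}$ one has $(x,\textbf{a}_{*})\in\QQ_{T_{1},\ldots,T_{d}}(X)$ exactly when $(y,\textbf{a}_{*})\in\QQ_{T_{1},\ldots,T_{d}}(X)$. Granting this, transitivity is a two-line argument (apply replacement with $\textbf{a}_{*}=z_{*}^{[d]}$); but proving the replacement property is the actual work, an iterated insertion/gluing construction over all $d$ directions. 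Note also that distality enters through the gluing property (Lemma \ref{pegado1}), i.e.\ through minimality of orbit closures in the cube system, which lets one return from a limit point --- not by preventing coordinates from ``collapsing.''

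The second gap is in your step (ii). The identity $\pi^{[d]}(\QQ_{T_{1},\ldots,T_{d}}(X))=\QQ_{T_{1},\ldots,T_{d}}(X/\mathcal{R})$ does \emph{not} yield, by any standard lifting argument, that witnesses of $\rr_{T_{1},\ldots,T_{d}}(X/\mathcal{R})$ lift to witnesses of $\rr_{T_{1},\ldots,T_{d}}(X)$: a lift of the point $(\bar{x},\bar{y}_{*}^{[d]})$ is some $(x',\textbf{b}_{*})$ whose last $2^{d}-1$ coordinates are assorted, generally distinct, preimages of $\bar{y}$, so it is not of the form $(x',(y')_{*}^{[d]})$ and does not exhibit a pair in $\rr_{T_{1},\ldots,T_{d}}(X)$. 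Repairing this is precisely Theorem \ref{teo2} ($\pi\times\pi(\rr_{T_{1},\ldots,T_{d}}(Y))=\rr_{T_{1},\ldots,T_{d}}(X)$ for factor maps onto minimal distal systems), whose proof is a genuine induction: one pushes each lower face of the lifted cube to a constant using minimality of the face-group actions, tracks the resulting pair $(x,z_{j})$ along diagonal orbit closures, and finally uses distality to return to the original fiber. Your plan needs this theorem (or an equivalent amount of work) stated and proved; citing ``compatibility of cubes under factor maps'' does not supply it.
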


Finally we develop interesting applications of the directional dynamical cubes to study recurrence properties of systems with the unique closing parallelepiped property. It turns out that this property can be characterized studying the sets of return times of a $\Z^d$-system. 

 In the previous results, we do not know if the assumption of distality is essential or not. The main feature we use about distal systems is that they enjoy the \emph{gluing property} (we start its discussion in Definition \ref{definition:gluing}) and most of proofs only use this property. We ask then
	
\begin{problem}
How much of our theory can be extended for general minimal $\Z^d$-systems? In particular, is the unique closing parallelepiped property preserved under factor maps? We show in this article that this is true for factor maps between minimal distal systems, but we do not know if this holds for general minimal $\Z^d$-systems.  
\end{problem}

\subsection{Organization of the paper}
In Section \ref{sec:preliminaries} we give some basic material used throughout the paper.
In Section \ref{sec:DirecCubes} we present directional dynamical cubes for a $\Z^{d}$-system $(X,T_{1},\ldots,T_{d})$, introduce the main terminology and give their general properties. Next, in Section  \ref{sec:RegRelation} we introduce the associated $(T_{1},\ldots,T_{d})$-regionally proximal relation and establish some general results. In Section \ref{sec:characterizing} we give a characterization of $\Z^d$-systems with the unique closing parallelepiped property in the distal case and in Section \ref{sec:Structure} we show the structure of a minimal distal system with the unique closing parallelepiped property. Section \ref{sec:ProofThm} is devoted to the proof of Theorem \ref{StructThm}. Then, in Section \ref{sec:ReturnTime} we study the sets of return times for minimal distal systems with the unique closing parallelepiped property and give a theorem that characterizes these systems using their return time sets. 
In Section \ref{sec:Examples} we provide a family of explicit examples of minimal distal $\Z^{d}$-systems with the unique closing parallelepiped property. 

\subsection*{Acknowledgement} We thank Wenbo Sun for many helpful discussions. We also thank the anonymous referee for valuable remarks.

\section{Preliminaries}\label{sec:preliminaries}
	
\subsection{Basic notions from topological dynamics} A \emph{topological dynamical system} is a pair $(X,G)$, where $X$ is a compact metric space and $G$ is a group of homeomorphisms of the space $X$ into itself. We also refer to $X$ as a $G$-system. We use $\rho_{X}(\cdot,\cdot)$ to denote the metric of $X$ and we let $\Delta_{X}=\{(x,x)\colon x \in X\}$ denote the diagonal of $X\times X$. 
	
If $d\geq 1$ is an integer and $T_{1},\ldots,T_{d}:X\to X$ are $d$ commuting homeomorphisms of $X$, we write $(X,T_{1},\ldots,T_{d})$ to denote the topological dynamical system $(X,\langle T_{1},\ldots,T_{d} \rangle)$, where $\langle T_{1},\ldots,T_{d} \rangle$ is the group spanned by $T_{1},\ldots,T_{d}$. 
Throughout this paper those systems are called $\Z^d$-systems.
	
A \emph{factor map} between the dynamical systems $(Y,G)$ and $(X,G)$ is an onto and continuous map $\pi: Y \to X$ such that $\pi \circ g = g \circ \pi$ for every $g\in G$. We say that $(Y,G)$ is an \emph{extension} of $(X,G)$ or that $(X,G)$ is a \emph{factor} of $(Y,G)$. When $\pi$ is bijective we say that $\pi$ is an isomorphism and that $(Y,G)$ and $(X,G)$ are isomorphic or conjugate.
	
Let $(X_{1},G), \ldots, (X_{k},G)$ be $k$ topological dynamical systems. A \emph{joining} between them is a closed subset ${Z\subseteq X_{1}\times \cdots \times X_{k}}$ which is invariant under the diagonal action $g\times \cdots \times g$ ($k$ times) for all $g\in G$ and projects onto each factor.
	
Suppose the systems $(X_{1},G),\ldots,(X_{k},G)$ are extensions of a common system $(W,G)$ and for $i\in \{1,\ldots,k\}$ denote by $\pi_{i}\colon X_{i}\to W$ the associated factor map. We say that a joining $Z \subseteq X_{1}\times \cdots \times X_{k}$ is \emph{relatively independent} with respect to $W$ if for every $(x_{1},\ldots,x_{k})\in Z$, for every $i \in \{ 1,\ldots,k\}$ and for every 
$x_{i}'\in X_{i}$ with $\pi_{i}(x_{i}')=\pi_{i}(x_{i})$ we have
$$(x_{1},\ldots,x_{i-1},x_{i}',x_{i+1},\ldots,x_{k})\in Z.$$
That is, we can freely change coordinates of points in the joining when the corresponding projection to the factor $W$ coincide.
	
Given a topological dynamical system $(X,G)$ and a point $x\in X$ we denote by ${\mathcal{O}_{G}(x)=\{gx\colon g \in G\}}$ the orbit of $x$. We say that $(X,G)$ is \emph{transitive} if there exists a point in $X$ whose orbit is dense. We say that $(X,G)$ is \emph{minimal} if the orbit of any point is dense in $X$. A system $(X,G)$ is \emph{pointwise almost periodic} if for any $x\in X$ the system $(\overline{\mathcal{O}_{G}(x)},G)$ is minimal.

A topological dynamical system $(X,G)$ is {\it distal} if for every $x,y \in X$ such that $x\neq y$ we have $$\inf_{g \in G} d(gx,gy)>0.$$
Distal systems have many interesting properties that we will use in the sequel (see \cite{auslander1988minimal}, chapters 5 and 7). We recall some of them:
\begin{theorem} 
\label{distal}
\quad
\begin{enumerate}
\item The Cartesian product of distal systems is distal.
\item Distality is preserved by taking factors and subsystems.
\item A distal system is minimal if and only if it is transitive, \emph{i.e.}, a distal system is pointwise almost periodic.
\item If $(X,G)$ is distal and $G'$ is a subgroup of $G$, then $(X,G')$ is distal.
\item Factor maps between distal systems are open maps.
\end{enumerate}
\end{theorem}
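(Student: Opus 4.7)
The five items are classical; my plan is to reduce them all to two basic tools, namely the metric definition of distality and the fact that in a distal system every orbit closure is minimal (equivalently, that the Ellis semigroup is a group).

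Items (1), (4), and the subsystem case of (2) are essentially unpackings of the definition. For (1), if $(x_1,x_2)\neq(x_1',x_2')$ in a product $X_1\times X_2$, some coordinate (say the first) differs, and then $\inf_g d(g(x_1,x_2),g(x_1',x_2'))\geq \inf_g d_{X_1}(gx_1,gx_1')>0$. For (4), passing from $G$ to a subgroup $G'$ only shrinks the index set of the infimum. For a closed invariant $Y\subseteq X$, distal pairs in $Y$ are already distal in $X$. Each is a few lines.

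For the factor case of (2), I would fix a factor map $\pi\colon X\to Y$ with $X$ distal and argue by contradiction. Let $y_1,y_2\in Y$ be distinct but proximal, and pick lifts $x_i\in\pi^{-1}(y_i)$. Passing to a subsequence of $g_n$ witnessing proximality, write $g_n x_i\to x_i^\ast$; continuity of $\pi$ together with proximality of $y_1,y_2$ forces $\pi(x_1^\ast)=\pi(x_2^\ast)=:y^\ast$. By (1), $X\times X$ is distal, hence pointwise almost periodic, so the orbit closure $M:=\overline{G(x_1,x_2)}$ is minimal. Minimality yields $h_m\in G$ with $h_m(x_1^\ast,x_2^\ast)\to(x_1,x_2)$; pushing forward by $\pi$ gives $h_m y^\ast\to y_1$ and $h_m y^\ast\to y_2$, hence $y_1=y_2$, a contradiction. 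For (3), the nontrivial direction then follows easily: a distal transitive system with dense orbit $\overline{Gx_0}=X$ has $x_0$ uniformly recurrent by pointwise almost periodicity, so $X$ is minimal. Pointwise almost periodicity itself I would establish through the Ellis semigroup: when $E(X)$ is a group, for any $z\in\overline{Gx}$ there is $p\in E(X)$ with $px=z$, and then $p^{-1}z=x$ gives $\overline{Gz}\supseteq\overline{Gx}$, forcing every orbit closure to be minimal.

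Item (5) is the most delicate and I expect it to be the main obstacle. The key intermediate step is uniformity of fibers: for every $\epsilon>0$ there is $\delta>0$ such that whenever $\pi(x)=y$ and $d_Y(y,y')<\delta$, some $x'\in\pi^{-1}(y')$ satisfies $d_X(x,x')<\epsilon$. Openness of $\pi$ is immediate from this. To prove uniformity I would argue by contradiction, extracting sequences $x_n\to x$ and $y_n'\to\pi(x)$ together with $x_n'\in\pi^{-1}(y_n')$ whose cluster points $x'$ lie in $\pi^{-1}(\pi(x))$ but at distance $\geq\epsilon$ from $x$. The hard part is to then translate this into a proximality statement in $X\times X$ contradicting distality; I would do so by invoking the enveloping-group structure to transport $(x,x')$ along the action and produce a sequence converging to the diagonal, in the spirit of the factor case of (2). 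This transport step is where the Ellis-semigroup machinery becomes essential and is by far the main technical obstacle.
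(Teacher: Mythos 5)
The paper offers no proof of this theorem: it is stated as a list of classical facts with a pointer to Auslander's book (chapters 5 and 7), so the only real question is whether your sketch would stand on its own. Items (1), (4) and the subsystem half of (2) are correct as written. Your route to pointwise almost periodicity via the enveloping group is also sound --- the neutral element of the group $E(X,G)$ is the unique idempotent, hence the identity map, so $p^{-1}(px)=x$ and every orbit closure is minimal --- and the factor half of (2) and item (3) then close correctly, since $h_m y^{\ast}$ converges simultaneously to $y_1$ and to $y_2$.

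The genuine gap is in item (5), and it is twofold. First, your contradiction is aimed at the wrong event: from the failure of openness you extract $y'_n\to\pi(x)$ with $\pi^{-1}(y'_n)\cap B(x,\epsilon)=\emptyset$, and cluster points $x'$ of chosen preimages then lie in $\pi^{-1}(\pi(x))$ at distance at least $\epsilon$ from $x$ --- but this is not contradictory, because fibers of a factor map are typically large and the pair $(x,x')$ is simply a distal pair in one fiber. What must be contradicted is that the \emph{entire} fiber $\pi^{-1}(y'_n)$ avoids $B(x,\epsilon)$, and converting that into a proximality statement is precisely the content of the classical proof: one needs the circle operation $p\circ A$ on closed sets, the characterization of openness as $p\circ\pi^{-1}(y)=\pi^{-1}(\theta(p)y)$ for all $p$ in the enveloping semigroup, and the lemma (proved with minimal idempotents) that every point of $p\circ A$ is proximal to a point of $\overline{pA}$; distality of the fibers then forces equality. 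You explicitly flag this ``transport step'' as an unresolved obstacle, so as written item (5) is not proved. Second, note that (5) is false without minimality: a non-open continuous surjection of $[0,1]$ onto itself, with the trivial action on both sides, is a factor map between distal systems. Minimality (which the paper always has in its applications) must therefore enter your argument for (5), and in your sketch it never does.
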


If $(X,G)$ is a topological dynamical system, we can see $G$ as a subset of $X^{X}$. We define the \emph{enveloping semigroup} $E(X,G)$ of $(X,G)$ as the closure (for the pointwise convergence) of $G$ in $X^{X}$. We have that $E(X,G)$ is compact and Hausdorff, usually non-metrizable. Algebraic properties of the enveloping semigroup have a precise translation into dynamical properties of the system. For example, a topological dynamical system is distal if and only if its enveloping semigroup is a group.

A \emph{left ideal} in a enveloping semigroup $E(X,G)$ is a non-empty subset $I\subseteq E(X,G)$ such that $E(X,G)I$ $\subseteq I$. A \emph{minimal left ideal} is one which does not properly contain a left ideal. An \emph{idempotent} in a enveloping semigroup $E(X,G)$ is an element $u \in E(X,G)$ such that $u^{2}=u$. We denote by $J(E(X,G))$ the set of idempotents in $E(X,G)$. We can introduce a \emph{quasi-order} $<$ on the set $J(E(X,G))$ by defining $v<u$ if and only if $vu=v$. If $v<u$ and $u<v$ we say that $u$ and $v$ are \emph{equivalent} and we write $u\sim v$. An idempotent $u\in J(E(X,G))$ is \emph{minimal} if whenever $v\in J(E(X,G))$ and $v<u$ then $u<v$ (or $u\sim v$). 

We recall some results about the enveloping semigroup that we use later.

\begin{theorem} 
\label{thm:basics enveloping}
Let $(Y,G)$ and $(X,G)$ be topological dynamical systems.
\begin{enumerate}
\item If $\pi:Y\to X$ is a factor map, then there exists a unique continuous semigroup homomorphism 
${\theta: E(Y,G)\to E(X,G)}$ such that $\pi(py)=\theta(p)\pi(y)$ for all $y\in Y$ and $p \in E(Y,G)$. 
If $u \in E(Y,G)$ is a minimal idempotent then $\theta(u) \in E(X,G)$ is a minimal idempotent too.
\item $x, x' \in X$ are proximal if and only if there exists $p\in E(X,G)$ such that $px=px'$.
\item $x \in X$ is an almost periodic point (\emph{i.e.}, its orbit is minimal) if and only if there exists a minimal idempotent $u \in E(X,G)$ such that $u x=x$.
\end{enumerate} 
\end{theorem}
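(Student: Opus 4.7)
The plan is to treat the three items in order, using throughout that $E(X,G)$ is a compact right-topological semigroup equal to the closure of $G$ in $X^X$, together with the Ellis--Numakura lemma guaranteeing idempotents in any compact right-topological sub-semigroup.

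For (1), I would construct $\theta$ as follows. Given $p\in E(Y,G)$, pick a net $(g_\alpha)\subseteq G$ with $g_\alpha \to p$ pointwise on $Y$; compactness of $E(X,G)$ lets me pass to a subnet converging pointwise to some $q\in E(X,G)$, and I set $\theta(p)=q$. Well-definedness is checked by passing to the limit in $\pi(g_\alpha y)=g_\alpha\pi(y)$, which yields $\pi(py)=q\pi(y)$; together with surjectivity of $\pi$ this pins down $q$ independently of the chosen net. Continuity of $\theta$ follows from a standard net argument in the product topologies, the homomorphism property is inherited from $G$, and uniqueness is immediate because $\theta$ is prescribed on the dense subset $G$. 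The map $\theta$ is also surjective, since its image is compact and contains $G$. For the minimal idempotent claim, let $u\in E(Y,G)$ be a minimal idempotent in a minimal left ideal $M\subseteq E(Y,G)$; then $\theta(M)$ is a left ideal of $E(X,G)$ by surjectivity of $\theta$, and given any left ideal $N\subseteq\theta(M)$ the set $\theta^{-1}(N)\cap M$ is a left ideal of $E(Y,G)$ contained in $M$, hence equal to $M$ by minimality, forcing $N=\theta(M)$. So $\theta(M)$ is itself minimal, and the idempotent $\theta(u)\in\theta(M)$ is therefore minimal.

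For (2), the forward direction uses compactness: if $(g_\alpha)\subseteq G$ satisfies $d(g_\alpha x, g_\alpha x')\to 0$, pass to a subnet converging to some $p\in E(X,G)$; continuity of evaluation then gives $px=px'$. The converse reverses the argument, since $p$ is a pointwise limit of elements of $G$. For (3), assume $x$ is almost periodic and pick any minimal left ideal $M$ of $E(X,G)$; then $Mx$ is a nonempty closed $G$-invariant subset of $\overline{\mathcal{O}_G(x)}$, and by minimality it equals $\overline{\mathcal{O}_G(x)}$, so in particular $x\in Mx$ and the closed sub-semigroup $\{p\in M: px=x\}$ is nonempty. Ellis--Numakura produces an idempotent $u$ in it, and idempotents sitting inside a minimal left ideal are automatically minimal. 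For the converse, if $ux=x$ with $u$ a minimal idempotent in the minimal left ideal $M=E(X,G)u$, then $Mx=E(X,G)x=\overline{\mathcal{O}_G(x)}$; using $M=E(X,G)q$ for any $q\in M$, one sees that $\overline{\mathcal{O}_G(qx)}=Mx$ for every $qx\in Mx$, whence $\overline{\mathcal{O}_G(x)}$ is minimal.

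I expect the minimal-idempotent preservation in (1) to be the most delicate point, since it relies on both surjectivity of $\theta$ and a pullback argument with minimal left ideals that is specific to the enveloping semigroup setting rather than being formal for an arbitrary continuous semigroup homomorphism.
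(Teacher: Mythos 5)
This theorem is stated in the paper as recalled background, with a citation to Auslander's book and no proof supplied, so there is no in-paper argument to compare against; your proposal is the standard Ellis-theory proof and it is correct. All the key steps check out: the intertwining relation $\pi(py)=\theta(p)\pi(y)$ together with surjectivity of $\pi$ pins down $\theta(p)$ and gives well-definedness, continuity, the homomorphism property and surjectivity of $\theta$; the pullback $\theta^{-1}(N)\cap M$ argument correctly shows $\theta(M)$ is a minimal left ideal; and in (3) the sets $Mx$ and $\{p\in M: px=x\}$ do the work you claim. The only thing worth flagging is that you silently invoke three standard structural facts: that every minimal idempotent (in the quasi-order sense used by the paper, $v<u\iff vu=v$) lies in some minimal left ideal, that conversely every idempotent in a minimal left ideal is minimal, and that minimal left ideals are closed (needed for $Mx$ to be closed in (3) and for Ellis--Numakura to apply to the closed subsemigroup $\{p\in M:px=x\}$). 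Each of these has a two-line proof (e.g.\ if $v\in M$ is idempotent and $vu=v$ for an idempotent $u\in M$, then $Mv=M$ gives $u=pv$ and hence $uv=u$), and since the whole theorem is itself a recollection of standard facts, relying on them is entirely reasonable --- but a fully self-contained write-up would include them.
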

	
\subsection{Cubes and faces} \label{sec:cubesandfaces}
For an integer $d\geq 1$ we write $[d]=\{1,\ldots,d\}$. We view $\{0,1\}^{d}$ in one of two ways, either as a sequence $\varepsilon=\varepsilon_{1}\ldots\varepsilon_{d}$ of 0's and 1's written without commas or parentheses, or as a subset of $[d]$. A subset $\varepsilon \subseteq [d]$ corresponds to the sequence 
$\varepsilon_{1}\ldots\varepsilon_{d}\in \{0,1\}^{d}$ such that for each $i\in [d]$, $i\in \varepsilon$ if and only if $\varepsilon_{i}=1$, \emph{i.e.}, seen as a vector it is the sum of the canonical vectors indexed by 
the set $\varepsilon$.

For any set $X$ we write $X^{[d]}=X^{2^{d}}$. A point $\textbf{x} \in X^{[d]}$ can be described in one of two equivalent ways, depending on the context and convenience:
$$\textbf{x}=(x_{\varepsilon}\colon \varepsilon \in \{0,1\}^{d})=(x_{\varepsilon}\colon \varepsilon \subseteq [d]).$$
For $x\in X$ we write $x^{[d]}=(x,\ldots,x)\in X^{[d]}$ and the diagonal of $X^{[d]}$ is denoted by $\Delta^{[d]}_X=\{x^{[d]}\colon x \in X\}$.
	
A point $\textbf{x}\in X^{[d]}$ can be decomposed as $\textbf{x}=(\textbf{x}',\textbf{x}'')$ with $\textbf{x}',\textbf{x}''\in X^{[d-1]}$, where $\textbf{x}'=(x_{\eta0}\colon \eta\in \{0,1\}^{d-1})$ and $\textbf{x}''=(x_{\eta1}\colon \eta \in \{0,1\}^{d-1})$. We also isolate the first coordinate writing $X_{*}^{[d]}=X^{2^{d}-1}$ and a point $\textbf{x}\in X^{[d]}$ as $\textbf{x}=(x_\emptyset,\textbf{x}_{*})$, where $x_\emptyset \in X$ and $\textbf{x}_{*}=(x_{\varepsilon}\colon \varepsilon\subseteq [d],\ \varepsilon \neq \emptyset)\in X_{*}^{[d]}$. 
	
For $r\leq d$, let $J\subseteq [d]$ with $|J|=d-r$ and $\xi \in \{0,1\}^{d-r}$. A \emph{face of dimension $r$} of $\textbf{x}$ is an element of the form $(x_{\varepsilon}\colon \varepsilon\in \{0,1\}^{d}, \varepsilon_{J}=\xi) \in X^{[r]}$, where $\varepsilon_{J}=(\varepsilon_{i}\colon i \in J)$.
	
Let $(X,T_{1},\ldots,T_{d})$ be a $\Z^d$-system. For $j \in [d]$, the \emph{j-th face transformation} $T_{j}^{[d]}\colon X^{[d]}\to X^{[d]}$ is defined for every $\textbf{x}\in X^{[d]}$ and every $\varepsilon\subseteq [d]$ by:
$$(T_{j}^{[d]}\textbf{x})_\varepsilon=\left\{\begin{array}{ll}
T_{j}x_{\varepsilon} & \text{if}\ j\in \varepsilon,\\
x_{\varepsilon} & \text{if}\ j\notin \varepsilon.
\end{array}\right.$$
The \emph{face group of dimension d} is the group $\mathcal{F}_{T_{1},\ldots,T_{d}}$ of transformations of $X^{[d]}$ generated by the face transformations. Let $G$ be the group spanned by $T_{1},\ldots,T_{d}$ and set $G_{[d]}^{\Delta}=\{g^{[d]}\colon g \in G\}$. We denote by $\mathcal{G}_{T_{1},\ldots,T_{d}}$ the subgroup of $G^{[d]}$ generated by $\mathcal{F}_{T_{1},\ldots, T_{d}}$ and $G_{[d]}^{\Delta}$.

\section{Directional dynamical cubes for $\Z^d$-systems}\label{sec:DirecCubes}
\label{sec:CubeDefinitions}	
In this section we present the notion of directional dynamical cubes for $\Z^{d}$-systems $(X,T_{1},\ldots,T_{d})$ together with its main properties. This is a generalization of the dynamical cubes introduced by  Donoso and  Sun in \cite{donoso2014dynamical} when $d=2$. 
	
\begin{definition}
\label{def:directioncubes}
Let $(X,T_{1},\ldots, T_{d})$ be a $\Z^{d}$-system. The set of \emph{directional dynamical cubes} associated to $(X,T_{1},\ldots, T_{d})$ is defined by,
$$
\QQ_{T_{1},\ldots,T_{d}}(X)=\overline{\left\{(T_{1}^{n_{1}\varepsilon_{1}}\cdots T_{d}^{n_{d}\varepsilon_{d}}x)_{\varepsilon\in \{0,1\}^{d}}\colon x\in X, \textbf{n}=(n_{1},\ldots,n_{d})\in \Z^{d}\right\}}\subseteq X^{[d]}.
$$
Adittionally, given $x_{0}\in X$ we consider the following restriction of theses cubes to $X_{*}^{[d]}$,
$$
\KK_{T_{1},\ldots,T_{d}}^{x_{0}}(X)=\overline{\left\{(T_{1}^{n_{1}\varepsilon_{1}}\ldots T_{d}^{n_{d}\varepsilon_{d}}x_{0})_{\varepsilon\in \{0,1\}^{d}\setminus \{\vec{0}\}}\colon \textbf{n}=(n_{1},\ldots,n_{d})\in \Z^{d}\right\}} \subseteq X_{*}^{[d]}.
$$
\end{definition}

Given $\{j_{1},\ldots,j_{k}\}\subseteq [d]$ and $x_{0}\in X$, since $(X,T_{j_1},\ldots,T_{j_k})$ is a $\Z^k$-system, then we will often consider the set of cubes $\QQ_{T_{j_{1}},\ldots,T_{j_{k}}}(X) \subseteq X^{[k]}$ and $\KK_{T_{j_{1}},\ldots,T_{j_k}}^{x_{0}}(X) \subseteq X_{*}^{[k]}$. For example, if $k=3$ and $j_1,j_2,j_3 \in [d]$ then $\QQ_{T_{j_1},T_{j_2},T_{j_3}}(X)$ is the closure of the set of points 
$$
(x,T_{j_1}^{n_1}x,T_{j_2}^{n_2}x,T_{j_1}^{n_1}T_{j_2}^{n_2}x,
T_{j_3}^{n_3}x,T_{j_1}^{n_1}T_{j_3}^{n_3}x,T_{j_2}^{n_2}T_{j_3}^{n_3}x,
T_{j_1}^{n_1}T_{j_2}^{n_2}T_{j_3}^{n_3}x),
$$
for $x\in X$ and $n_1,n_2,n_3 \in \Z$.

Remark that the generators of $\Z^d$ are given with a precise order. A different choice of this order produces a different set of dynamical cubes. This is the reason why we call them directional cubes. 

\subsection{Basic structural properties} 
 
\begin{proposition}
\label{prop1}
Let $(X,T_{1},\ldots, T_{d})$ be a $\Z^{d}$-system and take $k\in [d]$. We have,
\begin{enumerate}[(1)]
\item \label{prop1:1} $x^{[d]}\in \QQ_{T_{1},\ldots,T_{d}}(X)$ for every $x\in X$.
\item \label{prop1:2} For all $x,y\in X$ and for all $j\in [d]$, $(x,y)\in \QQ_{T_{j}}(X)$ if and only if $(y,x)\in \QQ_{T_{j}}(X)$.
\item \label{prop1:3} The set $\QQ_{T_{1},\ldots,T_{d}}(X)$ is invariant under $\mathcal{G}_{T_{1},\ldots,T_{d}}$ and 
$\KK_{T_{1},\ldots,T_{d}}^{x_{0}}(X)$ is invariant under $\F_{T_{1},\ldots,T_{d}}^{x_{0}}$ \footnote{For convenience,  we let $\F_{T_{1},\ldots,T_{d}}^{x_{0}}$ denote the restriction of $\F_{T_{1},\ldots,T_{d}}$ to $\KK_{T_{1},\ldots,T_{d}}^{x_{0}}(X)$}. Therefore  $(\QQ_{T_{1},\ldots,T_{d}}(X),$  $\mathcal{G}_{T_{1},\ldots,T_{d}})$ and $(\KK_{T_{1},\ldots,T_{d}}^{x_{0}}(X),\F_{T_{1},\ldots,T_{d}}^{x_{0}})$ are topological dynamical systems. 
\item \label{prop1:4} (Projection) Let $\textbf{x}\in \QQ_{T_{1},\ldots,T_{d}}(X)$, $\{j_{1},\ldots,j_{k}\}\subseteq [d]$ and $\xi \in \{0,1\}^{d-k}$. Then, 
$$(x_{\varepsilon}\colon \varepsilon \in \{0,1\}^{d},\ \varepsilon_{[d]\setminus\{j_{1},\ldots,j_{k}\}}=\xi)\in \QQ_{T_{j_{1}},\ldots,T_{j_{k}}}(X).$$
\item \label{prop1:5} (Duplication) Let $\textbf{x}=(x_{\eta}\colon \eta\in \{0,1\}^{k})\in \QQ_{T_{j_{1}},\ldots,T_{j_{k}}}(X)$ with $\{j_{1},\ldots,j_{k}\}\subseteq [d]$. If $\textbf{y}\in X^{[d]}$ is defined such that for every $\varepsilon \in \{0,1\}^{d}$, 
$$y_{\varepsilon}=x_{\eta} \Longleftrightarrow\ \forall \ell \in \{1,\ldots,k\},\ \varepsilon_{j_{\ell}}=\eta_{\ell},$$
\noindent then $\textbf{y}\in \QQ_{T_{1},\ldots,T_{d}}(X)$.
\end{enumerate}
\end{proposition}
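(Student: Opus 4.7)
The proposition is a collection of five bookkeeping facts about the definition of $\QQ_{T_1,\ldots,T_d}(X)$, so my plan is to verify each on the dense generating set and then invoke continuity/closure. Since each of $\QQ_{T_1,\ldots,T_d}(X)$ and $\KK_{T_1,\ldots,T_d}^{x_0}(X)$ is closed by definition, it suffices throughout to check the statements on points of the form $(T_1^{n_1\varepsilon_1}\cdots T_d^{n_d\varepsilon_d}x)_\varepsilon$ provided the maps/operations involved are continuous.

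For (\ref{prop1:1}), take $\textbf{n}=\vec{0}$: every coordinate of the resulting cube equals $x$, giving $x^{[d]}$. For (\ref{prop1:2}), any pair in the generating set of $\QQ_{T_j}(X)$ has the form $(y,T_j^n y)$; setting $y'=T_j^n y$ and $n'=-n$ yields $(y',T_j^{n'}y')=(T_j^n y, y)$, so the generating set is symmetric and symmetry passes to its closure.

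For (\ref{prop1:3}), I would check the three types of generators of $\mathcal{G}_{T_1,\ldots,T_d}$ separately. The diagonal action $g^{[d]}$ applied to $(T_1^{n_1\varepsilon_1}\cdots T_d^{n_d\varepsilon_d}x)_\varepsilon$ gives, since the $T_i$ commute with $g$, a cube of the same form based at $gx$. The $j$-th face transformation $T_j^{[d]}$ multiplies the $\varepsilon$-coordinate by $T_j^{\varepsilon_j}$, so by commutativity it produces the generating cube based at $x$ with exponent vector $(n_1,\ldots,n_j+1,\ldots,n_d)$. Invariance of $\KK_{T_1,\ldots,T_d}^{x_0}(X)$ under $\F_{T_1,\ldots,T_d}^{x_0}$ is the same computation restricted to the coordinates indexed by nonempty $\varepsilon$, noting that the base point $x_0$ does not move since the $\emptyset$-coordinate is ignored. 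Taking closures yields the stated group actions and hence the dynamical systems.

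Parts (\ref{prop1:4}) and (\ref{prop1:5}) are index manipulations on the generators. For projection, given a generator $(T_1^{n_1\varepsilon_1}\cdots T_d^{n_d\varepsilon_d}x)_\varepsilon$, fix the entries $\varepsilon_i=\xi_i$ for $i\notin\{j_1,\ldots,j_k\}$ and set $y=\bigl(\prod_{i\notin\{j_1,\ldots,j_k\}} T_i^{n_i\xi_i}\bigr)x$. Commutativity of the $T_i$ then rewrites each retained coordinate as $T_{j_1}^{n_{j_1}\varepsilon_{j_1}}\cdots T_{j_k}^{n_{j_k}\varepsilon_{j_k}}y$, exhibiting the projection as a generator of $\QQ_{T_{j_1},\ldots,T_{j_k}}(X)$, so by continuity of the projection map the inclusion passes to closures. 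For duplication, given $\textbf{x}$ a generator of $\QQ_{T_{j_1},\ldots,T_{j_k}}(X)$ with parameters $(n_1,\ldots,n_k)$ and base point $x$, define $\tilde{\textbf{n}}\in\Z^d$ by $\tilde{n}_{j_\ell}=n_\ell$ and $\tilde{n}_i=0$ otherwise. Then the generator of $\QQ_{T_1,\ldots,T_d}(X)$ with parameters $\tilde{\textbf{n}}$ and base $x$ has $\varepsilon$-coordinate $T_{j_1}^{n_1\varepsilon_{j_1}}\cdots T_{j_k}^{n_k\varepsilon_{j_k}}x$, which matches $x_\eta$ for $\eta=(\varepsilon_{j_1},\ldots,\varepsilon_{j_k})$, so it coincides with the prescribed $\textbf{y}$; since the assignment $\textbf{x}\mapsto\textbf{y}$ is continuous, it extends from generators to all of $\QQ_{T_{j_1},\ldots,T_{j_k}}(X)$.

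None of these steps is a real obstacle; the only thing to be careful about is the indexing convention, in particular keeping straight when $\varepsilon_j=0$ versus $\varepsilon_j=1$ under the face action so that the exponent rewriting $n_j\leadsto n_j+1$ is legitimate simultaneously on all coordinates (it is, because $\varepsilon_j=0$ forces the factor $T_j^{n_j\varepsilon_j}=T_j^{(n_j+1)\varepsilon_j}=\id$ automatically).
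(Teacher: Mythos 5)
Your proposal is correct and follows essentially the same route as the paper: items (1)--(3) are checked on generators exactly as the authors intend ("follow directly from definitions"), and your treatments of projection and duplication — re-basing at $y=\bigl(\prod_{i\notin\{j_1,\ldots,j_k\}}T_i^{n_i\xi_i}\bigr)x$ and padding the exponent vector with zeros, then passing to closures by continuity — are precisely the computations in the paper's proof, phrased on the dense generating set rather than via explicit limiting sequences.
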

	
\begin{proof} Properties \eqref{prop1:1} to \eqref{prop1:3} follow directly from definitions. We only prove properties \eqref{prop1:4} and \eqref{prop1:5}.
For property \eqref{prop1:4} let $\textbf{x}\in \QQ_{T_{1},\ldots,T_{d}}(X)$, $\{j_{1},\ldots,j_{k}\}\subseteq [d]$ and $\xi \in \{0,1\}^{d-k}$. By definition, there exist  $(x^{i})_{i\in \N}\subseteq X$ and $(\textbf{n}(i))_{i\in \N}\subseteq \Z^{d}$ such that
$$\forall \varepsilon \in \{0,1\}^{d},\quad x_{\varepsilon}=\lim\limits_{i\to\infty} T_{1}^{n_{1}(i)\varepsilon_{1}}\cdots T_{d}^{n_{d}(i)\varepsilon_{d}}x^{i}.$$
Now, consider $\eta\in \{0,1\}^{d}$ such that
$$\eta_{\ell}=\left\{\begin{array}{ll}
0 & \ell \in \{j_{1},\ldots,j_{k}\},\\
\xi_{\ell} & \ell \in [d]\setminus\{j_{1},\ldots,j_{k}\},
\end{array} \right.$$
and set $y^{i}=T_1^{n_1(i)\eta_1}\cdots T_d^{n_d(i)\eta_d}x^{i}$. Then, for 
$\varepsilon \in \{0,1\}^{d}$ such that $\varepsilon_{[d]\setminus\{j_{1},\ldots,j_{k}\}}=\xi$ we have
$$\begin{array}{ll}
x_{\varepsilon} & =\lim\limits_{i\to\infty} T_{1}^{n_{1}(i)\varepsilon_{1}}\cdots T_{d}^{n_{d}(i)\varepsilon_{d}}x^{i},\\
& = \lim\limits_{i \to \infty} T_{j_{1}}^{n_{j_{1}}(i)\varepsilon_{j_{1}}}\cdots T_{j_{k}}^{n_{j_{k}}(i)\varepsilon_{j_{k}}}y^{i}.
\end{array}$$
This implies, by definition, that $(x_{\varepsilon}\colon \varepsilon \in \{0,1\}^{d}, \varepsilon_{[d]\setminus\{j_{1},\ldots,j_{k}\}}=\xi)\in \QQ_{T_{j_{1}},\ldots,T_{j_{k}}}(X)$.

Now we prove property \eqref{prop1:5}. Let $\textbf{x}=(x_{\eta}\colon \eta\in \{0,1\}^{k})\in \QQ_{T_{j_{1}},\ldots,T_{j_{k}}}(X)$ with $\{j_{1},\ldots,j_{k}\}\subseteq [d]$. By definition, there exist $(x^{i})_{i\in \N}\subseteq X$ and $(\textbf{m}(i))_{i\in\N} \subseteq \mathbb{Z}^{k}$ such that
$$\forall \eta \in \{0,1\}^{k},\quad x_{\eta}=\lim\limits_{i\to \infty} T_{j_{1}}^{m_{1}(i)\eta_{1}}\cdots T_{j_{k}}^{m_{k}(i)\eta_{k}}x^{i}.$$
Define $\textbf{n}(i)\in \mathbb{Z}^{d}$ by 
$$n_{p}(i)=\begin{cases}
m_{\ell}(i) & \text{if}\ p=j_\ell \text{ for some } j_\ell \in \{j_{1},\ldots,j_{k}\},\\
0 & \text{if}\ p\in [d]\setminus \{j_{1},\ldots,j_{k}\}.
\end{cases}$$
Then, for $\varepsilon \in \{0,1\}^{d}$ and $\eta \in \{0,1\}^{k}$ such that $\varepsilon_{j_{\ell}}=\eta_{\ell}$ for all $\ell \in \{1,\ldots,k\}$ we have 
$$\begin{array}{ll}
y_{\varepsilon}=x_\eta 
& =  
\lim\limits_{i\to \infty} T_{j_{1}}^{m_{1}(i)\eta_{1}}\cdots T_{j_{k}}^{m_{k}(i)\eta_{k}}x^{i},\\
& = 
\lim\limits_{i\to \infty} T_{1}^{n_{1}(i)\varepsilon_{1}}\cdots T_{d}^{n_{d}(i)\varepsilon_{d}}x^{i},
\end{array}$$
which implies \eqref{prop1:5}.
\end{proof}

Proposition \ref{prop1} \eqref{prop1:3} tells us that $(\QQ_{T_{1},\ldots,T_{d}}(X),\mathcal{G}_{T_{1},\ldots,T_{d}})$ is a topological dynamical system. The following proposition states a little more. Its 
proof requires the technology of the enveloping semigroup. Basic properties of enveloping semigroups were considered in the preliminaries, for a complete exposition see \cite{auslander1988minimal}. 

\begin{proposition}
\label{CubeMin}
Let $(X,T_{1},\ldots, T_{d})$ be a minimal $\Z^{d}$-system. Then, $(\QQ_{T_{1},\ldots,T_{d}}(X),\mathcal{G}_{T_{1},\ldots,T_{d}})$ is a minimal system. Furthermore, if $(X,T_{1},\ldots,T_{d})$ is distal, then $(\QQ_{T_{1},\ldots,T_{d}}(X),\mathcal{G}_{T_{1},\ldots,T_{d}})$ is distal too.
\end{proposition}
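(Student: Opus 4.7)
For the distality claim, the plan is a short three-step reduction. If $(X, G)$ is distal with $G=\langle T_1,\ldots,T_d\rangle$, then the product system $(X^{[d]}, G^{[d]})$ is distal by Theorem~\ref{distal}(1). Every face transformation $T_j^{[d]}$ and every diagonal $g^{[d]}$ lies in $G^{[d]}$, so $\mathcal{G}_{T_1,\ldots,T_d}$ is a subgroup of $G^{[d]}$ and Theorem~\ref{distal}(4) yields distality of $(X^{[d]}, \mathcal{G}_{T_1,\ldots,T_d})$. Since $\QQ_{T_1,\ldots,T_d}(X)$ is a $\mathcal{G}$-invariant closed subset by Proposition~\ref{prop1}(3), Theorem~\ref{distal}(2) transfers distality to the subsystem.

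For minimality, my plan has two parts. The first part is to verify that for every $x \in X$ the $\mathcal{G}_{T_1,\ldots,T_d}$-orbit closure of $x^{[d]}$ equals $\QQ_{T_1,\ldots,T_d}(X)$: the diagonal subgroup $G^{\Delta}_{[d]} \subseteq \mathcal{G}$ acts on $\Delta_X^{[d]}$ as $G$ acts on $X$, so minimality of $(X,G)$ gives $\overline{G^{\Delta}_{[d]} \cdot x^{[d]}} = \Delta_X^{[d]}$, and the closure of $\mathcal{F}_{T_1,\ldots,T_d} \cdot \Delta_X^{[d]}$ is $\QQ_{T_1,\ldots,T_d}(X)$ by the very definition of directional cubes. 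The second part is to show that for every $\mathbf{z} \in \QQ_{T_1,\ldots,T_d}(X)$ some diagonal point $w^{[d]}$ lies in $\overline{\mathcal{G} \cdot \mathbf{z}}$; combined with the first part this forces $\overline{\mathcal{G} \cdot \mathbf{z}} \supseteq \overline{\mathcal{G} \cdot w^{[d]}} = \QQ_{T_1,\ldots,T_d}(X)$, hence minimality.

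The second part is immediate in the distal case: $(\QQ_{T_1,\ldots,T_d}(X), \mathcal{G}_{T_1,\ldots,T_d})$ is distal by the first paragraph and transitive by the first part above, so Theorem~\ref{distal}(3) gives minimality directly. In the general case, the approach is via the enveloping semigroup: pick a minimal idempotent $u \in E(X,G)$ with $u z_\emptyset = z_\emptyset$ (such $u$ exists by Theorem~\ref{thm:basics enveloping}(3), since $z_\emptyset$ is almost periodic by minimality of $X$), and consider the diagonal lift $u^{[d]} \in E(\QQ_{T_1,\ldots,T_d}(X), \mathcal{G}_{T_1,\ldots,T_d})$, which is an idempotent fixing $z_\emptyset^{[d]}$. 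If $u^{[d]}$ is a minimal idempotent in the larger enveloping semigroup, then $z_\emptyset^{[d]}$ is almost periodic in $(\QQ_{T_1,\ldots,T_d}(X), \mathcal{G}_{T_1,\ldots,T_d})$, so its orbit closure—equal to $\QQ_{T_1,\ldots,T_d}(X)$ by the first part—is a minimal subsystem, forcing the whole cube system to be minimal. The main obstacle is precisely this minimality of $u^{[d]}$: an idempotent minimal in a smaller acting-group's enveloping semigroup does not automatically remain minimal after enlarging the group, and justifying this requires a careful Ellis-theoretic analysis that exploits the coherent coordinate structure of elements of $E(\QQ_{T_1,\ldots,T_d}(X), \mathcal{G}_{T_1,\ldots,T_d})$ as tuples of elements of $E(X,G)$ constrained by the face-group generators.
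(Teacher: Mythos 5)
Your distality argument (product, subgroup, subsystem) is exactly the paper's, and your first part of the minimality argument --- that $\overline{\mathcal{O}(x^{[d]},\mathcal{G}_{T_{1},\ldots,T_{d}})}=\QQ_{T_{1},\ldots,T_{d}}(X)$ for every $x$, hence the system is transitive, hence minimal in the distal case by Theorem \ref{distal}(3) --- is correct. The problem is the general (non-distal) case: you correctly reduce everything to the claim that the diagonal lift $u^{[d]}$ of a minimal idempotent $u\in E(X,G)$ is still a minimal idempotent of $E(\QQ_{T_{1},\ldots,T_{d}}(X),\mathcal{G}_{T_{1},\ldots,T_{d}})$, and then you explicitly decline to prove it, calling it ``the main obstacle'' that ``requires a careful Ellis-theoretic analysis.'' That claim is not a routine remark to be waved at --- it is the entire content of the proposition's proof, and as you yourself note, minimality of an idempotent is \emph{not} in general preserved when the acting group is enlarged. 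As written, the proposal therefore does not establish minimality for general minimal $\Z^{d}$-systems.

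The gap can be closed along the lines you gesture at, and this is what the paper does. Every element of $\mathcal{G}_{T_{1},\ldots,T_{d}}$ acts coordinatewise by elements of $G$, so each coordinate projection $\pi_{\varepsilon}\colon(\QQ_{T_{1},\ldots,T_{d}}(X),G^{\Delta}_{[d]})\to(X,G)$ is a factor map and induces a semigroup homomorphism $\pi_{\varepsilon}^{*}$ into $E(X,G)$; moreover an element of $E(\QQ_{T_{1},\ldots,T_{d}}(X),\mathcal{G}_{T_{1},\ldots,T_{d}})$ is completely determined by its $2^{d}$ coordinate projections. Now take $u$ a minimal idempotent of $E(\QQ_{T_{1},\ldots,T_{d}}(X),G^{\Delta}_{[d]})$ and an idempotent $v$ in the larger semigroup with $vu=v$. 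By Theorem \ref{thm:basics enveloping}(1) each $\pi_{\varepsilon}^{*}(u)$ is a minimal idempotent of $E(X,G)$; projecting $vu=v$ gives $\pi_{\varepsilon}^{*}(v)<\pi_{\varepsilon}^{*}(u)$, so minimality in $E(X,G)$ yields $\pi_{\varepsilon}^{*}(u)\pi_{\varepsilon}^{*}(v)=\pi_{\varepsilon}^{*}(u)$ for every $\varepsilon$, and since elements are determined by their coordinates this reads $uv=u$. Hence $u$ is minimal in $E(\QQ_{T_{1},\ldots,T_{d}}(X),\mathcal{G}_{T_{1},\ldots,T_{d}})$, and applying this to $u^{[d]}$ (which is a minimal idempotent for the diagonal action and fixes $x^{[d]}$) makes $x^{[d]}$ almost periodic for the full group $\mathcal{G}_{T_{1},\ldots,T_{d}}$; together with your first part this finishes the proof. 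You should supply this coordinate-projection argument explicitly rather than leaving it as an acknowledged obstacle.
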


\begin{proof}
Let $E(\QQ_{T_{1},\ldots,T_{d}}(X),\mathcal{G}_{T_{1},\ldots,T_{d}})$ be the enveloping semigroup of the system $(\QQ_{T_{1},\ldots,T_{d}}(X),\mathcal{G}_{T_{1},\ldots,T_{d}})$. For every ${\varepsilon \in \{0,1\}^{d}}$ let $\pi_{\varepsilon}\colon\QQ_{T_{1},\ldots,T_{d}}(X)\to X$ be the projection onto the $\varepsilon$-th coordinate and let $\pi_{\varepsilon}^{*}\colon$ $ E(\QQ_{T_{1},\ldots,T_{d}}(X),\mathcal{G}_{T_{1},\ldots,T_{d}})\to E(X,T_{1},\ldots,T_{d})$ be the corresponding semigroups homomorphism.

Let $u\in E(\QQ_{T_{1},\ldots,T_{d}}(X),G_{[d]}^{\Delta})$ denote a minimal idempotent for the system $(\QQ_{T_{1},\ldots,T_{d}}(X),G_{[d]}^{\Delta})$. We show that $u$ is also a minimal idempotent in $E(\QQ_{T_{1},\ldots,T_{d}}(X),\mathcal{G}_{T_{1},\ldots,T_{d}})$. It suffices to show that if $v\in E(\QQ_{T_{1},\ldots,T_{d}}(X),\mathcal{G}_{T_{1},\ldots,T_{d}})$ with $vu=v$, then $uv=u$. Projecting onto  coordinates we deduce that $\pi_{\varepsilon}^{*}(vu)$ $=\pi_{\varepsilon}^{*}(v)\pi_{\varepsilon}^{*}(u)=\pi_{\varepsilon}^{*}(v)$ for every $\varepsilon \in \{0,1\}^{d}$. Since the action of $G^{\Delta}_{[d]}$ is diagonal, then the projection of a minimal idempotent of $E(\QQ_{T_{1},\ldots,T_{d}}(X),G_{[d]}^{\Delta})$ is a minimal idempotent in $E(X,T_{1},\ldots,T_{d})$. Then, we have that $\pi_{\varepsilon}^{*}(u)\pi_{\varepsilon}^{*}(v)=\pi_{\varepsilon}^{*}(u)$ for every $\varepsilon \in \{0,1\}^{d}$. Since $\QQ_{T_{1},\ldots,T_{d}}(X)\subseteq X^{[d]}$, we can view the elements of $E(\QQ_{T_{1},\ldots,T_{d}}(X),\mathcal{G}_{T_{1},\ldots,T_{d}})$ as vectors of dimension $2^{d}$. Thus the projections to the coordinates determine an element of  $E(\QQ_{T_{1},\ldots,T_{d}}(X),\mathcal{G}_{T_{1},\ldots,T_{d}})$. We have deduced that $uv=u$, which implies that $u$ is a minimal idempotent in $E(\QQ_{T_{1},\ldots,T_{d}}(X),\mathcal{G}_{T_{1},\ldots,T_{d}})$.

Let $x\in X$. Since $(X,T_{1},\ldots, T_{d})$ is minimal, there exists a minimal idempotent $u\in E(X,T_{1},\ldots, T_{d})$ such that $ux=x$. Consider $u^{[d]}\in E(\QQ_{T_{1},\ldots,T_{d}}(X),G_{[d]}^{\Delta})$. We have that $u^{[d]}x^{[d]}=x^{[d]}$, so $x^{[d]}$ is an almost periodic point in $X^{[d]}$ (so in $\QQ_{T_{1},\ldots,T_{d}}(X)$) under the action of $G_{[d]}^{\Delta}$. We observe that the point $x^{[d]}$ is minimal under the action of $\mathcal{G}_{T_{1},\ldots,T_{d}}$ since, by the property proved before, $u^{[d]}$ is also a minimal idempotent in $E(\QQ_{T_{1},\ldots,T_{d}}(X), \mathcal{G}_{T_{1},\ldots,T_{d}})$. As $\overline{\mathcal{O}(x^{[d]},\mathcal{G}_{T_{1},\ldots,T_{d}})}=\QQ_{T_{1},\ldots,T_{d}}(X)$, we conclude that $(\QQ_{T_{1},\ldots,T_{d}}(X),$ $\mathcal{G}_{T_{1},\ldots,T_{d}})$ is a minimal system.

Now, if $(X,T_1,\ldots, T_d)$ is distal, then $(X^{[d]}, G^{[d]})$ is also distal by Theorem \ref{distal} (1),  which implies that $(X^{[d]},\mathcal{G}_{T_{1},\ldots,T_{d}})$ is distal by Theorem \ref{distal} (4). Additionally, since $\QQ_{T_{1},\ldots,T_{d}}(X)$ is invariant under $\mathcal{G}_{T_{1},\ldots,T_{d}}$ we also get that $(\QQ_{T_{1},\ldots,T_{d}}(X),\mathcal{G}_{T_{1},\ldots,T_{d}})$ is distal.
\end{proof}

The following proposition states that directional cubes pass through factors.

\begin{proposition}
\label{cube1}
Let $\pi\colon Y \to X$ be a factor map between the $\Z^{d}$-systems 
$(Y,T_{1},\ldots,T_{d})$ and $(X,T_{1},\ldots,$ $T_{d})$. Then, $$\pi^{[d]}(\QQ_{T_{1},\ldots,T_{d}}(Y))=\QQ_{T_{1},\ldots,T_{d}}(X),$$
where $\pi^{[d]}: Y^{[d]} \to X^{[d]}$ is defined from $\pi$ coordinatewise. 
\end{proposition}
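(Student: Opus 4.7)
The plan is to prove both inclusions directly from the definitions, exploiting the surjectivity and continuity of $\pi$ together with the intertwining relations $\pi\circ T_j = T_j\circ\pi$ for every $j\in[d]$ (which imply that $\pi^{[d]}$ intertwines the actions of the face transformations and of $G^{\Delta}_{[d]}$).

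For the inclusion $\pi^{[d]}(\QQ_{T_{1},\ldots,T_{d}}(Y))\subseteq\QQ_{T_{1},\ldots,T_{d}}(X)$, I would start with a generating point of $\QQ_{T_{1},\ldots,T_{d}}(Y)$, that is, a point of the form $(T_{1}^{n_{1}\varepsilon_{1}}\cdots T_{d}^{n_{d}\varepsilon_{d}}y)_{\varepsilon\in\{0,1\}^{d}}$ for some $y\in Y$ and $\textbf{n}\in\Z^{d}$. Applying $\pi^{[d]}$ coordinatewise and using that $\pi$ commutes with every $T_{j}$ yields $(T_{1}^{n_{1}\varepsilon_{1}}\cdots T_{d}^{n_{d}\varepsilon_{d}}\pi(y))_{\varepsilon}$, which is by definition a generating point of $\QQ_{T_{1},\ldots,T_{d}}(X)$. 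Continuity of $\pi^{[d]}$ together with closedness of $\QQ_{T_{1},\ldots,T_{d}}(X)$ then extend this to every point of $\QQ_{T_{1},\ldots,T_{d}}(Y)$.

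For the reverse inclusion, let $\textbf{x}\in\QQ_{T_{1},\ldots,T_{d}}(X)$, so by definition there exist sequences $(x^{i})_{i\in\N}\subseteq X$ and $(\textbf{n}(i))_{i\in\N}\subseteq\Z^{d}$ such that $x_{\varepsilon}=\lim_{i\to\infty}T_{1}^{n_{1}(i)\varepsilon_{1}}\cdots T_{d}^{n_{d}(i)\varepsilon_{d}}x^{i}$ for every $\varepsilon\in\{0,1\}^{d}$. By surjectivity of $\pi$, for each $i$ pick $y^{i}\in Y$ with $\pi(y^{i})=x^{i}$, and consider the generating cubes $\textbf{y}^{i}=(T_{1}^{n_{1}(i)\varepsilon_{1}}\cdots T_{d}^{n_{d}(i)\varepsilon_{d}}y^{i})_{\varepsilon}\in\QQ_{T_{1},\ldots,T_{d}}(Y)$, which satisfy $\pi^{[d]}(\textbf{y}^{i})\to\textbf{x}$. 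By compactness of $Y^{[d]}$ one may pass to a subsequence along which $\textbf{y}^{i}$ converges to some $\textbf{y}\in Y^{[d]}$, and closedness forces $\textbf{y}\in\QQ_{T_{1},\ldots,T_{d}}(Y)$, while continuity yields $\pi^{[d]}(\textbf{y})=\textbf{x}$.

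No step should present any real difficulty; the only subtle point is that one cannot lift $\textbf{x}$ by choosing preimages coordinate by coordinate, since $\pi^{[d]}$ is not injective in general and such a naive choice would typically destroy the cube structure. Lifting instead at the level of the generating parameters $(x^{i},\textbf{n}(i))$ and then invoking compactness to extract a convergent subsequence of cubes in $Y^{[d]}$ is exactly what bypasses this issue.
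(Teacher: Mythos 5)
Your proof is correct and follows essentially the same argument as the paper: the forward inclusion from the definition and continuity, and the reverse inclusion by lifting the generating data $(x^{i},\textbf{n}(i))$ to preimages $y^{i}\in\pi^{-1}(x^{i})$ and extracting a convergent subsequence of the resulting cubes. The only difference is cosmetic — the paper additionally notes a shortcut via minimality of $(\QQ_{T_{1},\ldots,T_{d}}(X),\mathcal{G}_{T_{1},\ldots,T_{d}})$ when $X$ is minimal, but its general argument is the one you give.
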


\begin{proof}
It follows directly from definition that $\pi^{[d]}(\QQ_{T_{1},\ldots,T_{d}}(Y))\subseteq\QQ_{T_{1},\ldots,T_{d}}(X)$. Then, if $(X,T_{1},\ldots,T_{d})$ is minimal, by Proposition \ref{CubeMin}
the proof follows directly. If not, let $\textbf{x}\in \QQ_{T_{1},\ldots,T_{d}}(X)$ and take sequences ${(x^{i})_{i\in \N}\subseteq X}$ and $(\textbf{n}(i))_{i\in \N}\subseteq \Z^{d}$ such that
$$\forall \varepsilon \in \{0,1\}^{d},\quad x_{\varepsilon}=\lim\limits_{i\to\infty} T_{1}^{n_{1}(i)\varepsilon_{1}}\cdots T_{d}^{n_{d}(i)\varepsilon_{d}}x^{i}.$$
For each $i\in \N$ take $y^{i}\in \pi^{-1}(\{x^{i}\})$. By compactness, we can assume that 
$\displaystyle \lim_{i\to \infty} y^{i}= y$ and 
$$\lim\limits_{i\to \infty}T_{1}^{n_{1}(i)\varepsilon_{1}}\cdots T_{d}^{n_{d}(i)\varepsilon_{d}}y^{i}=y_{\varepsilon},$$
for all $\varepsilon\in \{0,1\}^{d}$. Now, by continuity of $\pi$, we have that $\displaystyle \lim_{i\to \infty}\pi(y^{i})= \pi(y)=x$ and 
$$\pi(y_{\varepsilon})=\pi\left(\lim\limits_{i\to \infty}T_{1}^{n_{1}(i)\varepsilon_{1}}\cdots T_{d}^{n_{d}(i)\varepsilon_{d}}y^{i}\right)=\lim\limits_{i\to \infty}T_{1}^{n_{1}(i)\varepsilon_{1}}\cdots T_{d}^{n_{d}(i)\varepsilon_{d}}\pi\left(y^{i}\right)=x_{\varepsilon},$$
for all $\varepsilon \in \{0,1\}^{d}$.
Then, ${\textbf{y}\in \QQ_{T_{1},\ldots,T_{d}}(Y)}$ and $\pi^{[d]}(\textbf{y})=\textbf{x}\in \QQ_{T_{1},\ldots,T_{d}}(X)$. This proves the lemma. 
\end{proof}
	
We finish with two crucial properties that appear in the different cube theories developed in topological dynamics \cite{host2010nilsequences, Szegedy2010, donoso2014dynamical,gutman2016structurei,gutman2016structureii,gutman2016structureiii,Glasner20181004}. We need to introduce some terminology. 

\begin{definition}
	Let ${\bf x}=(x_{\varepsilon}:\varepsilon \in \{0,1\}^d), \ {\bf y}=(y_{\varepsilon}:\varepsilon \in \{0,1\}^d)$ in $X^{[d]}$ and $j\in [d]$. We say that ${\bf x}$ and ${\bf y}$ coincide in the $j$-upper face if \[(x_{\varepsilon}: \varepsilon\in\{0,1\}^d, \varepsilon_j=1)=(y_{\varepsilon}: \varepsilon\in\{0,1\}^d, \varepsilon_j=1).\]
Similarly, we define when ${\bf x}$ and ${\bf y}$ coincide in their $j$-lower face. 
Also, we say that the $j$-upper face of ${\bf x}$ coincides with the $j$-lower face of ${\bf y}$ if
\[(x_{\varepsilon}: \varepsilon\in\{0,1\}^d, \varepsilon_j=1)=(y_{\varepsilon}: \varepsilon\in\{0,1\}^d, \varepsilon_j=0).\]
\end{definition}

\begin{definition}[Gluing operation] \label{definition:gluing}
Let ${\bf x}=(x_{\varepsilon}:\varepsilon \in \{0,1\}^d)$ and ${\bf y}=(y_{\varepsilon}:\varepsilon \in \{0,1\}^d)$ in $X^{[d]}$. Suppose that the $j$-upper face of ${\bf x}$ coincides with the $j$-lower face of ${\bf y}$. We define the gluing of ${\bf x}$ and ${\bf y}$ along the $j$-face as the point 
${\bf z}=( z_{\varepsilon}: \varepsilon\in \{0,1\}^d) \in X^{[d]}$ such that 
\[ z_{\varepsilon}=\begin{cases} x_{\varepsilon}  & \varepsilon_j=0,  \\ 
y_{\varepsilon} & \varepsilon_j=1.
\end{cases} \]
\end{definition}

\begin{definition}[Crucial properties] 
Let $(X,T_{1},\ldots,T_{d})$ be a $\Z^{d}$-system. 
\begin{enumerate}
\item{\emph{Unique closing parallelepiped property.}} We say that $(X,T_{1},\ldots,T_{d})$ has the \emph{unique closing parallelepiped property} if whenever 
$\textbf{x}, \textbf{y}\in \QQ_{T_{1},\ldots,T_{d}}(X)$ have $2^{d}-1$ coordinates in common then 
$\textbf{x}=\textbf{y}$.
\item{\emph{Gluing property.}}  
We say that $(X,T_{1},\ldots,T_{d})$ has the \emph{gluing property} if for every $j\in [d]$, if ${\bf x},{\bf y}$ $\in\QQ_{T_{1},\ldots,T_{d}}(X)$ are such that the $j$-upper face of ${\bf x}$ coincides with the $j$-lower face of ${\bf y}$, then the point given by the gluing of ${\bf x}$ and ${\bf y}$ along their $j$-face belongs to $\QQ_{T_{1},\ldots,T_{d}}(X)$. 
\end{enumerate}
\end{definition}

In the measure theoretical category the \emph{unique closing parallelepiped property} is exactly the property that characteristic factors for multiple averages for systems with commuting transformations satisfy \cite[Section 2.6]{host2009ergodic}. In the minimal distal case the gluing property always holds. 

\begin{lemma}[Gluing Lemma] 
\label{pegado1}
Let $(X,T_{1},\ldots,T_{d})$ be a minimal distal $\Z^{d}$-system. Then, $(X,T_{1},\ldots,T_{d})$ has the gluing property.
\end{lemma}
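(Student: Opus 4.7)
The plan is to exploit that, by Proposition \ref{CubeMin}, the cube system $(\QQ_{T_1,\ldots,T_d}(X),\mathcal{G}_{T_1,\ldots,T_d})$ is itself minimal and distal, and to reduce the gluing statement to a density question about explicit basic cubes. Fix $j\in[d]$, let $\pi_j^+,\pi_j^-\colon \QQ_{T_1,\ldots,T_d}(X)\to X^{2^{d-1}}$ denote the projections to the $j$-upper and $j$-lower faces respectively, and set
\[
S_j := \bigl\{(\mathbf{x},\mathbf{y})\in\QQ_{T_1,\ldots,T_d}(X)^2 : \pi_j^+(\mathbf{x}) = \pi_j^-(\mathbf{y})\bigr\},
\]
which is a closed subset on which the gluing operation is continuous.

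First I would verify the lemma for \emph{elementary matching pairs}: if $\mathbf{x}$ is the basic cube based at $x\in X$ with parameters $(n_1,\ldots,n_d)\in\Z^d$ and $\mathbf{y}$ is the basic cube based at $T_j^{n_j}x$ with parameters $(n_1,\ldots,n_{j-1},m,n_{j+1},\ldots,n_d)$, then the matching condition holds automatically and the gluing coincides with the basic cube based at $x$ with parameters $(n_1,\ldots,n_{j-1},n_j+m,n_{j+1},\ldots,n_d)$, which lies in $\QQ_{T_1,\ldots,T_d}(X)$. Let $L$ be the closure in $\QQ_{T_1,\ldots,T_d}(X)^2$ of the set of such elementary matching pairs; by continuity of the gluing and closedness of $\QQ_{T_1,\ldots,T_d}(X)$, every pair in $L$ glues into $\QQ_{T_1,\ldots,T_d}(X)$. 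The lemma therefore reduces to proving $L=S_j$.

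Next I would introduce a subgroup $\mathcal{H}\leq\mathcal{G}_{T_1,\ldots,T_d}\times\mathcal{G}_{T_1,\ldots,T_d}$ generated by $(\tau^{[d]},\tau^{[d]})$ for $\tau\in\langle T_1,\ldots,T_d\rangle$, by $(T_i^{[d]},T_i^{[d]})$ for $i\neq j$, by $(\mathrm{id},T_j^{[d]})$, and by $(T_j^{[d]},T_j^{\Delta})$, where $T_j^{\Delta}\in G^{\Delta}_{[d]}$ denotes the diagonal action of $T_j$. A direct check shows that each generator preserves both $S_j$ and the elementary form, hence $\mathcal{H}$ acts on $S_j$ and $L$ is closed and $\mathcal{H}$-invariant. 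Since $(\QQ_{T_1,\ldots,T_d}(X),\mathcal{G}_{T_1,\ldots,T_d})$ is distal by Proposition \ref{CubeMin}, Theorem \ref{distal}(1), (2) and (4) imply that $(S_j,\mathcal{H})$ is distal, so it is pointwise almost periodic and every $\mathcal{H}$-orbit closure in $S_j$ is minimal.

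It therefore suffices to prove that every minimal $\mathcal{H}$-subsystem of $S_j$ meets $L$: closedness and $\mathcal{H}$-invariance of $L$ will then force such a minimal subsystem to lie entirely in $L$, and pointwise almost periodicity will give $S_j\subseteq L$. A convenient candidate already in $L$ is the diagonal point $(x_0^{[d]},x_0^{[d]})$ for any $x_0\in X$, and the goal becomes showing that its $\mathcal{H}$-orbit closure is all of $S_j$. The main obstacle is precisely this density/minimality claim: one must show that $\mathcal{H}$ is rich enough to move $(x_0^{[d]},x_0^{[d]})$ arbitrarily close to any $(\mathbf{x},\mathbf{y})\in S_j$. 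I expect to argue it by induction on $d$ — the base $d=1$ being immediate since $\QQ_{T_1}(X)=X\times X$ for a minimal $\Z$-system. For the inductive step, the joint generators $(T_i^{[d]},T_i^{[d]})$ with $i\neq j$ and $(\tau^{[d]},\tau^{[d]})$ should, via the inductive hypothesis applied to the common $(d-1)$-dimensional face living in $\QQ_{T_1,\ldots,\hat{T_j},\ldots,T_d}(X)$, handle the matching face, while $(\mathrm{id},T_j^{[d]})$ and $(T_j^{[d]},T_j^{\Delta})$ independently adjust the remaining $j$-faces of $\mathbf{y}$ and $\mathbf{x}$.
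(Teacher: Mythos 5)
Your setup — the matching space $S_j$, the group $\mathcal{H}$, the closed $\mathcal{H}$-invariant set $L$ of limits of elementary pairs, and the reduction via distality to showing that every minimal $\mathcal{H}$-subsystem meets $L$ — is internally correct, and the verification that the four types of generators preserve $S_j$ and the elementary form does go through. The problem is that the step you yourself flag as ``the main obstacle'' is a genuine gap, and not a small one. Since every elementary pair lies in the $\mathcal{H}$-orbit of a diagonal pair $(x^{[d]},x^{[d]})$, and all such diagonal pairs lie in a single $\mathcal{H}$-orbit closure, one has $L=\overline{\mathcal{H}(x_0^{[d]},x_0^{[d]})}$; hence the density claim ``$\overline{\mathcal{H}(x_0^{[d]},x_0^{[d]})}=S_j$'' is literally the statement $L=S_j$, which is at least as strong as the Gluing Lemma itself (it asserts that every pair of cubes sharing a $j$-face is approximable by pairs arising from a single basic $(d+1)$-parallelepiped in the directions $T_1,\ldots,T_d,T_j$). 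So the reduction does not make the problem easier, and the inductive sketch does not close it. Concretely, the generators $(\mathrm{id},T_j^{[d]})$ and $(T_j^{[d]},T_j^{\Delta})$ move the outer $j$-faces only by \emph{diagonal} powers of $T_j$, so they cannot ``independently adjust'' those faces to arbitrary admissible positions; and the inductive hypothesis for $d-1$ concerns $\QQ_{T_1,\ldots,T_{j-1},T_{j+1},\ldots,T_d}(X)$, whereas the constraints to be realized — that $(\mathbf{a},\mathbf{b})$ and $(\mathbf{b},\mathbf{c})$ are each $j$-face decompositions of points of $\QQ_{T_1,\ldots,T_d}(X)$ — are genuinely $d$-dimensional and are not reduced by passing to the common face. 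The simultaneous approximation of both cubes over the shared middle face is exactly the content of the lemma and is left unproved.

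For comparison, the paper's argument avoids any density statement about elementary pairs. Writing $\mathbf{x}=(\mathbf{x}',\mathbf{x}'')$ and $\mathbf{y}=(\mathbf{x}'',\mathbf{y}'')$, it uses minimality of $(\QQ_{T_1,\ldots,T_d}(X),\mathcal{G}_{T_1,\ldots,T_d})$ to choose $g_n=(g_n',g_n'')$ with $g_n\mathbf{x}\to(a^{[d-1]},a^{[d-1]})$, notes that $(g_n'',g_n'')$ also lies in $\mathcal{G}_{T_1,\ldots,T_d}$ so that $(g_n''\mathbf{x}'',g_n''\mathbf{y}'')\to(a^{[d-1]},\mathbf{u})\in\QQ_{T_1,\ldots,T_d}(X)$, observes that $g_n(\mathbf{x}',\mathbf{y}'')$ converges to this same point, and then invokes distality (minimality of the orbit closure of the candidate glued point) to pull $(\mathbf{x}',\mathbf{y}'')$ back into $\QQ_{T_1,\ldots,T_d}(X)$. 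If you want to keep your framework, you should replace the global minimality claim for $S_j$ by this kind of ``collapse one point, then return by distality'' argument applied to a single pair at a time.
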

\begin{proof}
We write the proof for $j=d$. The general proof is the same modulo a small change of notation.
Let $\textbf{x}',\textbf{x}'',\textbf{y}''\in X^{[d-1]}$ be such that $\textbf{x}=(\textbf{x}',\textbf{x}''), \textbf{y}=(\textbf{x}'',\textbf{y}'')\in \QQ_{T_{1},\ldots,T_{d}}(X)$. By definition, 
the point given by the gluing of ${\bf x}$ and ${\bf y}$ along their $d$-face is ${\bf z}=(\textbf{x}',\textbf{y}'')$.

Pick any $a\in X$. Then, by Proposition \ref{prop1} \eqref{prop1:1}, $(a^{[d-1]},a^{[d-1]})\in \QQ_{T_{1},\ldots,T_{d}}(X)$. By Proposition \ref{CubeMin}, there exists a sequence $(g_{n})_{n\in \N}=((g_{n}',g_{n}''))_{n\in \N}\in \mathcal{G}_{T_{1},\ldots,T_{d}}$ such that $g_{n}(\textbf{x}',\textbf{x}'')=(g_{n}'\textbf{x}',g_{n}''\textbf{x}'')\to (a^{[d-1]},a^{[d-1]})$. We can assume, by compactness, that $g_{n}''\textbf{y}''\to \textbf{u}$ and thus $(g_{n}''\textbf{x}'',g_{n}''\textbf{y}'')\to (a^{[d-1]},\textbf{u})\in \QQ_{T_{1},\ldots,T_{d}}(X)$. Now, we have that $g_{n}(\textbf{x}',\textbf{y}'')=(g_{n}'\textbf{x}',g_{n}''\textbf{y}'')\to (a^{[d-1]},\textbf{u})$, and this point belongs to the closed orbit of $(\textbf{x}',\textbf{y}'')$ under $\mathcal{G}_{T_{1},\ldots,T_{d}}$. By distality (see Proposition \ref{CubeMin}), this orbit is minimal and thus 
it follows that $(\textbf{x}',\textbf{y}'')$ is in the closed orbit of $(a^{[d-1]},\textbf{u})$, which implies that ${\bf z}=(\textbf{x}',\textbf{y}'')\in \QQ_{T_{1},\ldots,T_{d}}(X)$.
\end{proof}

\begin{remark}\label{Rem:gluingsubset}
Remark that the same property holds for a subset of the transformations. That is, if $(X,T_1,\ldots,T_d)$ is a minimal distal $\Z^d$-system, then $(X,T_{i_1},T_{i_2},\ldots,T_{i_k})$, $1\leq k\leq d$, $1\leq i_1<i_2<\cdots <i_{k}\leq d$, also has the gluing property in its space of cubes $\QQ_{T_{i_1},T_{i_2},\ldots, T_{i_k}}(X)$. 
The proof of this is exactly the same as in Lemma \ref{pegado1}, but considering the system $\QQ_{T_{i_1},T_{i_2},\ldots, T_{i_k}}(X)$ endowed  with the action generated by $\mathcal{G}_{T_{i_1},\ldots,T_{i_k}}$ and $G^{\triangle}_{[k]}$ (instead of just $\mathcal{G}_{T_{i_1},\ldots,T_{i_k}}$) . We point out that this joint action is minimal, since it is the projection of   $(\QQ_{T_{1},T_{2},\ldots, T_{d}}(X), \mathcal{G}_{T_1,\ldots,T_d})$ onto a $k$ dimensional face (recall Proposition \ref{prop1} (4)). 
\end{remark}

\subsection{Symmetries of the euclidean cube and directional cubes}  \label{sec:symmetries} As was mentioned above, a different choice of the order of the transformations in a $\Z^d$-system produces a different set of dynamical cubes.
Thus, in order to simplify some proofs in the next sections we introduce the terminology proposed in \cite{host2005nonconventional} about Euclidean permutations of the $d$-dimensional cube $\{0,1\}^d$.

\subsubsection{Digit permutations} Let $d\geq 1$ be an integer. Given a permutation $\sigma\colon [d]\to [d]$  we consider the symmetry that it induces in the euclidean cube $\{0,1\}^d$, which is defined by setting $\sigma(\varepsilon)_i=\varepsilon_{\sigma(i)}$ for $i \in [d]$. Here we slightly abuse of the notation and continue calling this transformation $\sigma$.   
Let $\sigma_{\ast}\colon X^{[d]} \to X^{[d]}$ denote the transformation 
\[ \sigma_{\ast}((x_{\varepsilon}\colon \varepsilon \in \{0,1\}^{d}))=(x_{\sigma(\varepsilon)}\colon \varepsilon \in \{0,1\}^{d}) . \]
We refer to it as a \emph{digit permutation} of $X^{[d]}$.  

\begin{lemma} \label{lem:digitpermutation}
Let $(X,T_1,\ldots,T_d)$ be a $\Z^d$-system and $\sigma\colon[d]\to [d]$ be a permutation. 
Then, $$\sigma_{\ast}(\QQ_{T_{\sigma(1)},\ldots,T_{\sigma(d)}}(X))=\QQ_{T_{1},\ldots,T_{d}}(X)$$. 
\end{lemma}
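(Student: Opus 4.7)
The statement is essentially a bookkeeping claim: $\sigma_\ast$ is a permutation of the coordinates of $X^{[d]}$ that exactly mirrors, via a change of variables in the exponents, the change of labels on the transformations. The plan is to unfold both definitions on a generating cube, apply the substitution $k=\sigma^{-1}(j)$, and then pass to the closure using continuity of $\sigma_\ast$.

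More concretely, I would start with a typical generating point of $\QQ_{T_{\sigma(1)},\ldots,T_{\sigma(d)}}(X)$, namely $\mathbf{y}=(y_\varepsilon)_{\varepsilon\in\{0,1\}^d}$ with
\[ y_\varepsilon = T_{\sigma(1)}^{n_1\varepsilon_1}\cdots T_{\sigma(d)}^{n_d\varepsilon_d}x, \]
and compute $(\sigma_\ast\mathbf{y})_\varepsilon=y_{\sigma(\varepsilon)}=T_{\sigma(1)}^{n_1\varepsilon_{\sigma(1)}}\cdots T_{\sigma(d)}^{n_d\varepsilon_{\sigma(d)}}x$ using the definition $\sigma(\varepsilon)_i=\varepsilon_{\sigma(i)}$. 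Setting $k=\sigma(i)$ in the product (or, equivalently, $m_k=n_{\sigma^{-1}(k)}$) reindexes it as
\[ (\sigma_\ast\mathbf{y})_\varepsilon = T_1^{m_1\varepsilon_1}\cdots T_d^{m_d\varepsilon_d}x, \]
which is the canonical form of a generator of $\QQ_{T_1,\ldots,T_d}(X)$. Because $\sigma_\ast$ is continuous, taking closures gives the inclusion $\sigma_\ast(\QQ_{T_{\sigma(1)},\ldots,T_{\sigma(d)}}(X))\subseteq\QQ_{T_1,\ldots,T_d}(X)$.

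For the reverse inclusion, I would observe that $\sigma_\ast$ is a bijection on $X^{[d]}$ with inverse $(\sigma^{-1})_\ast$ (since $\tau_\ast\circ\sigma_\ast=(\sigma\circ\tau)_\ast$, as is checked coordinatewise), and apply the previous computation with the permutation $\sigma^{-1}$ in place of $\sigma$. This yields $(\sigma^{-1})_\ast(\QQ_{T_1,\ldots,T_d}(X))\subseteq\QQ_{T_{\sigma(1)},\ldots,T_{\sigma(d)}}(X)$, and then applying $\sigma_\ast$ to both sides concludes the proof.

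I do not anticipate any genuine obstacle: the statement is purely combinatorial and continuous, with the only subtlety being the correct direction of the two index substitutions (between the permutation on coordinates $\varepsilon$ and the relabeling of the exponents $n_i$). The proof does not even need minimality or distality; it works for any $\Z^d$-system.
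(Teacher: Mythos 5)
Your proposal is correct and follows essentially the same route as the paper: compute $\sigma_{\ast}$ on a generating point, reindex the exponents via $m_k=n_{\sigma^{-1}(k)}$, pass to closures by continuity, and obtain the reverse inclusion by running the same computation for $\sigma^{-1}$ (applied to the relabelled system $T_{\sigma(1)},\ldots,T_{\sigma(d)}$). The only nitpick is your identity $\tau_\ast\circ\sigma_\ast=(\sigma\circ\tau)_\ast$: with the paper's convention $\sigma(\varepsilon)_i=\varepsilon_{\sigma(i)}$ one actually gets $\tau_\ast\circ\sigma_\ast=(\tau\circ\sigma)_\ast$, but this is immaterial since you only use it with $\tau=\sigma^{-1}$, where both readings give the identity.
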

\begin{proof}
First we prove that for $x\in X$ and $(n_1,\ldots,n_d)\in \Z^d$ the point  $\sigma_{\ast}((T_{\sigma(1)}^{n_{1}\varepsilon_{1}}\cdots T_{\sigma(d)}^{n_{d}\varepsilon_{d}}x : \varepsilon\in \{0,1\}^{d}))$ belongs to $\QQ_{T_{1},\ldots,T_{d}}(X)$. This will prove that $\sigma_{\ast}(\QQ_{T_{\sigma(1)},\ldots,T_{\sigma(d)}}(X)) \subseteq \QQ_{T_{1},\ldots,T_{d}}(X)$. The other inclusion follows analogously by considering the inverse of $\sigma$. 

Let $\eta \in\{0,1\}^d$. Since $\sigma(\eta)=\eta_{\sigma(1)}\cdots\eta_{\sigma(d)}$ we have that 
\begin{align*}\sigma_{\ast}((T_{\sigma(1)}^{n_{1}\varepsilon_{1}}\cdots T_{\sigma(d)}^{n_{d}\varepsilon_{d}}x : \varepsilon\in \{0,1\}^{d}))_{\eta} &=(T_{1}^{n_{1}\varepsilon_{1}}\cdots T_{d}^{n_{d}\varepsilon_{d}}x : \varepsilon\in \{0,1\}^{d})_{\sigma(\eta)},\\
&= T_{\sigma(1)}^{n_{1}\eta_{\sigma(1)}}\cdots T_{\sigma(d)}^{n_{d}\eta_{\sigma(d)}}x, \\
&=T_1^{m_1\eta_{1}}\cdots T_{d}^{m_{d}\eta_{d}}x,
\end{align*}
where $m_i=n_{\sigma^{-1}(i)}$  for all $i\in [d]$.  It is then clear that $\sigma_{\ast}((T_{\sigma(1)}^{n_{1}\varepsilon_{1}}\cdots T_{\sigma(d)}^{n_{d}\varepsilon_{d}}x : \varepsilon\in \{0,1\}^{d}))$ belongs to $\QQ_{T_{1},\ldots,T_{d}}(X)$. 
\end{proof}
We remark that $\QQ_{T_{1},\ldots,T_{d}}(X)$ and $\QQ_{T_{\sigma(1)},\ldots,T_{\sigma(d)}}(X)$ do not need to (and usually do not) coincide.  This lack of symmetry is one of the main differences with the case where all transformations are powers of a given transformation $T$, \emph{i.e.}, when $T_{i}=T^{i}$ for $i\in [d]$. In the latter case, all digit symmetries hold.  We think this is one of the reasons why we cannot expect the unique closing parallelepiped property to imply the strong algebraic consequences that one obtains in the single transformation case  \cite{host2005nonconventional,host2010nilsequences}, or more generally, when the whole group is acting in all directions \cite{Szegedy2010,gutman2016structureiii, Glasner20181004}.
Nevertheless, we still have a reduced number of symmetries that leave invariant $\QQ_{T_{1},\ldots,T_{d}}(X)$ that we describe in the next section. 
\subsubsection{Reflections}
For every $j\in [d]$ we consider the following Euclidean permutation:
$$\begin{array}{lcll}
\Phi_{j}\colon & \{0,1\}^{d} & \to & \{0,1\}^{d}\\
& \varepsilon & \mapsto & \Phi_{j}(\varepsilon)=\left\{\begin{array}{cl} \varepsilon_{k} & \text{if}\ k\neq j,\\ 1-\varepsilon_{k} & \text{if}\ k=j.\\
\end{array}\right.\\
\end{array}
$$
The transformation $\Phi_j$ is the one that takes the upper $j$-face and turn it into the lower $j$-face and vice versa. We naturally define the induced action $\Phi_{j\ast}\colon X^{[d]}\to  X^{[d]}$ by setting 
\[ (\Phi_{j\ast}({\bf x}))_{\varepsilon} = x_{\Phi_{j}(\varepsilon)},\ \text{for all } \varepsilon \in \{0,1\}^{d},  \]
where ${\bf x}=(x_{\varepsilon}: \varepsilon\in \{0,1\}^d)$. 
We have, 
\begin{lemma}[Face permutation invariance]
\label{lem:goodpermutations}
For every $ j\in  [d]$ the transformation $\Phi_{j\ast}$ leaves invariant $\QQ_{T_{1},\ldots,T_{d}}(X)$.
\end{lemma}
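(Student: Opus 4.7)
The plan is to verify the invariance on the dense set of generators of $\QQ_{T_{1},\ldots,T_{d}}(X)$ and then pass to the closure, using that $\Phi_j$ is an involution to get invariance (not just inclusion). So first I would reduce the statement to showing that for every $x\in X$, every $\mathbf{n}=(n_{1},\ldots,n_{d})\in\Z^{d}$, and every $j\in[d]$, the point
\[
\Phi_{j\ast}\bigl((T_{1}^{n_{1}\varepsilon_{1}}\cdots T_{d}^{n_{d}\varepsilon_{d}}x)_{\varepsilon\in\{0,1\}^{d}}\bigr)
\]
is again of the form $(T_{1}^{m_{1}\varepsilon_{1}}\cdots T_{d}^{m_{d}\varepsilon_{d}}y)_{\varepsilon\in\{0,1\}^{d}}$ for some $y\in X$ and $\mathbf{m}\in\Z^{d}$, and hence lies in $\QQ_{T_{1},\ldots,T_{d}}(X)$.

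The key step is a direct computation. By definition, the $\varepsilon$-th coordinate of the displayed point equals $\prod_{k=1}^{d} T_{k}^{n_{k}\Phi_{j}(\varepsilon)_{k}}x$. Since $\Phi_{j}(\varepsilon)_{k}=\varepsilon_{k}$ for $k\neq j$ and $\Phi_{j}(\varepsilon)_{j}=1-\varepsilon_{j}$, I can split off the $j$-factor and use $T_{j}^{n_{j}(1-\varepsilon_{j})}=T_{j}^{n_{j}}T_{j}^{-n_{j}\varepsilon_{j}}$, so that the coordinate becomes
\[
T_{j}^{-n_{j}\varepsilon_{j}}\prod_{k\neq j}T_{k}^{n_{k}\varepsilon_{k}}\,(T_{j}^{n_{j}}x).
\]
Setting $y=T_{j}^{n_{j}}x$, $m_{j}=-n_{j}$ and $m_{k}=n_{k}$ for $k\neq j$ (here the commutativity of the $T_{i}$'s is used to reorder the product), this is exactly $T_{1}^{m_{1}\varepsilon_{1}}\cdots T_{d}^{m_{d}\varepsilon_{d}}y$. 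Hence $\Phi_{j\ast}$ maps generators to generators.

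Finally, since $\QQ_{T_{1},\ldots,T_{d}}(X)$ is defined as the closure of the generators and $\Phi_{j\ast}$ is a homeomorphism of $X^{[d]}$, I conclude $\Phi_{j\ast}(\QQ_{T_{1},\ldots,T_{d}}(X))\subseteq \QQ_{T_{1},\ldots,T_{d}}(X)$. Because $\Phi_{j}$ is an involution on $\{0,1\}^{d}$, so is $\Phi_{j\ast}$, and applying the inclusion to $\Phi_{j\ast}(\QQ_{T_{1},\ldots,T_{d}}(X))$ yields the reverse inclusion, giving equality.

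There is no serious obstacle here; the only thing to be careful about is the bookkeeping that turns the flip of a single coordinate into a change of basepoint (shifting $x$ by $T_{j}^{n_{j}}$) together with a sign flip $n_{j}\mapsto -n_{j}$. Note that this proof does not work for an arbitrary digit permutation $\sigma$ (as in Lemma \ref{lem:digitpermutation}), which is exactly why in general one only has $\sigma_{\ast}(\QQ_{T_{\sigma(1)},\ldots,T_{\sigma(d)}}(X))=\QQ_{T_{1},\ldots,T_{d}}(X)$; the reflections $\Phi_{j}$ are special because they preserve the set of directions and only reverse one of them.
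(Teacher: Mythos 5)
Your proof is correct and follows essentially the same route as the paper's: reduce to the dense set of generating points, split off the $j$-th factor via $T_j^{n_j(1-\varepsilon_j)}=T_j^{-n_j\varepsilon_j}(T_j^{n_j}\cdot)$, and absorb the flip into a new basepoint $y=T_j^{n_j}x$ with $n_j\mapsto -n_j$, then pass to the closure by continuity. The observation that $\Phi_{j\ast}$ is an involution, upgrading the inclusion to equality, is a small addition the paper leaves implicit, but the core argument is identical.
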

\begin{proof}
Since  $\Phi_{j\ast}$ is clearly continuous, it suffices to show that for every $x\in X$ and $(n_1,\ldots,n_d)\in \Z^d$ the image of $(T_1^{n_1\varepsilon_1}\cdots T_d^{n_d\varepsilon_d} x: \varepsilon \in \{0,1\}^d)$ under $\Phi_{j\ast}$ belongs to  $\QQ_{T_{1},\ldots,T_{d}}(X)$.
We have that 
\begin{align*} \Phi_{j\ast}((T_1^{n_1\varepsilon_1}\cdots T_d^{n_d\varepsilon_d} x: \varepsilon \in \{0,1\}^d))=& (T_1^{n_1\varepsilon_1}\cdots T_j^{n_j(1-\varepsilon_j)} \cdots T_d^{n_d\varepsilon_d} x: \varepsilon \in \{0,1\}^d) \\ 
=& (T_1^{n_1\varepsilon_1}\cdots T_j^{(-n_j)\varepsilon_j} \cdots T_d^{n_d\varepsilon_d} (T^{n_j} x): \varepsilon \in \{0,1\}^d)  \end{align*}
and this point clearly belongs to $\QQ_{T_{1},\ldots,T_{d}}(X)$ since  $T^{n_j} x\in X$ and $(n_1,\ldots,-n_j,\ldots,n_d)\in \Z^d$. 
\end{proof}

\begin{definition}[The insertion procedure] 
Let ${\bf x}=(x_{\varepsilon}: \varepsilon\in \{0,1\}^d)$ and ${\bf y}=(y_{\varepsilon}: \varepsilon\in \{0,1\}^d)$ be points in  $X^{[d]}$. We say that the point ${\bf z}=(z_{\varepsilon}: \varepsilon\in \{0,1\}^d)\in X^{[d]}$ is the result of inserting the $j$-upper face of ${\bf x}$ 
into the $j$-lower face of ${\bf y}$ if
\[z_{\varepsilon} =\begin{cases} y_{\varepsilon}  & \text{ if } \varepsilon_j=1,  \\ 
x_{\Phi_j(\varepsilon)} &  \text{ if } \varepsilon_j=0.
\end{cases}  \]	
Similarly one defines the insertion of the $j$-lower face of ${\bf x}$ 
into the $j$-upper face of ${\bf y}$ 
\end{definition}

From Lemma \ref{lem:goodpermutations} and the gluing property we get,

\begin{lemma} \label{lem:insert}
Let $(X,T_1,\ldots,T_d)$ be a $\Z^d$-system where the gluing property holds (in particular when the system is minimal and distal). If ${\bf x}, {\bf y}\in \QQ_{T_{1},\ldots,T_{d}}(X)$ coincide in their $j$-upper face, then the point obtained by inserting the $j$-lower face of ${\bf x}$ into the  $j$-upper face of ${\bf y}$ also belongs to $\QQ_{T_{1},\ldots,T_{d}}(X)$.
\end{lemma}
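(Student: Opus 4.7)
The plan is to reduce the insertion operation to a straight application of the gluing property via the reflection $\Phi_{j\ast}$. First, I would set $\textbf{x}':=\Phi_{j\ast}(\textbf{x})$, which by Lemma \ref{lem:goodpermutations} still lies in $\QQ_{T_{1},\ldots,T_{d}}(X)$. The crucial bookkeeping observation is that $\Phi_{j}$ swaps the $j$-upper and $j$-lower faces: for any $\varepsilon$ with $\varepsilon_{j}=0$ we have $x'_{\varepsilon}=x_{\Phi_{j}(\varepsilon)}$ with $\Phi_{j}(\varepsilon)_{j}=1$, so the $j$-lower face of $\textbf{x}'$ is exactly the $j$-upper face of $\textbf{x}$ (re-indexed through $\Phi_{j}$), and symmetrically for the other face.

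By hypothesis the $j$-upper face of $\textbf{x}$ coincides with the $j$-upper face of $\textbf{y}$. Combining this with the previous observation, the $j$-lower face of $\textbf{x}'$ coincides with the $j$-upper face of $\textbf{y}$. This is exactly the configuration required to apply the gluing property (available in the minimal distal case by Lemma \ref{pegado1}) to the pair $\textbf{y}$ and $\textbf{x}'$ along the $j$-face. The resulting point $\textbf{z}\in\QQ_{T_{1},\ldots,T_{d}}(X)$ satisfies
\[ z_{\varepsilon}=\begin{cases} y_{\varepsilon} & \text{if } \varepsilon_{j}=0, \\ x'_{\varepsilon}=x_{\Phi_{j}(\varepsilon)} & \text{if } \varepsilon_{j}=1, \end{cases} \]
which matches verbatim the definition of the point obtained by inserting the $j$-lower face of $\textbf{x}$ into the $j$-upper face of $\textbf{y}$.

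No step is genuinely delicate: the whole argument is a one-line invocation of the gluing property once $\textbf{x}$ has been reflected by $\Phi_{j\ast}$, and the only thing to verify carefully is that $\Phi_{j}$ swaps upper and lower faces in the correct way, which I would make explicit by tracking the coordinate $\Phi_{j}(\varepsilon)_{j}=1-\varepsilon_{j}$.
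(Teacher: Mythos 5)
Your proposal is correct and is exactly the argument the paper intends: the lemma is stated there as an immediate consequence of Lemma \ref{lem:goodpermutations} and the gluing property, and your reflection $\Phi_{j\ast}(\textbf{x})$ followed by gluing with $\textbf{y}$ along the $j$-face is that argument written out. The coordinate bookkeeping ($\Phi_j(\varepsilon)_j=1-\varepsilon_j$, hence $\Phi_{j\ast}$ swaps the $j$-upper and $j$-lower faces) is handled correctly, and the resulting point matches the definition of the insertion.
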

		
\section{The $(T_{1},\ldots,T_{d})$-regionally proximal relation} \label{sec:RegRelation}
Let $(X,T_{1},\ldots,T_{d})$ be a $\Z^{d}$-system. We define a relation in $X$ associated with the cube structure $\QQ_{T_{1},\ldots,T_{d}}(X)$ established in Section \ref{sec:DirecCubes}. This relation generalizes the definition given by Donoso and Sun for two commuting transformations in \cite{donoso2014dynamical}. The motivation is to find precisely the relations producing the maximal factors with the unique closing parallelepiped property. 
We start defining for $j\in [d]$ the following transformations:
$$\begin{array}{lcll}
& & & \\
\Psi_{j}^{0}\colon & \{0,1\}^{d-1} & \to & \{0,1\}^{d}\\
& \varepsilon & \mapsto & \Psi_{0}^{j}(\varepsilon)=\varepsilon_{1}\varepsilon_{2}\ldots\varepsilon_{j-1}0\varepsilon_{j}\ldots\varepsilon_{d-1},\\
& & &\\
\Psi_{j}^{1}\colon & \{0,1\}^{d-1} & \to & \{0,1\}^{d}\\
& \varepsilon & \mapsto & \Psi_{0}^{j}(\varepsilon)=\varepsilon_{1}\varepsilon_{2}\ldots\varepsilon_{j-1}1\varepsilon_{j}\ldots\varepsilon_{d-1}.\\
\end{array}$$
The maps $\Psi_{j}^{0}$ and $\Psi_{j}^{1}$ are used to replicate a face. Let $j\in [d]$ and $\textbf{x}\in X^{[d-1]}$. If we define $\textbf{y}\in X^{[d]}$ by $y_{\varepsilon}= x_{\eta}\ \text{if}\ \varepsilon = \Psi_{j}^{0}(\eta)\ \vee\ \varepsilon=\Psi_{j}^{1}(\eta)$, we have that both the $j$-lower face of $\textbf{y}$ and the $j$-upper face of $\textbf{y}$ are equal to $\textbf{x}$, \emph{i.e.}, 
$$(y_{\varepsilon}\colon \varepsilon \in \{0,1\}^d, \ \varepsilon_j=0)=(y_{\varepsilon}\colon \varepsilon \in \{0,1\}^d, \ \varepsilon_j=1)=\textbf{x} \in X^{[d-1]}.$$

\begin{definition} 
For $x,y\in X$, $\textbf{a}_{*}\in X_{*}^{[d-1]}$ and $j\in [d]$, define $\textbf{z}(x,y,\textbf{a}_{*},j)\in X^{[d]}$ with coordinates 
$$
(\textbf{z}(x,y,\textbf{a}_{*},j))_{\varepsilon}=\left\{\begin{array}{ll}x & \text{if}\ \varepsilon =\emptyset,\\ y & \text{if}\ \varepsilon=\{j\},\\
(\textbf{a}_{*})_{\eta} & \text{if}\ \varepsilon=\Psi_{j}^{0}(\eta)\ \vee\ \varepsilon= \Psi_{j}^{1}(\eta).\end{array}\right.
$$

\begin{definition}\label{def:Relation}
For each $j \in [d]$ we define the $T_{j}$-regionally proximal relation as
\[\mathcal{R}_{T_{j}}(X) = \left\{(x,y)\in X\times X\colon \exists \ \textbf{a}_{*}\in X_{*}^{[d-1]},\ \textbf{z}(x,y,\textbf{a}_{*},j)\in \QQ_{T_{1},\ldots,T_{d}}(X)\right\}. \]
\end{definition}

For $j\in [d]$ let $\sigma\colon [d]\to [d]$ be the permutation that interchanges $j$ and $1$ (and is the identity everywhere else). By Lemma \ref{lem:digitpermutation},  we have that $\sigma_{\ast}$ maps $\QQ_{T_j,\ldots,T_1,\ldots,T_d}(X)$ onto $\QQ_{T_{1},\ldots,T_{d}}(X)$. It then follows that $(x,y) \in \mathcal{R'}_{T_j}$ if and only if $(x,y)\in \mathcal{R}_{T_j}$, where $\mathcal{R'}_{T_j}$ is the same relation but in the cube $\QQ_{T_j,\ldots,T_1,\ldots,T_d}(X)$ (so the order of the transformations is changed, interchanging $T_1$ and $T_j$). The advantage of doing so is that (modulo changing the order of the transformations) we can assume that the points $(x,y)$ are in $R_{T_1}$, where $T_1$ is the first transformation. This point of view is only esthetical, but it will lighten some proofs notations in the next sections.  

Finally, we define the $(T_{1},\ldots,T_{d})$-regionally proximal relation as,
$$\rr_{T_{1},\ldots,T_{d}}(X)=\bigcap\limits_{j=1}^{d}\rr_{T_{j}}(X).$$
\end{definition}

As an example, for $d=3$ the relations are defined as follows:
$$\begin{array}{ll}
\rr_{T_{1}}(X) & = \{(x,y)\in X\times X\colon (x,y,a,a,b,b,c,c)\in \QQ_{T_{1},T_{2},T_{3}}(X)\ \text{for some}\ a,b,c\in X\},\\
\rr_{T_{2}}(X) & = \{(x,y)\in X \times X\colon (x,a,y,a,b,c,b,c)\in \QQ_{T_{1},T_{2},T_{3}}(X)\ \text{for some}\ a,b,c\in X\},\\
\rr_{T_{3}}(X) & = \{(x,y)\in X \times X\colon (x,a,b,c,y,a,b,c)\in \QQ_{T_{1},T_{2},T_{3}}(X)\ \text{for some}\ a,b,c\in X\}.
\end{array}$$

Graphically we represent these relations as follows:
\
	
\begin{figure}[H]
\centering
\begin{tikzpicture}[scale=1.2]
\node(b1) at (-2,1.35) [scale=1] {$(x,y) \in \rr_{T_{1}}(X):$};
		
\coordinate[scale=.5,label=left:$x$] (a1) at (0,0);
\coordinate[scale=.5,label=left:$b$] (a2) at (0,1.5);
\coordinate[scale=.5,label=right:$y$] (a3) at (1.5,0);
\coordinate[scale=.5,label=right:$b$] (a4) at (1.5,1.5);
\coordinate[scale=.5,label=right:$c$] (a5) at (2.13,2.13);
\coordinate[scale=.5,label=left:$c$] (a6) at (0.63,2.13);
\coordinate[scale=.5,label=right:$a$] (a7) at (2.13,0.63);
\coordinate[scale=.5,label=left:$a$] (a8) at (0.63,0.63);
		
\node(s1) at (3.5,1.35) [scale=1] {$\in \QQ_{T_{1},T_{2},T_{3}}(X)$};
		
\path[thin] (a1) edge (a2);
\path[thin] (a1) edge (a3);
\path[dashed] (a1) edge (a8);
\path[thin] (a2) edge (a4);
\path[thin] (a2) edge (a6);
\path[thin] (a3) edge (a4);
\path[thin] (a3) edge (a7);
\path[thin] (a4) edge (a5);
\path[thin] (a5) edge (a7);
\path[thin] (a5) edge (a6);
\path[dashed] (a6) edge (a8);
\path[dashed] (a7) edge (a8);
		
\node(b2) at (-2,-1.65) [scale=1] {$(x,y) \in \rr_{T_{2}}(X):$};
		
\coordinate[scale=.5,label=left:$x$] (c1) at (0,-3);
\coordinate[scale=.5,label=right:$a$] (c3) at (1.5,-3);
\coordinate[scale=.5,label=left:$y$] (c8) at (0.63,-2.37);
\coordinate[scale=.5,label=right:$a$] (c7) at (2.13,-2.37);
\coordinate[scale=.5,label=left:$b$] (c2) at (0,-1.5);
\coordinate[scale=.5,label=right:$c$] (c4) at (1.5,-1.5);
\coordinate[scale=.5,label=left:$b$] (c6) at (0.63,-0.87);
\coordinate[scale=.5,label=right:$c$] (c5) at (2.13,-0.87);
		
\node(s2) at (3.5,-1.65) [scale=1] {$\in \QQ_{T_{1},T_{2},T_{3}}(X)$};
		
\path[thin] (c1) edge (c2);
\path[thin] (c1) edge (c3);
\path[dashed] (c1) edge (c8);
\path[thin] (c2) edge (c4);
\path[thin] (c2) edge (c6);
\path[thin] (c3) edge (c4);
\path[thin] (c3) edge (c7);
\path[thin] (c4) edge (c5);
\path[thin] (c5) edge (c7);
\path[thin] (c5) edge (c6);
\path[dashed] (c6) edge (c8);
\path[dashed] (c7) edge (c8);
	
\node(b3) at (-2,-4.65) [scale=1] {$(x,y) \in \rr_{T_{3}}(X):$};
		
\coordinate[scale=.5,label=left:$x$] (d1) at (0,-6);
\coordinate[scale=.5,label=right:$a$] (d3) at (1.5,-6);
\coordinate[scale=.5,label=left:$b$] (d8) at (0.63,-5.37);
\coordinate[scale=.5,label=right:$c$] (d7) at (2.13,-5.37);
\coordinate[scale=.5,label=left:$y$] (d2) at (0,-4.5);
\coordinate[scale=.5,label=right:$a$] (d4) at (1.5,-4.5);
\coordinate[scale=.5,label=left:$b$] (d6) at (0.63,-3.87);
\coordinate[scale=.5,label=right:$c$] (d5) at (2.13,-3.87);
		
\node(32) at (3.5,-4.65) [scale=1] {$\in \QQ_{T_{1},T_{2},T_{3}}(X)$};
		
\path[thin] (d1) edge (d2);
\path[thin] (d1) edge (d3);
\path[dashed] (d1) edge (d8);
\path[thin] (d2) edge (d4);
\path[thin] (d2) edge (d6);
\path[thin] (d3) edge (d4);
\path[thin] (d3) edge (d7);
\path[thin] (d4) edge (d5);
\path[thin] (d5) edge (d7);
\path[thin] (d5) edge (d6);
\path[dashed] (d6) edge (d8);
\path[dashed] (d7) edge (d8);
		
\end{tikzpicture}
\caption{Representation of the relations $\rr_{T_{1}}(X)$, $\rr_{T_{2}}(X)$ and $\rr_{T_{3}}(X)$ in the case $d=3$.}
\label{fig:figuracycle}
\end{figure}
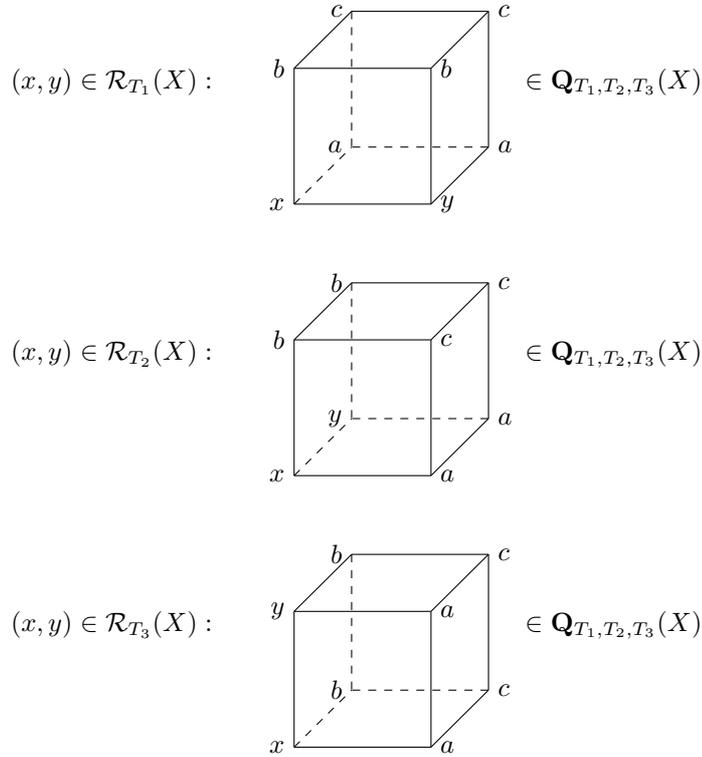

It is easy to see that these relations are reflexive, symmetric, closed and invariant under the action of $\langle T_1,\ldots,T_d\rangle$. In the next section we prove that these relations are transitive in the minimal distal case, but we do not know yet if they are transitive in the general minimal case. 

The next proposition shows that the relations $\rr_{T_{j}}(X)$ for $\Z^d$-systems with the unique closing parallelepiped property are trivial.	
	
\begin{proposition}
Let $(X,T_{1},\ldots,T_{d})$ be a $\Z^{d}$-system with the unique closing parallelepiped property. Then, for every $j\in [d]$ we have that $\rr_{T_{j}}(X)=\Delta_{X}$.
\label{prop4}
\end{proposition}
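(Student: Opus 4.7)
The plan is to show that any pair $(x,y)\in \rr_{T_j}(X)$ must satisfy $x=y$, by exhibiting two cubes in $\QQ_{T_1,\ldots,T_d}(X)$ that agree on $2^d-1$ coordinates and then invoking the unique closing parallelepiped property.

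Suppose $(x,y)\in \rr_{T_j}(X)$, so by definition there is $\textbf{a}_{*}\in X_{*}^{[d-1]}$ with $\textbf{z}(x,y,\textbf{a}_{*},j)\in \QQ_{T_1,\ldots,T_d}(X)$. The first step is to extract the $j$-lower face of this cube: fixing the coordinate in direction $j$ to $0$ and applying the Projection property (Proposition \ref{prop1} \eqref{prop1:4}) with $\{j_1,\ldots,j_{d-1}\}=[d]\setminus\{j\}$ and $\xi=(0)$, one obtains a point of $\QQ_{T_1,\ldots,\widehat{T_j},\ldots,T_d}(X)$ whose coordinate at $\emptyset$ is $x$ and whose other coordinates are exactly $\textbf{a}_{*}$.

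The second step is to duplicate this $(d-1)$-dimensional cube along direction $j$ via the Duplication property (Proposition \ref{prop1} \eqref{prop1:5}). This produces a point in $\QQ_{T_1,\ldots,T_d}(X)$ whose $j$-lower face and $j$-upper face both coincide with $(x,\textbf{a}_{*})$; unpacking the indexing, this is precisely $\textbf{z}(x,x,\textbf{a}_{*},j)$.

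Finally, the two cubes $\textbf{z}(x,y,\textbf{a}_{*},j)$ and $\textbf{z}(x,x,\textbf{a}_{*},j)$ belong to $\QQ_{T_1,\ldots,T_d}(X)$ and agree on every coordinate $\varepsilon\in\{0,1\}^d$ except possibly the one indexed by $\{j\}$, where the first has $y$ and the second has $x$. The unique closing parallelepiped property then forces $x=y$. Thus $\rr_{T_j}(X)\subseteq \Delta_X$, and since $\rr_{T_j}(X)$ is reflexive, equality holds. No real obstacle is anticipated: the argument is essentially a bookkeeping exercise with Projection and Duplication, and the unique closing parallelepiped property does the rest directly.
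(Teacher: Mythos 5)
Your argument is correct and is essentially the same as the paper's proof: project $\textbf{z}(x,y,\textbf{a}_{*},j)$ onto its $j$-lower face to get $(x,\textbf{a}_{*})\in \QQ_{T_{1},\ldots,T_{j-1},T_{j+1},\ldots,T_{d}}(X)$, duplicate along direction $j$ to produce $\textbf{z}(x,x,\textbf{a}_{*},j)\in \QQ_{T_{1},\ldots,T_{d}}(X)$, and compare with the original cube via the unique closing parallelepiped property. The paper's point $\textbf{w}$ is exactly your $\textbf{z}(x,x,\textbf{a}_{*},j)$, so there is nothing to add.
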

	
\begin{proof}
Take $(x,y)\in \rr_{T_{j}}(X)$ for a given $j\in [d]$. Then, by definition, there exists ${\textbf{a}_{*} \in X_{*}^{[d-1]}}$ such that $\textbf{z}(x,y,\textbf{a}_{*},j)\in \QQ_{T_{1},\ldots,T_{d}}(X)$. By Proposition \ref{prop1} \eqref{prop1:4}, we have that ${(x,\textbf{a}_{*})\in \QQ_{T_{1},\ldots,T_{j-1},T_{j+1},\ldots,T_{d}}(X)}$. Define $\textbf{w}\in X^{[d]}$ as
$$w_{\varepsilon} = \left\{\begin{array}{ll}x & \text{if}\ \varepsilon =\emptyset,\\ x & \text{if}\ \varepsilon=\{j\},\\
\left(\textbf{a}_{*}\right)_{\eta} & \text{if}\ \varepsilon=\Psi_{j}^{0}(\eta)\ \vee\ \varepsilon= \Psi_{j}^{1}(\eta).\end{array}\right.$$
By Proposition \ref{prop1} \eqref{prop1:5}, we have that $\textbf{w}\in \QQ_{T_{1},\ldots,T_{d}}(X)$ and therefore the unique closing parallelepiped property implies that $x=y$.
\end{proof}
	
The proof of Proposition \ref{prop4} in the case $d=3$ and $j=1$ is illustrated in the next figure. 
\begin{figure}[H]
\label{fig:proof of PropTrivialrelation}
\centering
\begin{tikzpicture}[scale=1.3]
\coordinate[scale=.5,label=left:$x$] (a1) at (0,0);
\coordinate[scale=.5,label=left:$b$] (a2) at (0,1.5);
\coordinate[scale=.5,label=right:$y$] (a3) at (1.5,0);
\coordinate[scale=.5,label=right:$b$] (a4) at (1.5,1.5);
\coordinate[scale=.5,label=right:$c$] (a5) at (2.13,2.13);
\coordinate[scale=.5,label=left:$c$] (a6) at (0.63,2.13);
\coordinate[scale=.5,label=right:$a$] (a7) at (2.13,0.63);
\coordinate[scale=.5,label=left:$a$] (a8) at (0.63,0.63);
		
\node(q1) at (0.75,-0.5) [scale=1] {(a)};
\node(q2) at (5.75,0) [scale=1] {(b)};
\node(q3) at (5.75,-3.2) [scale=1] {(c)};
		
\node(s1) at (3.5,1.35) [scale=1] {$\in \QQ_{T_{1},T_{2},T_{3}}(X) \implies$};
		
\coordinate[scale=.5,label=below:$x$] (b1) at (5,0.5);
\coordinate[scale=.5,label=above:$b$] (b2) at (5,2);
\coordinate[scale=.5,label=below:$a$] (b3) at (6.5,0.5);
\coordinate[scale=.5,label=above:$c$] (b4) at (6.5,2);
		
\node(s2) at (7.4,1.35) [scale=1] {$\in \QQ_{T_{2},T_{3}}(X)$};
\node(s3) at (4.4,-1.4) [scale=1] {$\implies$};
		
\coordinate[scale=.5,label=left:$x$] (c1) at (5,-2.7);
\coordinate[scale=.5,label=left:$b$] (c2) at (5,-1.2);
\coordinate[scale=.5,label=right:$x$] (c3) at (6.5,-2.7);
\coordinate[scale=.5,label=right:$b$] (c4) at (6.5,-1.2);
\coordinate[scale=.5,label=right:$c$] (c5) at (7.13,-0.57);
\coordinate[scale=.5,label=left:$c$] (c6) at (5.63,-0.57);
\coordinate[scale=.5,label=right:$a$] (c7) at (7.13,-2.07);
\coordinate[scale=.5,label=left:$a$] (c8) at (5.63,-2.07);
		
\node(s2) at (8.1,-1.4) [scale=1] {$\in \QQ_{T_{1},T_{2},T_{3}}(X)$};
				
\path[thin] (a1) edge (a2);
\path[thin] (a1) edge (a3);
\path[dashed] (a1) edge (a8);
\path[thin] (a2) edge (a4);
\path[thin] (a2) edge (a6);
\path[thin] (a3) edge (a4);
\path[thin] (a3) edge (a7);
\path[thin] (a4) edge (a5);
\path[thin] (a5) edge (a7);
\path[thin] (a5) edge (a6);
\path[dashed] (a6) edge (a8);
\path[dashed] (a7) edge (a8);
		
\path[thin] (c1) edge (c2);
\path[thin] (c1) edge (c3);
\path[dashed] (c1) edge (c8);
\path[thin] (c2) edge (c4);
\path[thin] (c2) edge (c6);
\path[thin] (c3) edge (c4);
\path[thin] (c3) edge (c7);
\path[thin] (c4) edge (c5);
\path[thin] (c5) edge (c7);
\path[thin] (c5) edge (c6);
\path[dashed] (c6) edge (c8);
\path[dashed] (c7) edge (c8);
		
\filldraw [gray,opacity=0.5,thick] (a1)--(a8)--(a6)--(a2)--(a1)--cycle;
		
\path[thin] (b1) edge (b2);
\path[thin] (b1) edge (b3);
\path[thin] (b2) edge (b4);
\path[thin] (b3) edge (b4);
\end{tikzpicture}
\caption{Illustration of the proof of Proposition \ref{prop4} in the case $d=3$ for $j=1$. We have the existence of a cube like in (a) because $(x,y)\in \rr_{T_{1}}(X)$.} 
\end{figure}
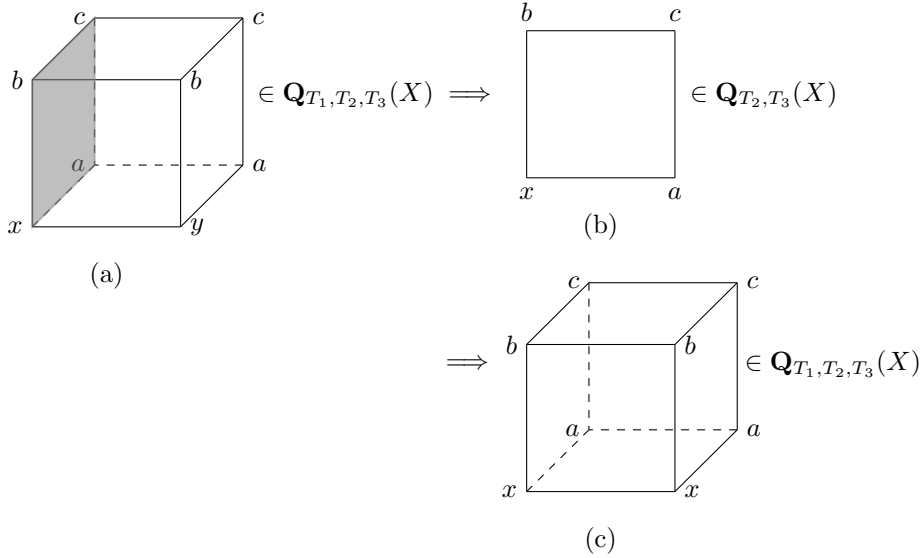

In the literature \cite{host2010nilsequences,shao2012regionally,donoso2014dynamical,gutman2016structurei,gutman2016structureii,gutman2016structureiii,Glasner20181004}, when defining these types of relations one also looks for the following properties: (i) if $\pi\colon Y \to X$ is a factor map between the minimal $\Z^{d}$-systems 
$(Y,T_{1},\ldots,$ $T_{d})$ and $(X,T_{1},\ldots,T_{d})$ we would like to have $\pi^{[d]}(\rr_{T_{j}}(Y))= \rr_{T_{j}}(X)$ for every $j\in [d]$ and $$\pi^{[d]}(\rr_{T_{1},\ldots,T_{d}}(Y)) = \rr_{T_{1},\ldots,T_{d}}(X);$$
and (ii) we also would like to have that $\rr_{T_{1},\ldots,T_{d}}(X)$ is an equivalence relation to study the factor system $({X}/{\rr_{T_{1},\ldots,T_{d}}(X)},T_1,\ldots,T_d)$. Nevertheless, for the moment we cannot prove these properties. In Section  \ref{sec:characterizing} we will prove that they hold for minimal distal systems.

\section{Characterizing the $(T_{1},\ldots,T_{d})$-regionally proximal relation for distal $\Z^d$-systems}
\label{sec:characterizing}
	
	We study the properties of the $(T_{1},\ldots,T_{d})$-regionally proximal relation for  minimal distal systems that will be used in the proof of Theorem \ref{StructThm}.
	
We start with the following theorem, which was proved in the case $d=2$ by Donoso and Sun in\cite{donoso2014dynamical}.
	
\begin{theorem} \label{thm:characterization}
Let $(X,T_{1},\ldots,T_{d})$ be a $\Z^{d}$-system where the gluing property holds. Consider $x,y\in X$. The following statements are equivalent:
\begin{enumerate}[(1)]
\item $(x,y)\in \mathcal{R}_{T_{1},\ldots,T_{d}}(X)$.
\item $(x,y_{*}^{[d]})\in \QQ_{T_{1},\ldots,T_{d}}(X)$. \label{xyyy}
\item There exists $\textbf{a}_{*}\in X_{*}^{[d]}$ such that $(x,\textbf{a}_{*}), (y,\textbf{a}_{*})\in \QQ_{T_{1},\ldots,T_{d}}(X)$.
\item For every $\textbf{a}_{*}\in X_{*}^{[d]}$, $(x,\textbf{a}_{*})\in \QQ_{T_{1},\ldots,T_{d}}(X)$ if and only if $(y,\textbf{a}_{*})\in \QQ_{T_{1},\ldots,T_{d}}(X)$. \label{replace} 
\item There exists $j \in [d]$ such that $(x,y)\in \mathcal{R}_{T_{j}}(X)$.
\end{enumerate}
\label{prop2}
\end{theorem}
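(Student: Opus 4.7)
The plan is to establish the cycle $(1) \Rightarrow (5) \Rightarrow (3) \Rightarrow (4) \Rightarrow (2) \Rightarrow (1)$, which makes all five statements equivalent. Three of these are essentially formal and I would dispose of them first. The implication $(1) \Rightarrow (5)$ is immediate from the definition $\rr_{T_1,\ldots,T_d}(X) = \bigcap_{j=1}^d \rr_{T_j}(X)$. For $(4) \Rightarrow (2)$, I take $\textbf{a}_* = y_*^{[d]}$; since $(y, y_*^{[d]}) = y^{[d]} \in \QQ_{T_1,\ldots,T_d}(X)$ by Proposition \ref{prop1}(1), statement (4) yields $(x, y_*^{[d]}) \in \QQ_{T_1,\ldots,T_d}(X)$. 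For $(2) \Rightarrow (1)$, I observe that $(x, y_*^{[d]}) = \textbf{z}(x, y, y_*^{[d-1]}, j)$ for every $j \in [d]$, so $(x, y) \in \rr_{T_j}(X)$ for all $j$.

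For $(5) \Rightarrow (3)$: given a witness $\textbf{w} := \textbf{z}(x, y, \textbf{a}_*, j) \in \QQ_{T_1,\ldots,T_d}(X)$ of $(x, y) \in \rr_{T_j}(X)$, I set $\textbf{c}_* := \textbf{w}_*$. Then $(x, \textbf{c}_*) = \textbf{w}$ lies in $\QQ$ by hypothesis, while $(y, \textbf{c}_*)$ is precisely the duplication along direction $j$ (in the sense of Proposition \ref{prop1}(5)) of the $j$-upper face $(y, \textbf{a}_*)$ of $\textbf{w}$, which belongs to $\QQ_{T_i: i \neq j}(X)$ by Proposition \ref{prop1}(4); hence $(y, \textbf{c}_*) \in \QQ_{T_1,\ldots,T_d}(X)$.

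The main step is $(3) \Rightarrow (4)$: given $(x, \textbf{c}_*), (y, \textbf{c}_*), (x, \textbf{a}_*) \in \QQ$, the goal is $(y, \textbf{a}_*) \in \QQ$. The plan rests on the fact that every face transformation fixes the $\emptyset$-coordinate, so $\F_{T_1,\ldots,T_d}$ acts within each fibre of the projection $\pi_\emptyset \colon \QQ_{T_1,\ldots,T_d}(X) \to X$. The crucial input, which I would establish as a preliminary lemma, is that $(\KK_{T_1,\ldots,T_d}^{x_0}(X), \F_{T_1,\ldots,T_d}^{x_0})$ is a minimal system for every $x_0 \in X$; in the minimal distal setting (implicit in the gluing hypothesis) this follows by an enveloping-semigroup argument analogous to Proposition \ref{CubeMin}, using that the $\F$-subaction on $\QQ$ inherits distality from $(\QQ, \mathcal{G}_{T_1,\ldots,T_d})$ via Theorem \ref{distal}(4) and that the $\F$-orbit closure of the diagonal $x^{[d]}$ coincides with the fibre $\{x\} \times \KK^{x}$. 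Granted fibre minimality, $\textbf{c}_*, \textbf{a}_* \in \KK^{x}$, so there exists $(f_n) \subset \F_{T_1,\ldots,T_d}$ with $f_n \textbf{c}_* \to \textbf{a}_*$; applying $f_n$ to $(y, \textbf{c}_*)$ gives $(y, f_n \textbf{c}_*) \in \QQ$ by $\F$-invariance (Proposition \ref{prop1}(3)), and closedness of $\QQ$ yields $(y, \textbf{a}_*) \in \QQ$. The principal obstacle is this fibre-minimality claim: it is the only non-formal ingredient in the argument and is precisely where the gluing/distality assumption enters essentially.
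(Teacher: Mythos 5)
Your peripheral implications are fine: $(1)\Rightarrow(5)$, $(4)\Rightarrow(2)$ and $(2)\Rightarrow(1)$ match the paper, and your $(5)\Rightarrow(3)$ — projecting $\textbf{z}(x,y,\textbf{a}_{*},j)$ onto its $j$-upper face $(y,\textbf{a}_{*})$ and duplicating it back along direction $j$ via Proposition \ref{prop1}(4)--(5) — is correct and pleasantly avoids the gluing property altogether. The problem is the main step $(3)\Rightarrow(4)$, which rests entirely on the claim that for \emph{every} $x$ the fibre $\{\textbf{b}_{*}\in X_{*}^{[d]}\colon (x,\textbf{b}_{*})\in \QQ_{T_{1},\ldots,T_{d}}(X)\}$ is $\F_{T_{1},\ldots,T_{d}}$-minimal and coincides with $\KK_{T_{1},\ldots,T_{d}}^{x}(X)$. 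That is a genuine gap. The identification of the fibre with $\KK^{x}_{T_1,\ldots,T_d}(X)$ (the $\F$-orbit closure of $x^{[d]}$) is exactly the definition of $x$ being a \emph{continuity point}, and this is only known on a dense $G_{\delta}$ set (see the discussion preceding Theorem \ref{KRelInd}, citing \cite{glasner1994topological}); it is precisely because this fails pointwise that Theorem \ref{KRelInd} carries a continuity-point hypothesis. Without it, your $\textbf{a}_{*}$ and $\textbf{c}_{*}$ are only known to lie in the fibre, not in $\KK^{x}$, so minimality of $(\KK^{x},\F^{x})$ would not connect them. Moreover, the suggested argument ``analogous to Proposition \ref{CubeMin}'' does not transfer: that proof establishes minimality under $\mathcal{G}_{T_{1},\ldots,T_{d}}$ by exploiting the diagonal action $G_{[d]}^{\Delta}$ to make $x^{[d]}$ almost periodic, and the diagonal action is exactly what you must exclude to stay inside a fibre of $\pi_{\emptyset}$. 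Distality does give that the fibre is a union of $\F$-minimal sets, but you would still need \emph{uniqueness} of the minimal set in the fibre, a Shao--Ye-type statement whose known proofs use the full digit-permutation symmetry of the cube — precisely the symmetry the directional cubes lack (Section \ref{sec:symmetries}). Your key lemma is therefore at least as strong as the theorem itself and is not justified.

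The paper sidesteps this by routing the hard content through the special point $(x,y_{*}^{[d]})$: it proves $(5)\Rightarrow(2)$ and $(2)\Rightarrow(4)$ by explicit finite chains of duplications, insertions (Lemma \ref{lem:insert}) and gluings, using only the gluing property — so the theorem holds for any system with that property, whereas your route already needs minimality and distality before the unproven fibre-minimality even enters. If you wish to keep your cycle, replace $(3)\Rightarrow(4)$ by the one-step insertion argument $(3)\Rightarrow(5)$ and then prove $(5)\Rightarrow(2)\Rightarrow(4)$ combinatorially as in the paper.
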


\begin{proof} We start with the easy implications and then show the two main ones. 

\noindent $(1) \implies (5)$ follows from defintion.  
	
\noindent $(2) \implies (1)$. Let $j\in [d]$ and $\textbf{a}_{*}=y^{[d-1]}\in X^{[d-1]}$, then $\textbf{z}(x,y,\textbf{a}_{*},j)={(x,y_{*}^{[d]}) \in \QQ_{T_{1},\ldots,T_{d}}(X)}$. Hence $(x,y)\in \mathcal{R}_{T_j}(X)$  and since $j$ is arbitrary we get that $(x,y)\in \mathcal{R}_{T_1,\ldots,T_d}(X)$. 
		
\noindent $(4) \implies (2)$. Since $y^{[d]}=(y,y_{\ast}^{[d]}) \in \QQ_{T_{1},\ldots,T_{d}}(X)$, we can use
${\bf a}_{\ast}=y_{\ast}^{[d]}$ in property (4) to get $(x,y_{\ast}^{[d]})\in \QQ_{T_{1},\ldots,T_{d}}(X)$. 
		
\noindent $(2) \implies (3)$ It follows directly since $y^{[d]}=(y,y_{\ast}^{[d]})$ and $(x,y_{*}^{[d]})\in \QQ_{T_{1},\ldots,T_{d}}(X)$.
		
\noindent $(3) \implies (5)$ Let ${\bf a}_{\ast} \in X_{*}^{[d]}$ be such that $(x,\textbf{a}_{*}), (y,\textbf{a}_{*})\in \QQ_{T_{1},\ldots,T_{d}}(X)$. Then the $d$-th upper faces of $(x,\textbf{a}_{*})$ and $(y,\textbf{a}_{*})$ coincide. By Lemma \ref{lem:insert} the point obtained by inserting the lower $d$-th face of $(y,\textbf{a}_{*})$ into the $d$-th upper of $(x,\textbf{a}_{*})$ also belongs to $\QQ_{T_{1},\ldots,T_{d}}(X)$. But this point can be written as $(x,b_{\ast},y,b_{\ast})$ for some $b_{\ast}\in X^{[d-1]}$. This implies that $(x,y)\in \mathcal{R}_{T_d}(X)$. 
			
We now prove the main implications, namely $(5)\implies (2)$ and $(2)\implies (4)$. 
		
\noindent $(5) \implies (2)$. The idea of the proof is to use the gluing property to construct a sequence of points 
$\textbf{z}^{1},\ldots,{\textbf{z}^{d}\in \QQ_{T_{1},\ldots,T_{d}}(X)}$ so that we increase the number of times that $y$ appears as a coordinate of $\textbf{z}^i$ until getting $\textbf{z}^{d}=(x,y_{*}^{[d]})\in \QQ_{T_{1},\ldots,T_{d}}(X)$. We suggest the reader to look at the illustration of the proof in Figure 
\ref{fig:5implies2}
while reading it. 
		
By the discussion right after Definition \ref{def:Relation} we can assume for the moment that 
$(x,y)\in \mathcal{R}_{T_1}(X)$. So, there exists $\textbf{a}_{*}\in X^{[d-1]}$ for which $\textbf{z}(x,y,\textbf{a}_{*},1)\in \QQ_{T_{1},\ldots,T_{d}}(X)$. We call $\textbf{z}^{1}=\textbf{z}(x,y,\textbf{a}_{*},1)$.
		
Let ${\bf v}^1$ be the duplication of the $1$-th upper face of ${\bf z}^1$. Then 	$\textbf{z}^{1}$ and ${\bf v}^1$ coincide in their $2$-th upper face. By Lemma \ref{lem:insert} we can insert the $2$-th lower face of ${\bf v}^1$ into the $2$-th upper face of ${\bf z}^1$, obtaining a point ${\bf z}^2$ that belongs to $\QQ_{T_{1},\ldots,T_{d}}(X)$. The point ${\bf z}^2$ has the property that 
\[ z_{\varepsilon} = y \text{ if  }\varepsilon \neq \emptyset \text{ and } \varepsilon_j=0 \text{ for all } j \geq 3. \]
Otherwise saying, the lower face of codimension $d-2$ of ${\bf z}^2$  (\emph{i.e.}, the one associated to the face $\{0,1\}^2\times \{0\} \times \cdots \times \{0\}$) is $(x,y,y,y)$. 
	
Now suppose that for $1\leq k\leq d-1$ we have constructed ${\bf z}^k \in \QQ_{T_{1},\ldots,T_{d}}(X)$ such that its lower face of codimension $d-k$ (\emph{i.e.}, the face associated to $\{0,1\}^k\times \{0\}^{d-k}$ ) coincides with $(x,y_{\ast}^{[k]})$.  Let ${\bf v}^k$ be the duplication of the $k$-th upper face of ${\bf z}^k$. Then  ${\bf v}^k$ and ${\bf z}^k$ coincide in their $(k+1)$-th upper face. By Lemma \ref{lem:insert} we can insert the $(k+1)$-th lower face of ${\bf v}^k$ into the $(k+1)$-th upper face of ${\bf z}^k$, obtaining a point ${\bf z}^{k+1}$ that belongs to $\QQ_{T_{1},\ldots,T_{d}}(X)$. This point satisfies that its lower face of codimension $d-(k+1)$ (\emph{i.e.}, the one associated to the face $\{0,1\}^{k+1}\times \{0\}^{d-(k+1)}$) is $(x,y_{\ast}^{[k+1]})$.
With this procedure we build a sequence of points which ends up in ${\bf z}^{d}=(x,y_{\ast}^{[d]})$ as desired.
	
As was stated before, the assumption $(x,y)\in \rr_{T_1}(X)$ is irrelevant. Indeed, modulo changing the order of the transformations we get that $(x,y_{\ast}^{[d]}) \in \QQ_{T_{1},\ldots,T_{d}}(X)$. But, for every permutation $\sigma\colon[d]\to [d]$ we have that $\sigma_{\ast}(x,y_{\ast}^{[d]})= (x,y_{\ast}^{[d]})$. Hence, this point also belongs to $\QQ_{T_{\sigma(1)},\ldots,T_{\sigma(d)}}(X)$ by Lemma \ref{lem:digitpermutation}. This shows that the order considered for the transformations is not important. 
			
\begin{figure}[h]
\centering
			\begin{tikzpicture}[scale=1.1]
			\node(s1) at (-0.5,1) [scale=1] {\large$\textbf{z}^{1}=$};
			\coordinate[scale=.5,label=left:\large$x$] (a1) at (0,0);
			\coordinate[scale=.5,label=left:\large$b$] (a2) at (0,1.5);
			\coordinate[scale=.5,label=right:\large$y$] (a3) at (1.5,0);
			\coordinate[scale=.5,label=right:\large$b$] (a4) at (1.5,1.5);
			\coordinate[scale=.5,label=right:\large$c$] (a5) at (2.13,2.13);
			\coordinate[scale=.5,label=left:\large$c$] (a6) at (0.63,2.13);
			\coordinate[scale=.5,label=right:\large$a$] (a7) at (2.13,0.63);
			\coordinate[scale=.5,label=left:\large$a$] (a8) at (0.63,0.63);
			\filldraw [gray,opacity=0.5,thick] (a3)--(a4)--(a5)--(a7)--(a3)--cycle;
			\filldraw [red,opacity=0.5,thick] (a7)--(a8)--(a6)--(a5)--(a7)--cycle;
	
			\node(s2) at (3.3,1) [scale=1] {\large$\implies$};
			
			\node(s3) at (4.5,1) [scale=1] {\large$\textbf{v}^{1}=$};
			\coordinate[scale=.5,label=left:\large$y$] (b1) at (5,0);
			\coordinate[scale=.5,label=left:\large$b$] (b2) at (5,1.5);
			\coordinate[scale=.5,label=right:\large$y$] (b3) at (6.5,0);
			\coordinate[scale=.5,label=right:\large$b$] (b4) at (6.5,1.5);
			\coordinate[scale=.5,label=right:\large$c$] (b5) at (7.13,2.13);
			\coordinate[scale=.5,label=left:\large$c$] (b6) at (5.63,2.13);
			\coordinate[scale=.5,label=right:\large$a$] (b7) at (7.13,0.63);
			\coordinate[scale=.5,label=left:\large$a$] (b8) at (5.63,0.63);			
			\filldraw [gray,opacity=0.5,thick] (b3)--(b4)--(b5)--(b7)--(b3)--cycle;
			\filldraw [red,opacity=0.5,thick] (b7)--(b8)--(b6)--(b5)--(b7)--cycle;
			
			\node(s4) at (-1,-1.7) [scale=1] {\large$\implies \textbf{z}^{2}=$};
			\coordinate[scale=.5,label=left:\large$x$] (c1) at (0,-2.6);
			\coordinate[scale=.5,label=left:\large$b$] (c2) at (0,-1.1);
			\coordinate[scale=.5,label=right:\large$y$] (c3) at (1.5,-2.6);
			\coordinate[scale=.5,label=right:\large$b$] (c4) at (1.5,-1.1);
			\coordinate[scale=.5,label=right:\large$b$] (c5) at (2.13,-0.47);
			\coordinate[scale=.5,label=left:\large$b$] (c6) at (0.63,-0.47);
			\coordinate[scale=.5,label=right:\large$y$] (c7) at (2.13,-1.97);
			\coordinate[scale=.5,label=left:\large$y$] (c8) at (0.63,-1.97);
			
			\filldraw [gray,opacity=0.5,thick] (c7)--(c8)--(c6)--(c5)--(c7)--cycle;			
			\filldraw [blue,opacity=0.5,thick] (c2)--(c4)--(c5)--(c6)--(c2)--cycle;
			
			\node(s5) at (3.3,-1.7) [scale=1] {\large$\implies$}; 
			
			\node(s6) at (4.5,-1.7) [scale=1] {\large$\textbf{v}^{2}=$};
			\coordinate[scale=.5,label=left:\large$y$] (d1) at (5,-2.6);
			\coordinate[scale=.5,label=left:\large$b$] (d2) at (5,-1.1);
			\coordinate[scale=.5,label=right:\large$y$] (d3) at (6.5,-2.6);
			\coordinate[scale=.5,label=right:\large$b$] (d4) at (6.5,-1.1);
			\coordinate[scale=.5,label=right:\large$b$] (d5) at (7.13,-0.47);
			\coordinate[scale=.5,label=left:\large$b$] (d6) at (5.63,-0.47);
			\coordinate[scale=.5,label=right:\large$y$] (d7) at (7.13,-1.97);
			\coordinate[scale=.5,label=left:\large$y$] (d8) at (5.63,-1.97);	
			\filldraw [gray,opacity=0.5,thick] (d7)--(d8)--(d6)--(d5)--(d7)--cycle;
			\filldraw [blue,opacity=0.5,thick] (d2)--(d4)--(d5)--(d6)--(d2)--cycle;

			\node(s7) at (-1,-4.5) [scale=1] {\large$\implies \textbf{z}^{3}=$};
			\coordinate[scale=.5,label=left:\large$x$] (e1) at (0,-5.3);
			\coordinate[scale=.5,label=left:\large$y$] (e2) at (0,-3.8);
			\coordinate[scale=.5,label=right:\large$y$] (e3) at (1.5,-5.3);
			\coordinate[scale=.5,label=right:\large$y$] (e4) at (1.5,-3.8);
			\coordinate[scale=.5,label=right:\large$y$] (e5) at (2.13,-3.17);
			\coordinate[scale=.5,label=left:\large$y$] (e6) at (0.63,-3.17);
			\coordinate[scale=.5,label=right:\large$y$] (e7) at (2.13,-4.67);
			\coordinate[scale=.5,label=left:\large$y$] (e8) at (0.63,-4.67);
			
			\path[thin] (a1) edge (a2);
			\path[thin] (a1) edge (a3);
			\path[dashed] (a1) edge (a8);
			\path[thin] (a2) edge (a4);
			\path[thin] (a2) edge (a6);
			\path[thin] (a3) edge (a4);
			\path[thin] (a3) edge (a7);
			\path[thin] (a4) edge (a5);
			\path[thin] (a5) edge (a7);
			\path[thin] (a5) edge (a6);
			\path[dashed] (a6) edge (a8);
			\path[dashed] (a7) edge (a8);
			
			\path[thin] (b1) edge (b2);
			\path[thin] (b1) edge (b3);
			\path[dashed] (b1) edge (b8);
			\path[thin] (b2) edge (b4);
			\path[thin] (b2) edge (b6);
			\path[thin] (b3) edge (b4);
			\path[thin] (b3) edge (b7);
			\path[thin] (b4) edge (b5);
			\path[thin] (b5) edge (b7);
			\path[thin] (b5) edge (b6);
			\path[dashed] (b6) edge (b8);
			\path[dashed] (b7) edge (b8);
								
			\path[thin] (c1) edge (c2);
			\path[thin] (c1) edge (c3);
			\path[dashed] (c1) edge (c8);
			\path[thin] (c2) edge (c4);
			\path[thin] (c2) edge (c6);
			\path[thin] (c3) edge (c4);
			\path[thin] (c3) edge (c7);
			\path[thin] (c4) edge (c5);
			\path[thin] (c5) edge (c7);
			\path[thin] (c5) edge (c6);
			\path[dashed] (c6) edge (c8);
			\path[dashed] (c7) edge (c8);

			\path[thin] (d1) edge (d2);
			\path[thin] (d1) edge (d3);
			\path[dashed] (d1) edge (d8);
			\path[thin] (d2) edge (d4);
			\path[thin] (d2) edge (d6);
			\path[thin] (d3) edge (d4);
			\path[thin] (d3) edge (d7);
			\path[thin] (d4) edge (d5);
			\path[thin] (d5) edge (d7);
			\path[thin] (d5) edge (d6);
			\path[dashed] (d6) edge (d8);
			\path[dashed] (d7) edge (d8);

			\path[thin] (e1) edge (e2);
			\path[thin] (e1) edge (e3);
			\path[dashed] (e1) edge (e8);
			\path[thin] (e2) edge (e4);
			\path[thin] (e2) edge (e6);
			\path[thin] (e3) edge (e4);
			\path[thin] (e3) edge (e7);
			\path[thin] (e4) edge (e5);
			\path[thin] (e5) edge (e7);
			\path[thin] (e5) edge (e6);
			\path[dashed] (e6) edge (e8);
			\path[dashed] (e7) edge (e8);
\end{tikzpicture}
\caption{Illustration of the proof of (5) $\implies$ (2) of Theorem \ref{prop2} for the case $d=3$ and $j=1$. Always the grey faces are duplicated and the color faces serve to pass to the next step using Lemma \ref{lem:insert}.}
\label{fig:5implies2}
\end{figure}
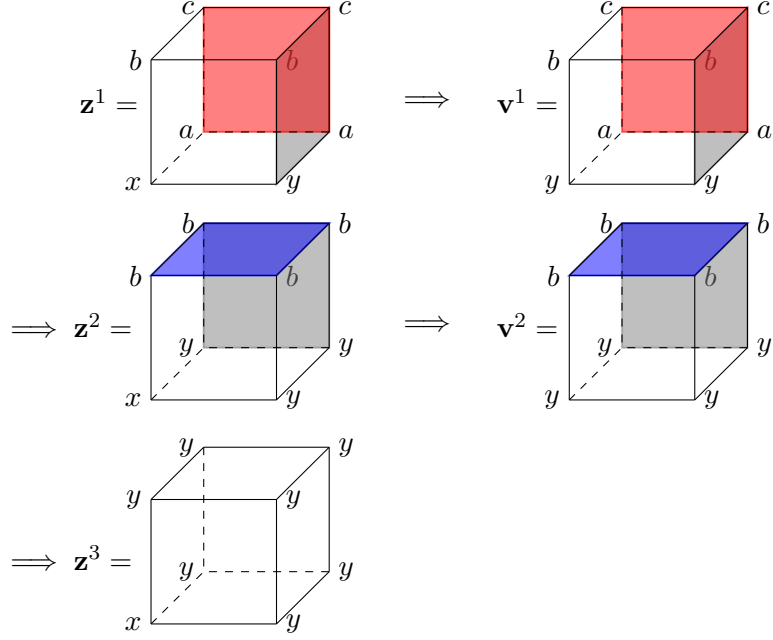
				
We remark that a consequence of this implication is that  
$(x,y_{\ast}^{[d]})\in \QQ_{T_{1},\ldots,T_{d}}(X)$ is equivalent to $(y,x_{\ast}^{[d]})\in \QQ_{T_{1},\ldots,T_{d}}(X)$.

\noindent $(2) \implies (4)$. Here we also advice the reader to look at Figure \ref{Fig:2imp4} while reading the proof. Assume that $(x,y_{\ast}^{[d]} )\in \QQ_{T_{1},\ldots,T_{d}}(X)$ and let $\textbf{a}_{*}\in X_{\ast}^{[d]}$ be such that $\textbf{z}=(y,\textbf{a}_{*})\in \QQ_{T_{1},\ldots,T_{d}}(X)$. We have to show that ${(x,\textbf{a}_{*})\in \QQ_{T_{1},\ldots,T_{d}}(X)}$. The other implication will follow by the symmetry implied by previous implication remarked above.  
		
Consider the $d$ points  ${\textbf{w}}^{1},\ldots,{\textbf{w}^{d}}\in \QQ_{T_{1},\ldots,T_{d}}(X)$ such that for every $1\leq k\leq d$ the point ${\bf w}^k$ is the replication of the lower face of dimension $k$ of ${\bf z}$, (the one associated to $\{0,1\}^k\times \{0\}^{d-k}$ or to $[k]$ in the subset notation). That means that for every $\varepsilon=\varepsilon_1\cdots\varepsilon_k\varepsilon_{k+1}\cdots\varepsilon_d \in \{0,1\}^d$ 
\[ w^k_{\varepsilon}=z_{\varepsilon_1\cdots\varepsilon_k 0\cdots0}.\]
Such points can be obtained using Proposition \ref{prop1}(5). We remark that ${\bf w}^d={\bf z}$. 
		
Next we construct $d+1$ points $\textbf{v}^{1},\ldots,\textbf{v}^{d+1}\in \QQ_{T_{1},\ldots,T_{d}}(X)$, starting with $\textbf{v}^{1}=(x,y_{\ast}^{[d]})$ and transforming it step by step into $\textbf{v}^{d+1}=(x,\textbf{a}_{*})$.   

We have that the 1-th upper face of ${\bf v}^1$ and the  1-th lower face of ${\bf w}^1$ coincide (they are equal to $y^{[d-1]}$). Hence by Lemma \ref{pegado1}, the point ${\bf v}^2$ obtained by gluing 
the 1-th lower face of ${\bf v}^1$ and the 1-th upper face of ${\bf w}^1$ belongs to $\QQ_{T_{1},\ldots,T_{d}}(X)$. We claim that the point ${\bf v}^2$ is equal to ${\bf w}^1$ everywhere but in the 
$\vec{0}$ coordinate, where it is equal to $x$. To see this, note that in the lower 1-th face, \emph{i.e.}, 
if $\varepsilon_1=0$ and $\varepsilon\neq \vec{0}$ we have
\[ v^2_{\varepsilon}=v^1_{\varepsilon} =y=w^1_{\varepsilon},\]
while if $\varepsilon_1=1$ we have
\[ v^2_{\varepsilon}=w^1_{\varepsilon},\]
proving the claim. 

Now assume that we have constructed ${\bf v}^k$ such that it coincides with ${\bf w}^{k-1}$ up to the 
$\vec{0}$ coordinate, where  it is equal to $x$, \emph{i.e.},   
\[v^k_{\vec{0}}=x\quad  \text{ and } \quad v^k_{\varepsilon}=w^{k-1}_{\varepsilon} \text{ for all }\varepsilon \in \{0,1\}^{d} \setminus\{\vec{0}\}.\]

We now construct ${\bf v}^{k+1}$ satisfying the corresponding properties. We have that the $k$-th lower face of ${\bf w}^{k}$ coincides with the $k$-th upper face of ${\bf v}^{k}$.
To see this, note that
\[ w^{k}_{\varepsilon_1\cdots \varepsilon_{k-1}0\varepsilon_{k+1}\cdots\varepsilon_d} =z_{\varepsilon_1\cdots \varepsilon_{k-1}0\cdots0} \]
and
\[v^{k}_{\varepsilon_1\cdots \varepsilon_{k-1}1\varepsilon_{k+1}\cdots\varepsilon_d} = { w}^{k-1}_{\varepsilon_1\cdots \varepsilon_{k-1}1\varepsilon_{k+1}\cdots\varepsilon_d}= z_{\varepsilon_1\cdots \varepsilon_{k-1}0\cdots0}.\]
Hence the gluing property ensures the existence of a point ${\bf v}^{k+1}\in \QQ_{T_{1},\ldots,T_{d}}(X)$ whose $k$-th lower face is equal to the one of ${\bf v}^k$ and its $k$-th upper face is equal to the one of ${\bf w}^k$. We now check that this point is equal to ${\bf w}^{k}$ everywhere but in the $\vec{0}$ coordinate. 

In the $k$-th lower  face, \emph{i.e.},  if $\varepsilon_k=0$ and $\varepsilon \neq {\vec{0}}$ we have
\[v^{k+1}_{\varepsilon} = v^k_{\varepsilon}=w^{k-1}_{\varepsilon_1\cdots\varepsilon_k0\cdots0}=z_{\varepsilon_1\cdots\varepsilon_{k-1}0\cdots0}=z_{\varepsilon_1\cdots\varepsilon_{k-1}\varepsilon_k0\cdots0}= w^k_{\varepsilon} , \]
while in the $k$-th upper face, \emph{i.e.}, if $\varepsilon_k=1$, we have by definition that
\[v^{k+1}_{\varepsilon} = w^k_{\varepsilon}.\]

We end up with ${\bf v}^{d+1}$ which is equal to ${\bf w}^{d}={\bf z}$ everywhere but in the $\vec{0}$ coordinate, where it is equal to $x$. That is, ${\bf v}^{d+1}=(x,{\bf a}_{\ast})\in \QQ_{T_{1},\ldots,T_{d}}(X)$, finishing the proof.
\end{proof}

\begin{figure}[H]
\centering
			\begin{tikzpicture}[scale=1.1]
			\node(s1) at (-0.5,0.7) [scale=1] {\large$\textbf{z}=$};
			\coordinate[scale=.5,label=left:\large$y$] (a1) at (0,0);
			\coordinate[scale=.5,label=right:\large$a$] (a3) at (1.5,0);
			\coordinate[scale=.5,label=left:\large$b$] (a8) at (0.63,0.63);
			\coordinate[scale=.5,label=right:\large$c$] (a7) at (2.13,0.63);
			\coordinate[scale=.5,label=left:\large$d$] (a2) at (0,1.5);
			\coordinate[scale=.5,label=right:\large$e$] (a4) at (1.5,1.5);
			\coordinate[scale=.5,label=left:\large$f$] (a6) at (0.63,2.13);
			\coordinate[scale=.5,label=right:\large$g$] (a5) at (2.13,2.13);
			
			\filldraw [blue,opacity=0.5,thick] (a1)--(a3)--(a7)--(a8)--(a1)--cycle;
			
			\path[thin] (a1) edge (a2);
			\path[thin] (a1) edge (a3);
			\path[dashed] (a1) edge (a8);
			\path[thin] (a2) edge (a4);
			\path[thin] (a2) edge (a6);
			\path[thin] (a3) edge (a4);
			\path[thin] (a3) edge (a7);
			\path[thin] (a4) edge (a5);
			\path[thin] (a5) edge (a7);
			\path[thin] (a5) edge (a6);
			\path[dashed] (a6) edge (a8);
			\path[dashed] (a7) edge (a8);
			
			\node(s4) at (4.6,0.7) [scale=1] {$\textbf{v}^{1}=$};
			
			\coordinate[scale=.5,label=left:\large$x$] (c1) at (5,0);
			\coordinate[scale=.5,label=left:\large$y$] (c2) at (5,1.5);
			\coordinate[scale=.5,label=right:\large$y$] (c3) at (6.5,0);
			\coordinate[scale=.5,label=right:\large$y$] (c4) at (6.5,1.5);
			\coordinate[scale=.5,label=right:\large$y$] (c5) at (7.13,2.13);
			\coordinate[scale=.5,label=left:\large$y$] (c6) at (5.63,2.13);
			\coordinate[scale=.5,label=right:\large$y$] (c7) at (7.13,0.63);
			\coordinate[scale=.5,label=left:\large$y$] (c8) at (5.63,0.63);
			
			\path[thin] (c1) edge (c2);
			\path[thin] (c1) edge (c3);
			\path[dashed] (c1) edge (c8);
			\path[thin] (c2) edge (c4);
			\path[thin] (c2) edge (c6);
			\path[thin] (c3) edge (c4);
			\path[thin] (c3) edge (c7);
			\path[thin] (c4) edge (c5);
			\path[thin] (c5) edge (c7);
			\path[thin] (c5) edge (c6);
			\path[dashed] (c6) edge (c8);
			\path[dashed] (c7) edge (c8);
			
			\filldraw [gray,opacity=0.5,thick] (c3)--(c4)--(c5)--(c7)--(c3)--cycle;
			
			\coordinate[scale=.5,label=left:\large$y$] (b1) at (0,-2.6);
			\coordinate[scale=.5,label=right:\large$a$] (b3) at (1.5,-2.6);
			\coordinate[scale=.5,label=left:\large$y$] (b8) at (0.63,-1.97);
			\coordinate[scale=.5,label=right:\large$a$] (b7) at (2.13,-1.97);
			\coordinate[scale=.5,label=left:\large$y$] (b2) at (0,-1.1);
			\coordinate[scale=.5,label=right:\large$a$] (b4) at (1.5,-1.1);
			\coordinate[scale=.5,label=left:\large$y$] (b6) at (0.63,-0.47);
			\coordinate[scale=.5,label=right:\large$a$] (b5) at (2.13,-0.47);
			
			\node(s2) at (-0.5,-1.8) [scale=1] {\large$\textbf{w}^{1}=$};
			
			\path[thin] (b1) edge (b2);
			\path[thin] (b1) edge (b3);
			\path[dashed] (b1) edge (b8);
			\path[thin] (b2) edge (b4);
			\path[thin] (b2) edge (b6);
			\path[thin] (b3) edge (b4);
			\path[thin] (b3) edge (b7);
			\path[thin] (b4) edge (b5);
			\path[thin] (b5) edge (b7);
			\path[thin] (b5) edge (b6);
			\path[dashed] (b6) edge (b8);
			\path[dashed] (b7) edge (b8);
			
			\filldraw [gray,opacity=0.5,thick] (b1)--(b8)--(b6)--(b2)--(b1)--cycle;
			
			\coordinate[scale=.5,label=left:\large$x$] (d1) at (5,-2.6);
			\coordinate[scale=.5,label=right:\large$a$] (d3) at (6.5,-2.6);
			\coordinate[scale=.5,label=left:\large$y$] (d8) at (5.63,-1.97);
			\coordinate[scale=.5,label=right:\large$a$] (d7) at (7.13,-1.97);
			\coordinate[scale=.5,label=left:\large$y$] (d2) at (5,-1.1);
			\coordinate[scale=.5,label=right:\large$a$] (d4) at (6.5,-1.1);
			\coordinate[scale=.5,label=left:\large$y$] (d6) at (5.63,-0.47);
			\coordinate[scale=.5,label=right:\large$a$] (d5) at (7.13,-0.47);
			
			\node(s4) at (4.6,-1.8) [scale=1] {$\textbf{v}^{2}=$};
			
			\path[thin] (d1) edge (d2);
			\path[thin] (d1) edge (d3);
			\path[dashed] (d1) edge (d8);
			\path[thin] (d2) edge (d4);
			\path[thin] (d2) edge (d6);
			\path[thin] (d3) edge (d4);
			\path[thin] (d3) edge (d7);
			\path[thin] (d4) edge (d5);
			\path[thin] (d5) edge (d7);
			\path[thin] (d5) edge (d6);
			\path[dashed] (d6) edge (d8);
			\path[dashed] (d7) edge (d8);
			
			\filldraw [red,opacity=0.5,thick] (d8)--(d7)--(d5)--(d6)--(d8)--cycle;
		
			\coordinate[scale=.5,label=left:\large$y$] (e1) at (0,-5.2);
			\coordinate[scale=.5,label=right:\large$a$] (e3) at (1.5,-5.2);
			\coordinate[scale=.5,label=left:\large$b$] (e8) at (0.63,-4.57);
			\coordinate[scale=.5,label=right:\large$c$] (e7) at (2.13,-4.57);
			\coordinate[scale=.5,label=above:\large$y$] (e2) at (0,-3.7);
			\coordinate[scale=.5,label=right:\large$a$] (e4) at (1.5,-3.7);
			\coordinate[scale=.5,label=above:\large$b$] (e6) at (0.63,-3.07);
			\coordinate[scale=.5,label=above:\large$c$] (e5) at (2.13,-3.07);

			\filldraw [red,opacity=0.5,thick] (e1)--(e3)--(e4)--(e2)--(e1)--cycle;
			
			\node(s9) at (-0.5,-4.4) [scale=1] {\large$\textbf{w}^{2}=$};
			
			\node(s10) at (4.6,-4.4) [scale=1] {\large$\textbf{v}^{3}=$};
			
			\coordinate[scale=.5,label=left:\large$x$] (f1) at (5,-5.2);
			\coordinate[scale=.5,label=right:\large$a$] (f3) at (6.5,-5.2);
			\coordinate[scale=.5,label=left:\large$b$] (f8) at (5.63,-4.57);
			\coordinate[scale=.5,label=right:\large$c$] (f7) at (7.13,-4.57);
			\coordinate[scale=.5,label=left:\large$y$] (f2) at (5,-3.7);
			\coordinate[scale=.5,label=right:\large$a$] (f4) at (6.5,-3.7);
			\coordinate[scale=.5,label=left:\large$b$] (f6) at (5.63,-3.07);
			\coordinate[scale=.5,label=right:\large$c$] (f5) at (7.13,-3.07);
			
			\filldraw [blue,opacity=0.5,thick] (f2)--(f4)--(f5)--(f6)--(f2)--cycle;
			
			\node(s12) at (1.5,-7) [scale=1] {\large$\textbf{v}^{4}=$};
			
			\coordinate[scale=.5,label=left:\large$x$] (g1) at (2,-7.8);
			\coordinate[scale=.5,label=right:\large$a$] (g3) at (3.5,-7.8);
			\coordinate[scale=.5,label=left:\large$b$] (g8) at (2.63,-7.17);
			\coordinate[scale=.5,label=right:\large$c$] (g7) at (4.13,-7.17);
			\coordinate[scale=.5,label=left:\large$d$] (g2) at (2,-6.3);
			\coordinate[scale=.5,label=right:\large$e$] (g4) at (3.5,-6.3);
			\coordinate[scale=.5,label=left:\large$f$] (g6) at (2.63,-5.67);
			\coordinate[scale=.5,label=right:\large$g$] (g5) at (4.13,-5.67);
			
			\path[thin] (e1) edge (e2);
			\path[thin] (e1) edge (e3);
			\path[dashed] (e1) edge (e8);
			\path[thin] (e2) edge (e4);
			\path[thin] (e2) edge (e6);
			\path[thin] (e3) edge (e4);
			\path[thin] (e3) edge (e7);
			\path[thin] (e4) edge (e5);
			\path[thin] (e5) edge (e7);
			\path[thin] (e5) edge (e6);
			\path[dashed] (e6) edge (e8);
			\path[dashed] (e7) edge (e8);
			
			\path[thin] (f1) edge (f2);
			\path[thin] (f1) edge (f3);
			\path[dashed] (f1) edge (f8);
			\path[thin] (f2) edge (f4);
			\path[thin] (f2) edge (f6);
			\path[thin] (f3) edge (f4);
			\path[thin] (f3) edge (f7);
			\path[thin] (f4) edge (f5);
			\path[thin] (f5) edge (f7);
			\path[thin] (f5) edge (f6);
			\path[dashed] (f6) edge (f8);
			\path[dashed] (f7) edge (f8);
			
			\path[thin] (g1) edge (g2);
			\path[thin] (g1) edge (g3);
			\path[dashed] (g1) edge (g8);
			\path[thin] (g2) edge (g4);
			\path[thin] (g2) edge (g6);
			\path[thin] (g3) edge (g4);
			\path[thin] (g3) edge (g7);
			\path[thin] (g4) edge (g5);
			\path[thin] (g5) edge (g7);
			\path[thin] (g5) edge (g6);
			\path[dashed] (g6) edge (g8);
			\path[dashed] (g7) edge (g8);
			
			\end{tikzpicture}
			\caption{Illustration of the proof of (2) $\implies$ (4) in the case $d=3$.} \label{Fig:2imp4}
		\end{figure}
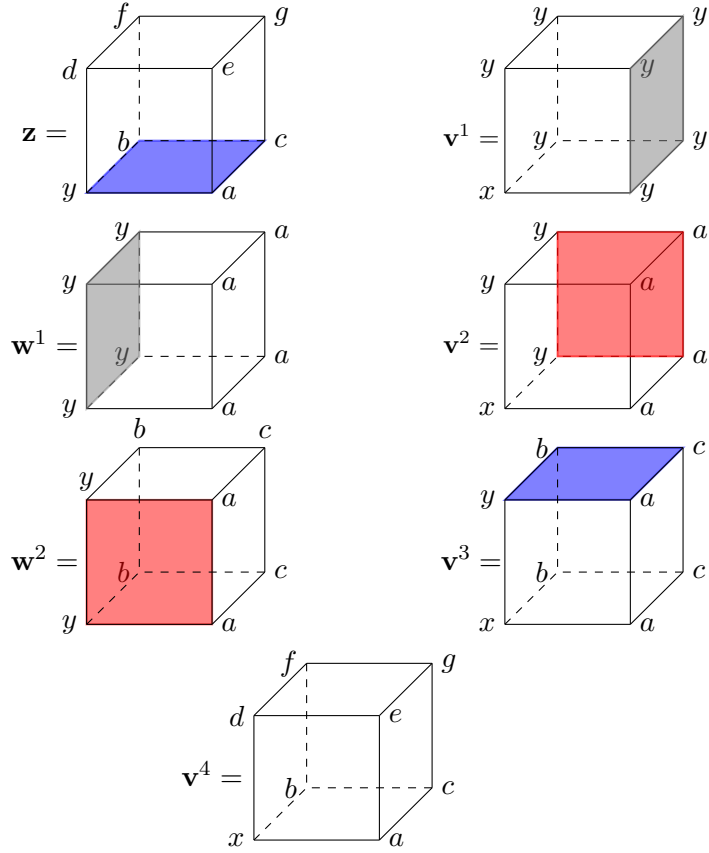
			
We can prove that $\mathcal{R}_{T_{1},\ldots,T_{d}}(X)$ is an equivalence relation in the minimal distal case.
	
\begin{theorem}
\label{EquivRel}
Let $(X,T_{1},\ldots,T_{d})$ be a minimal distal $\Z^d$-system. Then, $\mathcal{R}_{T_{1},\ldots,T_{d}}(X)$ is a closed and invariant equivalence relation on $X$.
\end{theorem}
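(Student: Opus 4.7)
The plan is to deduce everything from the characterization theorem (Theorem \ref{thm:characterization}), which applies because minimal distal systems enjoy the gluing property (Lemma \ref{pegado1}).

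First I would dispose of the easy properties. Reflexivity of $\rr_{T_1,\ldots,T_d}(X)$ is immediate from $x^{[d]}\in\QQ_{T_1,\ldots,T_d}(X)$ and symmetry was already observed right after Definition \ref{def:Relation}, so I would just recall them. Closedness and invariance are proved one index $j$ at a time (since $\rr_{T_1,\ldots,T_d}(X) = \bigcap_j \rr_{T_j}(X)$). For closedness: if $(x_n,y_n)\to (x,y)$ with $(x_n,y_n)\in \rr_{T_j}(X)$, pick witnesses $\textbf{a}_*^n \in X_*^{[d-1]}$ with $\textbf{z}(x_n,y_n,\textbf{a}_*^n,j)\in \QQ_{T_1,\ldots,T_d}(X)$; by compactness extract a convergent subsequence $\textbf{a}_*^n \to \textbf{a}_*$, and since $\QQ_{T_1,\ldots,T_d}(X)$ is closed the limit cube witnesses $(x,y)\in \rr_{T_j}(X)$. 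Invariance under each $T_i$ follows because the diagonal action $T_i^{[d]}$ preserves $\QQ_{T_1,\ldots,T_d}(X)$ (Proposition \ref{prop1} \eqref{prop1:3}), and applying it to $\textbf{z}(x,y,\textbf{a}_*,j)$ produces $\textbf{z}(T_ix,T_iy,T_i^{[d-1]}\textbf{a}_*,j)\in \QQ_{T_1,\ldots,T_d}(X)$.

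The main point is transitivity, and this is precisely what equivalence $(2)\Leftrightarrow (4)$ in Theorem \ref{thm:characterization} is engineered to give. Suppose $(x,y),(y,z)\in \rr_{T_1,\ldots,T_d}(X)$. By Theorem \ref{thm:characterization} \eqref{xyyy}, from $(y,z)\in \rr_{T_1,\ldots,T_d}(X)$ we get $(y,z_*^{[d]})\in \QQ_{T_1,\ldots,T_d}(X)$; that is, with the choice $\textbf{a}_* := z_*^{[d]}\in X_*^{[d]}$ we have $(y,\textbf{a}_*)\in \QQ_{T_1,\ldots,T_d}(X)$. Now apply the replacement property \eqref{replace} of Theorem \ref{thm:characterization} to the pair $(x,y)\in \rr_{T_1,\ldots,T_d}(X)$: this lets us freely swap $y$ for $x$ in the $\emptyset$-coordinate of a cube, yielding $(x,z_*^{[d]})\in \QQ_{T_1,\ldots,T_d}(X)$. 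Applying \eqref{xyyy} in the other direction concludes $(x,z)\in \rr_{T_1,\ldots,T_d}(X)$.

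The substantive work has therefore been absorbed into Theorem \ref{thm:characterization}, where the gluing property and Lemma \ref{lem:insert} were used to migrate a single coordinate $y\ne x$ through the faces of a cube. What one really uses here is only that the ambient system satisfies the gluing property; so the same proof would hand back transitivity in any minimal $\Z^d$-system with the gluing property, and the distality hypothesis enters only through Lemma \ref{pegado1}. The main obstacle I anticipated, before noticing the characterization, was to build a cube witnessing $(x,z)\in \rr_{T_j}(X)$ directly by stacking the two witnesses for $(x,y)$ and $(y,z)$; but Theorem \ref{thm:characterization}\eqref{replace} packages exactly this stacking step, so there is nothing further to do.
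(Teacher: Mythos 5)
Your proof is correct and follows essentially the same route as the paper: the paper likewise dismisses reflexivity, symmetry, closedness and invariance as immediate (these are observed at the end of Section \ref{sec:RegRelation}) and reduces the whole theorem to transitivity, which it derives exactly as you do by combining items \eqref{xyyy} and \eqref{replace} of Theorem \ref{thm:characterization}. Your additional remark that only the gluing property is really used is also consistent with the paper's own discussion.
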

	
\begin{proof}
It suffices to prove the transitivity of $\mathcal{R}_{T_{1},\ldots,T_{d}}(X)$. Let $(x,y), (y,z)\in \mathcal{R}_{T_{1},\ldots,T_{d}}(X)$. By Theorem \ref{prop2}\eqref{xyyy} we have that $(y,z_{*}^{[d]})\in \QQ_{T_{1},\ldots,T_{d}}(X)$. Now, by Theorem \ref{prop2}\eqref{replace} ${(x,z_{*}^{[d]})\in \QQ_{T_{1},\ldots,T_{d}}(X)}$ and thus ${(x,z)\in \mathcal{R}_{T_{1},\ldots,T_{d}}(X)}$.
\end{proof}
	
We also have the following property which allows us to lift the relation $\mathcal{R}_{T_{1},\ldots,T_{d}}(X)$ by a factor map and to prove our main theorem.
	
\begin{theorem}
Let $\pi\colon Y\to X$ be a factor map between the $\Z^d$-systems $(Y,T_{1},\ldots,T_{d})$ and $(X,T_{1},\ldots,T_{d})$.  If $(X,T_{1},\ldots,T_{d})$ is minimal and distal, then $\pi\times \pi(\mathcal{R}_{T_{1},\ldots,T_{d}}(Y))=\mathcal{R}_{T_{1},\ldots,T_{d}}(X)$.
\label{teo2}
\end{theorem}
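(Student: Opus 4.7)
My plan is to prove both set containments of the equality. The easy direction, $\pi\times\pi(\mathcal{R}_{T_1,\ldots,T_d}(Y))\subseteq\mathcal{R}_{T_1,\ldots,T_d}(X)$, proceeds by unwinding definitions. Taking $(y_1,y_2)\in\mathcal{R}_{T_1,\ldots,T_d}(Y)$ and fixing $j\in[d]$, I pick $\textbf{a}_{*}\in Y_{*}^{[d-1]}$ with $\textbf{z}(y_1,y_2,\textbf{a}_{*},j)\in\QQ_{T_1,\ldots,T_d}(Y)$ from the definition of $\mathcal{R}_{T_j}(Y)$, and apply Proposition \ref{cube1} coordinatewise: the image $\textbf{z}(\pi(y_1),\pi(y_2),\pi^{[d-1]}(\textbf{a}_{*}),j)$ lies in $\QQ_{T_1,\ldots,T_d}(X)$, so $(\pi(y_1),\pi(y_2))\in\mathcal{R}_{T_j}(X)$. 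Since $j$ is arbitrary, the inclusion follows.

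For the reverse direction, take $(x_1,x_2)\in\mathcal{R}_{T_1,\ldots,T_d}(X)$. By Theorem \ref{thm:characterization} we have $(x_1,(x_2)_{*}^{[d]})\in\QQ_{T_1,\ldots,T_d}(X)$. Choose sequences $(\tilde{x}_{i})_{i\in\N}$ in $X$ and $(\textbf{n}(i))_{i\in\N}$ in $\Z^{d}$ realizing this cube as a limit. By compactness of $Y$, I lift $\tilde{x}_{i}$ to some $\tilde{y}_{i}\in\pi^{-1}(\tilde{x}_{i})$ so that, after passing to a subsequence, $\tilde{y}_{i}\to y_{1}$ with $\pi(y_{1})=x_1$ and $T_1^{n_1(i)\varepsilon_1}\cdots T_d^{n_d(i)\varepsilon_d}\tilde{y}_{i}\to z_{\varepsilon}$ with $\pi(z_{\varepsilon})=x_2$ for every $\varepsilon\in\{0,1\}^d\setminus\{\emptyset\}$. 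This produces a cube $\textbf{w}=(y_1,z_{*})\in\QQ_{T_1,\ldots,T_d}(Y)$ above $(x_1,(x_2)_{*}^{[d]})$. It then remains to extract some $y_{2}\in\pi^{-1}(x_2)$ such that $(y_1,(y_2)_{*}^{[d]})\in\QQ_{T_1,\ldots,T_d}(Y)$, as Theorem \ref{thm:characterization} then yields $(y_1,y_2)\in\mathcal{R}_{T_1,\ldots,T_d}(Y)$ with $\pi\times\pi(y_1,y_2)=(x_1,x_2)$.

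To produce this $y_2$, I plan to work (after passing to a minimal subsystem of $Y$ projecting onto $X$ if necessary) in the distal setting so that the gluing property of Lemma \ref{pegado1} is available and $(\QQ(Y),\mathcal{G}_{T_1,\ldots,T_d})$ is minimal and distal by Proposition \ref{CubeMin}. I would set $y_{2}:=z_{1\cdots 1}$ and build the cube $(y_1,(y_2)_{*}^{[d]})$ by an iterated duplication-and-gluing scheme modeled on the proof of (5)$\Rightarrow$(2) of Theorem \ref{thm:characterization}: at step $k$ I duplicate a carefully chosen $k$-dimensional face of the current intermediate cube (via Proposition \ref{prop1}\eqref{prop1:5}) and then glue via Lemma \ref{lem:insert} so as to replace one more occurrence of $y_1$ sitting at a position of weight $k$ by $y_{2}$.

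The delicate point, and the main obstacle, is to ensure throughout this construction that the duplicated faces lie in $\QQ_{T_1,\ldots,T_d}(Y)$ and that each intermediate cube projects onto a cube of $\QQ(X)$ whose non-$\emptyset$ coordinates are all $x_2$. Here the very specific structure of the initial lift $(y_1,z_{*})$ is crucial: by Proposition \ref{prop1}\eqref{prop1:4}, every sub-face of $\textbf{w}$ missing the $\emptyset$-vertex lies in the corresponding $\QQ_{T_{j_1},\ldots,T_{j_k}}(Y)$ with all coordinates above $x_2$, so these faces are exactly the pieces from which the duplicated faces at each step will be drawn. Once the indexing of the iteration is set up so that the duplicated face at step $k$ is such a sub-face, the scheme mirrors that of Theorem \ref{thm:characterization} and terminates at the required cube $(y_1,(y_2)_{*}^{[d]})$.
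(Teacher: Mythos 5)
Your first inclusion and the lifting of a cube of $\QQ_{T_{1},\ldots,T_{d}}(X)$ to a point $\textbf{w}=(y_{1},z_{*})\in\QQ_{T_{1},\ldots,T_{d}}(Y)$ via Proposition \ref{cube1} are fine, but the homogenization step that is supposed to turn $\textbf{w}$ into $(y_{1},(y_{2})_{*}^{[d]})$ has a genuine gap, and it is not one that a cleverer indexing of the iteration can close. The duplication-and-insertion scheme in $(5)\Rightarrow(2)$ of Theorem \ref{prop2} works only because the starting point $\textbf{z}(x,y,\textbf{a}_{*},j)$ is \emph{doubled}: each value $(\textbf{a}_{*})_{\eta}$ sits at both $\Psi_{j}^{0}(\eta)$ and $\Psi_{j}^{1}(\eta)$, and this is exactly what makes the relevant faces of the auxiliary cubes coincide \emph{as points of $Y^{[d-1]}$}, which is what Lemma \ref{lem:insert} requires. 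An arbitrary lift $(y_{1},z_{*})$ has no such structure: the $z_{\varepsilon}$ all project to $x_{2}$ but are otherwise unrelated points of the fiber, and gluing needs equality in $Y$, not equality of projections. Moreover your preassigned target is already wrong for $d=2$: writing $\textbf{w}=(y_{1},z_{10},z_{01},z_{11})$, the point $(y_{1},y_{2},y_{2},y_{2})$ with $y_{2}:=z_{11}$ would force, by Proposition \ref{prop1}\eqref{prop1:4}, that $(y_{1},z_{11})\in\QQ_{T_{1}}(Y)\cap\QQ_{T_{2}}(Y)$, whereas the lift only gives $(y_{1},z_{10})\in\QQ_{T_{1}}(Y)$ and $(y_{1},z_{01})\in\QQ_{T_{2}}(Y)$; nothing relates $y_{1}$ to $z_{11}$ in either direction. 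In fact it is not clear that \emph{any} $y_{2}\in\pi^{-1}(x_{2})$ works for the particular $y_{1}$ you fixed, so the intermediate statement you reduce to is stronger than the theorem and stronger than what the paper proves.

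The paper's proof is organized around precisely the obstruction you are trying to suppress: it does \emph{not} hold the fiber over $x_{2}$ fixed. It lifts the cube $(x,\ldots,x,z)$ (with $z$ only at the $[d]$ corner) and uses minimality of the face-group actions on the lower-dimensional cube spaces (Proposition \ref{CubeMin}) to push, one direction at a time, the $k$-th lower face of the current cube onto $y_{1}^{[d-1]}$ by applying a converging sequence of group elements simultaneously to both $k$-faces. Each such limit moves the projection of the $[d]$ corner from $z_{j}$ to some $z_{j+1}$ with $(x,z_{j+1})\in\overline{\mathcal{O}((x,z_{j}),G_{[2]}^{\Delta})}$, so after $d$ steps one obtains $(y_{1},\ldots,y_{1},y_{2})\in\QQ_{T_{1},\ldots,T_{d}}(Y)$ but with $\pi(y_{2})=z_{d}$, which need not equal $z$. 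The essential final step, which has no counterpart in your plan, is to use distality of $X$ (hence of $X\times X$ under the diagonal action) to return $(x,z_{d})$ to $(x,z)$ along the diagonal orbit and pass to the limit of $(g^{i}y_{1},g^{i}y_{2})$ inside the closed, invariant relation $\mathcal{R}_{T_{1},\ldots,T_{d}}(Y)$. Note finally that the theorem assumes only $X$ minimal and distal, so the gluing property for $Y$ (Lemma \ref{pegado1}) that your scheme invokes is not available in general (a minimal subsystem of $Y$ over a distal $X$ need not be distal); the paper's argument deliberately uses only minimality of the cube spaces of $Y$ together with distality of $X$.
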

	
\begin{proof}
The proof is similar to that of Theorem 6.4 in \cite{shao2012regionally}. It is a direct consequence of the definition that ${\pi\times\pi(\rr_{T_{1},\ldots,T_{d}}(Y))\subseteq\rr_{T_{1},\ldots,T_{d}}(X)}$ and so we are left to prove the reverse inclusion. Let $(x,z)\in \mathcal{R}_{T_{1},\ldots,T_{d}}(X)$. By Theorem \ref{thm:characterization} we have that $(x,\ldots,x,z)\in \QQ_{T_{1},\ldots,T_{d}}(X)$. Replacing $Y$ by a minimal subsystem if needed, we may assume that $(Y,T_1,\ldots,T_d)$ is minimal.
				
By Proposition \ref{cube1} there exists $\textbf{y}^{0}=(y_{1},\textbf{a}_{*}) \in\QQ_{T_{1},\ldots,T_{d}}(Y)$ such that $\pi^{[d]}(\textbf{y}^0)=(x,\ldots,x,z)$. Let  $\textbf{y}^0_{\textbf{I}}=(y^0_{\varepsilon}\colon \varepsilon_d=0)$ and ${\textbf{y}^0_{\textbf{II}}=(y^0_{\varepsilon}\colon \varepsilon_d=1)}$ the $d$-th lower and upper faces of ${\bf y}^0$ respectively. We have that $\textbf{y}^0_{\textbf{I}}\in \QQ_{T_{1},\ldots,T_{d-1}}(Y)$. Let $\mathcal{G}_{T_1,\ldots,T_{d-1}}'$ denote the projection of the action of $\mathcal{G}_{T_1,\ldots,T_{d}}$ onto $\QQ_{T_{1},\ldots,T_{d-1}}(Y)$. That is,  $\mathcal{G}_{T_1,\ldots,T_{d-1}}'$ is generated by  $\mathcal{G}_{T_1,\ldots,T_{d-1}}$ and the diagonal action $G^{\triangle}_{[d-1]}$. By minimality of $(\QQ_{T_{1},\ldots,T_{d}}(Y),\mathcal{G}_{T_1,\ldots,T_{d}})$ we have that  $(\QQ_{T_{1},\ldots,T_{d-1}}(Y),\mathcal{G}_{T_1,\ldots,T_{d-1}}')$ is minimal so there exists a sequence $(\mathcal{G}^0_i)_{i\geq 1}$ in $\mathcal{G}_{T_1,\ldots,T_{d-1}}'$ such that $\mathcal{G}^0_i\textbf{y}^0_{\textbf{I}}\to y_{1}^{[d-1]}$. By compactness we can assume that $\mathcal{G}^0_i\textbf{y}^0_{\textbf{II}}\to \textbf{z}_{\textbf{II}}^0$ for some $\textbf{z}_{\textbf{II}}^0 \in \QQ_{T_{1},\ldots,T_{d-1}}(Y)$. Let $\tilde{\mathcal{G}}^0_{i}=(\mathcal{G}^0_i,\mathcal{G}^0_i)$, \emph{i.e.}, the transformation that consists in applying  $\mathcal{G}^0_i$ to both the upper and lower $d$-th faces of a point in $\QQ_{T_{1},\ldots,T_{d}}(Y)$. Thus, we have that $\tilde{\mathcal{G}}^0_i\textbf{y}^0=(\mathcal{G}^0_i\textbf{y}^0_{\textbf{I}},\mathcal{G}^0_i\textbf{y}^0_{\textbf{II}})$ converges to a point $\textbf{y}^1$ in $\QQ_{T_{1},\ldots,T_{d}}(Y)$ whose $d$-th lower face is equal to $y_{1}^{[d-1]}$ and whose $d$-th upper face is equal to $\textbf{z}_{\textbf{II}}^0$. Notice that 

\[\pi^{[d]}(\textbf{y}^{1})=\textbf{x}^{1}=(x,\ldots,x,z_1),\]
for some $z_1 \in X$. 

\textbf{Claim:} The point  $(x,z_1)$ belongs to  $\overline{\mathcal{O}((x,z),G_{[2]}^{\Delta})}.$

To prove the claim, remark that the projection of $\tilde{\mathcal{G}}^0_i\textbf{y}^0$ onto the coordinates $011 \ldots 1$ and $1\ldots 1$ is of the form $(gx,gz)$ for some $g\in G$. By construction these coordinates converge to $(x,z_1)$, proving the claim.

Now assume we have constructed points $\textbf{y}^{{j}}\in \QQ_{T_{1},\ldots,T_{d}}(Y)$ for $1\leq j \leq d-1$ with $\pi^{[d]}(\textbf{y}^{j})=\textbf{x}^{j}$ such that $y_{\varepsilon}^{j}=y_{1}$ if there exists some $k$ with $d-j+1\leq k \leq d$ and $\varepsilon_k=0$ (\emph{i.e.}, $\textbf{y}^{{j}}$ coincides with $y_1^{[d]}$ in all $k$-th lower faces for $d-j+1\leq k\leq d$),  $z_j= x_{[d]}^{j}$, $x_{\varepsilon}^{j}=x$ for all $\varepsilon \neq [d]$ and $(x,z_{j})\in \overline{\mathcal{O}((x,z_{j-1}),G_{[2]}^{\Delta})}$.

Let $\textbf{y}_{\textbf{I}}^{j}=(y_{\varepsilon}^{j}\colon \varepsilon \in \{0,1\}^{d}, \varepsilon_{d-j}=0)$ and $\textbf{y}_{\textbf{II}}^{j}=(y_{\varepsilon}^{j}\colon \varepsilon \in \{0,1\}^{d}, \varepsilon_{d-j}=1)$ be the $(d-j)$-th lower and upper faces of $\textbf{y}^{{j}}$ respectively. Let $\mathcal{G}_{T_{1},\ldots,T_{d-j-1},T_{d-j+1},\ldots,T_{d}}'$  (excluding $T_{d-j}$) be the group generated by $\mathcal{G}_{T_{1},\ldots,T_{d-j-1},T_{d-j+1},\ldots,T_{d}}$  and the diagonal $G^{\triangle}_{[d-1]}$. By a minimality argument (identical to the one written for $j=0$), there exists a  sequence
$(\mathcal{G}^j_i)_{i\geq 1}$ in $\mathcal{G}_{T_{1},\ldots,T_{d-j-1},T_{d-j+1},\ldots,T_{d-1}}'$, such that $\mathcal{G}^j_i\textbf{y}^j_{\textbf{I}}\to y_{1}^{[d-1]}$ as $i$ goes to infinity. 
By compactness we can assume that $\mathcal{G}^j_i\textbf{y}^j_{\textbf{II}}\to \textbf{z}^j_{\textbf{II}}$ for some $\textbf{z}^j_{\textbf{II}}\in \QQ_{T_{1},\ldots,T_{d-j-1},T_{d-j+1},\ldots,T_{d}}(Y)$.
Let $\tilde{\mathcal{G}}^j_{i}$ be the transformation which consists in applying $\mathcal{G}^j_i$ to both the upper and lower $(d-j)$-th faces of a point in $\QQ_{T_{1},\ldots,T_{d}}(Y)$. We have that $\tilde{\mathcal{G}}^j_{i} \textbf{y}^j$ converges to a point $\textbf{y}^{j+1} \in \QQ_{T_{1},\ldots,T_{d}}(Y)$ whose $(d-j)$-th lower face is equal to $y_{1}^{[d-1]}$ and whose $(d-j)$-th upper face is equal to $\textbf{z}^j_{\textbf{II}}$. 

Let us analyze the properties of $\textbf{y}^{j+1}$. We show that $\textbf{y}^{{j}}$ coincides with $y_1^{[d]}$ in all $k$-lower faces for $d-(j+1)+1=d-j\leq k\leq d$. By construction of $\textbf{y}^{{j}}$, this is true for the $(d-j)$-th lower face. For $d-j<k\leq d$, let $\varepsilon\in \{0,1\}^d$ with $\varepsilon_k=0$ and notice that ${y}^{j+1}_{\varepsilon}$ equals to ${y}^{j+1}_{\varepsilon'}$, for all $\varepsilon'$ differing from  $\varepsilon$ at most in the $d-j$ coordinate, where it is equal to 0 (\emph{i.e}, $\varepsilon'_{d-j}=0$). Since ${y}^{j+1}_{\varepsilon}$ equals to a value in the $(d-j)$-th lower face, we have that it is equal to $y_1$. 
Set $\textbf{x}^{j+1}=\pi^{[d]}(\textbf{y}^{j+1})$ and notice  that since $\textbf{x}^{j}$ is equal to $(x,\ldots,x,z_j)$ we have that $\textbf{x}^{j+1}$ is equal to $(x,\ldots,x,z_{j+1})$ for some $z_{j+1}\in X$. 

\textbf{Claim:} The point  $(x,z_{j+1})$ belongs to  $\overline{\mathcal{O}((x,z_{j}),G_{[2]}^{\Delta})}.$

To see this, remark that the projection of $\tilde{\mathcal{G}}^j_{i} \textbf{y}^j$ onto the coordinates 
$1\ldots0\ldots1$ (where the 0 is in the $d-j$ position) and $1\ldots1$ has the form $(gx,gz_{j})$ and these coordinates converge to $(x,z_{j+1})$. This allows us to conclude. 

Repeating this process we construct the points  $\textbf{y}^{1},\ldots,\textbf{y}^{d} \in \QQ_{T_{1},\ldots,T_{d}}(Y)$ and $\textbf{x}^{1},\ldots,\textbf{x}^{d} \in \QQ_{T_{1},\ldots,T_{d}}(X)$ such that $\pi^{[d]}(\textbf{y}^{j})=\textbf{x}^{j}$ for all $j\in [d]$. The point $\textbf{y}^{d}$ satisfies that it is equal to $y_{1}$ in all $k$-lower faces, $k\in[d]$. That is, $y_{\varepsilon}^{d}=y_{1}$ if $\varepsilon_k=0$ for some $k\in[d]$. That means that $\textbf{y}^{d}$ is equal to $(y_1,\ldots,y_1,y_2)$ for some $y_{2}\in Y$. By Theorem \ref{prop2}, $(y_{1},y_{2})\in \mathcal{R}_{T_{1},\ldots,T_{d}}(Y)$. 

Also, by construction we have that $\pi(y_{2})=z_{d}$ and from the condition $(x,z_{j})\in \overline{\mathcal{O}((x,z_{j-1}),G_{[2]}^{\Delta})}$ for $j\in [d]$, we get that $(x,z_d)\in \overline{\mathcal{O}((x,z),G_{[2]}^{\Delta})}$. Now, by distality, we have that $(x,z)\in \overline{\mathcal{O}((x,z_d),G_{[2]}^{\Delta})}$, and thus there exists a sequence $(g_{i})_{i\geq 1}$ in $G$ such that $(g^{i}x,g^{i}z_d)\to (x,z)$. By compactness we may assume that ${(g^{i}y_{1},g^{i}y_{2})\to (y'_{1},y'_{2})}$ for some $y_1',y_2'\in Y$. Then, as $\mathcal{R}_{T_{1},\ldots,T_{d}}(Y)$ is closed and invariant, the point $(y'_{1},y'_{2})$ belongs to $\mathcal{R}_{T_{1},\ldots,T_{d}}(Y)$. It is immediate that ${\pi\times \pi(y'_{1},y'_{2})=(x,z)}$, finishing the proof.

\end{proof}

\section{Structure of systems with the unique closing parallelepiped property}	
\label{sec:Structure}
In this section we describe the structure of a minimal $\Z^d$-system with the unique closing parallelepiped property (for a given choice of generators). To do so, we need first to introduce some terminology. 

\subsection{The $\QQ_{H}(X)$ relations and classes $\mathsf{Z}_{0}^{H}$}

Let $(X,T_{1},\ldots,T_{d})$ be a $\Z^{d}$-system and let $H$ be a subgroup of $\langle T_1,\ldots,T_d\rangle$. We define 
\[\QQ_{H}(X)=\overline{\left\{(x,hx)\colon x\in X, h \in H \right\}}.\]
With this notation, slightly abusing of the notation, we have that $\QQ_{T_{j}}(X)=\overline{\left\{(x,T_{j}^{n}x)\colon x\in X, n\in \Z\right\}}$ defined in Section \ref{sec:cubesandfaces} is nothing but $\QQ_{\langle T_{j}\rangle}(X)$.

It is easy to see that $\QQ_{H}(X)$ is a closed, $H$-invariant, reflexive and symmetric relation. In \cite{glasner1994topological} Glasner proved that for a $G_{\delta}$-dense subset $X_0$ of $X$, $(X_0\times X_0) \cap  \QQ_{H}(X)$ is transitive. 
But in general this relation is not an equivalence relation (for an example see \cite{tu2013dynamical}). Nevertheless, transitivity of $\QQ_{H}(X)$ is satisfied when the system is distal, because it is a particular case of the gluing property: if $(x,y),(y,z)\in \QQ_{H}(X)$ then $(x,z) \in \QQ_{H}(X)$. The proof of the gluing property in this context is almost a verbatim copy of the one of Lemma \ref{pegado1} so we omit it.
\begin{proposition}
	\label{cube3} 
	Let $(X,T_{1},\ldots,T_{d})$ be a minimal distal $\Z^{d}$-system.  
	Then, for every subgroup $H$ of $\langle T_1,\ldots,T_d\rangle$ we have that $\QQ_{H}(X)$ is a closed and invariant equivalence relation of $X$.
\end{proposition}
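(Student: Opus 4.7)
The plan is to adapt the gluing argument of Lemma \ref{pegado1} to the setting of $\QQ_H(X)$. The paragraph preceding the proposition already observes that $\QQ_H(X)$ is closed, $H$-invariant, reflexive and symmetric, so only transitivity is new. Write $G=\langle T_1,\ldots,T_d\rangle$ and introduce the subgroup $\mathcal{G}_H \subseteq G\times G$ generated by the diagonal $G^{\Delta}=\{(g,g):g\in G\}$ together with $\{1\}\times H$. Since $G$ is abelian, every element of $\mathcal{G}_H$ has the form $(g,gh)$ with $g\in G$ and $h\in H$, and this group preserves $\QQ_H(X)$: applying $(g,gh')$ to $(x,hx)$ yields $(gx,gh'hx)=(y,h''y)$ with $y=gx$ and $h''=h'h\in H$.

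I would then show that $(\QQ_H(X),\mathcal{G}_H)$ is minimal, in the spirit of Proposition \ref{CubeMin}. Distality is immediate: $(X\times X,G\times G)$ is distal by Theorem \ref{distal}(1), so $(\QQ_H(X),\mathcal{G}_H)$ is distal by Theorem \ref{distal}(2) and (4), hence every orbit closure under $\mathcal{G}_H$ is a minimal set. To identify these orbit closures with $\QQ_H(X)$, pick any $a\in X$ and compute the orbit of $(a,a)$ under $\mathcal{G}_H$: for each fixed $h\in H$, minimality of $(X,G)$ makes $\{(ga,gha):g\in G\}=\{(y,hy):y\in Ga\}$ dense in $\{(y,hy):y\in X\}$, and taking the union over $h\in H$ and closing recovers $\QQ_H(X)$ by definition.

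With minimality available, transitivity proceeds as in Lemma \ref{pegado1}. Assume $(x,y),(y,z)\in \QQ_H(X)$ and fix $a\in X$. By minimality of $(\QQ_H(X),\mathcal{G}_H)$ choose $\gamma_n=(g_n,g_nh_n)\in \mathcal{G}_H$ with $\gamma_n(x,y)=(g_nx,g_nh_ny)\to (a,a)$, and by compactness pass to a subsequence along which $g_nh_nz\to u$ for some $u\in X$. Apply the diagonal element $(g_nh_n,g_nh_n)\in G^{\Delta}\subseteq \mathcal{G}_H$ to $(y,z)\in \QQ_H(X)$ to get $(g_nh_ny,g_nh_nz)\to (a,u)$, which lies in $\QQ_H(X)$ by closedness. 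On the other hand $\gamma_n(x,z)=(g_nx,g_nh_nz)\to (a,u)$ as well, so the orbit closure of $(x,z)$ under $\mathcal{G}_H$ contains $(a,u)$. By distality this orbit closure is minimal, hence equal to the orbit closure of $(a,u)$, which is contained in the $\mathcal{G}_H$-invariant set $\QQ_H(X)$; hence $(x,z)\in \QQ_H(X)$.

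The main step is identifying the correct group $\mathcal{G}_H$ and proving the analogue of Proposition \ref{CubeMin}; once that is in place, the gluing manipulation of Lemma \ref{pegado1} adapts verbatim. I do not anticipate a serious obstacle, the key observation being that the diagonal subaction $G^{\Delta}$, because it consists of homeomorphisms commuting with $H$, lets us move the common coordinate $y$ simultaneously in both $(x,y)$ and $(y,z)$, which is exactly what is needed to pin down the limit $(a,u)$ from two different sides and conclude via distality.
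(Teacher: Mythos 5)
Your proposal is correct and follows essentially the same route as the paper, which explicitly omits the proof on the grounds that transitivity of $\QQ_H(X)$ is a special case of the gluing property and is proved by an almost verbatim adaptation of Lemma \ref{pegado1}; your choice of the group $\mathcal{G}_H$ generated by $G^{\Delta}$ and $\{1\}\times H$, the minimality and distality of $(\QQ_H(X),\mathcal{G}_H)$, and the limit argument pinning down $(a,u)$ from both sides are exactly the intended adaptation.
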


We now define the following classes of $\Z^d$-systems that will be useful to characterize systems with the unique closing parallelepiped property. For $H$ as before, let
$$\mathsf{Z}_{0}^{H} = \{(X,T_{1},\ldots,T_{d})\colon H\ \text{acts as the identity in } X\}.$$
This definition and notation is analogous to the one used by Austin in \cite{austin2010multiple} when studying the $L^2$ convergence of multiple ergodic averages in the measure theoretical setting. Using a Zorn's Lemma argument it can be proved  that every $\Z^d$-system  possesses a maximal $\mathsf{Z}_{0}^{H}$-factor \cite[Chapter 9]{auslander1988minimal}. That is, any factor of a given $\Z^d$-system in the class $\mathsf{Z}_{0}^{H}$ factorizes through its maximal $\mathsf{Z}_{0}^{H}$-factor. In addition, this factor can be characterized precisely. To this purpose, 
for a minimal $\Z^{d}$-system $(X,T_{1},\ldots,T_{d})$ let $\sigma_{H}(X)$ denote the smallest closed and invariant equivalence relation containing $\QQ_{H}(X)$.

\begin{lemma}
	\label{lemma3}
	Let $(X,T_{1},\ldots,T_{d})$ be a minimal $\Z^{d}$-system. Then, $({X}/{\sigma_{H}(X)},T_1,\ldots, T_d)$ is the maximal $\mathsf{Z}_{0}^{H}$-factor of $(X,T_{1},\ldots,T_{d})$.
\end{lemma}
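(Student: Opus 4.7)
The plan is to verify the two universal-property conditions: (a) the quotient $X/\sigma_H(X)$ itself belongs to $\mathsf{Z}_0^H$, and (b) any $\mathsf{Z}_0^H$-factor of $(X,T_1,\ldots,T_d)$ factors through $X/\sigma_H(X)$. Both steps will be direct consequences of how $\sigma_H(X)$ was built as the smallest closed invariant equivalence relation containing $\QQ_H(X)$.

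For (a), I would start from the generators of $\sigma_H(X)$. By definition, for every $x\in X$ and every $h\in H$, the pair $(x,hx)$ lies in $\QQ_H(X)\subseteq \sigma_H(X)$. Hence in the quotient $X/\sigma_H(X)$ the classes $[x]$ and $[hx]$ coincide, which means the induced action of $h$ is the identity. Since this holds for every $h\in H$, the factor $(X/\sigma_H(X),T_1,\ldots,T_d)$ is in $\mathsf{Z}_0^H$. (Here I implicitly use that $\sigma_H(X)$, being invariant under the $\Z^d$-action, passes to a well-defined $\Z^d$-action on the quotient, which is the standard construction.)

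For (b), let $\pi\colon X\to Y$ be a factor map with $(Y,T_1,\ldots,T_d)\in \mathsf{Z}_0^H$, and let $R_\pi=\{(x,x')\in X\times X : \pi(x)=\pi(x')\}$ denote the associated equivalence relation. Then $R_\pi$ is closed, invariant under the diagonal action, and an equivalence relation. For any $x\in X$ and $h\in H$, since $h$ acts as the identity on $Y$, we have $\pi(hx)=h\pi(x)=\pi(x)$, so $(x,hx)\in R_\pi$. Taking closures gives $\QQ_H(X)\subseteq R_\pi$, and by minimality of $\sigma_H(X)$ among closed invariant equivalence relations containing $\QQ_H(X)$, we conclude $\sigma_H(X)\subseteq R_\pi$. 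This inclusion is exactly the statement that $\pi$ factorizes as a composition of the quotient map $X\to X/\sigma_H(X)$ followed by a factor map $X/\sigma_H(X)\to Y$.

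There is no serious obstacle here; the argument is a routine universal-property check once one observes that $\QQ_H(X)$ itself is closed, invariant, and reflexive-symmetric, so the only non-trivial work Glasner's construction avoids is already packaged in the definition of $\sigma_H(X)$. The mildly subtle points are (i) checking that $\sigma_H(X)$ is $\Z^d$-invariant so that it actually yields a $\Z^d$-factor (which follows from its defining minimality together with the $\Z^d$-invariance of $\QQ_H(X)$), and (ii) making sure the factor map $X/\sigma_H(X)\to Y$ in step (b) is continuous, which follows from the quotient topology and the continuity of $\pi$.
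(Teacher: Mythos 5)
Your proof is correct and follows essentially the same route as the paper: showing that the generators $(x,hx)$ collapse in the quotient to put it in $\mathsf{Z}_0^{H}$, and then observing that $R_{\pi}$ is a closed invariant equivalence relation containing $\QQ_{H}(X)$, hence containing $\sigma_{H}(X)$ by minimality. Nothing further is needed.
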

\begin{proof}
	Let $\pi:X\to {X}/{\sigma_{H}(X)}$ be the natural factor map. First we prove that $({X}/{\sigma_{H}(X)},T_{1},\ldots,T_{d})\in \mathsf{Z}_{0}^{H}$. Let $h\in H$, if $[\cdot ]$ denotes the equivalence class of a point for the relation $\sigma_{H}(X)$, then $h[x]=[hx]$. Since $(x,hx)\in \sigma_{H}(X)$ we get that $h[x]=[x]$. So $H$ acts trivially on ${X}/{\sigma_{H}(X)}$.
	Now, let $(Z,T_1,\ldots,T_d)$ be a factor of $(X,T_1,\ldots,T_d)$ that belongs to $\mathsf{Z}_{0}^{H}$ and let $\pi'\colon X\to Z$ be the corresponding factor map. Let $R_{\pi'}=\{(x,y)\in X\colon \pi'(x)=\pi'(y)\}$. We have to prove that $\sigma_{H}(X)\subseteq R_{\pi'}$. In fact, since $R_{\pi'}$ is a closed and invariant equivalence relation, it suffices to prove that $\QQ_{H}(X)\subseteq R_{\pi'}$.  
	Since $(Z,T_1,\ldots,T_d) \in \mathsf{Z}_{0}^{H}$ we have that for every $h\in H$, $\pi'\circ h=h\circ \pi'=\pi'$. Hence, for any $x \in X$, $\{(x,hx)\colon h\in H \}\subseteq R_{\pi'}$, and thus ${\QQ_{H}(X)\subseteq R_{\pi'}}$ as desired.
\end{proof}

In the distal case, by Proposition \ref{cube3}, $\QQ_{H}(X)$ is an equivalence relation and thus $\QQ_{H}(X)=\sigma_{H}(X)$. In this case Lemma \ref{lemma3} can be stated as,
\begin{corollary}
	\label{equivdistal}
	Let $(X,T_{1},\ldots,T_{d})$ be a minimal distal $\Z^{d}$-system. Then, $({X}/{\QQ_{H}(X)}, T_{1},\ldots,T_{d})$ is the maximal $\mathsf{Z}_{0}^{H}$-factor of $(X,T_{1},\ldots,T_{d})$.
\end{corollary}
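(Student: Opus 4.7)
The plan is to observe that the corollary is essentially an immediate combination of Proposition \ref{cube3} and Lemma \ref{lemma3}, with the key reduction being the equality $\sigma_{H}(X) = \QQ_{H}(X)$ in the distal setting.

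First, I would invoke Proposition \ref{cube3}: since $(X,T_{1},\ldots,T_{d})$ is minimal and distal, for every subgroup $H$ of $\langle T_{1},\ldots,T_{d}\rangle$ the relation $\QQ_{H}(X)$ is already a closed invariant equivalence relation on $X$. Now recall that $\sigma_{H}(X)$ was defined as the \emph{smallest} closed and invariant equivalence relation containing $\QQ_{H}(X)$. Since $\QQ_{H}(X)$ trivially contains itself and is a closed invariant equivalence relation by Proposition \ref{cube3}, we immediately obtain
\[ \sigma_{H}(X) = \QQ_{H}(X). \]

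Substituting this equality into the conclusion of Lemma \ref{lemma3}, which asserts that $(X/\sigma_{H}(X),T_{1},\ldots,T_{d})$ is the maximal $\mathsf{Z}_{0}^{H}$-factor of $(X,T_{1},\ldots,T_{d})$, yields that $(X/\QQ_{H}(X),T_{1},\ldots,T_{d})$ is the maximal $\mathsf{Z}_{0}^{H}$-factor, as claimed.

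There is essentially no obstacle here: all the work has been done upstream. The only subtlety worth checking is that the quotient $X/\QQ_{H}(X)$ inherits a well-defined $\Z^{d}$-action from $(X,T_{1},\ldots,T_{d})$, but this is immediate from the $\langle T_{1},\ldots,T_{d}\rangle$-invariance of $\QQ_{H}(X)$ provided by Proposition \ref{cube3}, which is precisely what makes the quotient dynamically meaningful and what Lemma \ref{lemma3} relies on as well. Hence the proof is a direct citation of the two preceding results.
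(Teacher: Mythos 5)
Your proof is correct and is exactly the argument the paper gives: Proposition \ref{cube3} shows $\QQ_{H}(X)$ is itself a closed invariant equivalence relation in the minimal distal case, hence $\sigma_{H}(X)=\QQ_{H}(X)$, and Lemma \ref{lemma3} then yields the conclusion.
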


This result generalizes to iterated quotients. 
\begin{proposition}
	Let $(X,T_1,\ldots,T_d)$ be a minimal distal $\Z^d$-system and let $H$, $H_1$ and $H_2$ be subgroups of 
	$\langle T_1,\ldots,T_d\rangle$ such that $H=H_1H_2$. 
	Let $Y=X/\QQ_{H_1}(X)$. 
	Then, $(X/\QQ_{H}(X),T_1,\ldots,T_d)$ is isomorphic to $(Y/\QQ_{H_2}(Y),T_1,\ldots,T_d)$.
\end{proposition}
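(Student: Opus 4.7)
The plan is to identify both $X/\QQ_H(X)$ and $Y/\QQ_{H_2}(Y)$ as the maximal factor of $(X,T_1,\ldots,T_d)$ in the class $\mathsf{Z}_0^H$, and then conclude by the usual uniqueness up to isomorphism of such maximal factors in the minimal distal setting. The tool is Corollary \ref{equivdistal}, applied first to $X$ with the subgroup $H_1$ (yielding $Y$), and then to $Y$ with the subgroup $H_2$.

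First I would verify that $Y/\QQ_{H_2}(Y)$ belongs to $\mathsf{Z}_0^H$. By Corollary \ref{equivdistal}, $H_1$ acts trivially on $Y$, and therefore on any factor of $Y$, in particular on $Y/\QQ_{H_2}(Y)$; and by construction $H_2$ acts trivially on $Y/\QQ_{H_2}(Y)$. Because the ambient group $\langle T_1,\ldots,T_d\rangle$ is abelian, $H=H_1H_2$ is indeed a subgroup, each of its elements is a product $h_1h_2$ with $h_i\in H_i$, and both factors act as the identity on $Y/\QQ_{H_2}(Y)$. Hence $Y/\QQ_{H_2}(Y)\in\mathsf{Z}_0^H$, and by Corollary \ref{equivdistal} there is a natural factor map $\gamma\colon X/\QQ_H(X)\to Y/\QQ_{H_2}(Y)$ compatible with the natural projections from $X$.

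Next I would construct a factor map in the opposite direction. Since $H_1\subseteq H$, the system $X/\QQ_H(X)$ lies in $\mathsf{Z}_0^{H_1}$, and so by the maximality of $Y=X/\QQ_{H_1}(X)$ as the $\mathsf{Z}_0^{H_1}$-factor of $X$ there is a factor map $\alpha\colon Y\to X/\QQ_H(X)$ with $\alpha\circ\pi_{H_1}=\pi_H$. Viewed via $\alpha$ as a factor of $Y$, the system $X/\QQ_H(X)$ still lies in $\mathsf{Z}_0^{H_2}$ (since $H_2\subseteq H$ and $H$ acts trivially on it), so Corollary \ref{equivdistal} applied to $Y$ produces a factor map $\beta\colon Y/\QQ_{H_2}(Y)\to X/\QQ_H(X)$ with $\beta\circ\pi_{H_2}'=\alpha$, where $\pi_{H_2}'\colon Y\to Y/\QQ_{H_2}(Y)$ is the canonical projection.

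Finally I would check $\beta$ and $\gamma$ are mutually inverse by chasing the diagram over $X$: the two maps $\gamma\circ\beta$ and $\mathrm{id}$ from $Y/\QQ_{H_2}(Y)$ to itself both equalize $\pi_{H_2}'\circ\pi_{H_1}$, and similarly $\beta\circ\gamma$ and $\mathrm{id}$ both equalize $\pi_H$; surjectivity of the projections from $X$ then forces $\gamma\circ\beta=\mathrm{id}$ and $\beta\circ\gamma=\mathrm{id}$, giving the desired isomorphism commuting with $T_1,\ldots,T_d$. I do not anticipate a genuine obstacle: the only point to watch is that $H=H_1H_2$ is a well-defined subgroup (which is automatic from abelianness) and that the two uses of Corollary \ref{equivdistal} apply cleanly, one to $X$ and one to the minimal distal quotient $Y$.
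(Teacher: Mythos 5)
Your proof is correct, but it takes a genuinely different route from the paper's. The paper works directly with the two equivalence relations on $X$: writing $\pi\colon X\to Y/\QQ_{H_2}(Y)$ for the composed projection, it shows $\QQ_{H}(X)=R_{\pi}$ by double inclusion. The easy inclusion is as in your first step, but the reverse inclusion is handled by a concrete lifting claim: given $(x,x')$ with $(\pi_1(x),\pi_1(x'))\in\QQ_{H_2}(Y)$, one uses the openness of $\pi_1$ (a consequence of distality) and a compactness argument to produce $x''\in X$ with $(x,x'')\in\QQ_{H_2}(X)$ and $\pi_1(x'')=\pi_1(x')$, and then concludes by transitivity of $\QQ_{H}(X)$. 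You instead bypass this lifting entirely by invoking the universal property of Corollary \ref{equivdistal} three times (for $(X,H)$, $(X,H_1)$ and $(Y,H_2)$) to build factor maps $\gamma$ and $\beta$ in both directions, and close with a diagram chase over $X$ using surjectivity of the canonical projections. Both arguments use distality in an essential way — the paper for openness of $\pi_1$ and transitivity of $\QQ_{H}(X)$, you for the identification of the quotients $X/\QQ_{H}(X)$, $X/\QQ_{H_1}(X)$ and $Y/\QQ_{H_2}(Y)$ with the corresponding maximal $\mathsf{Z}_{0}$-factors (which requires $\QQ_{H}$, $\QQ_{H_1}$, $\QQ_{H_2}$ to be equivalence relations, Proposition \ref{cube3}, and note that $Y$ is again minimal and distal so the corollary indeed applies to it). Your version is shorter and more formal; the paper's version has the side benefit of exhibiting explicitly that the relation $R_{\pi}$ equals $\QQ_{H}(X)$, and its lifting claim is a reusable technical device. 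Your one caveat (that $H_1H_2$ is a subgroup by abelianness) is correctly identified and harmless.
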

\begin{proof} Let $\pi_1\colon X\to Y (=X/\QQ_{H_1}(X))$ and  $\pi\colon X\to Y/\QQ_{H_2}(Y)$ be the natural factor maps and $R_{\pi}=\{(x,x'): \pi(x)=\pi(x') \}$. 
	We have that $Y/\QQ_{H_2}(Y)$ is a factor of $X$ where the action of both $H_1$ and $H_2$ are trivial. Hence, the action of $H$ is trivial as well. It follows that $\QQ_{H}(X)\subseteq R_{\pi}$. For the converse inclusion, assume that $(x,x')\in R_{\pi}$. Then $(\pi_1(x),\pi_1(x'))\in \QQ_{H_2}(Y)$.  
	
	{\bf Claim:} there exists $x'' \in X$ such that $(x,x'')\in \QQ_{H_2}(X)$ and $\pi_1(x'')=\pi_1(x')$. 
	
	To prove the claim, consider $\epsilon>0$ and let $\epsilon>\delta>0$  so that $\pi(B(x,\epsilon))$ contains $B(\pi_1(x),\delta)$. This can be done by the openness of $\pi_1$ (recall the systems are distal). Since $(\pi_1(x),\pi_1(x'))\in \QQ_{H_2}(Y)$, there exists $\tilde{x}\in X$ and $h_2=h_2(\delta) \in H_2$ such that $\pi_1(\tilde{x})$ is $\delta$ close to $\pi_1(x)$ and $h_2\pi_1(\tilde{x})=\pi_1(h_2\tilde{x})$ is $\delta$ close to $\pi_1(x')$. We can find a point $\tilde{x}'$ that is $\epsilon$ close to $x$ such that $\pi_1(\tilde{x}')=\pi_1(\tilde{x})$ and so $\pi_1(h_2\tilde{x}')$ is $\delta$ close to $\pi_1(x')$. A compactness argument allows us to find $x''$ as a limit point of $h_2\tilde{x}'$,  satisfying then the claim statement. 
	
	Using the claim, notice that $(x,x'')\in \QQ_{H_2}(X)\subseteq \QQ_{H}(X)$ and $(x'',x')\in \QQ_{H_1}(X)\subseteq \QQ_{H}(X)$. Since $\QQ_{H}(X)$ is an equivalence relation we conclude that $(x,x')\in \QQ_{H}(X)$, finishing the proof.
	
\end{proof}

To ease notations, we let $X/\QQ_{H_1}/\QQ_{H_2}$ denote the quotient  $Y/\QQ_{H_2}(Y)$, where $Y=X/\QQ_{H_1}(X)$. We naturally extend this notation to iterated quotients $X/\QQ_{H_1}/\QQ_{H_2}/\cdots /\QQ_{H_n}$ that are defined in the obvious way. 

Notice that if $\{j_{1},\ldots,j_{k}\}\subseteq [d]$ we have that
$$\QQ_{\left\langle T_{j_{1}},\ldots,T_{j_{k}}\right\rangle}(X)=\overline{\{(x,hx)\colon x\in X,\ h\in \left\langle T_{j_{1}},\ldots,T_{j_{k}}\right\rangle\}}.$$
We warn the reader that the sets $\QQ_{\left\langle T_{j_{1}},\ldots,T_{j_{k}}\right\rangle}(X)$ and $\QQ_{T_{j_{1}},\ldots, T_{j_{k}}}(X)$ are different. The first one is a subset of $X^2$ while the second one is a subset of $X^{[k]}$.  

Following the notation in \cite{austin2010multiple} we denote by $\mathsf{Z}_{0}^{e_{j}}$  the class $\mathsf{Z}_{0}^{\left\langle T_{j}\right\rangle}$ and by $\mathsf{Z}_{0}^{e_{j_{1}} \wedge e_{j_{2}}}$ the intersection of the classes $\mathsf{Z}_{0}^{e_{j_{1}}}$ and $\mathsf{Z}_{0}^{e_{j_{2}}}$, which corresponds to the class $\mathsf{Z}_{0}^{\left\langle T_{j_{1}},T_{j_{2}}\right\rangle}$.

\subsection{Dynamical structure of systems with the unique closing parallelepiped property}

We describe first a soft structure theorem for minimal systems with the unique closing parallelepiped property.

\begin{proposition} \label{prop:softstructure}
	Let $(X,T_{1},\ldots,T_{d})$ be a minimal (not necessarily distal) $\Z^{d}$-system with the unique closing parallelepiped property. Then, 
	\begin{enumerate}
		\item for each $j\in [d]$ there exists a $\Z^d$-system $(Y_j,T_{1},\ldots,T_{d})$ such that $T_j$ is the identity transformation on $Y_j$;
		\item there exists a joining $(Y,T_1,\ldots,T_d)$ of the systems $(Y_1,T_1,\ldots,T_d),\ldots,(Y_d,T_1,\ldots,T_d)$ which is an extension of $(X,T_1,\ldots,T_d)$. 
	\end{enumerate}
Otherwise saying, $(X,T_1,\ldots,T_d)$ 
has an extension which is a joining of systems, where on each one a different transformation acts as the identity.
\end{proposition}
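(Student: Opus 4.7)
The plan is to construct $Y_1,\ldots,Y_d$ and $Y$ directly as projections of the cube space $\QQ_{T_1,\ldots,T_d}(X)$, equipped with a suitable $\Z^d$-action obtained by twisting the face and diagonal actions. For each $j\in[d]$, let $D_j\in G_{[d]}^{\Delta}$ denote the diagonal transformation applying $T_j$ to every coordinate of $X^{[d]}$, and define
\[
\tilde T_j := D_j\cdot (T_j^{[d]})^{-1}\in \mathcal{G}_{T_1,\ldots,T_d}.
\]
A coordinate-by-coordinate check shows that $\tilde T_j$ acts as $T_j$ on the coordinate $\varepsilon$ whenever $j\notin \varepsilon$ and as the identity whenever $j\in \varepsilon$. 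Because the face and diagonal elements pairwise commute, so do the $\tilde T_j$'s, so the assignment $T_j\mapsto \tilde T_j$ defines a $\Z^d$-action on $\QQ_{T_1,\ldots,T_d}(X)$. The $\vec{0}$-coordinate projection $\pi_{\vec{0}}\colon \QQ_{T_1,\ldots,T_d}(X)\to X$ is equivariant for this action (since $\tilde T_j$ agrees with $T_j$ on that coordinate) and surjective (since $x^{[d]}\in \QQ_{T_1,\ldots,T_d}(X)$ for every $x\in X$), hence a factor map.

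For item (1), for each $j$ let $p_j\colon \QQ_{T_1,\ldots,T_d}(X)\to X^{2^{d-1}}$ denote the projection onto the $j$-upper face $(x_\varepsilon:\varepsilon_j=1)$, and set $Y_j=p_j(\QQ_{T_1,\ldots,T_d}(X))$, equipped with the $\Z^d$-action inherited from $\tilde T_1,\ldots,\tilde T_d$. Since $\tilde T_j$ restricts to the identity on every coordinate with $j\in \varepsilon$, the transformation $T_j$ on $Y_j$ is the identity, yielding the required $\Z^d$-system.

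For item (2), consider the product map $P=\prod_{j=1}^{d} p_j\colon \QQ_{T_1,\ldots,T_d}(X)\to Y_1\times\cdots\times Y_d$. Every $\varepsilon\in\{0,1\}^d$ with $\varepsilon\neq \vec{0}$ lies in the $j$-upper face for some $j$, so two cubes with the same image under $P$ agree on all $2^d-1$ coordinates different from $\vec{0}$; the unique closing parallelepiped property then forces them to coincide everywhere, so $P$ is an equivariant topological embedding. Let $Y=P(\QQ_{T_1,\ldots,T_d}(X))$. Then $Y$ is a closed subset of $Y_1\times\cdots\times Y_d$, invariant under the diagonal $\Z^d$-action (by the equivariance of each $p_j$) and surjects onto every factor by construction, so it is a joining; composing $\pi_{\vec{0}}$ with $P^{-1}$ provides the factor map $Y\to X$ that realises $Y$ as an extension of $X$.

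The main conceptual hurdle is to spot the correct twisted action $\tilde T_j=D_j(T_j^{[d]})^{-1}$, which simultaneously trivialises $T_j$ on the $j$-upper face while keeping the $\vec{0}$-coordinate projection equivariant. Once this twist is identified, the unique closing parallelepiped property is precisely what guarantees that the collection of upper-face projections separates points in the cube space, and the joining is produced automatically.
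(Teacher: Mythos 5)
Your argument is correct, and it rests on the same underlying idea as the paper's proof: realize an extension of $X$ inside the cube space carrying a $\Z^d$-action for which each generator acts trivially on a codimension-one face, and use the unique closing parallelepiped property to see that the face projections separate points. The realizations differ, though. The paper works with the based face-orbit space $\KK^{x_0}_{T_1,\ldots,T_d}$ (passing to a minimal subsystem in the non-distal case) under the face transformations $T_1^{[d]},\ldots,T_d^{[d]}$, uses the projection onto the $[d]$-coordinate as the factor map onto $X$, and takes $Y_j$ to be the projection onto the coordinates with $\varepsilon_j=0$. You keep the full cube space $\QQ_{T_1,\ldots,T_d}(X)$, twist by the diagonal to get $\tilde T_j=D_j(T_j^{[d]})^{-1}$ (trivial exactly on the $j$-upper face, and still an element of $\mathcal{G}_{T_1,\ldots,T_d}$, so it preserves the cube space by Proposition \ref{prop1}), use the $\vec{0}$-coordinate as the factor map, and take $Y_j$ to be the $j$-upper face projection. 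Your version has two minor advantages: surjectivity of $\pi_{\vec 0}$ is immediate from $x^{[d]}\in\QQ_{T_1,\ldots,T_d}(X)$, so no base point or minimal subsystem is needed; and the joining map $P=\prod_j p_j$ together with its injectivity -- which is exactly the unique closing parallelepiped property applied to the $2^d-1$ nonzero coordinates -- is made completely explicit, where the paper argues more informally that the last coordinate ``is determined by the others.'' Both routes yield the same conclusion.
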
 
\begin{figure}[H]
	\begin{center}
		\begin{tikzpicture}
		\matrix (m) [matrix of math nodes,row sep=3em,column sep=4em,minimum width=2em,ampersand replacement=\&]
		{
			\& (X,T_1,\ldots,T_d) \& (Y,T_1,\ldots,T_d) \&   \\
			(Y_1,,T_1,\ldots,T_d) \& (Y_2,T_1,\ldots,T_d) \& \cdots \& (Y_d,T_1,\ldots,T_d)  \\};
		\path[-stealth]
		(m-1-3) edge node[above] {$ $} (m-1-2)
		(m-1-3) edge node[above] {$ $} (m-2-1)
		(m-1-3) edge node[above] {$ $} (m-2-2)
		(m-1-3) edge node[above] {$ $} (m-2-3)
		(m-1-3) edge node[above] {$ $} (m-2-4)
		;     
		\end{tikzpicture}
		\caption{Structure of a minimal system $(X,T_1,\ldots,T_d)$ with the unique closing parallelepiped property}
	\end{center}
\end{figure}
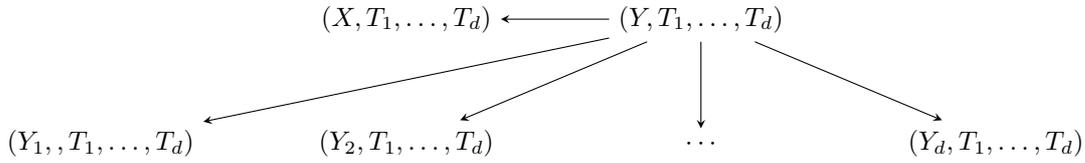

\begin{proof}
	Let $(X,T_1,\ldots,T_d)$ be a  minimal $\Z^{d}$-system and let $x_{0}\in X$.  Consider the $\Z^d$-system $(\KK_{T_{1},\ldots,T_{d}}^{x_{0}},$ $T_1^{[d]},\ldots,T_d^{[d]})$ (recall the face transformations defined in  Section \ref{sec:cubesandfaces}). Such system is similar to the one considered in \cite{donoso2014dynamical} when $d=2$.
	Let $Y\subseteq \KK_{T_{1},\ldots,T_{d}}^{x_{0}}$ be a minimal subsystem (note that in the distal case we have that $Y=\KK_{T_{1},\ldots,T_{d}}^{x_{0}}$). Here we consider the natural factor map from $(Y,T_1^{[d]},\ldots,T_d^{[d]})$ to $(X,T_1,\ldots,T_d)$ given by the projection onto the last coordinate of points in $Y$.  We analyze how $Y$ looks like when $(X,T_1,\ldots,T_d)$ has the unique closing parallelepiped property and show that $Y$ can be ``naturally decomposed'' as a joining of some factors where on each one the action of one transformation is the identity. That is, $Y$ is a joining of lower dimensional actions.
	
We proceed as follows. Points in $Y$ have $2^d-1$ coordinates and since $(X,T_1,\ldots,T_d)$ has the unique closing parallelepiped property, the last one is determined by the others. Hence, we can think of points in $Y$ having only $2^d-2$ coordinates. 
	
	\begin{figure}[H]\label{fig:constY}
		\begin{tikzpicture}[scale=1.3]
		\node(s) at (0,2) [scale=1] {\large$\begin{array}{cccccc:c}\text{ A point in } Y: & y_{10\ldots 0} & \cdots  & y_{0\ldots 01} & \cdots  & y_{1\ldots 1 0} & y_{1\cdots 1}\\ T_1^{[d]}:& T_{1} & \cdots & \id & \cdots & T_{1} & T_{1}\\  & \vdots & \ldots & \vdots & \cdots & \vdots & \vdots \\
			T_{d-1}^{[d]}  & \id & \cdots & \id & \cdots & T_{d-1} & T_{d-1}\\ T_{d}^{[d]}& \id & \cdots & T_{d} & \cdots & \id & T_{d} \end{array}$};
		\node(s1) at (0.51,1) [scale=.1] {};
		\end{tikzpicture}
		\caption{How a point in $Y$ and the face transformations look like.}
\end{figure}
In Figure \ref{fig:constY},	the dashed vertical line separates the last coordinate of a point in $Y$ that is determined by the others. From now on we omit this coordinate. Now we describe how to build the factors $Y_1,\ldots,Y_d$ whose joining is $Y$.
	
	For $j\in [d]$, let $Y_j$ be the projection of $Y$ onto the coordinates where $T_j^{[d]}$ is trivial, \emph{i.e.}, onto the coordinates $\varepsilon \in \{0,1\}^{d}\setminus \{\vec{0}\}$ such that $\varepsilon_j=0$.  We leave the reader to verify that $Y_j$ is a subset of $\KK_{T_{1},\ldots,T_{j-1},T_{j+1},\ldots,T_{d}}^{x_{0}}$ (the face orbit where we forget the transformation $T_j$).
	
	Because we are forgetting the  last coordinate,  for all coordinates of a point in $Y$ at least one of the transformations acts as the identity. It follows then that the projections of a point ${\bf y}  \in Y$ into the factors $Y_j$, $j\in [d]$, determine the point ${\bf y}$ itself, {\em i.e.,} $Y$ is a joining of the systems $Y_j$, $j\in[d]$. This finishes the proof.  
\end{proof}

Remark that by definition, for $j\in[d]$ the system  $(Y_j,T_1,\ldots,T_d)$  belongs to the class $\mathsf{Z}_{0}^{e_{j}}$. What next result shows is that in the distal  case $(Y_j,T_1,\ldots,T_d)$ is actually the maximal factor of $(Y,T_1,\ldots,T_d)$ with respect to this class.  	

\begin{proposition}
	\label{cube2}
	Let $(X,T_{1},\ldots,T_{d})$ be a minimal distal $\Z^{d}$-system and $x_{0}\in X$. Then, for any $j\in [d]$,  $(\KK_{T_{1},\ldots,T_{j-1},T_{j+1},\ldots,T_{d}}^{x_{0}},T_1^{[d]},\ldots,T_d^{[d]})$ is isomorphic to \large$(\KK_{T_{1},\ldots,T_{d}}^{x_{0}}/{\QQ_{T_{j}^{[d]}}(\KK_{T_{1},\ldots,T_{d}}^{x_{0}})},T_1^{[d]},\ldots,T_d^{[d]})$\normalsize.
	In particular, $(\KK_{T_{1},\ldots,T_{j-1},T_{j+1},\ldots,T_{d}}^{x_{0}},T_1^{[d]},\ldots,T_d^{[d]})$ is the maximal $\mathsf{Z}_{0}^{e_{j}}$-factor of $(\KK_{T_{1},\ldots,T_{d}}^{x_{0}},T_1^{[d]},\ldots,T_d^{[d]})$.
\end{proposition}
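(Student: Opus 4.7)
The natural candidate for the isomorphism is the projection
\[\pi_j \colon \KK_{T_1,\ldots,T_d}^{x_0} \longrightarrow \KK_{T_1,\ldots,T_{j-1},T_{j+1},\ldots,T_d}^{x_0}\]
that forgets the coordinates $\varepsilon\in\{0,1\}^d$ with $\varepsilon_j=1$ and deletes the $j$-th bit of the remaining indices. Proposition \ref{prop1}(4) shows that $\pi_j$ takes values in the claimed target, and Proposition \ref{prop1}(5) shows it is surjective (lift any point of the target by duplicating along the $j$-th direction). Equivariance with respect to $T_1^{[d]},\ldots,T_d^{[d]}$ is immediate from the definitions: for $k\neq j$, the restriction of $T_k^{[d]}$ to coordinates with $\varepsilon_j=0$ is the corresponding face transformation of the target, while $T_j^{[d]}$ fixes every coordinate with $\varepsilon_j=0$ and hence induces the identity on the target. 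Thus the target lies in $\mathsf{Z}_0^{e_j}$, each $T_j^{[d]}$-orbit is contained in a single $\pi_j$-fiber, and consequently $\QQ_{T_j^{[d]}}(\KK_{T_1,\ldots,T_d}^{x_0})\subseteq R_{\pi_j}$, where $R_{\pi_j}$ denotes the $\pi_j$-fiber relation. In particular $\pi_j$ factors as $\phi\circ q$, with $q\colon \KK_{T_1,\ldots,T_d}^{x_0}\to \KK_{T_1,\ldots,T_d}^{x_0}/\QQ_{T_j^{[d]}}$ the natural quotient, and the ``in particular'' clause of the statement will follow from Corollary \ref{equivdistal} once $\phi$ is known to be an isomorphism.

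The task thus reduces to proving the reverse inclusion $R_{\pi_j}\subseteq \QQ_{T_j^{[d]}}(\KK_{T_1,\ldots,T_d}^{x_0})$. I first verify, by the same argument as in Proposition \ref{CubeMin}, that both $(\KK_{T_1,\ldots,T_d}^{x_0},\mathcal{F}_{T_1,\ldots,T_d}^{x_0})$ and $(\KK_{T_1,\ldots,T_{j-1},T_{j+1},\ldots,T_d}^{x_0},\mathcal{F})$ are minimal and distal: distality descends from $(X^{[d]},G^{[d]})$ via Theorem \ref{distal}(1),(2),(4), while transitivity is the very definition of $\KK$, so Theorem \ref{distal}(3) yields minimality. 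Given ${\bf y},{\bf y}'\in \KK_{T_1,\ldots,T_d}^{x_0}$ with $\pi_j({\bf y})=\pi_j({\bf y}')$, minimality of the face action produces a sequence $h_k\in \mathcal{F}_{T_1,\ldots,T_d}^{x_0}$ with $h_k{\bf y}\to {\bf y}'$. Writing $h_k=g_k(T_j^{[d]})^{n_k}$ with $g_k$ a word in the face transformations $T_i^{[d]}$ for $i\neq j$, and projecting by $\pi_j$, one gets $g_k\pi_j({\bf y})\to \pi_j({\bf y})$. After passing to subsequences we may assume $(T_j^{[d]})^{n_k}{\bf y}\to {\bf z}$ --- so ${\bf z}$ lies in the $\QQ_{T_j^{[d]}}$-class of ${\bf y}$ --- and by continuity $g_k{\bf z}\to {\bf y}'$.

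The main obstacle is concluding that ${\bf z}={\bf y}'$, equivalently that the limit of $g_k$ acts trivially on ${\bf z}$. The cleanest route is to argue in the enveloping semigroup $E(\KK_{T_1,\ldots,T_d}^{x_0},\mathcal{F})$, which by distality is a group: the factor $\pi_j$ induces a surjective group homomorphism $\pi_j^\ast$ onto $E(\KK_{T_1,\ldots,T_{j-1},T_{j+1},\ldots,T_d}^{x_0},\mathcal{F})$, and one must verify that any $p\in \ker \pi_j^\ast$ acts on an arbitrary point of $\KK_{T_1,\ldots,T_d}^{x_0}$ as some element of $\overline{\langle T_j^{[d]}\rangle}\subseteq E(\KK_{T_1,\ldots,T_d}^{x_0},\mathcal{F})$. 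The containment of $\overline{\langle T_j^{[d]}\rangle}$ in $\ker \pi_j^\ast$ is routine; the converse identification is where the cube structure has to be exploited --- via the gluing property (Lemma \ref{pegado1}) and the face-permutation invariance (Lemma \ref{lem:goodpermutations}) --- to witness enough cube-theoretic moves inside each $\pi_j$-fiber. Once this is in hand, the limit of $g_k$ in $E$ acts on ${\bf z}$ as an element of $\overline{\langle T_j^{[d]}\rangle}$, which can be absorbed into the sequence $(T_j^{[d]})^{n_k}$, giving ${\bf y}'\in \overline{\{(T_j^{[d]})^n{\bf y}:n\in \Z\}}$, as required.
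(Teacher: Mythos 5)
Your reduction is the right one and matches the paper's: it suffices, by Corollary \ref{equivdistal}, to show that the fiber relation $R_{\pi_j}$ of the coordinate projection is contained in $\QQ_{T_j^{[d]}}(\KK_{T_{1},\ldots,T_{d}}^{x_{0}})$, the opposite inclusion and the equivariance being routine. But the proposal stops exactly where the proposition's actual content begins. Your plan defers the decisive step to the assertion that every $p\in\ker\pi_j^{\ast}$ acts on each point as an element of $\overline{\langle T_j^{[d]}\rangle}$, and you write that this ``is where the cube structure has to be exploited \ldots\ Once this is in hand\ldots'' without carrying it out. That assertion is not a lemma you can quote; it is a reformulation of the statement $R_{\pi_j}\subseteq \QQ_{T_j^{[d]}}$ itself (an element of $\ker\pi_j^{\ast}$ moves a point within its $\pi_j$-fiber, and saying it acts like a limit of powers of $T_j^{[d]}$ is saying the two points are $\QQ_{T_j^{[d]}}$-related). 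Neither the gluing property nor the face-permutation invariance, as stated, produces this: gluing manufactures new points of $\QQ_{T_{1},\ldots,T_{d}}(X)$, but gives no control on which pairs lie in the two-point relation $\QQ_{T_j^{[d]}}$ of the \emph{cube system} $\KK^{x_0}$. There is also a secondary flaw in the limit bookkeeping: from $g_k(T_j^{[d]})^{n_k}{\bf y}\to{\bf y}'$ and $(T_j^{[d]})^{n_k}{\bf y}\to{\bf z}$ you cannot conclude $g_k{\bf z}\to{\bf y}'$ ``by continuity,'' since the $g_k$ vary and the enveloping-semigroup product is continuous in only one variable; this would require an equicontinuity or joint-convergence argument you do not supply.

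What actually closes the gap in the paper is a concrete approximation argument that never leaves the metric space: one uses that $\KK^{x_0}$ is the closure of the explicit orbit points $(T_{1}^{n_{1}\varepsilon_{1}}\cdots T_{d}^{n_{d}\varepsilon_{d}}x_{0})_{\varepsilon}$, for which the $j$-upper face is literally $(T_j^{[d]})^{n_j}$ applied to the $j$-lower face, together with the openness of the factor map $\pi_j$ between distal systems (Theorem \ref{distal}(5)). Given ${\bf x},{\bf y}$ with the same $j$-lower face, one approximates ${\bf x}$ by an orbit point ${\bf z}$, uses openness to find ${\bf z}^1$ near ${\bf y}$ with the same $j$-lower face as ${\bf z}$, approximates ${\bf z}^1$ by a second orbit point ${\bf z}^2$, and then a suitable power $(T_j^{[d]})^{n_j-n_j'}$ carries ${\bf z}^2$ close to ${\bf x}$ while ${\bf z}^2$ itself is close to ${\bf y}$; letting the tolerances go to $0$ gives $({\bf x},{\bf y})\in\QQ_{T_j^{[d]}}(\KK^{x_0})$. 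Some quantitative input of this kind --- identifying the structural fact that on the dense set of orbit points the $j$-upper face is a $T_j^{[d]}$-iterate of the $j$-lower face --- is indispensable, and it is precisely what your proposal leaves unproved.
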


\begin{proof}
	We follow the notation introduced in Proposition \ref{prop:softstructure}. We start recalling that in the distal case, by minimality we have that $Y=\KK_{T_{1},\ldots,T_{d}}^{x_{0}}$ and $Y_j=\KK_{T_{1},\ldots,T_{j-1},T_{j+1},\ldots,T_{d}}^{x_{0}}$ for every $j\in [d]$. 
	Let $j\in [d]$. We know by Corollary \ref{equivdistal} that $({\KK_{T_{1},\ldots,T_{d}}^{x_{0}}}/{\QQ_{T_{j}^{[d]}}(\KK_{T_{1},\ldots,T_{d}}^{x_{0}}),T_1^{[d]},\ldots,T_d^{[d]})}$ is the maximal $\mathsf{Z}_{0}^{e_{j}}$-factor of $(\KK_{T_{1},\ldots,T_{d}}^{x_{0}},$ $T_1^{[d]},\ldots,T_d^{[d]})$  and since  $Y_j\in \mathsf{Z}_{0}^{e_{j}}$ we only need to show that if  $\textbf{x},\textbf{y}\in \KK_{T_{1},\ldots,T_{d}}^{x_{0}}$ are such that $$(x_{\varepsilon}\colon \varepsilon \in \{0,1\}^{d}\setminus\{\vec{0}\},\, \varepsilon_j=0)={(y_{\varepsilon}\colon \varepsilon \in \{0,1\}^{d}\setminus\{\vec{0}\},\, \varepsilon_j=0)}$$
(\emph{i.e.}, they have the same projection on $Y_j$), then $(\textbf{x},\textbf{y})\in {\QQ_{T_{j}^{[d]}}(\KK_{T_{1},\ldots,T_{d}}^{x_{0}})}$. 

Let $\delta >0$ and let $\pi\colon \KK_{T_{1},\ldots,T_{d}}^{x_{0}}\to \KK_{T_{1},\ldots,T_{j-1},T_{j+1},\ldots,T_{d}}^{x_{0}}$ be the natural projection. By the openness of $\pi$ (recall the systems are distal), we can find $0<\delta'<\delta$ such that
	\begin{equation}\label{ProjectionFacej}
	B_{\rho_{X_{*}^{[d-1]}}}((x_{\varepsilon}\colon \varepsilon \in \{0,1\}^{d},\, \varepsilon_j=0),\delta')\subseteq \pi\left (B_{\rho_{X_{*}^{[d]}}}(\textbf{x},\delta)\right ) \cap   \pi\left(B_{\rho_{X_{*}^{[d]}}}(\textbf{y},\delta)\right) .
	\end{equation}
	Here we recall that if $\rho$ is a metric on $X$, 	$\rho_{X_{\ast}}$ is any metric that generates the product topology on $X^{[d]}_{\ast}=X^{2^d-1}$ (for instance the sum of the metrics in each coordinate).

	By definition of $\KK_{T_{1},\ldots,T_{d}}^{x_{0}}$, there exists $\textbf{n}\in \Z^{d}$ such that
	$$\rho_{X_{*}^{[d]}}((T_{1}^{n_{1}\varepsilon_{1}}\ldots T_{d}^{n_{d}\varepsilon_{d}}x_{0})_{\varepsilon \in \{0,1\}^{d}\setminus\{\vec{0}\}},\textbf{x})<\delta'.$$
	
	Call $\textbf{z}=(T_{1}^{n_{1}\varepsilon_{1}}\ldots T_{d}^{n_{d}\varepsilon_{d}}x_{0})_{\varepsilon \in \{0,1\}^{d}\setminus\{\vec{0}\}}$. Then, we have that
	$$\rho_{X_{*}^{[d-1]}}((z_{\varepsilon}\colon \varepsilon \in \{0,1\}^{d}\setminus\{\vec{0}\},\ \varepsilon_j=0),(x_{\varepsilon}\colon \varepsilon \in \{0,1\}^{d}\setminus\{\vec{0}\},\, \varepsilon_j=0))<\delta'.$$
	
	Thus, by \eqref{ProjectionFacej}, there exists $\textbf{z}^{1}\in \KK_{T_{1},\ldots,T_{d}}^{x_{0}}$ such that
	\[(z_{\varepsilon}^{1}\colon \varepsilon \in \{0,1\}^{d}\setminus\{\vec{0}\},\ \varepsilon_j=0)=(z_{\varepsilon}\colon \varepsilon \in \{0,1\}^{d}\setminus\{\vec{0}\},\, \varepsilon_j=0)\] and $\rho_{X_{*}^{[d]}}(\textbf{z}^{1},\textbf{y})<\delta.$
	
	Let $0<\delta''<\delta'$ be such that $\delta'+\delta''<\delta$ and for any $\textbf{u},\textbf{v}\in \KK_{T_{1},\ldots,T_{d}}^{x_{0}}$ we have
	$$\rho_{X_{*}^{[d]}}(\textbf{u},\textbf{v})<\delta'' \implies \rho_{X_{*}^{[d]}}((T_{j}^{[d]})^{n_{j}}\textbf{u},(T_{j}^{[d]})^{n_{j}}\textbf{v})<\delta'.$$
	
	By definition of $\KK_{T_{1},\ldots,T_{d}}^{x_{0}}$, there exists $\textbf{n}'\in \Z^{d}$ such that
	$$\rho_{X_{*}^{[d]}}((T_{1}^{n_{1}'\varepsilon_{1}}\ldots T_{d}^{n_{d}'\varepsilon_{d}}x_{0})_{\varepsilon \in \{0,1\}^{d}\setminus\{\vec{0}\}},\textbf{z}^{1})<\delta''.$$
Call $\textbf{z}^{2}=(T_{1}^{n_{1}'\varepsilon_{1}}\ldots T_{d}^{n_{d}'\varepsilon_{d}}x_{0})_{\varepsilon \in \{0,1\}^{d}\setminus\{\vec{0}\}}$ (\emph{i.e.}, $\rho_{X_{*}^{[d]}}(\textbf{z}^{2},\textbf{z}^{1})<\delta''$) and define $\textbf{z}^{3}=(T_{j}^{[d]})^{n_{j}-n_{j}'}\textbf{z}^{2}$.

Notice that the $j$-th lower face of $\textbf{z}^3$ coincides with the one of $\textbf{z}^2$ which is $\delta''+\delta $ close to the one of $\textbf{x}$ (here we are slightly abusing terminology, the $j$-th lower face should consider the coordinate $\vec{0}$, that we do not have in points of $\KK_{T_{1},\ldots,T_{d}}^{x_{0}}$, but let us use this word to avoid introducing more terminology). On the other hand, the $j$-th upper face of $(T_{j}^{[d]})^{-n_{j}'}\textbf{z}^{2}$ coincides with the $j$-lower face of $\textbf{z}^{2}$ (with $x_0$ in the $\vec{0}$ coordinate) and then it is $\delta''$ close to the $j$-th lower face of $\textbf{z}^1$ and thus to the $j$-th lower face of $\textbf{z}$. By the definition of $\delta''$, and using that the $j$-th upper face of $\textbf{z}$ is obtained by applying $(T_d^{[d]})^{n_d}$ to its $j$-th lower face we get that \[\textbf{z}^3=(T_{j}^{[d]})^{n_j}(T_{j}^{[d]})^{-n_{j}'}\textbf{z}^{2} \text{  is } 3\delta \text{ close to } \textbf{x}.\] 
	  
Now, remark that the point 	 $(T_{j}^{[d]})^{n_{j}'-n_j}\textbf{z}^3=\textbf{z}^{2}$ is $\delta''+\delta<2\delta$ close to $\textbf{y}$. As $\delta>0$ is arbitrary, we have shown that $(\textbf{x},\textbf{y})\in \QQ_{T_{j}^{[d]}}(\KK_{T_{1},\ldots,T_{d}}^{x_{0}})$ and therefore the  factor map $\pi:\KK_{T_{1},\ldots,T_{d}}^{x_{0}}/{\QQ_{T_{j}^{[d]}}(\KK_{T_{1},\ldots,T_{d}}^{x_{0}})} \to \KK_{T_{1},\ldots,T_{j-1},T_{j+1},\ldots,T_{d}}^{x_{0}}$ is an isomorphism.
\end{proof}

We use Theorem \ref{prop2} to prove the following theorem, which gives deeper information about the structure of the systems with the unique closing parallelepiped property, introducing independence with respect to further factors.

To state the theorem, let us introduce another definition. Let $(X,T_{1},\ldots,T_{d})$ be a $\Z^{d}$-system. We say that $x_0 \in X$ is a continuity point if $\KK_{T_{1},\ldots,T_{d}}^{x_{0}}$ coincides with the set of ${\bf x}\in X^{[d]}_{\ast}$ such that $(x_0,{\bf x})\in \QQ_{T_{1},\ldots,T_{d}}(X)$. The set of continuity points $x_0\in X$ is a dense $G_{\delta}$ set of points of $X$ \cite[Lemma 4.5]{glasner1994topological}. Note that if $x_0$ is a continuity point, then  any point in its orbit, \emph{i.e.}, $T_1^{n_1}\cdots T_d^{n_d}x_0$ with $n_1\ldots,n_d\in \Z$ is also a continuity point. For a continuity point $x_0$, we have that $\QQ_{T_j}(x_0)= \overline{\mathcal{O}(x_0,T_j)}$ for $j\in [d]$.

\begin{theorem} 
	\label{KRelInd}
	Let $(X,T_{1},\ldots,T_{d})$ be a minimal distal $\Z^{d}$-system and  $x_{0}\in X$ a continuity point. Suppose that $X$ has the unique closing parallelepiped property. Then, $\KK_{T_{1},\ldots,T_{d}}^{x_{0}}$ is a joining of its factors $\KK_{T_{1},\ldots,T_{j-1},T_{j+1},\ldots,T_{d}}^{x_{0}}$, $j\in[d]$, and it is relatively independent with respect to their maximal $\mathsf{Z}_{0}^{e_{j_{1}}}\wedge \mathsf{Z}_{0}^{e_{j_{2}}}$-factors, for all $j_{1},j_{2}\in [d]$ with $j_{1} < j_{2}$.
\end{theorem}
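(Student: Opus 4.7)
My plan is to prove the theorem in three stages: the joining structure, the identification of the common factors, and the relative independence.

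The joining structure is obtained by combining Proposition \ref{prop:softstructure}, applied to the minimal distal system $(\KK^{x_0}_{T_1,\ldots,T_d},T_1^{[d]},\ldots,T_d^{[d]})$, with Proposition \ref{cube2}, which identifies the factor on which $T_j^{[d]}$ acts trivially as precisely $Y_j := \KK^{x_0}_{T_1,\ldots,\widehat{T_j},\ldots,T_d}$. Together these say that $\KK^{x_0}_{T_1,\ldots,T_d}$ is a joining of the $Y_j$'s, and each $Y_j$ is the maximal $\mathsf{Z}_0^{e_j}$-factor.

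To identify the common factors, I would iterate Proposition \ref{cube2} at the $(d-1)$-dimensional level. Applying its argument to the minimal distal $\Z^{d-1}$-action on $Y_{j_1}$ identifies the maximal $\mathsf{Z}_0^{e_{j_2}}$-factor of $Y_{j_1}$ as $Y_{j_1,j_2} := \KK^{x_0}_{T_1,\ldots,\widehat{T_{j_1}},\ldots,\widehat{T_{j_2}},\ldots,T_d}$. Since $Y_{j_1}$ already belongs to $\mathsf{Z}_0^{e_{j_1}}$, this coincides with its maximal $\mathsf{Z}_0^{e_{j_1}\wedge e_{j_2}}$-factor. By a symmetric argument, $Y_{j_1,j_2}$ is also the maximal such factor of $Y_{j_2}$, so it is the desired common factor.

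For the relative independence, I would reduce to proving a \emph{single-component replacement}: given $\textbf{y}\in\KK^{x_0}_{T_1,\ldots,T_d}$ and $\tilde{\textbf{y}}_j\in Y_j$ whose projection onto every $Y_{j,k}$ ($k\neq j$) coincides with that of $\pi_j(\textbf{y})$, there exists $\tilde{\textbf{y}}\in\KK^{x_0}_{T_1,\ldots,T_d}$ with $\pi_j(\tilde{\textbf{y}})=\tilde{\textbf{y}}_j$ and $\pi_k(\tilde{\textbf{y}})=\pi_k(\textbf{y})$ for all $k\neq j$. For any compatible tuple in the fibre product over the common factors, one first produces some $\textbf{y}^{(0)}\in\KK^{x_0}_{T_1,\ldots,T_d}$ whose common-factor data matches that of the target (using the surjectivity of the projection to $\prod_{j_1<j_2}Y_{j_1,j_2}$ coming from the joining structure), and then applies the single-component replacement $d$ times in succession. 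To carry out one replacement, I would mimic the face-building argument of Theorem \ref{prop2} (implication $(5)\Rightarrow(2)$): starting from the $(d-1)$-cube $(x_0,\tilde{\textbf{y}}_j)\in\QQ_{T_1,\ldots,\widehat{T_j},\ldots,T_d}(X)$ and its duplication along $T_j$ furnished by Proposition \ref{prop1}(5), I would then successively apply the gluing property (Lemma \ref{pegado1}) and the insertion procedure (Lemma \ref{lem:insert}) along each direction $T_k$, $k\neq j$, pasting in the face data of $\textbf{y}$ coming from the unchanged projections $\pi_k(\textbf{y})$. The matching of the common-factor projections produces exactly the face coincidences needed for each gluing.

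The main obstacle is the combinatorial bookkeeping of the gluings: the only coordinates of $\textbf{y}$ that can possibly differ from those of $\tilde{\textbf{y}}$ are those $\varepsilon$ with $\varepsilon_k=1$ for all $k\neq j$---a single $T_j$-edge consisting of two coordinates---so the sequence of face operations must deposit the new value at the $\varepsilon_j=0$ endpoint while letting the $\varepsilon_j=1$ endpoint (namely the ``top'' coordinate $\vec{1}$) adjust coherently. The unique closing parallelepiped property of $X$ is exactly what is needed here: it forces this top coordinate to be uniquely determined by the other $2^d-1$ coordinates of any cube in $\QQ_{T_1,\ldots,T_d}(X)$, ensuring that the sequence of gluings closes up consistently to a valid cube in $\KK^{x_0}_{T_1,\ldots,T_d}$.
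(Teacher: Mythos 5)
Your architecture coincides with the paper's: the joining statement is exactly Propositions \ref{prop:softstructure} and \ref{cube2}, the common factors $Y_{j_1,j_2}$ are identified by applying Proposition \ref{cube2} one dimension down (and observing that a factor of $Y_{j_1}$ is automatically in $\mathsf{Z}_0^{e_{j_1}}$), and the relative independence is correctly reduced to changing the single coordinate $[d]\setminus\{j\}$ of a cube (together with the determined top coordinate $[d]$), performed successively for $j=1,\ldots,d$. All of that matches the paper's construction of the points $\textbf{x}^0,\ldots,\textbf{x}^d$.

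The gap is in how you carry out one replacement. Gluing and insertion only paste together faces that \emph{already coincide}, and after duplicating $(x_0,\tilde{\textbf{y}}_j)$ along the direction $T_j$ the upper $j$-face equals the lower $j$-face; no sequence of gluings in the directions $T_k$, $k\neq j$, will turn that upper $j$-face into the upper $j$-face of $\textbf{y}$ (already for $d=3$, $j=1$, the relevant faces of your duplicated cube and of $\textbf{y}$ disagree at the coordinates containing $1$, so Lemma \ref{lem:insert} does not apply). At some point you must displace the upper $j$-face by the $T_j$-action, i.e.\ you need integers $n_i$ with $T_j^{n_i}x_{[d]\setminus\{j\}}\to x_{[d]}$, so that the modified value at $[d]\setminus\{j\}$ gets dragged along to produce the new top coordinate $a$. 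A priori you only know $(x_{[d]\setminus\{j\}},x_{[d]})\in \QQ_{T_j}(X)$, and $\QQ_{T_j}(z)$ may strictly contain $\overline{\mathcal{O}(z,T_j)}$, so such a sequence need not exist. This is where the paper's proof does its real work (the second claim inside the proof of Theorem \ref{KRelInd}): it splits into the case $\QQ_{T_j}(x_{[d]\setminus\{j\}})=\overline{\mathcal{O}(x_{[d]\setminus\{j\}},T_j)}$, where the naive limit argument works, and the general case, handled by approximating with points of the $G_\delta$-dense set where this equality holds (Glasner's lemma, available because $x_0$ is a continuity point), using the openness of several projection maps (distality), and passing to the limit. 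Your proposal never uses the continuity-point hypothesis in the replacement step, which is the telltale sign that this obstruction has been skipped; without it the face-building sketch does not close.
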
 

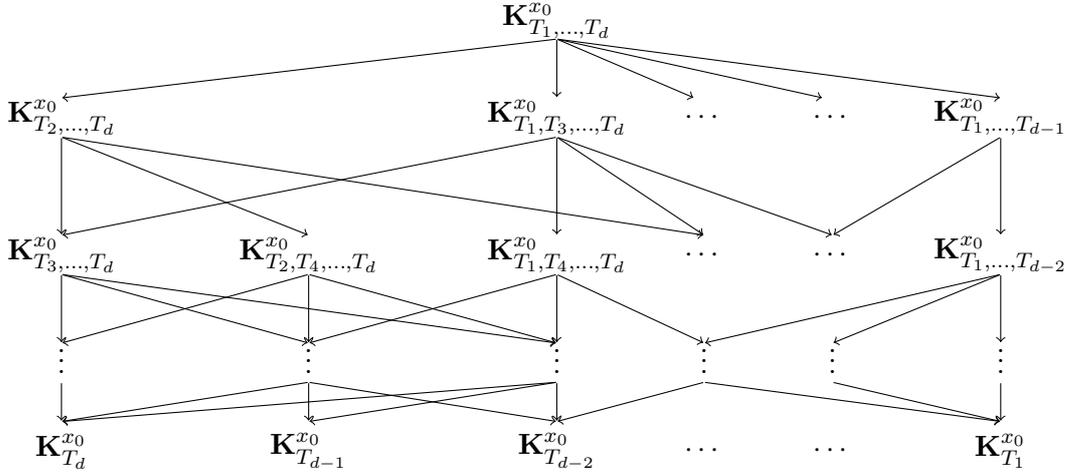
\begin{figure}[H]
\centering
\begin{tikzpicture}[scale=1.3]
\node(s) at (0.51,1.2) [scale=1] {\large$\begin{array}{c}\KK_{T_{1},\ldots,T_{d}}^{x_{0}}  \end{array}$};
	\node(s1) at (0.51,1) [scale=.1] {};
	
	\node(a) at (-4.5,0.2) [scale=1] {\large$\KK_{T_{2},\ldots,T_{d}}^{x_{0}}$};
	\node(a1) at (-4.5,0.4) [scale=.1] {};
	\node(a2) at (-4.5,0) [scale=.1] {};
	
	\node(b) at (0.51,0.2) [scale=1] {\large$\KK_{T_{1},T_{3},\ldots,T_{d}}^{x_{0}}$};
	\node(b1) at (0.51,0.4) [scale=.1] {};
	\node(b2) at (0.51,0) [scale=.1] {};
	
	\node(c) at (5,0.2) [scale=1] {\large$\KK_{T_{1},\ldots,T_{d-1}}^{x_{0}}$};
	\node(c1) at (5,0.4) [scale=.1] {};
	\node(c2) at (5,0) [scale=.1] {};
	
	\node(d) at (2,0.2) [scale=1] {\large$\cdots$};
	\node(d1) at (1.9,0.4) [scale=.1] {};
	
	\node(e) at (3.3,0.2) [scale=1] {\large$\cdots$};	
	\node(e1) at (3.2,0.4) [scale=.1] {};
	
	\node(f) at (-4.5,-1.2) [scale=1] {\large$\KK_{T_{3},\ldots,T_{d}}^{x_{0}}$};
	\node(f1) at (-4.5,-1) [scale=.1] {};
	\node(f2) at (-4.5,-1.4) [scale=.1] {};
	
	\node(g) at (-2,-1.2) [scale=1] {\large$\KK_{T_{2},T_{4},\ldots,T_{d}}^{x_{0}}$};
	\node(g1) at (-2,-1) [scale=.1] {};
	\node(g2) at (-2,-1.4) [scale=.1] {};
	
	\node(h) at (0.51,-1.2) [scale=1] {\large$\KK_{T_{1},T_{4},\ldots,T_{d}}^{x_{0}}$};
	\node(h1) at (0.51,-1) [scale=.1] {};
	\node(h2) at (0.51,-1.4) [scale=.1] {};
	
	\node(i) at (2,-1.2) [scale=1] {\large$\cdots$};
	\node(i1) at (2,-1) [scale=.1] {};
	
	\node(j) at (3.3,-1.2) [scale=1] {\large$\cdots$};
	\node(j1) at (3.3,-1) [scale=.1] {};
	
	\node(k) at (5,-1.2) [scale=1] {\large$\KK_{T_{1},\ldots,T_{d-2}}^{x_{0}}$};
	\node(k1) at (5,-1) [scale=.1] {};
	\node(k2) at (5,-1.4) [scale=.1] {};
	
	\node(l) at (-4.5,-2.2) [scale=1] {\large$\vdots$};
	\node(l1) at (-4.5,-2.1) [scale=.1] {};
	\node(l2) at (-4.5,-2.5) [scale=.1] {};
	
	\node(m) at (-2,-2.2) [scale=1] {\large$\vdots$};
	\node(m1) at (-2,-2.1) [scale=.1] {};
	\node(m2) at (-2,-2.5) [scale=.1] {};
	
	\node(n) at (0.51,-2.2) [scale=1] {\large$\vdots$};
	\node(n1) at (0.51,-2.1) [scale=.1] {};
	\node(n2) at (0.51,-2.5) [scale=.1] {};
	
	\node(o) at (2,-2.2) [scale=1] {\large$\vdots$};
	\node(o1) at (2,-2.1) [scale=.1] {};
	\node(o2) at (2,-2.5) [scale=.1] {};
	
	\node(p) at (3.3,-2.2) [scale=1] {\large$\vdots$};
	\node(p1) at (3.3,-2.1) [scale=.1] {};
	\node(p2) at (3.3,-2.5) [scale=.1] {};
	
	\node(q) at (5,-2.2) [scale=1] {\large$\vdots$};
	\node(q1) at (5,-2.1) [scale=.1] {};
	\node(q2) at (5,-2.5) [scale=.1] {};
	
	\node(r) at (-4.5,-3.2) [scale=1] {\large$\KK_{T_{d}}^{x_{0}}$};
	\node(r1) at (-4.5,-2.9) [scale=.1] {};
	
	\node(s) at (-2,-3.2) [scale=1] {\large$\KK_{T_{d-1}}^{x_{0}}$};
	\node(s11) at (-2,-2.9) [scale=.1] {};
	
	\node(t) at (0.51,-3.2) [scale=1] {\large$\KK_{T_{d-2}}^{x_{0}}$};
	\node(t1) at (0.51,-2.9) [scale=.1] {};
	
	\node(u) at (2,-3.2) [scale=1] {\large$\cdots$};
	\node(u1) at (2,-2.9) [scale=.1] {};
	
	\node(v) at (3.3,-3.2) [scale=1] {\large$\cdots$};
	\node(v1) at (3.3,-2.9) [scale=.1] {};
	
	\node(w) at (5,-3.2) [scale=1] {\large$\KK_{T_{1}}^{x_{0}}$};
	\node(w1) at (5,-2.9) [scale=.1] {};
	
	\path[thin,->] (s1) edge (a1);
	\path[thin,->] (s1) edge (b1);
	\path[thin,->] (s1) edge (c1);
	\path[thin,->] (s1) edge (d1);
	\path[thin,->] (s1) edge (e1);
	\path[thin,->] (a2) edge (f1);
	\path[thin,->] (a2) edge (g1);
	\path[thin,->] (a2) edge (i1);
	\path[thin,->] (b2) edge (f1);
	\path[thin,->] (b2) edge (h1);
	\path[thin,->] (b2) edge (i1);
	\path[thin,->] (b2) edge (j1);
	\path[thin,->] (c2) edge (j1);
	\path[thin,->] (c2) edge (k1);
	\path[thin,->] (f2) edge (l1);
	\path[thin,->] (f2) edge (m1);
	\path[thin,->] (f2) edge (n1);
	\path[thin,->] (g2) edge (l1);
	\path[thin,->] (g2) edge (m1);
	\path[thin,->] (g2) edge (n1);
	\path[thin,->] (h2) edge (m1);
	\path[thin,->] (h2) edge (n1);
	\path[thin,->] (h2) edge (o1);
	\path[thin,->] (k2) edge (o1);
	\path[thin,->] (k2) edge (p1);
	\path[thin,->] (k2) edge (q1);
	\path[thin,->] (l2) edge (r1);
	\path[thin,->] (m2) edge (r1);
	\path[thin,->] (m2) edge (s11);
	\path[thin,->] (m2) edge (t1);
	\path[thin,->] (n2) edge (r1);
	\path[thin,->] (n2) edge (s11);
	\path[thin,->] (n2) edge (t1);
	\path[thin,->] (o2) edge (t1);
	\path[thin,->] (o2) edge (w1);
	\path[thin,->] (p2) edge (w1);
	\path[thin,->] (q2) edge (w1);
	\end{tikzpicture}
	\caption{Decomposition of the system $(\KK_{T_{1},\ldots,T_{d}}^{x_{0}},T^{[d]}_{1},\ldots,T^{[d]}_{d})$ as the joining of the systems $\KK_{T_{1},\ldots,T_{j-1},T_{j+1},\ldots,T_{d}}^{x_{0}}$. Theorem \ref{KRelInd} asserts that the joining is relatively independent with respect to the factors in the ``second level'', \emph{i.e.}, $\KK_{T_{i_1},\ldots,T_{i_{d-2}}}^{x_{0}}$, $1\leq i_1<\cdots <i_{d-2}\leq d$. In the diagram further factors are also shown. A system in the $k$th-level is an action of $\Z^{d-k}$ ($k$ transformations are the identity, and  $\KK_{T_{1},\ldots,T_{d}}^{x_{0}}$ corresponds to the 0 level).}
\end{figure}

\begin{proof} 
	
	The fact that $\KK_{T_{1},\ldots,T_{d}}^{x_{0}}$ is a joining of its factors $\KK_{T_{1},\ldots,T_{j-1},T_{j+1},\ldots,T_{d}}^{x_{0}}$, $j\in [d]$, was already shown in Proposition  \ref{prop:softstructure} and Proposition \ref{cube2}, so we are left to show the relatively independence part. 
	
	Let $\textbf{x},\textbf{y}\in X_{*}^{[d]}$ be such that:
	\begin{enumerate}
		\item $\textbf{x}\in \KK_{T_{1},\ldots,T_{d}}^{x_{0}}$. \label{item:cond1}
		\item For every $j\in [d]$, $(y_{\varepsilon}\colon \varepsilon \in \{0,1\}^{d}\setminus\{\vec{0}\},\ \varepsilon_{j}=0)\in \KK_{T_{1},\ldots,T_{j-1},T_{j+1},\ldots,T_{d}}^{x_{0}}$.
		\item For every $j_{1},j_{2}\in [d]$, $j_{1}\neq j_{2}$, $$(x_{\varepsilon}\colon \varepsilon \in \{0,1\}^{d}\setminus \{\vec{0}\},  \varepsilon_{j_{1}}=0,\ \varepsilon_{j_{2}}=0)=(y_{\varepsilon}\colon \varepsilon \in \{0,1\}^{d}\setminus \{\vec{0}\},  \varepsilon_{j_{1}}=0,\ \varepsilon_{j_{2}}=0),$$
		\noindent {\em i.e.,} $\textbf{x}$ and $\textbf{y}$ coincide in the coordinates associated to the maximal $\mathsf{Z}_{0}^{e_{j_{1}}\wedge \ e_{j_{2}}}$-factor. 
	\end{enumerate}
	Roughly speaking, this condition says that given $\textbf{x}\in \KK_{T_{1},\ldots,T_{d}}^{x_{0}}$, one considers its projection into the systems of the ``second level'' and the relative independence means that one can freely change all the coordinates of $\textbf{x}$ of the form $[d]\setminus\{j\}$ with $j\in [d]$, as long as the resulting point (the point \textbf{y} in the statement)  has valid faces. We leave the reader to verify that these are the conditions we need to look at to check the relative independence.

	So, given (i), (ii) and (iii), we want to show that $\textbf{y}\in \KK_{T_{1},\ldots,T_{d}}^{x_{0}}$. By (iii), the only coordinates $\varepsilon\subseteq [d]$, $\varepsilon \neq \emptyset$, such that $x_{\varepsilon}\neq y_{\varepsilon}$ are the coordinates $\varepsilon=[d]$ and $\varepsilon=[d]\setminus \{j\}$ for $j\in [d]$. The program of the proof is to construct points $\textbf{x}^{0},\textbf{x}^{1},\ldots,\textbf{x}^{d}\in \KK_{T_{1},\ldots,T_{d}}^{x_{0}}$ with $\textbf{x}^{0}=\textbf{x}$ and such that for every $k\in [d]$
	$$x_{\varepsilon}^{k}=\left\{\begin{array}{ll} x_{\varepsilon} & \varepsilon\subseteq [d],\ \varepsilon\neq [d]\setminus\{ j\} \text{ for some }\ j \in [d],\\
	y_{\varepsilon} & \varepsilon =[d]\setminus\{ j \} \text{ for some }\ j \in \{1,\ldots,k\},\\
	a^k & \varepsilon=[d],
	\end{array}\right.$$ 
	
	\noindent for some $a^k\in X$. Notice that since $X$ has the unique closing parallelepiped property,  the point $a^k$ is unique. The point $\textbf{x}^{d}$ coincides with $\textbf{y}$ in all coordinates $\varepsilon$, $\varepsilon\neq [d]$, so the unique closing parallelepiped property implies that they are equal. This would conclude that $\textbf{y}\in \KK_{T_{1},\ldots,T_{d}}^{x_{0}}$. 
	To construct the points mentioned above, assume we have constructed the point $\textbf{x}^k\in\KK_{T_{1},\ldots,T_{d}}^{x_{0}} $ for $0\leq k<d$.
	Notice that $(x_{\varepsilon}^{k}\colon \varepsilon \in \{0,1\}^{d}\setminus \{\vec{0}\},  \varepsilon_{k+1}=0)$ and $(y_{\varepsilon}\colon \varepsilon \in \{0,1\}^{d}\setminus\{\vec{0}\}, \varepsilon_{k+1}=0)$ differ at most in the coordinate $[d]\setminus\{k+1\}$.

	\begin{claim} \label{cl:replacement} There exists $a\in X$ such that if we replace the coordinate $\varepsilon=[d]\setminus\{k+1\}$ of $\textbf{x}^{k}$ by the corresponding coordinate of $\textbf{y}$ and the coordinate $[d]$ of $\textbf{x}^{k}$ by $a$,  we obtain a point $\textbf{x}^{k+1}$ that belongs to $\KK_{T_{1},\ldots,T_{d}}^{x_{0}}$.
	\end{claim}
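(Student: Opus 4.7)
The plan is to build $\textbf{x}^{k+1}$ via a gluing step in direction $k+1$. Let $F$ denote the $(k+1)$-lower face of $\textbf{x}^{k}$ (with $x_{0}$ placed at the $\vec{0}$ coordinate), and let $F'$ be the point obtained from $F$ by replacing its coordinate at $[d]\setminus\{k+1\}$ by $y_{[d]\setminus\{k+1\}}$. By Proposition \ref{prop1}\eqref{prop1:4} applied to $\textbf{x}^{k}$, we have $F\in \QQ_{T_{1},\ldots,T_{k},T_{k+2},\ldots,T_{d}}(X)$; by hypothesis (ii) on $\textbf{y}$ together with hypothesis (iii), the point $F'$ also lies in that set, and indeed the only coordinate at which $F$ and $F'$ differ is $[d]\setminus\{k+1\}$.

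The strategy is to produce an auxiliary point $\textbf{G}\in \QQ_{T_{1},\ldots,T_{d}}(X)$ whose $(k+1)$-upper face equals $F$ and whose $(k+1)$-lower face equals $F'$. Once $\textbf{G}$ is in hand, its $(k+1)$-upper face coincides with the $(k+1)$-lower face of $\textbf{x}^{k}$, so Lemma \ref{pegado1} (applied in direction $k+1$) produces a cube in $\QQ_{T_{1},\ldots,T_{d}}(X)$ whose $(k+1)$-lower face is $F'$ and whose $(k+1)$-upper face equals that of $\textbf{x}^{k}$. This cube has $x_{0}$ at the $\vec{0}$ coordinate, hence belongs to $\KK_{T_{1},\ldots,T_{d}}^{x_{0}}$; setting $a$ to be its value at coordinate $[d]$ (which in this construction equals $a^{k}$) gives the desired $\textbf{x}^{k+1}$. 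The uniqueness of $a$ follows from the unique closing parallelepiped property.

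To construct $\textbf{G}$, I would first apply Theorem \ref{thm:characterization} to the $(d-1)$-dimensional subsystem $(X,T_{1},\ldots,T_{k},T_{k+2},\ldots,T_{d})$: the differing coordinate $[d]\setminus\{k+1\}$ corresponds to the $\vec{1}$ position of the reduced cube, which can be moved to $\vec{0}$ using the reflections of Lemma \ref{lem:goodpermutations}. This yields that the pair $(x^{k}_{[d]\setminus\{k+1\}}, y_{[d]\setminus\{k+1\}})$ lies in $\mathcal{R}_{T_{1},\ldots,T_{k},T_{k+2},\ldots,T_{d}}(X)$. Approximating $F$ and $F'$ by orbit $(d-1)$-cubes based at a common sequence $x^{i}\to x_{0}$, and using this regionally proximal relation to select exponents $m(i)$ for $T_{k+1}$, the $d$-dimensional orbit cubes obtained by stacking through $T_{k+1}^{m(i)}$ converge, up to a subsequence, in $\QQ_{T_{1},\ldots,T_{d}}(X)$ to a cube $\textbf{G}$ with the prescribed faces. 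The main obstacle is ensuring simultaneous convergence of all $2^{d-1}$ coordinates of the upper face to $F$; this delicate control is secured by distality together with the minimality of $(\QQ_{T_{1},\ldots,T_{d}}(X), \mathcal{G}_{T_{1},\ldots,T_{d}})$ established in Proposition \ref{CubeMin}, which provide enough flexibility via the enveloping semigroup to adjust $m(i)$ so that all coordinates of the upper face approach their targets at once.
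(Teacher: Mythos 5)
Your construction of the auxiliary cube $\textbf{G}$ is impossible, and the obstruction is precisely the unique closing parallelepiped property that the system is assumed to have in Theorem \ref{KRelInd}. Suppose $\textbf{G}\in \QQ_{T_{1},\ldots,T_{d}}(X)$ had $(k+1)$-upper face $F$ and $(k+1)$-lower face $F'$ with $F\neq F'$. Let $\textbf{H}$ be the duplication of $F'$ along the direction $k+1$ (Proposition \ref{prop1} (5)), so that both $(k+1)$-faces of $\textbf{H}$ equal $F'$. Then $\textbf{G}$ and $\textbf{H}$ agree on the whole $(k+1)$-lower face and on all but one coordinate of the $(k+1)$-upper face, i.e.\ on $2^{d}-1$ coordinates, so the unique closing parallelepiped property forces $\textbf{G}=\textbf{H}$ and hence $F=F'$, i.e.\ $x^{k}_{[d]\setminus\{k+1\}}=y_{[d]\setminus\{k+1\}}$, which is not given. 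The same contradiction reappears at your final gluing: the resulting cube would agree with $\textbf{x}^{k}$ everywhere except at the single coordinate $[d]\setminus\{k+1\}$ (you explicitly keep $a=a^{k}$ at the coordinate $[d]$), so it and $\textbf{x}^{k}$ would again be two distinct cubes sharing $2^{d}-1$ coordinates. In short: you cannot change only the coordinate $[d]\setminus\{k+1\}$; the coordinate $[d]$ \emph{must} be allowed to move, and your construction pins it down.

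The correct mechanism (and the one the paper uses, after reducing to $k=0$ by a digit permutation) is to let the entire $(k+1)$-upper face be transported rather than prescribed. One first forms the $(d-1)$-cube $(x_{[d]\setminus\{1\}},\ldots,x_{[d]\setminus\{1\}},y_{[d]\setminus\{1\}})\in\QQ_{T_{2},\ldots,T_{d}}(X)$ — this is the part of your argument that is sound — duplicates it along direction $1$, and then applies $(T_{1}^{[d]})^{n_{i}}$ with $T_{1}^{n_{i}}x_{[d]\setminus\{1\}}\to x_{[d]}$: the upper face then converges to the constant $x_{[d]}$ except at the corner, where $T_{1}^{n_{i}}y_{[d]\setminus\{1\}}\to a$ for some \emph{new} point $a$, generally different from both $x_{[d]}$ and $a^{k}$. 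This yields a cube $\textbf{u}^{1}$ whose faces are constant off the two exceptional corners; one then needs $d-1$ further duplicate-and-glue steps to restore agreement with $\textbf{x}$ on all coordinates other than $[d]\setminus\{1\}$ and $[d]$ — none of which appears in your one-step gluing. Finally, choosing such $n_{i}$ requires $x_{[d]}\in\overline{\mathcal{O}(x_{[d]\setminus\{1\}},T_{1})}$, and when $\QQ_{T_{1}}(x_{[d]\setminus\{1\}})\supsetneq\overline{\mathcal{O}(x_{[d]\setminus\{1\}},T_{1})}$ one must run an approximation argument using the continuity point $x_{0}$ and the openness of the relevant projections; your appeal to minimality of $(\QQ_{T_{1},\ldots,T_{d}}(X),\mathcal{G}_{T_{1},\ldots,T_{d}})$ and the enveloping semigroup does not address this case, which is where the hypothesis that $x_{0}$ is a continuity point is actually used.
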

	
	    We suggest the reader to look at Figure \ref{fig:claim1}, while reading the proof of the claim. There, we have illustrated the proof in the case $d=3$. We show how to change the coordinate $[d]\setminus\{1\}$ of $\textbf{x}$ by the corresponding one of $\textbf{y}$ (paying the minor price of modifying the $[d]$ coordinate too).
	
	\begin{proof}[Proof of the \cref{cl:replacement}] We first remark that it is enough to show the statement of the claim for $k=0$ (\emph{i.e.}, for the first step). Indeed, by Lemma \ref{lem:digitpermutation}, after proving the case $k=0$ we can permute the transformations and assume that now the second transformation is the first one, apply again the case $k=0$ and repeat this argument as many times as needed. 
		So, we have to show that from $\textbf{x}^{0} = \textbf{x}$ we can change its $[d]\setminus \{1\}$ coordinate by the corresponding one of $\textbf{y}$ and obtain a point in $\KK_{T_{1},\ldots,T_{d}}^{x_{0}}$. In the process we also allow us to change the $[d]$ coordinate of $\textbf{x}$. 
		
		In this claim it will be convenient to work in $\QQ_{T_1,\ldots,T_d}(X)$ instead of in $\KK_{T_{1},\ldots,T_{d}}^{x_0}$ so  we identify $\textbf{x}, \textbf{y} \in \KK_{T_{1},\ldots,T_{d}}^{x_0}$ with $(x_0,\textbf{x}), (x_0,\textbf{y})\in \QQ_{T_{1},\ldots,T_{d}}(X)$ respectively. This identification is harmless because of the unique closing parallelepiped property and avoids introducing more notation. 
		
		Consider the system $(X,T_{2},\ldots,T_{d})$. Since the points $(x_{\varepsilon}\colon \varepsilon \in \{0,1\}^{d},  \varepsilon_1=0)$ and $(y_{\varepsilon}\colon \varepsilon \in \{0,1\}^{d}, \varepsilon_1=0)$ differ at most in one coordinate (the coordinate $[d]\setminus\{1\}$), by Theorem \ref{prop2} \eqref{xyyy} applied to the system $(X,T_2,\ldots,T_d)$ (see Remark \ref{Rem:gluingsubset}) we have that
		\begin{equation}
		(x_{[d]\setminus\{1\}},\ldots,x_{[d]\setminus\{1\}},y_{[d]\setminus\{1\}})\in \QQ_{T_{2},\ldots,T_{d}}(X).
		\label{eq1}
		\end{equation}
		
		Now, duplicating this point along the $1$-th face, we obtain the point $\textbf{v}^1 \in \QQ_{T_{1},\ldots,T_{d}}(X)$. This point is characterized by 
		$$v_{\varepsilon}^{1}=\left\{\begin{array}{ll}x_{[d]\setminus\{1\}} & \text{if}\ \varepsilon_1=0\ \wedge \exists j\neq 1,\ \varepsilon_{j}=0,\\
		y_{[d]\setminus\{1\}} & \text{if}\ \varepsilon = [d]\setminus\{1\},\\x_{[d]\setminus\{1\}} & \text{if}\ \varepsilon_1=1\ \wedge \exists j\neq 1,\ \varepsilon_{j}=0,\\ y_{[d]\setminus\{1\}} & \text{if}\ \varepsilon=[d].\end{array}\right.$$

\begin{claim}\label{cl:completingcube}
There exists $a\in X$ such that the point $\textbf{u}^1$ defined as 
				
$$u_{\varepsilon}^{1}=\left\{\begin{array}{ll}x_{[d]\setminus\{1\}} & \text{if}\ \varepsilon_1=0\ \wedge \exists j\neq 1,\ \varepsilon_{j}=0,\\
y_{[d]\setminus\{1\}} & \text{if}\ \varepsilon = [d]\setminus\{1\},\\ x_{[d]} & \text{if}\ \varepsilon_1=1\ \wedge \exists j\neq 1,\ \varepsilon_{j}=0,\\ a & \text{if}\ \varepsilon=[d],\end{array}\right.$$
belongs to $\QQ_{T_{1},\ldots,T_{d}}(X)$.
			\end{claim}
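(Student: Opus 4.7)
The plan is to construct $\textbf{u}^{1}$ as the image of an auxiliary cube $\textbf{z}\in\QQ_{T_{1},\ldots,T_{d}}(X)$ under a suitable limit of face transformations. The cube $\textbf{z}$ is obtained by duplicating (Proposition \ref{prop1}\eqref{prop1:5}) the $1$-direction edge $(x_{[d]\setminus\{1\}},x_{[d]})$, which belongs to $\QQ_{T_{1}}(X)$ by Proposition \ref{prop1}\eqref{prop1:4} applied to $\textbf{x}$; explicitly, $z_{\varepsilon}=x_{[d]\setminus\{1\}}$ if $\varepsilon_{1}=0$ and $z_{\varepsilon}=x_{[d]}$ if $\varepsilon_{1}=1$. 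The target $\textbf{u}^{1}$ will agree with $\textbf{z}$ everywhere except at the coordinates $[d]\setminus\{1\}$ and $[d]$.

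As a preliminary I would extract from $\textbf{v}^{1}$ the relation $(x_{[d]\setminus\{1\}},y_{[d]\setminus\{1\}})\in\mathcal{R}_{T_{2},\ldots,T_{d}}(X)$. The $1$-lower face of $\textbf{v}^{1}$ lies in $\QQ_{T_{2},\ldots,T_{d}}(X)$ by Proposition \ref{prop1}\eqref{prop1:4} and has the form $(x_{[d]\setminus\{1\}},\ldots,x_{[d]\setminus\{1\}},y_{[d]\setminus\{1\}})$ with $y_{[d]\setminus\{1\}}$ only at the top. Composing the reflections $\Phi_{j\ast}$ for $j=2,\ldots,d$ (which leave $\QQ_{T_{2},\ldots,T_{d}}(X)$ invariant by Lemma \ref{lem:goodpermutations}, applied in the $\Z^{d-1}$-system $(X,T_{2},\ldots,T_{d})$ where the gluing property holds by Remark \ref{Rem:gluingsubset}) swaps bottom and top, producing the cube $(y_{[d]\setminus\{1\}},(x_{[d]\setminus\{1\}})_{\ast}^{[d-1]})\in\QQ_{T_{2},\ldots,T_{d}}(X)$. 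By Theorem \ref{prop2}\eqref{xyyy} in $(X,T_{2},\ldots,T_{d})$ and the symmetry of the regionally proximal relation, the claim follows.

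Next I would realize $\textbf{u}^{1}$ as the limit of the points $T_{2}^{[d],n_{2}^{(k)}}\cdots T_{d}^{[d],n_{d}^{(k)}}\textbf{z}$, whose defining transformation belongs to $\mathcal{G}_{T_{1},\ldots,T_{d}}$ and acts at position $\varepsilon$ by the partial product $\prod_{j\in\varepsilon\cap\{2,\ldots,d\}}T_{j}^{n_{j}^{(k)}}$. I would choose the parameters $(n_{j}^{(k)})_{k}$ so that, along a convenient subsequence, (i) the full product $T_{2}^{n_{2}^{(k)}}\cdots T_{d}^{n_{d}^{(k)}}$ sends $x_{[d]\setminus\{1\}}$ to $y_{[d]\setminus\{1\}}$ (which is possible by the relation obtained in the previous step together with Theorem \ref{prop2}\eqref{xyyy}), defining $a$ as the limit of $T_{2}^{n_{2}^{(k)}}\cdots T_{d}^{n_{d}^{(k)}}x_{[d]}$ extracted by compactness; and (ii) every partial product indexed by a proper subset $\tau\subsetneq\{2,\ldots,d\}$ simultaneously fixes both $x_{[d]\setminus\{1\}}$ and $x_{[d]}$. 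Requirements (i) and (ii) together force the limit to equal $\textbf{u}^{1}$, and since $\QQ_{T_{1},\ldots,T_{d}}(X)$ is closed and $\mathcal{G}_{T_{1},\ldots,T_{d}}$-invariant (Proposition \ref{prop1}\eqref{prop1:3}), the claim follows.

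The main obstacle is the joint prescription (i)+(ii): one must arrange a single sequence realizing the desired effect at the full face while every proper partial product fixes both reference points. This will be handled by working in the distal product system $(X\times X,T_{2}\times T_{2},\ldots,T_{d}\times T_{d})$ (distal by Theorem \ref{distal}(1)). Composing an approximating sequence supplied by Theorem \ref{prop2}\eqref{xyyy} with a minimal idempotent of $E(X\times X,\langle T_{2},\ldots,T_{d}\rangle)$ that fixes the point $(x_{[d]\setminus\{1\}},x_{[d]})$ — whose existence follows from distality via Theorem \ref{thm:basics enveloping}(3) applied to the minimal orbit closure of $(x_{[d]\setminus\{1\}},x_{[d]})$ — absorbs the unwanted displacements at the sub-faces without disturbing the effect of the full product.
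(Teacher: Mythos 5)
Your preliminary step (extracting $(x_{[d]\setminus\{1\}},y_{[d]\setminus\{1\}})\in\mathcal{R}_{T_{2},\ldots,T_{d}}(X)$ from the $1$-lower face) is fine, but the main construction has a genuine gap, and it sits exactly at the point where the paper has to work hardest. Your requirements (i) and (ii) amount to asking that the point with $x_{[d]\setminus\{1\}}$ (resp.\ $x_{[d]}$) at every proper partial-product coordinate and $y_{[d]\setminus\{1\}}$ (resp.\ $a$) at the full one lies in the \emph{based} cube set $\KK^{(x_{[d]\setminus\{1\}},x_{[d]})}$ of the product system $(X\times X,T_2\times T_2,\ldots,T_d\times T_d)$ --- i.e.\ that it is reachable by face transformations applied to the \emph{fixed} base pair. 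What Theorem \ref{prop2}\eqref{xyyy} gives you is only membership in $\QQ_{T_{2},\ldots,T_{d}}(X)$, where the approximating cubes are based at points $x^i\to x_{[d]\setminus\{1\}}$ that are allowed to move. The passage from $\QQ$ to $\KK^{x_0}$ is valid only at continuity points (the $G_\delta$-dense set of \cite[Lemma 4.5]{glasner1994topological}), and $x_{[d]\setminus\{1\}}$ need not be one; distality does not let you freeze the base point. So already your step (i) is unjustified: $(x_{[d]\setminus\{1\}},y_{[d]\setminus\{1\}})\in\mathcal{R}_{T_{2},\ldots,T_{d}}(X)$ does not imply that $y_{[d]\setminus\{1\}}$ lies in the closure of $\{T_2^{n_2}\cdots T_d^{n_d}x_{[d]\setminus\{1\}}\}$, let alone with the extra constraints of (ii).

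The idempotent device does not repair this. A minimal idempotent $u\in E(X\times X,\langle T_2,\ldots,T_d\rangle)$ fixing $(x_{[d]\setminus\{1\}},x_{[d]})$ is a limit of \emph{full} diagonal products $T_2^{m_2}\cdots T_d^{m_d}$; it constrains only the limit of the full product at the pair, and says nothing about the limits of the proper partial products $\prod_{j\in\tau}T_j^{m_j}$, $\tau\subsetneq\{2,\ldots,d\}$, which is precisely what (ii) needs you to control (simultaneously at two base points). All $2^{d-1}-1$ partial products are driven by the same exponents, so you cannot tune them independently, and "composing" with $u$ shifts all of them at once in an uncontrolled way. This is why the paper takes the orthogonal route: it applies $(T_1^{[d]})^{n}$ to $\textbf{v}^1$, so the entire $1$-lower face is fixed \emph{exactly} (not just in the limit) and the upper face moves by the single transformation $T_1^{n}$ uniformly; the only orbit-versus-$\QQ_{T_1}$ issue that remains (whether $x_{[d]}\in\overline{\mathcal{O}(x_{[d]\setminus\{1\}},T_1)}$) is then handled by the two-case analysis with continuity points and openness of distal factor maps. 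If you want to salvage your direction of attack, you would need an analogue of that Case 2 approximation argument for the full face-group action on $X\times X$, which is substantially more than the idempotent remark provides.
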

			\begin{proof}[Proof the \cref{cl:completingcube}]
				To prove the claim, notice that $(x_{[d]\setminus\{1\}},x_{[d]}) \in\QQ_{T_{1}}(X) $, because the point $\textbf{x}$ belongs to $\QQ_{T_{1},\ldots,T_{d}}(X)$. Now we analyze separately two cases, namely the case $\QQ_{T_{1}}(x_{[d]\setminus \{1\}})=\overline{\mathcal{O}(x_{[d]\setminus\{1\}},T_{1})}$ and the case ${\QQ_{T_{1}}(x_{[d]\setminus \{1\}})\supsetneq\overline{\mathcal{O}(x_{[d]\setminus\{1\}},T_{1})}}$. By Lemma 4.5 of \cite{glasner1994topological}, there is a $G_{\delta}$-dense set of $x\in X$ such that $\QQ_{T_{1}}(x)=\overline{\mathcal{O}(x,T_{1})}$. 
				
				\noindent Case 1: $\QQ_{T_{1}}(x_{[d]\setminus \{1\}})=\overline{\mathcal{O}(x_{[d]\setminus\{1\}},T_{1})}$. Since $(x_{[d]\setminus\{1\}},x_{[d]}) \in\QQ_{T_{1}}(X)$, by the assumption of this case we can find a sequence $(n_{i})_{i\in \N}\subseteq \Z$ such that
				$$T_{1}^{n_{i}}x_{[d]\setminus\{1\}}\to x_{[d]}.$$
				By compactness we can assume that $T_{1}^{n_{i}}y_{[d]\setminus\{1\}}\to a$ for some $a\in X$. Thus, applying the transformation $(T_1^{[d]})^{n_i}$ to the point $\textbf{v}^1$ and taking the limit, we obtain a point $\textbf{u}^{1}$ of the desired form that belongs to $\QQ_{T_{1},\ldots,T_{d}}(X)$.
				
				\noindent Case 2: $\QQ_{T_{1}}(x_{[d]\setminus \{1\}})\supsetneq\overline{\mathcal{O}(x_{[d]\setminus\{1\}},T_{1})}$. We define the following projection maps.
				
				\begin{itemize}
					\item Let $\phi_{1}$ be the projection from $\KK_{T_{1},\ldots,T_{d}}^{x_{0}}$ onto the coordinates $\varepsilon \in \{0,1\}^{d}$ where $\varepsilon_1=0$ and the coordinate $\varepsilon=[d]$. That is
					\[\phi_1(( z_{\varepsilon}: \varepsilon\in\{0,1\}^d)) =(z_{\varepsilon}: \varepsilon\in\{0,1\}^d, \varepsilon_1 =0 \vee \varepsilon =[d]) \]
					
					\item Let $\phi_{2}$ be the projection from $\KK_{T_{1},\ldots,T_{d}}^{x_{0}}$ onto the coordinates $\varepsilon \in \{0,1\}^{d}$ where $\varepsilon_1=0$, and set $\phi_{1,2}$ the map from $\phi_1(\KK_{T_{1},\ldots,T_{d}}^{x_{0}})$ onto $\phi_2(\KK_{T_{1},\ldots,T_{d}}^{x_{0}})$ such that $\phi_{1,2}\circ\phi_1=\phi_2$. That is, 
					\begin{align*} \phi_2(( z_{\varepsilon}: \varepsilon\in\{0,1\}^d)) &=(z_{\varepsilon}: \varepsilon\in\{0,1\}^d, \varepsilon_1 =0), \\ 
					\phi_{1,2}((z_{\varepsilon}: \varepsilon\in\{0,1\}^d, \varepsilon_1 =0 \vee \varepsilon =[d])) &=(z_{\varepsilon}: \varepsilon\in\{0,1\}^d, \varepsilon_1 =0).
					\end{align*}
					
					\item Let $\phi_{3}$ be the projection from $\KK_{T_{1},\ldots,T_{d}}^{x_{0}}$ onto the coordinates $\varepsilon \in \{0,1\}^{d}$ such that $\varepsilon_1=0$ and such that there exists $j\in [d]$, $j\neq 1$, such that $\varepsilon_{j}=0$. Set $\phi_{2,3}$ the map from $\phi_2(\KK_{T_{1},\ldots,T_{d}}^{x_{0}})$ onto $\phi_3(\KK_{T_{1},\ldots,T_{d}}^{x_{0}})$ such that $\phi_{2,3}\circ \phi_2=\phi_3$. 
					That is, 
					\begin{align*} \phi_3(( z_{\varepsilon}: \varepsilon\in\{0,1\}^d))& =(z_{\varepsilon}: \varepsilon\in\{0,1\}^d, \varepsilon_1 =0 \wedge \exists  j\neq 1 , \varepsilon_j=0), \\ 
					\phi_{2,3}((z_{\varepsilon}: \varepsilon\in\{0,1\}^d, \varepsilon_1 =0))& =(z_{\varepsilon}: \varepsilon\in\{0,1\}^d, \varepsilon_1 =0 \wedge \exists  j\neq 1 , \varepsilon_j=0).
					\end{align*}
					
				\end{itemize}
				Note that when we write a expression like  $(z_{\varepsilon}: \varepsilon\in\{0,1\}^d, \varepsilon_1 =0)$ we implicitly assume that this point is the restriction of a point $\textbf{z}=(z_{\varepsilon}: \varepsilon\in\{0,1\}^d)$. 
				By distality, all functions described above are open.  
				Let $\delta>0$. Because of the openness of $\phi_{1,2}$ and $\phi_{2,3}$, we can take $0<\delta'<\delta$ such that \begin{equation}
				B\left(\left(x_{\varepsilon}\colon \varepsilon\in \{0,1\}^{d},\ \varepsilon_1=0\right),\delta'\right)\subseteq \phi_{1,2}\left(B\left(\left(x_{\varepsilon}\colon \varepsilon \in \{0,1\}^{d},\ \varepsilon_1=0 \vee\ \varepsilon=[d]\right)\right),\delta\right),
				\label{eqphi2}
				\end{equation}
				and ${0<\delta''<\delta'}$ such that 
				\begin{equation}
				\begin{array}{l}
				B((x_{\varepsilon}\colon \varepsilon\in \{0,1\}^{d},\ \varepsilon_1=0 \wedge\ \exists  j\neq 1, \varepsilon_{j}=0),\delta'')\\
				\subseteq \phi_{2,3}(B((x_{\varepsilon}\colon \varepsilon\in \{0,1\}^{d},\ \varepsilon_1=0),\delta')).
				\end{array}
				\label{eqphi3}
				\end{equation}

Let $\textbf{a}=(a_{\varepsilon}\colon \varepsilon \in \{0,1\}^{d})$ be such that $\QQ_{T_{1}}(a_{[d]\setminus\{1\}})=\overline{\mathcal{O}(a_{[d]\setminus\{1\}},T_{1})}$ and such that is $\delta''$ close to $(x_{\varepsilon}\colon \varepsilon \in \{0,1\}^{d}).$ This point exists because $x_0$ is a continuity point (and we may take $\textbf{a}\in \KK_{T_{1},\ldots,T_{d}}^{x_{0}} $ in the face orbit of $(x_0,\ldots,x_0)$. See the remarks before the statement of Theorem  \ref{KRelInd})). 
				The point $(a_{\varepsilon}\colon \varepsilon \in \{0,1\}^{d},\varepsilon_1=0 \wedge \exists j\neq 1, \varepsilon_{j}=0)$ is $\delta''$ close to $(x_{\varepsilon}\colon \varepsilon \in \{0,1\}^{d},\varepsilon_1=0 \wedge \exists j\neq 1, \varepsilon_{j}=0)$ and then, by \eqref{eqphi3}, there exists a point $(b_{\varepsilon}\colon \varepsilon \in \{0,1\}^{d}, \varepsilon_1=0)$ such that
				$(b_{\varepsilon}\colon \varepsilon \in \{0,1\}^{d}, \varepsilon_1=0)$ is $\delta'$ close to $(y_{\varepsilon}\colon \varepsilon \in \{0,1\}^{d}, \varepsilon_1=0)$ and $(a_{\varepsilon}\colon \varepsilon \in \{0,1\}^{d}, \varepsilon_1=0 \wedge \exists j\neq 1, \varepsilon_{j}=0)=(b_{\varepsilon}\colon \varepsilon \in \{0,1\}^{d}, \varepsilon_1=0 \wedge \exists j\neq 1, \varepsilon_{j}=0)$.
				Therefore, the points $(a_{\varepsilon}\colon \varepsilon \in \{0,1\}^{d},\varepsilon_1=0)$ and $(b_{\varepsilon}\colon \varepsilon \in \{0,1\}^{d},\varepsilon_1=0) $ differ at most in one coordinate (the coordinate $[d]\setminus\{1\}$) and we can repeat the argument started  in $\eqref{eq1}$. 
				The point $(a_{\varepsilon}\colon \varepsilon \in \{0,1\}^{d}, \varepsilon_1=0)$ satisfies the assumption of Case 1 and thus we can construct $\textbf{w}\in \QQ_{T_{1},\ldots,T_{d}}(X)$ such that
				$$w_{\varepsilon}=\left\{\begin{array}{ll}a_{[d]\setminus\{1\}} & \text{if}\ \varepsilon_1=0\ \wedge \exists j\neq 1,\ \varepsilon_{j}=0,\\
				b_{[d]\setminus\{1\}} & \text{if}\ \varepsilon=[d]\setminus\{1\},\\ a_{[d]} & \text{if}\ \varepsilon_1=1\ \wedge \exists j\neq 1,\ \varepsilon_{j}=0,\\ u_{a} & \text{if}\ \varepsilon=[d],\end{array}\right.$$
				
				for some $u_{a}\in X$.
				
				Letting $\delta \to 0$, by compactness we can assume that $u_{a}$ converges to some $a\in X$. Thus we have that there exists ${\textbf{u}^{1}\in \QQ_{T_{1},\ldots,T_{d}}(X)}$ such that
				$$u_{\varepsilon}^{1}=\left\{\begin{array}{ll}x_{[d]\setminus\{1\}} & \text{if}\ \varepsilon_1=0\ \wedge \exists j\neq 1,\ \varepsilon_{j}=0,\\
				y_{[d]\setminus\{1\}} & \text{if}\ \varepsilon= [d]\setminus\{1\},\\ x_{[d]} & \text{if}\ \varepsilon_1=1\ \wedge \exists j\neq 1,\ \varepsilon_{j}=0,\\ a & \text{if}\ \varepsilon=[d].\end{array}\right.$$
				
This finishes the proof of \cref{cl:completingcube} in Case 2. 
\end{proof}
		
From $\textbf{u}^1$ we aim to construct points $\textbf{u}^2,\ldots,\textbf{u}^d$ in $\QQ_{T_{1},\ldots,T_{d}}(X)$ where at each step they increasingly coincide with  $\textbf{x}$, but having their $[d]\setminus\{1\}$ coordinate equal to $y_{[d]\setminus\{1\}}$ and their $[d]$ coordinate equal to $a$. We suggest the reader to keep in mind that the $[d]\setminus\{1\}$ and $[d]$ coordinates are special.
		
Consider the face $(u_{\varepsilon}^{1}\colon \varepsilon_2=0)$. Since $\textbf{x}\in \QQ_{T_{1},\ldots,T_{d}}(X)$, duplicating the restriction of $\textbf{x}$ to the coordinates 
		$ [d]\setminus\{1,2\},~[d]\setminus\{1\},~[d]\setminus\{2\}$ and $[d]$ we get the point $\textbf{v}^{2}\in \QQ_{T_{1},\ldots,T_{d}}(X)$ characterized by 
		$$v_{\varepsilon}^{2}=\left\{\begin{array}{ll}x_{[d]\setminus\{1,2\}} & \text{if}\ \varepsilon_1=0\ \wedge\ \varepsilon_2=0,\\ x_{[d]\setminus\{1\}} & \text{if}\ \varepsilon_1=0\ \wedge\ \varepsilon_2=1,\\ x_{[d]\setminus\{2\}} & \text{if}\ \varepsilon_1=1\ \wedge\ \varepsilon_2=0,\\ x_{[d]} & \text{if}\ \varepsilon_1=1\ \wedge\ \varepsilon_2=1.\\  \end{array}\right.$$
		
		Hence, $(v_{\varepsilon}^{2}\colon \varepsilon\in \{0,1\}^{d}, \varepsilon_2=1)=(u_{\varepsilon}^{1}\colon \varepsilon\in \{0,1\}^{d}, \varepsilon_2=0)$. By Lemma \ref{lem:insert}, we can glue the $2$-th lower face of $\textbf{v}$ with the $2$-th upper face of $\textbf{u}^1$ and obtain the point $\textbf{u}^{2}$ that belongs to  $\QQ_{T_{1},\ldots,T_{d}}(X)$. The point  $\textbf{u}^{2}$ is characterized by
$$u_{\varepsilon}^{2}=\left\{\begin{array}{ll}x_{[d]\setminus\{1,2\}} & \text{if}\ \varepsilon_1=0\ \wedge\ \varepsilon_2=0,\\ x_{[d]\setminus\{1\}} & \text{if}\ \varepsilon_1=0\ \wedge\ \varepsilon_2=1, \ \varepsilon \neq [d]\setminus\{1\} , \\ 
x_{[d]\setminus\{2\}} & \text{if}\ \varepsilon_1=1\ \wedge\ \varepsilon_2=0,\\ x_{[d]} & \text{if}\ \varepsilon_1=1\ \wedge\ \varepsilon_2=1,\\ y_{[d]\setminus\{1\}} & \varepsilon=[d]\setminus\{1\}, \\ a & \varepsilon=[d].  \end{array}\right.$$
		
		The value of $u_{\varepsilon}^{2}$ for $\varepsilon\neq [d]$ and $\varepsilon\neq [d]\setminus\{1\}$ can only take the values $x_{[d]\setminus \{1,2\}}$, $x_{[d]\setminus \{1\}}$, $x_{[d]\setminus \{2\}}$ or $x_{[d]}$. Furthermore, $\textbf{u}^{2}$ coincides with $\textbf{x}$ in the coordinates $[d]\setminus\{1,2\}$ and $[d]\setminus\{2\}$ while $u^2_{[d]\setminus\{1\}}=y_{ [d]\setminus\{1\}}$ and $u^2_{[d]}=a$. Note that $\textbf{u}^2$ does not depend on $\{3,\ldots,d \}$, meaning that $u^{2}_{\varepsilon}=x_{\varepsilon \cup \{3,\ldots, d\}}$, for any $\varepsilon$, except for $\varepsilon=[d]\setminus\{1\}$ and $\varepsilon=[d]$.

	 Now assume that we have constructed $\textbf{u}^{k}\in \QQ_{T_{1},\ldots,T_{d}}(X)$ for $2\leq k<d$ such that $\textbf{u}$ coincides with $\textbf{x}$ in all coordinates of the form $[d]\setminus A$ where $A\subseteq \{1,\ldots,k\}$, except for $A=\{1\}$ and $A=\emptyset$, where ${u}^{k}_{[d]\setminus \{1\}}=y_{ [d]\setminus\{1\}}$ and ${u}^{k}_{[d]}=a$. Assume also that $\textbf{u}^k$ does not depend on $\{k+1,\ldots,d \}$, meaning that for any $\varepsilon$ (except for $\varepsilon=[d]\setminus\{1\}$ and $\varepsilon=[d]$)
			\begin{equation} u^{k}_{\varepsilon}=x_{\varepsilon \cup \{k+1,\ldots, d\}}.  
			\label{equation:u^k}
			\end{equation}

			Consider the $(k+1)$-th lower face of $\textbf{u}^k$ (\emph{i.e.}, ${(u_{\varepsilon}^{k}\colon \varepsilon\in \{0,1\}^{d},\ \varepsilon_{k+1}=0)}$). Since ${\textbf{x}\in \QQ_{T_{1},\ldots,T_{d}}(X)}$, duplicating the restriction of $\textbf{x}$ to the coordinates $[d]\setminus A$, $A\subseteq \{1,\ldots,k+1 \}$  we get the point $\textbf{v}^{k+1}$ that belongs to  $\QQ_{T_{1},\ldots,T_{d}}(X)$. This point is characterized by
			\[v^{k+1}_{\varepsilon}=x_{\varepsilon \cup \{k+2,\dots,d\} } \text{ for every }\varepsilon \in \{0,1\}^d. \] 
Note that if  $\varepsilon_{k+1}=1$ (or equivalently $k+1\in \varepsilon)$ then,
\begin{equation}v^{k+1}_{\varepsilon}=x_{\varepsilon \cup \{k+2,\dots,d\} } =x_{\left(\varepsilon \setminus\{ k+1\} \right)\cup \{k+1,k+2,\dots,d\}} =u^k_{\varepsilon\setminus\{k+1\}}  \label{equation:v^{k+1}}, \end{equation}
where in the last equality we used \eqref{equation:u^k} with $\varepsilon \setminus\{ k+1\}$, which is different from $[d]\setminus\{1\}$ and $[d]$. 
			
From \eqref{equation:v^{k+1}} we deduce that  ${(v_{\varepsilon}^{k+1}\colon \varepsilon\in \{0,1\}^{d},\  \varepsilon_{k+1}=1)}={(u_{\varepsilon}^{k}\colon \varepsilon\in \{0,1\}^{d},\ \varepsilon_{k+1}=0)}$ . By \cref{pegado1}, we can glue the $(k+1)$-th lower face of $\textbf{v}^{k+1}$ with the $(k+1)$-th upper face of $\textbf{u}^k$ and obtain the point $\textbf{u}^{k+1}$ that belongs to $\QQ_{T_{1},\ldots,T_{d}}(X)$. The point $\textbf{u}^{k+1}$ is characterized by
			$$
			u_{\varepsilon}^{k+1} =  \left\{\begin{array}{ll}
			v_{\varepsilon}^{k+1} & \text{if}\ \varepsilon_{k+1}=0,\\
			u_{\varepsilon}^{k} & \text{if}\ \varepsilon_{k+1}=1.
			\end{array}\right.$$
			
			We check the point $\textbf{u}^{k+1}$ satisfies the conditions needed to continue the process, assuming  obviously that $k+1<d$. Let $A\subseteq \{1,\ldots,k+1\}$ with $A\neq \{1\}$ and $A\neq \emptyset$. If $k+1 \notin A$ then $k+1\in [d]\setminus A$ and $u^{k+1}_{[d]\setminus A}=u_{[d]\setminus A}^{k} =x_{([d]\setminus A)\cup \{k+1,\ldots,d\} }=x_{[d]\setminus A}$. On the other hand, if $k+1 \in A$ then $k+1\notin [d]\setminus A$ and  $u^{k+1}_{[d]\setminus A}=v_{[d]\setminus A}^{k+1}=x_{([d]\setminus A) \cup \{k+2,\dots,d\} } =x_{[d]\setminus A }$. We deduce that $\textbf{u}$ coincides with $\textbf{x}$ in all coordinates $[d]\setminus A$, with $A\neq \{1\}$ and $A\neq \emptyset$. It is also immediate to check that  $u^{k+1}_{[d]\setminus\{1\}}=u^{k}_{[d]\setminus\{1\}}=y_{[d]\setminus\{1\}}$ and $u^{k+1}_{[d]}=u^{k}_{[d]}=a$.
			We now check that $\textbf{u}^{k+1}$ does not depend on the coordinates $\{k+2,\ldots,d\}$. For $\varepsilon \neq [d]\setminus \{1\}$ and $\varepsilon\neq[d]$ we have 
			\[ u_{\varepsilon}^{k+1} =  \left\{\begin{array}{ll}
			v_{\varepsilon}^{k+1}=x_{\varepsilon\cup \{k+2,\ldots,d\}}  & \text{if}\ \varepsilon_{k+1}=0,\\
			u_{\varepsilon}^{k}=x_{\varepsilon\cup \{k+1,\ldots,d\}}=x_{\varepsilon\cup \{k+2,\ldots,d\}}    & \text{if}\ \varepsilon_{k+1}=1,
			\end{array}\right.\]
			and we conclude that $\textbf{u}^{k+1}$ does not depend on the coordinates $\{k+2,\ldots,d\}$.

			We proceed with the process until we construct $\textbf{u}^d\in \QQ_{T_{1},\ldots,T_{d}}(X)$. This point coincides with $\textbf{x}$ in all coordinates $[d]\setminus A$ for $A\subseteq \{1,\ldots,d \}=[d]$ except for $A=\{1\}$ and $A=\emptyset$, where $\textbf{u}^d_{[d]\setminus\{1\}}=y_{[d]\setminus\{1\}}$ and $\textbf{u}^d_{[d]}=a$. We therefore conclude that $\textbf{u}^d$ coincides with $\textbf{x}$ everywhere but in the coordinates $[d]\setminus\{1\}$ and $[d]$. In particular $\textbf{u}_{\vec{0}}=x_0$. Using that $x_0$ is a continuity point we get that $(u_{\varepsilon}:\varepsilon\neq \vec{0})$ belongs to $\KK_{T_{1},\ldots,T_{d}}^{x_0}$, completing the proof of \cref{cl:replacement}.  
			
\end{proof}
	
To finish the proof of \cref{KRelInd}, use \cref{cl:replacement} for $0\leq k <d$ to construct the points $\textbf{x}^{1},\ldots,\textbf{x}^{d}\in \KK_{T_{1},\ldots,T_{d}}^{x_{0}}$. As mentioned before, the point $\textbf{x}^d$ coincides everywhere with $\textbf{y}$, except for the $[d]$ coordinate. The unique closing parallelepiped property implies that $\textbf{y}=\textbf{x}^{d}\in \KK_{T_{1},\ldots,T_{d}}^{x_{0}}$. 
	
\end{proof}

{\footnotesize
	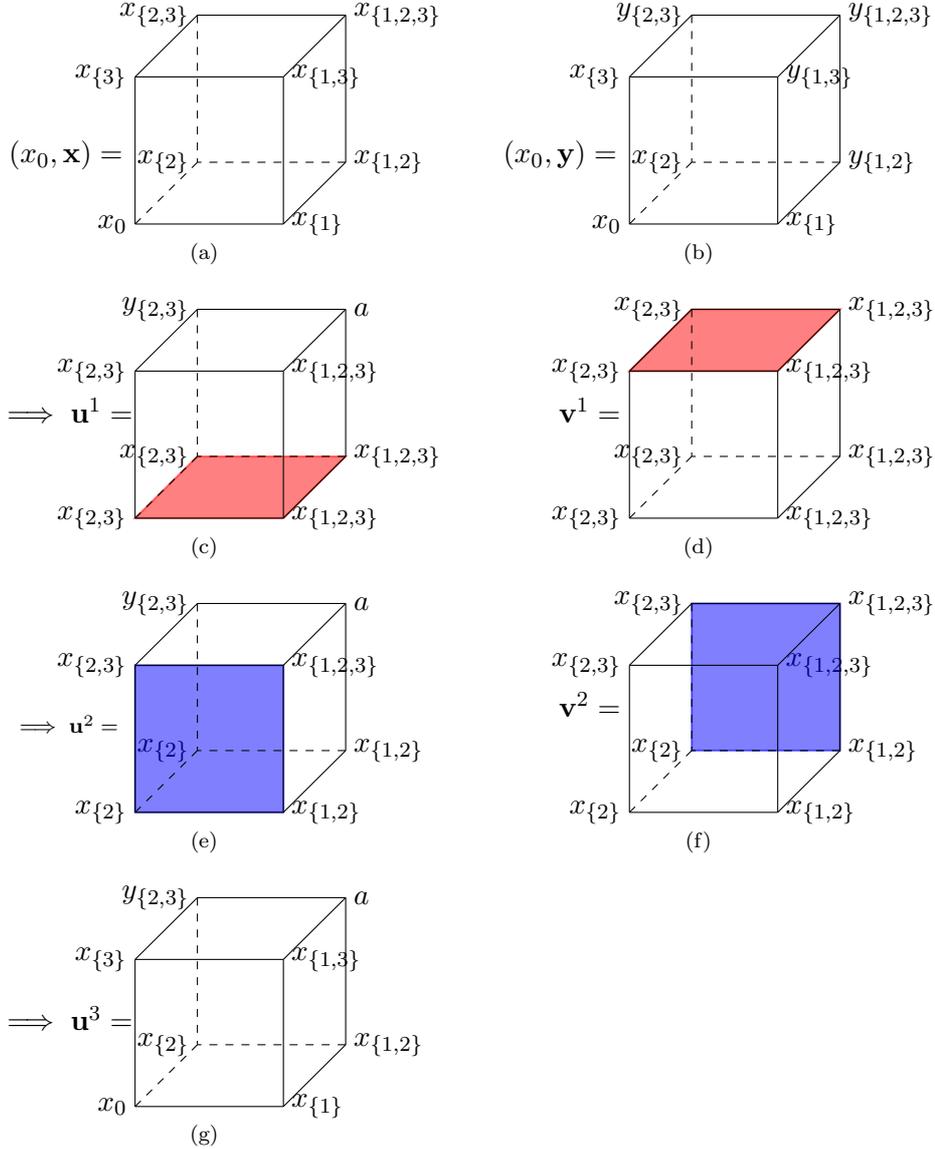
\begin{figure}[h]
		\centering
		\begin{tikzpicture}[scale=1.3]
		\node(s1) at (-0.7,0.7) [scale=1] {\large$(x_{0},\textbf{x})=$};
		\coordinate[scale=.5,label=left:\large$x_{0}$] (a1) at (0,0);
		\coordinate[scale=.5,label=left:\large$x_{\{3\}}$] (a2) at (0,1.5);
		\coordinate[scale=.5,label=right:\large$x_{\{1\}}$] (a3) at (1.5,0);
		\coordinate[scale=.5,label=right:\large$x_{\{1,3\}}$] (a4) at (1.5,1.5);
		\coordinate[scale=.5,label=right:\large$x_{\{1,2,3\}}$] (a5) at (2.13,2.13);
		\coordinate[scale=.5,label=left:\large$x_{\{2,3\}}$] (a6) at (0.63,2.13);
		\coordinate[scale=.5,label=right:\large$x_{\{1,2\}}$] (a7) at (2.13,0.63);
		\coordinate[scale=.5,label=left:\large$x_{\{2\}}$] (a8) at (0.63,0.63);
		\node(n1) at (0.7,-0.3) [scale=1] {(a)};
		
		\path[thin] (a1) edge (a2);
		\path[thin] (a1) edge (a3);
		\path[dashed] (a1) edge (a8);
		\path[thin] (a2) edge (a4);
		\path[thin] (a2) edge (a6);
		\path[thin] (a3) edge (a4);
		\path[thin] (a3) edge (a7);
		\path[thin] (a4) edge (a5);
		\path[thin] (a5) edge (a7);
		\path[thin] (a5) edge (a6);
		\path[dashed] (a6) edge (a8);
		\path[dashed] (a7) edge (a8);
		
		\node(s1) at (4.3,0.7) [scale=1] {\large$(x_{0},\textbf{y})=$};
		
		\coordinate[scale=.5,label=left:\large$x_{0}$] (c1) at (5,0);
		\coordinate[scale=.5,label=left:\large$x_{\{3\}}$] (c2) at (5,1.5);
		\coordinate[scale=.5,label=right:\large$x_{\{1\}}$] (c3) at (6.5,0);
		\coordinate[scale=.5,label=right:\large$y_{\{1,3\}}$] (c4) at (6.5,1.5);
		\coordinate[scale=.5,label=right:\large$y_{\{1,2,3\}}$] (c5) at (7.13,2.13);
		\coordinate[scale=.5,label=left:\large$y_{\{2,3\}}$] (c6) at (5.63,2.13);
		\coordinate[scale=.5,label=right:\large$y_{\{1,2\}}$] (c7) at (7.13,0.63);
		\coordinate[scale=.5,label=left:\large$x_{\{2\}}$] (c8) at (5.63,0.63);
		
		\node(n2) at (5.7,-0.3) [scale=1] {(b)};
		
		\path[thin] (c1) edge (c2);
		\path[thin] (c1) edge (c3);
		\path[dashed] (c1) edge (c8);
		\path[thin] (c2) edge (c4);
		\path[thin] (c2) edge (c6);
		\path[thin] (c3) edge (c4);
		\path[thin] (c3) edge (c7);
		\path[thin] (c4) edge (c5);
		\path[thin] (c5) edge (c7);
		\path[thin] (c5) edge (c6);
		\path[dashed] (c6) edge (c8);
		\path[dashed] (c7) edge (c8);

		\node(s6) at (4.6,-1.9) [scale=1] {\large$\textbf{v}^{1}=$};
		\coordinate[scale=.5,label=left:\large$x_{\{2,3\}}$] (d1) at (5,-3);
		\coordinate[scale=.5,label=left:\large$x_{\{2,3\}}$] (d2) at (5,-1.5);
		\coordinate[scale=.5,label=right:\large$x_{\{1,2,3\}}$] (d3) at (6.5,-3);
		\coordinate[scale=.5,label=right:\large$x_{\{1,2,3\}}$] (d4) at (6.5,-1.5);
		\coordinate[scale=.5,label=right:\large$x_{\{1,2,3\}}$] (d5) at (7.13,-0.87);
		\coordinate[scale=.5,label=left:\large$x_{\{2,3\}}$] (d6) at (5.63,-0.87);
		\coordinate[scale=.5,label=right:\large$x_{\{1,2,3\}}$] (d7) at (7.13,-2.37);
		\coordinate[scale=.5,label=left:\large$x_{\{2,3\}}$] (d8) at (5.63,-2.37);

		\filldraw [red,opacity=0.5,thick] (d5)--(d6)--(d2)--(d4)--(d5)--cycle;
		\node(n4) at (5.7,-3.3) [scale=1] {(d)};
		
		\path[thin] (d1) edge (d2);
		\path[thin] (d1) edge (d3);
		\path[dashed] (d1) edge (d8);
		\path[thin] (d2) edge (d4);
		\path[thin] (d2) edge (d6);
		\path[thin] (d3) edge (d4);
		\path[thin] (d3) edge (d7);
		\path[thin] (d4) edge (d5);
		\path[thin] (d5) edge (d7);
		\path[thin] (d5) edge (d6);
		\path[dashed] (d6) edge (d8);
		\path[dashed] (d7) edge (d8);
		
		\coordinate[scale=.5,label=left:\large$x_{\{2,3\}}$] (e1) at (0,-3);
		\coordinate[scale=.5,label=left:\large$x_{\{2,3\}}$] (e2) at (0,-1.5);
		\coordinate[scale=.5,label=right:\large$x_{\{1,2,3\}}$] (e3) at (1.5,-3);
		\coordinate[scale=.5,label=right:\large$x_{\{1,2,3\}}$] (e4) at (1.5,-1.5);
		\coordinate[scale=.5,label=right:\large$a$] (e5) at (2.13,-0.87);
		\coordinate[scale=.5,label=left:\large$y_{\{2,3\}}$] (e6) at (0.63,-0.87);
		\coordinate[scale=.5,label=right:\large$x_{\{1,2,3\}}$] (e7) at (2.13,-2.37);
		\coordinate[scale=.5,label=left:\large$x_{\{2,3\}}$] (e8) at (0.63,-2.37);
		
		\node(n5) at (0.7,-3.3) [scale=1] {(c)};
		
		\filldraw [red,opacity=0.5,thick] (e1)--(e3)--(e7)--(e8)--(e1)--cycle;
		
		\node(s9) at (-0.7,-1.9) [scale=1] {\large$\implies \textbf{u}^{1}=$};
		
		\node(s10) at (4.6,-4.9) [scale=1] {\large$\textbf{v}^{2}=$};
		
		\coordinate[scale=.5,label=left:\large$x_{\{2\}}$] (f1) at (5,-6);
		\coordinate[scale=.5,label=left:\large$x_{\{2,3\}}$] (f2) at (5,-4.5);
		\coordinate[scale=.5,label=right:\large$x_{\{1,2\}}$] (f3) at (6.5,-6);
		\coordinate[scale=.5,label=right:\large$x_{\{1,2,3\}}$] (f4) at (6.5,-4.5);
		\coordinate[scale=.5,label=right:\large$x_{\{1,2,3\}}$] (f5) at (7.13,-3.87);
		\coordinate[scale=.5,label=left:\large$x_{\{2,3\}}$] (f6) at (5.63,-3.87);
		\coordinate[scale=.5,label=right:\large$x_{\{1,2\}}$] (f7) at (7.13,-5.37);
		\coordinate[scale=.5,label=left:\large$x_{\{2\}}$] (f8) at (5.63,-5.37);
		
		\node(n6) at (5.7,-6.3) [scale=1] {(f)};
		
		\filldraw [blue,opacity=0.5,thick] (f6)--(f5)--(f7)--(f8)--(f6)--cycle;
		\path[thin] (e1) edge (e2);
		\path[thin] (e1) edge (e3);
		\path[dashed] (e1) edge (e8);
		\path[thin] (e2) edge (e4);
		\path[thin] (e2) edge (e6);
		\path[thin] (e3) edge (e4);
		\path[thin] (e3) edge (e7);
		\path[thin] (e4) edge (e5);
		\path[thin] (e5) edge (e7);
		\path[thin] (e5) edge (e6);
		\path[dashed] (e6) edge (e8);
		\path[dashed] (e7) edge (e8);
		
		\path[thin] (f1) edge (f2);
		\path[thin] (f1) edge (f3);
		\path[dashed] (f1) edge (f8);
		\path[thin] (f2) edge (f4);
		\path[thin] (f2) edge (f6);
		\path[thin] (f3) edge (f4);
		\path[thin] (f3) edge (f7);
		\path[thin] (f4) edge (f5);
		\path[thin] (f5) edge (f7);
		\path[thin] (f5) edge (f6);
		\path[dashed] (f6) edge (f8);
		\path[dashed] (f7) edge (f8);

		\node(s12) at (-0.7,-5.1) [scale=1] {$\implies \textbf{u}^{2}=$};
		
		\coordinate[scale=.5,label=left:\large$x_{\{2\}}$] (g1) at (0,-6);
		\coordinate[scale=.5,label=left:\large$x_{\{2,3\}}$] (g2) at (0,-4.5);
		\coordinate[scale=.5,label=right:\large$x_{\{1,2\}}$] (g3) at (1.5,-6);
		\coordinate[scale=.5,label=right:\large$x_{\{1,2,3\}}$] (g4) at (1.5,-4.5);
		\coordinate[scale=.5,label=right:\large$a$] (g5) at (2.13,-3.87);
		\coordinate[scale=.5,label=left:\large$y_{\{2,3\}}$] (g6) at (0.63,-3.87);
		\coordinate[scale=.5,label=right:\large$x_{\{1,2\}}$] (g7) at (2.13,-5.37);
		\coordinate[scale=.5,label=left:\large$x_{\{2\}}$] (g8) at (0.63,-5.37);
		
		\filldraw [blue,opacity=0.5,thick] (g1)--(g3)--(g4)--(g2)--(g1)--cycle;
		
		\node(s15) at (-0.7,-8.1) [scale=1] {\large$\implies \textbf{u}^{3}=$};
		
		\coordinate[scale=.5,label=left:\large$x_{0}$] (h1) at (0,-9);
		\coordinate[scale=.5,label=left:\large$x_{\{3\}}$] (h2) at (0,-7.5);
		\coordinate[scale=.5,label=right:\large$x_{\{1\}}$] (h3) at (1.5,-9);
		\coordinate[scale=.5,label=right:\large$x_{\{1,3\}}$] (h4) at (1.5,-7.5);
		\coordinate[scale=.5,label=right:\large$a$] (h5) at (2.13,-6.87);
		\coordinate[scale=.5,label=left:\large$y_{\{2,3\}}$] (h6) at (0.63,-6.87);
		\coordinate[scale=.5,label=right:\large$x_{\{1,2\}}$] (h7) at (2.13,-8.37);
		\coordinate[scale=.5,label=left:\large$x_{\{2\}}$] (h8) at (0.63,-8.37);
		
		\path[thin] (g1) edge (g2);
		\path[thin] (g1) edge (g3);
		\path[dashed] (g1) edge (g8);
		\path[thin] (g2) edge (g4);
		\path[thin] (g2) edge (g6);
		\path[thin] (g3) edge (g4);
		\path[thin] (g3) edge (g7);
		\path[thin] (g4) edge (g5);
		\path[thin] (g5) edge (g7);
		\path[thin] (g5) edge (g6);
		\path[dashed] (g6) edge (g8);
		\path[dashed] (g7) edge (g8);
		
		\path[thin] (h1) edge (h2);
		\path[thin] (h1) edge (h3);
		\path[dashed] (h1) edge (h8);
		\path[thin] (h2) edge (h4);
		\path[thin] (h2) edge (h6);
		\path[thin] (h3) edge (h4);
		\path[thin] (h3) edge (h7);
		\path[thin] (h4) edge (h5);
		\path[thin] (h5) edge (h7);
		\path[thin] (h5) edge (h6);
		\path[dashed] (h6) edge (h8);
		\path[dashed] (h7) edge (h8);
		
		\node(n7) at (0.7,-6.3) [scale=1] {(e)};
		\node(n8) at (0.7,-9.3) [scale=1] {(g)};
		
		\end{tikzpicture}
		\caption{ Illustration of the proof of \cref{cl:replacement} in Theorem \ref{KRelInd} for the case $d=3$. We change the $[d]\setminus\{1\}=\{2,3\}$ coordinate of $\textbf{x}$ by the corresponding one of $\textbf{y}$.}
		\label{fig:claim1}
	\end{figure}
}
	
\section{Proof of Theorem \ref{StructThm}}  \label{sec:ProofThm}
	
We have now all the ingredientes to prove the structure Theorem \ref{StructThm}. We restate it here for the reader's convenience. Interestingly, after the work of previous sections, all the $\Z^d$-systems that appear in the theorem can be described explicitly from the directional cube structures of $(X,T_{1},\ldots,T_{d})$.

\begin{theorem*}
Let $(X,T_{1},\ldots,T_{d})$ be a minimal distal $\Z^{d}$-system. The following statements are equivalent:
\begin{enumerate}[(1)]
\item $(X,T_{1},\ldots,T_{d})$ has the unique closing parallelepiped property, \emph{i.e.}, if $\textbf{x},\textbf{y}\in \QQ_{T_{1},\ldots,T_{d}}(X)$ have $2^{d}-1$ coordinates in common, then $\textbf{x}=\textbf{y}$.
\item $\mathcal{R}_{T_{1},\ldots,T_{d}}(X)=\Delta_{X}$.
\item The structure of $(X,T_{1},\ldots,T_{d})$ can be described as follows: (i) it is a factor of a minimal distal $\Z^{d}$-system $(Y,T_{1},\ldots,T_{d})$ which is a joining of $\Z^{d}$-systems $(Y_{1},T_{1},\ldots,T_{d}),\ldots,(Y_{d},T_{1},\ldots,T_{d})$, where for each $i\in \{1,\ldots,d\}$ the action of $T_i$ on $Y_i$ is the identity; (ii) for each $i,j\in \{1,\ldots,d\}$, $i< j$, there exists a $\Z^{d}$-system
$(Y_{i,j},T_{1},\ldots,T_{d})$ which is a common factor of $(Y_{i},T_{1},\ldots,T_{d})$ and $(Y_{j},T_{1},\ldots,T_{d})$ and where $T_i$ and $T_j$ act as the identity; and (iii) $Y$ is jointly relatively independent with respect to the systems $\left ( (Y_{i,j},T_{1},\ldots,T_{d}): \ i,j \in \{1,\ldots,d \}, \ i<j \right )$.

More precisely, $Y=\KK_{T_{1},\ldots,T_{d}}^{x_{0}}$ and $Y_j=\KK_{T_{1},\ldots,T_{j-1},T_{j+1},\ldots,T_{d}}^{x_{0}}$ with $x_0$ being a continuity point. Relative independence of $Y$ is with respect to their maximal $\mathsf{Z}_{0}^{e_{j_{1}}}\wedge \mathsf{Z}_{0}^{e_{j_{2}}}$-factors, for all $j_{1},j_{2}\in [d]$ with $j_{1} < j_{2}$.
\end{enumerate}
\end{theorem*}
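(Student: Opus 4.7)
The plan is to prove (1) $\Leftrightarrow$ (2), then (1) $\Rightarrow$ (3) by an explicit construction, and finally (3) $\Rightarrow$ (1) via a short projection argument combined with Theorem \ref{teo2}. The bulk of the work is already contained in the preceding sections; the main task is to assemble it in the right order, and the hardest individual ingredient is Theorem \ref{KRelInd}, already proved.

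For (1) $\Rightarrow$ (2), Proposition \ref{prop4} gives $\rr_{T_j}(X)=\Delta_X$ for every $j\in [d]$, so $\rr_{T_1,\ldots,T_d}(X)=\bigcap_j \rr_{T_j}(X)=\Delta_X$. For (2) $\Rightarrow$ (1), take $\textbf{x},\textbf{y}\in \QQ_{T_1,\ldots,T_d}(X)$ differing only at a single coordinate $\varepsilon_0$. Using the composition of reflections $\Phi_{j\ast}$ (Lemma \ref{lem:goodpermutations}) indexed by $j\in \varepsilon_0$, which leave $\QQ_{T_1,\ldots,T_d}(X)$ invariant, we reduce to $\varepsilon_0=\emptyset$, obtaining $\textbf{x}'=(x',\textbf{a}_{*})$ and $\textbf{y}'=(y',\textbf{a}_{*})$ with common tail $\textbf{a}_{*}\in X_{*}^{[d]}$, both in $\QQ_{T_1,\ldots,T_d}(X)$. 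Since minimal distal systems enjoy the gluing property (Lemma \ref{pegado1}), Theorem \ref{thm:characterization} applies and item (3) there yields $(x',y')\in \rr_{T_1,\ldots,T_d}(X)=\Delta_X$, so $\textbf{x}=\textbf{y}$.

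For (1) $\Rightarrow$ (3), fix a continuity point $x_{0}\in X$ and define $Y:=\KK_{T_1,\ldots,T_d}^{x_0}$ with the face action (playing the role of the $\Z^d$-action in the statement). Minimality and distality of $Y$ follow from Proposition \ref{CubeMin} applied to $X$, and the projection onto the $[d]$-coordinate yields a factor map $Y\to X$. For each $j\in [d]$ let $Y_j:=\KK_{T_1,\ldots,T_{j-1},T_{j+1},\ldots,T_d}^{x_0}$; Proposition \ref{cube2} shows that it is the maximal $\mathsf{Z}_{0}^{e_j}$-factor of $Y$ (in particular $T_j$ acts trivially on $Y_j$), and Proposition \ref{prop:softstructure} exhibits $Y$ as a joining of the $Y_j$'s. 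For part (ii), take $Y_{i,j}$ to be the maximal $\mathsf{Z}_0^{e_i}\wedge \mathsf{Z}_0^{e_j}$-factor of $Y$ (isomorphic to an analogous $\KK$-system with the $i$-th and $j$-th transformations removed), which is a common factor of $Y_i$ and $Y_j$ on which both $T_i$ and $T_j$ act as the identity. The relative independence required in (iii) is precisely Theorem \ref{KRelInd}.

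For (3) $\Rightarrow$ (1), I first show $Y$ has the unique closing parallelepiped property, and then descend it to $X$. Let $\textbf{y},\textbf{y}'\in \QQ_{T_1,\ldots,T_d}(Y)$ share $2^d-1$ coordinates, differing only at $\varepsilon_0$. For each $j\in [d]$, Proposition \ref{cube1} puts $\pi_j^{[d]}(\textbf{y})$ and $\pi_j^{[d]}(\textbf{y}')$ in $\QQ_{T_1,\ldots,T_d}(Y_j)$; since $T_j$ acts as the identity on $Y_j$, every cube $\textbf{w}\in \QQ_{T_1,\ldots,T_d}(Y_j)$ satisfies $w_\varepsilon=w_{\Phi_j(\varepsilon)}$ (passing to the limit in $T_1^{n_1\varepsilon_1}\cdots T_d^{n_d\varepsilon_d}y$, where the exponent of $T_j$ is irrelevant). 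Hence the $\varepsilon_0$-coordinate of each projection equals its $\Phi_j(\varepsilon_0)$-coordinate, which is shared between $\textbf{y}$ and $\textbf{y}'$, so $\pi_j(y_{\varepsilon_0})=\pi_j(y'_{\varepsilon_0})$ for every $j\in [d]$. Since $Y\subseteq Y_1\times \cdots \times Y_d$ is a joining, a point of $Y$ is determined by its projections to the $Y_j$'s, so $y_{\varepsilon_0}=y'_{\varepsilon_0}$ and $\textbf{y}=\textbf{y}'$. Thus $Y$ has UCPP, so $\rr_{T_1,\ldots,T_d}(Y)=\Delta_Y$ by (1) $\Rightarrow$ (2); Theorem \ref{teo2} then yields $\rr_{T_1,\ldots,T_d}(X)=(\pi\times\pi)(\Delta_Y)=\Delta_X$, and (2) $\Rightarrow$ (1) concludes.
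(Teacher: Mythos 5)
Your proof is correct and follows essentially the same route as the paper: (1)$\Leftrightarrow$(2) via Proposition \ref{prop4} and Theorem \ref{prop2}, (1)$\Rightarrow$(3) via Propositions \ref{prop:softstructure}, \ref{cube2} and Theorem \ref{KRelInd}, and (3)$\Rightarrow$(1) by showing $Y$ has the property and descending to $X$ with Theorem \ref{teo2}. If anything, you are slightly more careful than the paper in two places — using the reflections $\Phi_{j\ast}$ to reduce an arbitrary differing coordinate to $\emptyset$ in (2)$\Rightarrow$(1), and handling an arbitrary differing coordinate rather than only $[d]$ in (3)$\Rightarrow$(1) — where the paper treats only the normalized case implicitly.
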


\bigskip

\begin{proof}[Proof of Theorem \ref{StructThm}]
$(1) \implies (2)$. This follows from Proposition \ref{prop4}. 
		
$(2) \implies (1)$. Suppose that $(X,T_{1},\ldots,T_{d})$ does not verify the unique closing parallelepiped property, then there exist $x,y\in X$ with $x\neq y$ and $\textbf{a}_{*}\in X_{*}^{[d]}$ such that $(x,\textbf{a}_{*}),\ {(y,\textbf{a}_{*})\in \QQ_{T_{1},\ldots,T_{d}}(X)}$. By Proposition \ref{prop2}, we have that $(x,y)\in \mathcal{R}_{T_{1},\ldots,T_{d}}(X)$. Then, $x=y$, which is a contradiction.
		
$(1) \implies (3)$. This is a consequence of Theorem \ref{KRelInd}. 
		
$(3) \implies (1)$. We show that the system $(Y,T_{1},\ldots,T_{d})$ given by (3) verifies the unique closing parallelepiped property. 

Let $\textbf{y}\in \QQ_{T_{1},\ldots,T_{j}}(Y)$. 
By definition, for every $j\in [d]$ we have that $(y_{[d]},y_{[d]\setminus\{j\}}) \in 
\QQ_{T_j}(Y)$. But, $T_j$ acts as the identity on the $j$-th coordinate of points in $Y$, then
$((y_{[d]})_j,(y_{[d]\setminus\{j\}})_j) \in 
\QQ_{T_j}(Y_j)=\Delta_{Y_j}$. This implies that,  
$(y_{[d]})_j = (y_{[d]\setminus\{j\}})_j$. Hence, $y_{[d]}$ can be determined from previous coordinates of $\textbf{y}$, which proves the unique closing parallelepiped property for $(Y,T_1,\ldots,T_d)$. 

Since (1) is equivalent with (2) we have that $\rr_{T_{1},\ldots,T_{d}}(Y)=\Delta_{Y}$. By Theorem \ref{teo2}, $\rr_{T_{1},\ldots,T_{d}}(X)=\Delta_{X}$. This proves the unique closing parallelepiped property for $(X,T_1,\ldots,T_d)$. 	  
\end{proof}
	
The following two corollaries can be deduced from Theorem \ref{StructThm} and Theorem \ref{teo2}. The first one was implicitly proved inside the last proof. 
	
\begin{corollary}
Let $\pi\colon Y \to X$ be a factor map between  minimal distal $\Z^d$-systems $(Y,T_{1},\ldots,T_{d})$ and $(X,T_{1},\ldots,T_{d})$. If $(Y,T_{1},\ldots,T_{d})$ has the unique closing parallelepiped property then 
$(X,T_{1},\ldots,T_{d})$ has it too. 
\label{cor1}
\end{corollary}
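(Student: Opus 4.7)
The plan is to deduce the corollary directly from the equivalence $(1)\Leftrightarrow(2)$ in Theorem \ref{StructThm} together with the lifting result Theorem \ref{teo2}. Both auxiliary results are stated precisely for minimal distal $\Z^d$-systems and the hypotheses of the corollary are exactly that $Y$ and $X$ (which is a factor of $Y$, hence minimal and distal by Theorem \ref{distal}) fit this framework.

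First, since $(Y,T_1,\ldots,T_d)$ has the unique closing parallelepiped property, Theorem \ref{StructThm} gives $\mathcal{R}_{T_1,\ldots,T_d}(Y)=\Delta_Y$. Next, by Theorem \ref{teo2} applied to the factor map $\pi\colon Y \to X$, we have
\[ \mathcal{R}_{T_1,\ldots,T_d}(X) = \pi\times\pi\bigl(\mathcal{R}_{T_1,\ldots,T_d}(Y)\bigr) = \pi\times\pi(\Delta_Y) = \Delta_X, \]
where the last equality uses the surjectivity of $\pi$. Invoking the equivalence $(2)\Rightarrow(1)$ of Theorem \ref{StructThm} on $(X,T_1,\ldots,T_d)$ yields that $X$ enjoys the unique closing parallelepiped property.

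There is no real obstacle here: the work has been done already in Theorem \ref{teo2}, whose proof handled the delicate business of lifting representatives of a pair in $\mathcal{R}_{T_1,\ldots,T_d}(X)$ to $Y$ using the gluing property in directional cube spaces, and in the implication $(2)\Rightarrow(1)$ of Theorem \ref{StructThm}, which itself followed immediately from the characterization in Theorem \ref{thm:characterization}. The only verification to make is that distality and minimality pass to $X$, which is standard (parts (2) and (3) of Theorem \ref{distal}), so that Theorems \ref{StructThm} and \ref{teo2} apply in both directions of the factor map.
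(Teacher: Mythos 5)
Your proof is correct and is exactly the paper's argument: the authors note that Corollary \ref{cor1} "was implicitly proved inside the last proof," namely by combining the equivalence $(1)\Leftrightarrow(2)$ of Theorem \ref{StructThm} with the lifting identity $\pi\times\pi(\mathcal{R}_{T_1,\ldots,T_d}(Y))=\mathcal{R}_{T_1,\ldots,T_d}(X)$ from Theorem \ref{teo2}. Your observation that minimality and distality pass to the factor $X$ so that both auxiliary results apply is the only hypothesis check needed, and you have made it.
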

	
\begin{corollary}
Let $(X,T_{1},\ldots,T_{d})$ be a minimal distal $\Z^d$-system. 
Then, $({X}/{\mathcal{R}_{T_{1},\ldots,T_{d}}(X)},T_{1},\ldots,T_{d})$ has the unique closing parallelepiped property. Moreover, this system is the maximal factor with this property, \emph{i.e.}, any other factor of $(X,T_{1},\ldots,T_{d})$ with the unique closing parallelepiped property factorizes through it.
\label{prop5}
\end{corollary}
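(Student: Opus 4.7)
The plan is to derive the corollary from the already established Theorem \ref{StructThm} (specifically the equivalence $(1) \Leftrightarrow (2)$) together with the ``lifting'' Theorem \ref{teo2}. First, by Theorem \ref{EquivRel} the relation $\mathcal{R}_{T_{1},\ldots,T_{d}}(X)$ is a closed invariant equivalence relation, so the quotient $(X/\mathcal{R}_{T_{1},\ldots,T_{d}}(X),T_{1},\ldots,T_{d})$ is a well-defined minimal distal $\Z^d$-system (minimality and distality pass to factors).

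To show the quotient has the unique closing parallelepiped property, let $\pi\colon X\to X/\mathcal{R}_{T_{1},\ldots,T_{d}}(X)$ be the natural factor map. I would apply Theorem \ref{teo2} to $\pi$ to get
\[ \pi\times\pi(\mathcal{R}_{T_{1},\ldots,T_{d}}(X))=\mathcal{R}_{T_{1},\ldots,T_{d}}(X/\mathcal{R}_{T_{1},\ldots,T_{d}}(X)). \]
But by construction of the quotient, the image $\pi\times\pi(\mathcal{R}_{T_{1},\ldots,T_{d}}(X))$ is exactly the diagonal $\Delta_{X/\mathcal{R}_{T_{1},\ldots,T_{d}}(X)}$. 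Hence $\mathcal{R}_{T_{1},\ldots,T_{d}}(X/\mathcal{R}_{T_{1},\ldots,T_{d}}(X))=\Delta_{X/\mathcal{R}_{T_{1},\ldots,T_{d}}(X)}$, and by the implication $(2)\Rightarrow(1)$ of Theorem \ref{StructThm} the quotient system has the unique closing parallelepiped property.

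For the maximality part, let $\phi\colon X\to Z$ be a factor map onto a $\Z^d$-system $(Z,T_{1},\ldots,T_{d})$ with the unique closing parallelepiped property. Since $Z$ is a factor of a minimal distal system it is itself minimal and distal, so Theorem \ref{StructThm} (implication $(1)\Rightarrow(2)$) gives $\mathcal{R}_{T_{1},\ldots,T_{d}}(Z)=\Delta_{Z}$. Applying Theorem \ref{teo2} to $\phi$, for any $(x,y)\in \mathcal{R}_{T_{1},\ldots,T_{d}}(X)$ we have $(\phi(x),\phi(y))\in \mathcal{R}_{T_{1},\ldots,T_{d}}(Z)=\Delta_{Z}$, so $\phi(x)=\phi(y)$. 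Therefore $\mathcal{R}_{T_{1},\ldots,T_{d}}(X)$ is contained in the relation $R_\phi=\{(x,y):\phi(x)=\phi(y)\}$, and $\phi$ factors through the canonical projection $\pi$, yielding the desired factor map $X/\mathcal{R}_{T_{1},\ldots,T_{d}}(X)\to Z$.

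There is no essential obstacle here, as the entire argument is a bootstrap from the two main theorems already proved in the previous sections. The only subtle point is that both invocations of Theorem \ref{teo2} require the target systems ($X/\mathcal{R}_{T_{1},\ldots,T_{d}}(X)$ and $Z$) to be minimal distal, which holds automatically since they are factors of $(X,T_{1},\ldots,T_{d})$; one must make sure this is explicitly noted before applying the lifting theorem.
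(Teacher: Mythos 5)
Your proof is correct and follows essentially the same route as the paper: both parts are obtained by combining Theorem \ref{teo2} with the equivalence $(1)\Leftrightarrow(2)$ of Theorem \ref{StructThm}, the only differences being the order of the two claims and that you phrase the computation $\pi\times\pi(\mathcal{R}_{T_1,\ldots,T_d}(X))=\Delta$ directly rather than by lifting a specific pair from the quotient. No gaps.
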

	
\begin{proof}
Observe that if $(Z,T_{1},\ldots,T_{d})$ is a factor of $(X,T_{1},\ldots,T_{d})$ with the unique 
closing parallelepiped property, then by Theorem \ref{StructThm} ${\mathcal{R}_{T_{1},\ldots,T_{d}}(Z)=\Delta_{Z}}$. Now,  by Theorem \ref{teo2}, $\pi\times \pi(\mathcal{R}_{T_{1},\ldots,T_{d}}(X))=\rr_{T_{1},\ldots,T_{d}}(Z)$ $=\Delta_{Z}$. That is, there exists a factor map from 
$({X}/{\rr_{T_{1},\ldots,T_{d}}(X)},T_{1},\ldots,T_{d})$ to $(Z,T_{1},\ldots,T_{d})$. It remains to prove that $\rr_{T_{1},\ldots,T_{d}} (X/\rr_{T_{1},\ldots,T_{d}}(X))=\Delta_{{X}/{\rr_{T_{1},\ldots,T_{d}}(X)}}$ (\emph{i.e.}, the quotient system has the unique closing parallelepiped property). Let $\pi\colon X\to {X}/{\rr_{T_{1},\ldots,T_{d}}(X)}$ be the quotient map and take $(y_{1},y_{2})\in $  $\rr_{T_{1},\ldots,T_{d}}({X}/{\rr_{T_{1},\ldots,T_{d}}(X)})$. By Theorem \ref{teo2}, there exists $(x_{1},x_{2})\in \rr_{T_{1},\ldots,T_{d}}(X)$ with $\pi(x_{1})=y_{1}$ and $\pi(x_{2})$ $=y_{2}$. But $y_{1}=\pi(x_{1})=\pi(x_{2})=y_{2}$, so $\rr_{T_{1},\ldots,T_{d}}({X}/{\rr_{T_{1},\ldots,T_{d}}(X)})$ coincides with the diagonal relation of ${X}/{\rr_{T_{1},\ldots,T_{d}}(X)}$, which proves the corollary.
\end{proof}

\section{Recurrence in minimal distal $\Z^d$-systems with the unique closing parallelepiped property}  \label{sec:ReturnTime}

As an application of previous work, in this section we study sets of return times for minimal distal $\Z^d$-systems with the unique closing parallelepiped property. In particular, we get a characterization of minimal distal systems with this property using return time ideas.
	
\begin{definition}
Let $(X,T_{1},\ldots,T_{d})$ be a $\Z^d$-system.  Let $x\in X$ and $U$ be an open neighborhood of $x$. The \emph{set of return times} of $x$ to $U$ is defined as \[N_{T_{1},\ldots,T_{d}}(x,U)= \{(n_{1},\ldots,n_{d})\in \Z^{d}\colon\ T_{1}^{n_{1}}\cdots T_{d}^{n_{d}}x \in U\}.\]
		
A subset $B$ of $\Z^{d}$ is a set of return times for a $\Z^d$-system if there exists a $\Z^d$-system 
$(X,T_{1},\ldots,T_{d})$, $x\in X$ and an open neighborhood $U$ of $x$ such that $N_{T_{1},\ldots,T_{d}}(x,U)\subseteq B$.
\end{definition}
	
We are able to characterize sets of return times for minimal distal $\Z^{d}$-systems via the unique closing parallelepiped property. For this we consider the following definition.
	
\begin{definition}
Let $d\geq 2$ be an integer and  $B_{1},\ldots,B_{d}\subseteq \Z^{d-1}$. We define the $d$-joining of $B_{1},\ldots,B_{d}$ as the set 
$$B=\{(n_{1},\ldots,n_{d})\in \Z^{d}\colon \forall i \in [d],\ (n_{1},\ldots, n_{i-1},n_{i+1},\ldots,n_{d})\in B_{i}\}\subseteq \Z^{d}.$$
\end{definition}
We remark that the $2$-joining of $B_{1},B_{2}\subseteq \Z$ is the Cartesian product $B_{1}\times B_{2}$.  For $d\geq 3$ the set $B$ might be empty for general sets. For instance, let $B_1=\{(n_1,n_2) \in \Z^2: n_1-n_2 \text{ is even}\}$, $B_2=\{(n_1,n_2) \in \Z^2: n_1-n_2 \text{ is odd }\}$, $B_3=B_1$. Then, $(n_1,n_2,n_3)\in B$ if and only if $n_2-n_3$ is even, $n_1-n_3$ is odd and $n_1-n_2$ is even, which leads to a contradiction. 
However, if the building sets $B_1,\ldots,B_d$ are return times, then the set $B$ is always non-empty (it contains the vector of 0's). With the help of this notion of joining of sets we can obtain a nice relation with the unique closing parallelepiped property in the minimal distal case. 
	
\begin{theorem} \label{thm:returntimes}
Let $d\geq 2$ be an integer. A subset $B\subseteq \Z^{d}$ contains a set of return times for a minimal distal 
$\Z^d$-system with the unique closing parallelepiped property if and only if $B$ contains a $d$-joining of sets that are return times of  minimal distal $\Z^{d-1}$-systems.
\end{theorem}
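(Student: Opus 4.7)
My plan is to prove both implications using the structure theorem (Theorem~\ref{StructThm}). For the forward direction, suppose $N_{T_{1},\ldots,T_{d}}(x,U)\subseteq B$ for some minimal distal $\Z^{d}$-system $(X,T_{1},\ldots,T_{d})$ with the unique closing parallelepiped property. By Theorem~\ref{StructThm}, there is a factor map $\pi\colon Y\to X$ where $Y\subseteq Y_{1}\times\cdots\times Y_{d}$ is a minimal distal joining of $\Z^{d}$-systems $(Y_{i},T_{1},\ldots,T_{d})$ in which $T_{i}$ acts as the identity on $Y_{i}$. Lift $x$ to $y=(y_{1},\ldots,y_{d})\in Y$ and choose open sets $y_{i}\in U_{i}\subseteq Y_{i}$ with $(U_{1}\times\cdots\times U_{d})\cap Y\subseteq\pi^{-1}(U)$. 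Since $T_{i}$ acts trivially on $Y_{i}$, the condition $T_{1}^{n_{1}}\cdots T_{d}^{n_{d}}y_{i}\in U_{i}$ depends only on $(n_{1},\ldots,n_{i-1},n_{i+1},\ldots,n_{d})$, so $B_{i}:=N_{T_{1},\ldots,T_{i-1},T_{i+1},\ldots,T_{d}}(y_{i},U_{i})$ is a return time set of the minimal distal $\Z^{d-1}$-system $(Y_{i},T_{1},\ldots,T_{i-1},T_{i+1},\ldots,T_{d})$ (still minimal distal since $T_{i}$ is trivial). The return time $N_{T_{1},\ldots,T_{d}}(y,(U_{1}\times\cdots\times U_{d})\cap Y)$ is the intersection over $i$ of these, which is exactly the $d$-joining of $B_{1},\ldots,B_{d}$, and it is contained in $N_{T_{1},\ldots,T_{d}}(x,U)\subseteq B$.

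For the converse, suppose $B$ contains the $d$-joining of $B_{i}=N_{S_{1}^{i},\ldots,S_{d-1}^{i}}(y_{i}^{0},U_{i})$ for minimal distal $\Z^{d-1}$-systems $(Y_{i},S_{1}^{i},\ldots,S_{d-1}^{i})$, $i\in[d]$. I would extend each $(Y_{i},S_{1}^{i},\ldots,S_{d-1}^{i})$ to a $\Z^{d}$-system by inserting $T_{i}=\mathrm{id}$ in position $i$ and renaming $S_{1}^{i},\ldots,S_{d-1}^{i}$ as the remaining $T_{j}$'s; each $(Y_{i},T_{1},\ldots,T_{d})$ is then minimal distal. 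Let $Y$ be the orbit closure of $(y_{1}^{0},\ldots,y_{d}^{0})$ in $Y_{1}\times\cdots\times Y_{d}$ under the diagonal $\Z^{d}$-action; by Theorem~\ref{distal} the product is distal, so $Y$ is minimal distal, and by minimality of each $Y_{i}$ it projects onto every $Y_{i}$, i.e., it is a joining. As in the forward direction, the return time of $(y_{1}^{0},\ldots,y_{d}^{0})$ to $(U_{1}\times\cdots\times U_{d})\cap Y$ equals the $d$-joining of $B_{1},\ldots,B_{d}$, hence lies in $B$.

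What remains, and is the main step of the backward direction, is to verify that this $Y$ has the unique closing parallelepiped property. Given $\textbf{y}=(y^{\varepsilon})_{\varepsilon\in\{0,1\}^{d}}\in\QQ_{T_{1},\ldots,T_{d}}(Y)$, write $y^{\varepsilon}=((y^{\varepsilon})_{1},\ldots,(y^{\varepsilon})_{d})$ according to $Y\subseteq Y_{1}\times\cdots\times Y_{d}$. The projection property (Proposition~\ref{prop1}\eqref{prop1:4}) gives $(y^{\eta\setminus\{k\}},y^{\eta\cup\{k\}})\in\QQ_{T_{k}}(Y)$ for any $\eta\subseteq[d]$ and $k\in[d]$; projecting onto $Y_{k}$ and using that $\QQ_{T_{k}}(Y_{k})=\Delta_{Y_{k}}$ (since $T_{k}$ acts trivially on $Y_{k}$) yields $(y^{\eta\setminus\{k\}})_{k}=(y^{\eta\cup\{k\}})_{k}$. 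Applying this with $\eta=\eta_{0}\triangle\{k\}$ for each $k\in[d]$, every coordinate $(y^{\eta_{0}})_{k}$ is determined by $y^{\eta_{0}\triangle\{k\}}$, so $y^{\eta_{0}}$ is determined by the other $2^{d}-1$ coordinates of $\textbf{y}$. The only subtlety is juggling the two coordinate systems at play (the cube index $\varepsilon\in\{0,1\}^{d}$ and the joining index $i\in[d]$), but no deeper machinery beyond Theorem~\ref{StructThm} and this face-projection argument is needed.
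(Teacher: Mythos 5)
Your proof is correct and follows essentially the same route as the paper: the forward direction lifts to the joining extension provided by Theorem \ref{StructThm} and reads off the $d$-joining of return-time sets coordinatewise, and the backward direction forms the orbit closure of $(y_1^0,\ldots,y_d^0)$ in the product of the $\Z^{d-1}$-systems viewed as $\Z^{d}$-systems with one trivial generator. The only cosmetic difference is that you verify the unique closing parallelepiped property of $Y$ directly by the face-projection argument, whereas the paper cites the implication $(3)\Rightarrow(1)$ of Theorem \ref{StructThm}, whose proof is exactly that argument.
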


\begin{proof}
Let $B$ be a subset of $\Z^d$ that contains a set of return times of a minimal distal 
$\Z^d$-system $(X,T_{1},\ldots,T_{d})$  with the unique closing parallelepiped property. 
Let $x\in X$ and $U$ an open neighborhood of $x$ such that $N_{T_{1},\ldots,T_{d}}(x,U)= \{(n_{1},\ldots,n_{d})\in \Z^{d}\colon\ T_{1}^{n_{1}}\cdots T_{d}^{n_{d}}x \in U\} \subseteq B$. 
We want to show that $B$ contains a $d$-joining of sets that are return times of  minimal distal $\Z^{d-1}$-systems.

Let $(Y,{T}_{1},\ldots,{T}_{d})$ be an extension of $(X,T_{1},\ldots,T_{d})$ as in Theorem \ref{StructThm}. Recall that $(Y,{T}_{1},\ldots,{T}_{d})$ is a joining of systems $(Y_j,{T}_{1},\ldots,{T}_{d})$, where for each $j\in [d]$ the action of $T_j$ on $Y_j$ is the identity. 

Let $(y_{1},\ldots,y_{d})\in Y$ be such that $\pi(y_{1},\ldots,y_{d})=x$ and let $\tilde{U}$ be a neighborhood of $(y_{1},\ldots,y_{d})$ in $Y$ such that $\pi(\tilde{U})\subseteq U$. We may assume that $\tilde{U}=(\tilde{U}_1\times\cdots\times \tilde{U}_d)\cap Y$, where for each $j\in[d]$ the set 
$\tilde{U}_j$ is an open neighborhood of $y_j$. We have,
\[{N_{{T}_{1},\ldots,{T}_{d}}((y_{1},\ldots,y_{d}),\tilde{U})=N_{T_{1},\ldots,T_{d}}((y_{1},\ldots,y_{d}),(\tilde{U}_1\times\cdots\times \tilde{U}_d)\cap Y)} =\bigcap_{j\in[d]} N_{T_{1},\ldots,T_{d}}(y_j,\tilde{U}_j). \]
Recall that on each $Y_{j}$ the action of $T_{j}$ is  the identity, so the action of $T_1,\ldots, T_d$ on $Y_{j}$ can be seen as a $\Z^{d-1}$ action and 
\begin{align*}
& (n_1,\ldots,n_d) \in N_{{T}_{1},\ldots,{T}_{d}}((y_{1},\ldots,y_{d}),\tilde{U}) \iff  \\
& (n_{1},\ldots,n_{j-1},n_{j+1},\ldots,n_{d}) \in N_{T_{1},\ldots,T_{j-1},T_{j+1},\ldots,T_{d}}(y_{j},\tilde{U}_j) \text{ for every } j\in[d]. \\
\end{align*}
Thus the sets $B_j=\{(n_{1},\ldots,n_{j-1},n_{j+1},\ldots,n_{d}) \in N_{T_{1},\ldots,T_{j-1},T_{j+1},\ldots,T_{d}}(y_{j},\tilde{U}_j) \}$ are return times of minimal $\Z^{d-1}$-systems whose $d$-joining coincides with $N_{{T}_{1},\ldots,{T}_{d}}((y_{1},\ldots,y_{d}),\tilde{U})$. 
Since $\pi(\tilde{U})\subseteq U$, we have that  ${N_{{T}_{1},\ldots,{T}_{d}}((y_{1},\ldots,y_{d}),\tilde{U})\subseteq N_{T_{1},\ldots,T_{d}}(x,U)}\subseteq B$, which serves to conclude. 

Conversely, assume that a set $B \subseteq \Z^d$ contains a $d$-joining of sets $B_j$, $j\in[d]$, where each $B_j$ is a set of return times of a minimal distal $\Z^{d-1}$-system $(Y_j,S_{j,1},\ldots,S_{j,d-1})$. We want to show that $B$ contains the set of return times of a minimal distal $\Z^d$-system with the unique closing parallelepiped property.  

For convenience, for each $j\in [d]$ we write $(Y_{j},T_{1},\ldots,T_{j-1},T_{j+1},T_{d})$ instead of 
$(Y_j,S_{j,1},\ldots,S_{j,d-1})$. By doing so, we stress the fact that we are viewing the $\Z^{d-1}$ action as a $\Z^{d}$ action where one of the transformations is the identity. We will see below that using the same set of transformations $T_1,\ldots,T_d$ for all systems will not be a notational problem since we will consider a product system.  
 
Let $y_{j}\in Y_{j}$ and let $U_{j}$ be an open neighborhood of $y_{j}$ such that
\begin{equation}\label{equation:Z^{d-1}-B_j}
N_{T_{1},\ldots,T_{j-1},T_{j+1},\ldots,T_{d}}(y_{j},U_{j})\subseteq B_{j}.
\end{equation}
We now construct a minimal distal $\Z^d$-system with the unique closing parallelepiped property and a set of return times for this system that is contained in $B$. Consider the product system $\prod\limits_{j=1}^{d}Y_{j}$ and the diagonal action of $T_1,\ldots, T_d$ on it.  For the  point $\textbf{y}=(y_1,\ldots,y_d)\in \prod\limits_{j=1}^{d} Y_j$ let $Y=\overline{\mathcal{O}(\textbf{y},T_{1},\ldots,T_{d})}$. The $\Z^d$-system $(Y,T_1,\ldots,T_d)$ is minimal and distal (see Theorem \ref{distal}), and contains the point $\textbf{y}=(y_{1},\ldots,y_{d})$. For any $j\in[d]$, the transformation $T_{j}$ acts trivially in the $j$-th coordinate of $Y$ and thus by Theorem \ref{StructThm} $(Y,T_1,\ldots,T_d)$ has the unique closing parallelepiped property. Now, consider the open neighborhood of $\textbf{y}$ given by  ${U=\left(\prod\limits_{j=1}^{d}U_{j}\right)\cap Y}$ and note that 
$(n_1,\ldots,n_d)\in N_{T_{1},\ldots,T_{d}}(\textbf{y},U)$ if and only if for every $j\in [d]$, $(n_{1},\ldots,n_{j-1},n_{j+1},\ldots,n_{d})\in N_{T_{1},\ldots,T_{j-1},T_{j+1},\ldots,T_{d}}(y_{j},U_{j})$.
That is, $N_{T_{1},\ldots,T_{d}}(\textbf{y},U)$ is the $d$-joining of the sets $N_{T_{1},\ldots,T_{j-1},T_{j+1},\ldots,T_{d}}(y_{j},U_{j})$. Using \eqref{equation:Z^{d-1}-B_j} we conclude that $N_{T_{1},\ldots,T_{d}}(\textbf{y},U)$ is contained in $B$. 
\end{proof}
	
We denote by $\mathcal{B}_{d}$ the family generated by sets of return times arising from minimal distal $\Z^d$-systems with the unique closing parallelepiped property and by $\mathcal{B}^{*}_{d}$ the (dual) family of subsets of $\Z^{d}$ which have nonempty intersection with every set in $\mathcal{B}_{d}$.

\begin{lemma} \label{SuperLifting}
Let $(X,T_{1},\ldots,T_{d})$ be a minimal distal $\Z^d$-system. Suppose that $(x,y)\in \mathcal{R}_{T_{1},\ldots,T_{d}}(X)$. Let $(Z,T_{1},\ldots,T_{d})$ be a minimal distal $\Z^d$-system with the unique closing parallelepiped property 
and let $(J,T_1,\ldots,T_d)$ be a joining between $(X,T_1,\ldots,T_d)$ and $(Z,T_1,\ldots,T_d)$. Then, for $z_0\in Z$ we have that $(x,z_0)\in J$ if and only if $(y,z_0)\in J$.
\end{lemma}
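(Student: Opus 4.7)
My plan is to adapt the proof of Theorem \ref{teo2} to the factor map $\pi_X\colon J\to X$, exploiting crucially that $\rr_{T_1,\ldots,T_d}(Z)=\Delta_Z$ by Proposition \ref{prop4}. Since $X$ and $Z$ are minimal, any minimal subsystem of $J$ projects onto both, so I first replace $J$ by a minimal subsystem containing $(x,z_0)$, and assume $(J,T_1,\ldots,T_d)$ is itself minimal. Then $J$ is distal as a subsystem of the distal product $X\times Z$, and $\pi_X\colon J\to X$ is open by Theorem \ref{distal}(5).

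From $(x,y)\in\rr_{T_1,\ldots,T_d}(X)$, Theorem \ref{thm:characterization}(2) combined with face reflection invariance (Lemma \ref{lem:goodpermutations}) gives the cube $(x,\ldots,x,y)\in \QQ_{T_1,\ldots,T_d}(X)$, with $y$ only at coordinate $[d]$. Using openness of $\pi_X$ to approximate $(x,z_0)$ along the orbit points producing this cube (as in Proposition \ref{cube1}), I would lift to $\textbf{y}^0\in \QQ_{T_1,\ldots,T_d}(J)$ with $y^0_{\emptyset}=(x,z_0)$ and $\pi_X^{[d]}(\textbf{y}^0)=(x,\ldots,x,y)$. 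I would then replay the iteration from the proof of Theorem \ref{teo2} with pre-image $y_1=(x,z_0)$: at each step $j$, minimality of the face-group action on the appropriate $(d-1)$-dimensional cube space of $J$ (Proposition \ref{CubeMin}) allows one to modify $\textbf{y}^j$ so that its $(d-j)$-th lower face becomes $(x,z_0)^{[d-1]}$, and the $\emptyset$-coordinate persists as $(x,z_0)$ throughout since $\emptyset$ belongs to every lower face. After $d$ steps I obtain $\textbf{y}^d=((x,z_0),\ldots,(x,z_0),y_2)\in \QQ_{T_1,\ldots,T_d}(J)$.

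Theorem \ref{thm:characterization}(2) applied in $J$ (via reflection) now yields $((x,z_0),y_2)\in \rr_{T_1,\ldots,T_d}(J)$. Projecting this relation through $\pi_Z\colon J\to Z$ and using $\rr_{T_1,\ldots,T_d}(Z)=\Delta_Z$ forces $\pi_Z(y_2)=z_0$, so $y_2=(z_d,z_0)\in J$ for some $z_d\in X$; moreover the iteration ensures $(x,z_d)\in \overline{\mathcal{O}((x,y),G_{[2]}^{\Delta})}$.

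The main obstacle is the final step of replacing $z_d$ by the specific $y$ without disturbing the $Z$-coordinate $z_0$. Distality of $X\times X$ gives $(x,y)\in\overline{\mathcal{O}((x,z_d),G_{[2]}^{\Delta})}$, so one can pick $g_i\in G$ with $g_i(x,z_d)\to(x,y)$; applying $g_i$ diagonally to $((x,z_0),(z_d,z_0))\in \rr_{T_1,\ldots,T_d}(J)$ and extracting a subsequence along which $g_iz_0\to\xi$ yields $((x,\xi),(y,\xi))\in \rr_{T_1,\ldots,T_d}(J)$, and in particular $(y,\xi)\in J$. To pin down $\xi=z_0$, I would combine distality of the triple product $X\times X\times Z$ with minimality of $J$ (so that $\overline{\mathcal{O}((x,z_0),G)}=J$) to argue that the sequence $g_i$ can be chosen additionally to satisfy $g_iz_0\to z_0$, yielding the desired $(y,z_0)\in J$.
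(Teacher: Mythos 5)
Your construction is sound up to the point where it produces, for \emph{some} $\xi\in Z$, a pair $((x,\xi),(y,\xi))\in\rr_{T_1,\ldots,T_d}(J)$: forcing $y^0_{\emptyset}=(x,z_0)$ via openness of $\pi_X$ is legitimate, the iteration of Theorem \ref{teo2} does keep the $\emptyset$-coordinate fixed, and projecting through $\pi_Z$ correctly pins the $Z$-coordinate of $y_2$ to $z_0$. The gap is exactly where you place it, and the proposed fix does not close it. Knowing that $(x,y)\in\overline{\mathcal{O}((x,z_d),G_{[2]}^{\Delta})}$ says nothing about how the approximating sequence acts on a third coordinate: every point of the minimal distal system $\overline{\mathcal{O}((x,z_d,z_0),G^{\Delta})}$ has the form $(px,pz_d,pz_0)$ with $p$ in the (group) enveloping semigroup, and the set of $p$ satisfying $px=x$, $pz_d=y$ is a coset of the stabilizer of $(x,z_d)$; nothing forces that coset to contain an element fixing $z_0$. ``Distality of $X\times X\times Z$ plus minimality of $J$'' only yields that the fibre of $\overline{\mathcal{O}((x,z_d,z_0),G^{\Delta})}$ over $(x,y)$ is nonempty, not that it contains $z_0$. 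And passing from ``$(x,\xi),(y,\xi)\in J$ for some $\xi$'' to ``$(y,z_0)\in J$'' is essentially the statement of the lemma itself (with $x$ replaced by $z_d$, say), so the argument becomes circular at this last step.

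The paper circumvents precisely this difficulty with a function-space trick: it applies Theorem \ref{teo2} not to $J$ but to a minimal subsystem $A$ of $X\times Z_{\infty}$, where $Z_{\infty}\subseteq Z^{Z}$ is the orbit closure of the identity map under the induced action. The (uncontrolled) lift of $x$ is then a pair $(x,\omega^{1})$ with $\omega^{1}\colon Z\to Z$ surjective (because the enveloping semigroup of the distal system $Z$ is a group), so one may choose an evaluation point $z_1$ with $\omega^{1}(z_1)=z_0$ \emph{after} the lifting has been performed; evaluating $A$ at $z_1$ yields a minimal subjoining $B\subseteq J$ containing $(x,z_0)$, and projecting the lifted regionally proximal pair to $Z$ forces its second member to be $(y,z_0)$. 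To repair your direct approach you would need some analogous device that postpones the choice of the $Z$-coordinate of the lift until after the final correction by the $g_i$ --- which is, in effect, the paper's route.
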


\begin{proof}
The proof is similar to the proof of Lemma 6.19 in \cite{donoso2014dynamical}, which is an adaptation of the corresponding statement in \cite{huang2016nil}. We provide it for completeness. Let $W=Z^{Z}$ and $T_{1}^{Z},\ldots, T_{d}^{Z}\colon W\rightarrow W$ be the corresponding commuting transformations. Let $\omega^{*}\in W$ be the point satisfying $\omega^*(z)=z$ for all $z\in Z$ and $Z_{\infty}=\overline{\mathcal{O}(\omega^{*},G^{Z})}$, where $G^{Z}$ is the group generated by $T_{1}^{Z},\ldots,T_{d}^{Z}$. Then, $Z_{\infty}$ is minimal and distal. So for any $\omega\in Z_{\infty}$ there exists $p\in E(Z,G)$ such that $\omega(z)=p\omega^{*}(z)=p(z)$ for any $z\in Z$. Since $(Z,T_{1},\ldots,T_{d})$ is minimal and distal, $E(Z,G)$ is a group and thus $p\colon Z\rightarrow Z$ is surjective. This implies that there exists $z_{\omega}\in Z$ such that $\omega(z_{\omega})=z_{0}$.

Take a minimal subsystem $(A,T_{1}\times T_{1}^{Z},\ldots,T_{d}\times T_{d}^{Z})$ of the product system $(X\times Z_{\infty},T_{1}\times T_{1}^{Z},\ldots,$ $T_{d}\times T_{d}^{Z})$. Let $\pi_{X}\colon A\to X$ be the natural coordinate projection. Then, $\pi_{X}$ is a factor map between two distal minimal systems. By Theorem \ref{teo2}, there exist $\omega^{1},\omega^{2}\in W$ such that $((x,\omega^{1}),(y,\omega^{2}))\in \mathcal{R}_{\hat{T}_{1},\ldots,\hat{T}_{d}}(A)$, where $\hat{T}_{j}=T_{j}\times T_{j}^{Z}$, for $j\in [d]$.

Let $z_{1}\in Z$ be such that $\omega^{1}(z_{1})=z_{0}$. Denote by $\pi\colon A\rightarrow X\times Z$, $\pi(u,\omega)=(u,\omega(z_{1}))$ for $(u,\omega)\in A$, $u\in X$ and $\omega\in W$. Consider the projection $B=\pi(A)$. Then, ${(B,T_{1}\times T_{1},\ldots,T_{d}\times T_{d})}$ is a minimal distal subsystem of $(X\times Z, T_{1}\times T_{1},\ldots,T_{d}\times T_{d})$ and since $\pi(x_0,\omega^{1})=(x,z_{0})\in B$ we have that $J$ contains $B$. Suppose that $\pi(x,\omega^{2})=(x,z_{2})$. Then, $((x,z_{0}),(y,z_{2}))\in \mathcal{R}_{T_{1}\times T_{1},\ldots,T_{d}\times T_{d}}(B)$ and we conclude that $(z_{0},z_{2})\in \mathcal{R}_{T_{1},\ldots,T_{d}}(Z)$. Since $\mathcal{R}_{T_{1},\ldots,T_{d}}(Z)=\Delta_{Z}$ we have that $z_{0}=z_{2}$ and thus $(y,z_0)\in B\subseteq J$.
\end{proof}

\begin{lemma} \label{RecRPST}
Let $(X,T_{1},\ldots,T_{d})$ be a minimal distal $\Z^d$-system. Then, for $x,y\in X$, $(x,y)\in \mathcal{R}_{T_{1},\ldots,T_{d}}(X)$ if and only if $N_{T_{1},\ldots,T_{d}}(x,U)\in\mathcal{B}^{*}_{d}$ for any open neighborhood $U$ of $y$.
\end{lemma}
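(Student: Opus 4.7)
The plan is to handle the two implications separately, using Lemma \ref{SuperLifting} in one direction and the maximal factor with the unique closing parallelepiped property (Corollary \ref{prop5}) in the other.

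For the forward implication, assume $(x,y)\in\mathcal{R}_{T_{1},\ldots,T_{d}}(X)$ and let $U$ be an open neighborhood of $y$. I want to show that $N_{T_{1},\ldots,T_{d}}(x,U)$ meets every $B\in\mathcal{B}_{d}$. Such a $B$ contains a set of the form $N_{T_{1},\ldots,T_{d}}(z_{0},V)$ for some minimal distal $\Z^{d}$-system $(Z,T_{1},\ldots,T_{d})$ with the unique closing parallelepiped property, a point $z_{0}\in Z$ and an open neighborhood $V$ of $z_{0}$. I would consider the orbit closure $J=\overline{\mathcal{O}((x,z_{0}),T_{1}\times T_{1},\ldots,T_{d}\times T_{d})}\subseteq X\times Z$, which is a joining since both $X$ and $Z$ are minimal. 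Since $(x,z_{0})\in J$, Lemma \ref{SuperLifting} gives $(y,z_{0})\in J$. So there is a sequence $(\textbf{n}^{(k)})_{k}\subseteq \Z^{d}$ with $T_{1}^{n_{1}^{(k)}}\cdots T_{d}^{n_{d}^{(k)}}(x,z_{0})\to (y,z_{0})$. For $k$ large enough, $T_{1}^{n_{1}^{(k)}}\cdots T_{d}^{n_{d}^{(k)}}x\in U$ and $T_{1}^{n_{1}^{(k)}}\cdots T_{d}^{n_{d}^{(k)}}z_{0}\in V$, so $\textbf{n}^{(k)}\in N_{T_{1},\ldots,T_{d}}(x,U)\cap N_{T_{1},\ldots,T_{d}}(z_{0},V)\subseteq N_{T_{1},\ldots,T_{d}}(x,U)\cap B$.

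For the converse, assume that $N_{T_{1},\ldots,T_{d}}(x,U)\in \mathcal{B}_{d}^{*}$ for every open neighborhood $U$ of $y$. Let $Z=X/\mathcal{R}_{T_{1},\ldots,T_{d}}(X)$ with quotient map $\pi\colon X\to Z$. By Corollary \ref{prop5}, $(Z,T_{1},\ldots,T_{d})$ is a minimal distal $\Z^{d}$-system with the unique closing parallelepiped property. Set $x'=\pi(x)$ and $y'=\pi(y)$; to conclude $(x,y)\in\mathcal{R}_{T_{1},\ldots,T_{d}}(X)$, it suffices to show $x'=y'$. Arguing by contradiction, suppose $x'\neq y'$ and pick disjoint open neighborhoods $W$ of $x'$ and $V$ of $y'$ in $Z$. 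Then $U:=\pi^{-1}(V)$ is an open neighborhood of $y$ in $X$, and by definition $N_{T_{1},\ldots,T_{d}}(x',W)\in\mathcal{B}_{d}$. By hypothesis, $N_{T_{1},\ldots,T_{d}}(x,U)\cap N_{T_{1},\ldots,T_{d}}(x',W)\neq\emptyset$. But any $\textbf{n}$ in this intersection satisfies $T_{1}^{n_{1}}\cdots T_{d}^{n_{d}}x'=\pi(T_{1}^{n_{1}}\cdots T_{d}^{n_{d}}x)\in V$ and $T_{1}^{n_{1}}\cdots T_{d}^{n_{d}}x'\in W$, contradicting $V\cap W=\emptyset$. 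Hence $x'=y'$, as required.

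The main step is the forward direction, whose crux is choosing the right joining to apply Lemma \ref{SuperLifting}; the converse is essentially a disjointness argument inside the maximal factor, which is comparatively routine once the quotient $X/\mathcal{R}_{T_{1},\ldots,T_{d}}(X)$ is known to lie in the class that defines $\mathcal{B}_{d}$.
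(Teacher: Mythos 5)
Your proof is correct and follows essentially the same route as the paper: the forward direction uses the orbit-closure joining of $(x,z_0)$ together with Lemma \ref{SuperLifting} (which is indeed the right tool here, where the paper's text loosely cites Theorem \ref{teo2}), and the converse passes to the maximal factor $X/\mathcal{R}_{T_1,\ldots,T_d}(X)$ via Corollary \ref{prop5}. Your converse is phrased as a contradiction with disjoint neighborhoods rather than the paper's argument that $\pi(x)\in\pi(\overline{U})$ for every $U$, but this is only a cosmetic reorganization of the same idea.
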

\begin{proof}
The proof is similar to the one of Theorem 6.20 in \cite{donoso2014dynamical}.
 Suppose $N_{T_{1},\ldots,T_{d}}(x,U)\in\mathcal{B}^{*}_{T_{1},\ldots,T_{d}}$ for any open neighborhood $U$ of $y$. Since $(X,T_{1},\ldots,T_{d})$ is distal, $\mathcal{R}_{T_{1},\ldots,T_{d}}(X)$ is an equivalence relation. Let $\pi$ be the projection map $\pi\colon X\rightarrow Y= X/\mathcal{R}_{T_{1},\ldots,T_{d}}(X)$. By Corollary \ref{prop5} we have that $\mathcal{R}_{T_{1},\ldots,T_{d}}(Y)=\Delta_{Y}$. We also have that  the factor map $\pi$ is open and $\pi(U)$ is an open neighborhood of $\pi(y)$. In particular, $N_{T_{1},\ldots,T_{d}}(x,U)\subseteq N_{T_{1},\ldots,T_{d}}(\pi(x),\pi(U))$. Let $V$ be an open neighborhood of 
 $\pi(x)$. By hypothesis, we have that ${N_{T_{1},\ldots,T_{d}}(x,U)\cap N_{T_{1},\ldots,T_{d}}(\pi(x),\pi(U))\neq\emptyset}$, which implies that $N_{T_{1},\ldots,T_{d}}(\pi(x),\pi(U))\cap$ $ N_{T_{1},\ldots,T_{d}}(\pi(x),V)\neq \emptyset$. This implies that $\pi(U)\cap V\neq \emptyset$. But this holds for every $V$, so we have that $\pi(x)\in \overline{\pi(U)}=\pi(\overline{U})$. Finally, since this fact holds for every $U$ we conclude that $\pi(x)=\pi(y)$. This shows that $(x,y)\in \mathcal{R}_{T_{1},\ldots,T_{d}}(X)$ as desired.

Conversely, suppose that $(x,y)\in \mathcal{R}_{T_{1},\ldots,T_{d}}(X)$. Let $U$ be an open neighborhood of $y$ and $A$ be a $\mathcal{B}^{*}_d$ set. Then, there exists a minimal distal system $(Z,T_{1},\ldots,T_{d})$ with $\mathcal{R}_{T_{1},\ldots,T_{d}}(Z)=\Delta_Z$, an open set $V\subseteq Z$ and $z_0\in V$ such that $N_{T_{1},\ldots,T_{d}}(z_0,V)\subseteq A$. Let $J$ be the orbit closure of $(x,z_0)$ under $T_{j}\times T_{j}$ for $j\in [d]$. By distality we have that $(J,T_{1}\times T_{1},\ldots,T_{d}\times T_{d})$ is a minimal system and $(x,z_0)\in J$. By Theorem \ref{teo2} we have that $(y,z_0)\in J$ and particularly there exists a sequence $(\textbf{n}^i)_{i\in \N}\subseteq \Z^{d}$ such that ${(T_{1}^{n_{1}^{i}}\cdots T_{d}^{n_{d}^{i}}x,T_{1}^{n_{1}^{i}}\cdots T_{d}^{n_{d}^{i}}z_0)\to (y,z_0)}$. This implies that $N_{T_{1},\ldots,T_{d}}(x,U)\cap N_{T_{1},\ldots,T_{d}}(z_0,V)\neq \emptyset$ and the proof is achieved.
\end{proof}

We get the following characterization of the unique closing parallelepiped property for minimal distal $\Z^d$-systems.
	
\begin{corollary} \label{ReturnTProduct}
Let $(X,T_{1},\ldots,T_{d})$ be a minimal distal $\Z^d$-system. 
Then, $(X,T_{1},\ldots,T_{d})$ has the unique closing parallelepiped property if and only if for every $x\in X$ and every open neighborhood $U$ of $x$, $N_{T_{1},\ldots,T_{d}}(x,U)$ contains a $d$-joining built from $d$ sets of return times of minimal distal $\Z^{d-1}$-systems.
\end{corollary}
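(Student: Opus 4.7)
The plan is to reduce both directions of the equivalence to known results: Theorem \ref{thm:returntimes} (which characterizes when a set of $\Z^d$ contains a return time set of a minimal distal $\Z^d$-system with the UCP property), Theorem \ref{StructThm} (UCP is equivalent to $\mathcal{R}_{T_{1},\ldots,T_{d}}(X)=\Delta_{X}$), and Lemma \ref{RecRPST} (characterization of $\mathcal{R}_{T_{1},\ldots,T_{d}}(X)$ in terms of $\mathcal{B}_{d}^{*}$).

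For the forward direction, assume $(X,T_{1},\ldots,T_{d})$ has the unique closing parallelepiped property. Then for any $x\in X$ and any open neighborhood $U$ of $x$, the set $N_{T_{1},\ldots,T_{d}}(x,U)$ is itself a set of return times of a minimal distal $\Z^{d}$-system with the unique closing parallelepiped property, namely $X$ itself. Applying Theorem \ref{thm:returntimes}, this set must contain a $d$-joining built from $d$ sets of return times of minimal distal $\Z^{d-1}$-systems, which is precisely what we want.

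For the reverse direction, I will argue by contradiction. Suppose the hypothesis holds but $(X,T_{1},\ldots,T_{d})$ does not have the unique closing parallelepiped property. By Theorem \ref{StructThm} there exist $x\neq y$ with $(x,y)\in \mathcal{R}_{T_{1},\ldots,T_{d}}(X)$. Choose disjoint open neighborhoods $U$ of $x$ and $V$ of $y$. On one side, by Lemma \ref{RecRPST}, $N_{T_{1},\ldots,T_{d}}(x,V)\in\mathcal{B}_{d}^{*}$, so it must intersect every set in $\mathcal{B}_{d}$. On the other side, our hypothesis, combined with Theorem \ref{thm:returntimes}, implies that $N_{T_{1},\ldots,T_{d}}(x,U)$ contains a set of return times of a minimal distal $\Z^{d}$-system with the unique closing parallelepiped property; hence $N_{T_{1},\ldots,T_{d}}(x,U)\in\mathcal{B}_{d}$ (interpreting $\mathcal{B}_{d}$ as a family closed under supersets). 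Therefore $N_{T_{1},\ldots,T_{d}}(x,U)\cap N_{T_{1},\ldots,T_{d}}(x,V)\neq\emptyset$, which is absurd since $U\cap V=\emptyset$ forces this intersection to be empty.

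The main conceptual step is the reverse direction, but it is not really an obstacle: the disjoint neighborhoods trick is standard, and all the heavy lifting has already been carried out in Theorem \ref{thm:returntimes} and Lemma \ref{RecRPST}. The only subtle point worth emphasizing is the interpretation of $\mathcal{B}_{d}$ as upward closed so that ``contains a return-time set'' is equivalent to ``belongs to $\mathcal{B}_{d}$''; with that convention, the argument above goes through verbatim.
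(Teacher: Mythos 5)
Your proof is correct and follows essentially the same route as the paper: the forward direction is the immediate application of Theorem \ref{thm:returntimes} to $N_{T_{1},\ldots,T_{d}}(x,U)$ itself, and the reverse direction is the same contradiction argument via Theorem \ref{StructThm}, Lemma \ref{RecRPST}, and disjoint neighborhoods of a nondiagonal pair in $\mathcal{R}_{T_{1},\ldots,T_{d}}(X)$. Your remark about reading $\mathcal{B}_{d}$ as the upward-closed family generated by the relevant return-time sets is exactly the convention the paper uses implicitly.
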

	
\begin{proof} The implication that $N_{T_{1},\ldots,T_{d}}(x,U)$ contains a $d$-joining of $d$ sets of return times for minimal distal $\Z^{d-1}$-systems whenever $(X,T_1,\ldots,T_d)$ has the unique closing parallelepiped property follows immediately from Theorem \ref{thm:returntimes}. Then 
we only need to prove the other implication. Let us suppose that there exists ${(x,y)\in \mathcal{R}_{T_{1},\ldots,T_{d}}(X)\setminus \Delta_X}$ and let $U,V$ be open neighborhoods of $x$ and $y$ respectively such that $U\cap V=\emptyset$.  By assumption $N_{T_{1},\ldots,T_{d}}(x,U)$ is a $\mathcal{B}_d$ set and by Lemma \ref{RecRPST} $N_{T_{1},\ldots,T_{d}}(x,V)$ has nonempty intersection with $N_{T_{1},\ldots,T_{d}}(x,U)$. This implies that $U\cap V\neq \emptyset$, a contradiction. We conclude that $\mathcal{R}_{T_{1},\ldots,T_{d}}(X)=\Delta_X$ and therefore $(X,T_{1},\ldots,T_{d})$ has the unique closing parallelepiped property.
\end{proof}

As an application we get the following criterion for a minimal distal system $(X,T)$ (so defined by a single transformation) for being topologically conjugate to an inverse limit of $d$-step nilsystems, \emph{i.e.},  to an inverse limit of systems that can be written as $X=G/\Gamma$, where $G$ is a $d$-step nilpotent Lie group and $\Gamma$ is a cocompact subgroup of $G$, and $T$ is a left translation by a fixed element in $G$. 
For an integer $d\geq 1$ let $\phi_{d}\colon \mathbb{Z}^d\to \mathbb{Z}$ be the morphism $(n_1,\ldots,n_d)\mapsto n_1+\cdots+n_d$.  

\begin{theorem}
Let $(X,T)$ be a minimal distal system. Then, $(X,T)$ is topologically conjugate to an inverse limit of 
$d$-step nilsystems if and only if for all $x\in X$ and $U$ an open set containing $x$ the set of return times $N_T(x,U)=\{n\in \mathbb{Z}: T^nx\in U\}$ contains the image under $\phi_{d+1}$ of a $(d+1)$-joining of $(d+1)$ sets of return times for distal $\Z^{d}$-systems.  
\end{theorem}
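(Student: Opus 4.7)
The plan is to reduce the statement to a problem about $\Z^{d+1}$-systems and then invoke both Corollary \ref{ReturnTProduct} and the Host--Kra--Maass characterization of inverse limits of nilsystems from \cite{host2010nilsequences}.

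First, I would view the minimal distal system $(X,T)$ as a minimal distal $\Z^{d+1}$-system by setting $T_1=T_2=\cdots=T_{d+1}=T$ (minimality and distality are preserved because $\langle T_1,\ldots,T_{d+1}\rangle=\langle T\rangle$). For such a presentation, the set of return times computed coordinate-wise satisfies the identity
\[
N_{T_1,\ldots,T_{d+1}}(x,U)=\{(n_1,\ldots,n_{d+1})\in\Z^{d+1}\colon T^{n_1+\cdots+n_{d+1}}x\in U\}=\phi_{d+1}^{-1}(N_T(x,U)).
\]
Consequently, a subset $B\subseteq\Z^{d+1}$ satisfies $\phi_{d+1}(B)\subseteq N_T(x,U)$ if and only if $B\subseteq N_{T_1,\ldots,T_{d+1}}(x,U)$. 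Hence the condition in the statement is equivalent to saying that every $N_{T_1,\ldots,T_{d+1}}(x,U)$ contains a $(d+1)$-joining of $(d+1)$ return-time sets arising from minimal distal $\Z^{d}$-systems.

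Next I would apply Corollary \ref{ReturnTProduct} to the $\Z^{d+1}$-system $(X,T_1,\ldots,T_{d+1})=(X,T,\ldots,T)$: the previous reformulation is equivalent to $(X,T,\ldots,T)$ having the unique closing parallelepiped property. At this point I would observe that when all generators coincide with $T$, the directional cube space $\QQ_{T,\ldots,T}(X)$ agrees with the classical Host--Kra cube space $\QQ^{[d+1]}(X)$ of the single transformation $T$, since both are defined as the closure of orbits of the diagonal point under the face group, and the face transformations coincide in this setting.

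Finally, the Host--Kra--Maass structure theorem \cite{host2010nilsequences} asserts that for a minimal distal $(X,T)$, the Host--Kra cube $\QQ^{[d+1]}(X)$ satisfies the unique closing parallelepiped property if and only if $(X,T)$ is topologically conjugate to an inverse limit of minimal $d$-step nilsystems. Composing these equivalences yields the theorem. The only delicate point is the identification $\QQ_{T,\ldots,T}(X)=\QQ^{[d+1]}(X)$ and the verification that the notion of unique closing parallelepiped property used in \cite{host2010nilsequences} matches the one in Definition of Section \ref{sec:DirecCubes}; once this is settled the proof is essentially a translation between the two languages, and the real content is carried by Corollary \ref{ReturnTProduct} on one side and by Host--Kra--Maass on the other.
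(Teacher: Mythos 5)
Your proposal is correct and follows essentially the same route as the paper: view $(X,T)$ as the $\Z^{d+1}$-system $(X,T,\ldots,T)$, translate the return-time condition through $\phi_{d+1}$ using $N_{T,\ldots,T}(x,U)=\phi_{d+1}^{-1}(N_T(x,U))$, apply Corollary \ref{ReturnTProduct}, and conclude with \cite[Theorem 1.2]{host2010nilsequences} via the identification of $\QQ_{T,\ldots,T}(X)$ with the Host--Kra cube space. Your write-up is in fact slightly more explicit than the paper's about the preimage step and the matching of the two notions of unique completion, but the content is identical.
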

\begin{proof}
We regard $(X,T)$ as the $\Z^{d+1}$-system $(X,T,\ldots,T)$. Then, we have that $(X,T,\ldots,T)$ has the unique closing parallelepiped property if and only if for all $x\in X$ and all neighborhood  $U$ of $x$ the set $\{(n_1,\ldots,n_{d+1})\in \Z^{d+1} : T^{n_1}T^{n_2}\cdots T^{n_{d+1}}x \in U  \}\subseteq \Z^{d+1} $ contains a $(d+1)$-joining of $(d+1)$ sets of return times for $\Z^{d}$-distal systems. But, since all transformations are the same, 
$\{n\in \mathbb{Z}: T^nx\in U\}$ contains the image under $\phi_{d+1}$ of $\{(n_1,\ldots,n_{d+1})\in \Z^{d+1} : T^{n_1}T^{n_2}\cdots T^{n_{d+1}}x \in U  \}$.  
On the other hand, $(X,T,\ldots,T)$ has the unique closing parallelepiped property if and only if it is isomorphic to an inverse limit of $d$-step nilsystems \cite[Theorem 1.2]{host2010nilsequences}. This  finishes the proof. 
\end{proof}

\section{Examples of systems with the unique closing parallelepiped property}\label{sec:Examples}

\quad In this section we provide a family of examples of systems with the unique closing parallelepiped property.

\subsection{Affine transformations in the torus.}

Let $r\geq 1$ be an integer. Consider different affine transformations 
$T_{i}\colon\mathbb{T}^{r}\to \mathbb{T}^{r}$, $x\mapsto A_{i}x+\alpha_{i}$, where $A_{i}$ is an unipotent integer matrix (\emph{i.e.}, $(A_i-I)^{p}=0$ for some $p\in \N$) and $\alpha_i \in \mathbb{T}^{r}$ for every $i \in \{1,\ldots,d\}$. The $\Z^d$-system $(\mathbb{T}^r,T_1,\ldots, T_d)$ can be seen as a nilsystem as long as the matrices commute (we will not give all details on this fact but ideas can be found in  \cite{parry1969ergodic}). 

Let $G$ be the group of transformations of $\mathbb{T}^{r}$ generated by the matrices $A_{1},\ldots,A_{d}$ and the translations of $\T^{r}$. Then, every element $g\in G$ is a map 
$x \mapsto A(g) x+ \beta(g)$, where $A(g)=A_{1}^{m_{1}}\cdots A_{d}^{m_{d}}$ with $m_{1},\ldots,m_{d}\in \Z$ and $\beta(g)\in \mathbb{T}^{r}$. A simple computation shows that if $g_1,g_2\in G$ then the commutator $[g_1,g_2]$ is the map ${x\mapsto x + (A(g_1)-I)\beta(g_2) - (A(g_2)-I)\beta(g_1)}$ and thus it is a translation of $\mathbb{T}^r$. On the other hand, if $g\in G$ and $\beta \in \mathbb{T}^r$, then $[g,\beta]$ is the translation $x\mapsto x+(A(g)-I)\beta$. It follows that if $g_1,\ldots,g_k \in G$ then the iterated commutator $[\cdots[[g_1,g_2],g_3]\cdots g_k]$ belongs to $\mathbb{T}^r$ and is contained in the image of  $(A(g_3)-I)\cdots(A(g_k)-I)$. If $k$ is large enough, this product is trivial. So $G$ is a nilpotent Lie group. The torus $\T^{r}$ can be identified with ${G}/{\Gamma}$, where $\Gamma$ is the stabilizer of 0, which is the group generated by the matrices $A_{1},\ldots,A_{d}$. We refer to $(\T^{r},T_{1},\ldots,T_{d})$ as an \emph{affine nilsystem} with $d$ transformations. It is worth noting that the transformations $T_i$ and $T_j$ commute if and only if $(A_i-I)\alpha_j= (A_{j}-I)\alpha_i$ in $\mathbb{T}^r$. 

From results in \cite{leibman2005pointwise} one can deduce that, 

\begin{proposition}\label{Property:MinimalErgodic}
	Let $(\mathbb{T}^r,T_1,\ldots,T_d)$ be an affine nilsystem with $d$ transformations. Then, the properties  of transitivity, minimality, ergodicity and unique ergodicity under the action of $\langle T_1,\ldots,T_d \rangle$ are equivalent. 
\end{proposition}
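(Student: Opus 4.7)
The plan is to reduce the proposition to the classical theorem, due to Parry, Furstenberg and Leibman, which says that for an abelian group of nilrotations on a nilmanifold the notions of transitivity, minimality, ergodicity (with respect to the Haar measure) and unique ergodicity all coincide. The paragraph preceding the proposition has essentially carried out the reduction: the group $G$ generated by $A_1,\ldots,A_d$ and the translations of $\T^r$ is a nilpotent Lie group, the stabilizer $\Gamma$ of $0\in\T^r$ is discrete and cocompact, and under the identification $\T^r\simeq G/\Gamma$ each $T_i$ becomes left translation by an element $a_i\in G$. Hence $(\T^r,T_1,\ldots,T_d)$ is a nilsystem with respect to the commuting translations $a_1,\ldots,a_d$, and $\langle T_1,\ldots,T_d\rangle$ acts by nilrotations.

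The easy implications are standard in this setting: unique ergodicity trivially implies ergodicity; minimality trivially implies transitivity; ergodicity of the Haar measure together with its full support gives, via the pointwise ergodic theorem (applied to any Følner sequence in $\Z^d$), a $\mu$-generic point whose orbit is dense, so ergodicity implies transitivity. The only substantial step is therefore to run the implications \emph{transitivity} $\Rightarrow$ \emph{minimality} $\Rightarrow$ \emph{unique ergodicity}, which close the loop.

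Both of these are direct consequences of the structure theory for abelian group actions on nilmanifolds developed in \cite{leibman2005pointwise}. Namely, for a finitely generated abelian subgroup $A\subseteq G$ acting by left translation on $G/\Gamma$, Leibman shows that the orbit closure $\overline{Ax}$ of any $x$ is a sub-nilmanifold of $G/\Gamma$, on which $A$ acts minimally and uniquely ergodically with the Haar measure of that sub-nilmanifold. Applying this to $A=\langle a_1,\ldots,a_d\rangle$ and $x=\Gamma$: if $(\T^r,T_1,\ldots,T_d)$ is transitive then some orbit closure equals the whole $\T^r$, and by Leibman's result the action is then already minimal on the orbit closure, i.e.\ on $\T^r$; moreover it is uniquely ergodic there. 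This closes the chain of implications.

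The main technical point, and essentially the only one that is not completely formal, is to verify that the group $G$ constructed from the affine data really is a nilpotent Lie group in which $\Gamma$ is discrete and cocompact, and that Leibman's hypotheses (a connected nilpotent Lie group, a cocompact discrete subgroup, and a finitely generated abelian group acting by left translations) are met. This is handled by the commutator computation sketched in the paragraph before the proposition, together with the fact that the $T_i$ commute on $\T^r$ (guaranteed by the relation $(A_i-I)\alpha_j=(A_j-I)\alpha_i$), which ensures that $a_1,\ldots,a_d$ commute modulo $\Gamma$; one may then replace $G$ by the (possibly larger) nilpotent Lie group generated by $G$ and lifts of the $a_i$ that genuinely commute, without changing the dynamics, so as to apply \cite{leibman2005pointwise} verbatim.
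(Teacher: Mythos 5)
Your argument is correct and follows essentially the same route as the paper, which offers no written proof at all and simply deduces the proposition from the results of \cite{leibman2005pointwise} after identifying $(\mathbb{T}^r,T_1,\ldots,T_d)$ with a translation action on the nilmanifold $G/\Gamma$ as in the preceding paragraph. Your extra care about lifting the $T_i$ to genuinely commuting elements is in fact unnecessary here, since $G$ is by construction a group of transformations of $\mathbb{T}^r$ in which the $T_i$ already commute, but it does no harm.
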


We consider some conditions on the commuting transformations $T_{1},\ldots,T_{d}$ under which the system $(\mathbb{T}^r,T_1,\ldots,T_d)$ has the unique closing parallelepiped property. We start by presenting the examples whose elementary proofs are deliberately omitted. Interested readers can find them in \cite{cabezas2018}. For the sake of clarity we first consider the case $d=2$.

\begin{lemma}\label{Affine2}
	Let $(\mathbb{T}^r,T_1,T_2)$ be an affine nilsystem with 2 commuting transformations, where $T_{i}x=A_{i}x+\alpha_{i}$ for $i\in \{1,2\}$. Then, we have that for every $n,m\in \Z$,
	$$T_{1}^{n}T_{2}^{m}x=T_{1}^{n}x+T_{2}^{m}x-x$$
	
	\noindent if and only if the following conditions hold
	\begin{equation}
	(A_{1}-I)(A_{2}-I)=0,
	\label{MatCond1}
	\end{equation}
	\begin{equation}
	(A_{1}-I)\alpha_{2}=(A_{2}-I)\alpha_{1}=0.
	\label{MatCond2}
	\end{equation}
In particular, if conditions \eqref{MatCond1} and \eqref{MatCond2} are satisfied we have that
	$$\QQ_{T_{1},T_{2}}(X)=\overline{\{(x,T_{1}^{n}x,T_{2}^{m}x,T_{1}^{n}x+T_{2}^{m}x-x)\colon x \in \T^{r},\ n,m\in \Z\}},$$
	
	\noindent and thus $(\mathbb{T}^r,T_1,T_2)$ has the unique closing parallelepiped property.
\end{lemma}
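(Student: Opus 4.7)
My plan is to verify the identity $T_1^n T_2^m x = T_1^n x + T_2^m x - x$ by direct expansion, then read off both the necessary and sufficient matrix/translation conditions, and finally deduce the cube description and the unique closing parallelepiped property.

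First I will use the standard closed form $T_i^n x = A_i^n x + S_i(n)\alpha_i$, where $S_i(n) = I + A_i + \cdots + A_i^{n-1}$ for $n\geq 0$ (and the analogous expression for $n<0$), which satisfies the factorisation $A_i^n - I = (A_i - I) S_i(n)$ for every $n\in \Z$. Substituting into both sides of the putative identity and subtracting, the discrepancy splits into a linear piece $(A_1^n - I)(A_2^m - I)x$ and a translation piece $(A_1^n - I) S_2(m)\alpha_2$. Since each $A_i$ is an integer matrix and the identity must hold for every $x \in \T^r$, the linear piece forces $(A_1^n - I)(A_2^m - I) = 0$ as an integer matrix, while the translation piece forces $(A_1^n - I)S_2(m)\alpha_2 \equiv 0 \pmod{\Z^r}$.

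Second, specialising to $n=m=1$ extracts exactly the conditions $(A_1-I)(A_2-I)=0$ and $(A_1-I)\alpha_2=0$; the sibling condition $(A_2-I)\alpha_1=0$ follows from $T_1 T_2 = T_2 T_1$ on $\T^r$, which forces $A_1 A_2 = A_2 A_1$ as integer matrices and $(A_1-I)\alpha_2 = (A_2-I)\alpha_1$ modulo $\Z^r$. Conversely, assuming \eqref{MatCond1} and \eqref{MatCond2} together with this matrix commutation, the factorisation $A_i^n - I = (A_i - I)S_i(n)$ and the fact that $S_1(n)$ and $S_2(m)$ commute with each $A_i$ yield
\[(A_1^n - I)(A_2^m - I) = S_1(n)S_2(m)(A_1-I)(A_2-I) = 0\]
and $(A_1^n - I)S_2(m)\alpha_2 = S_1(n)S_2(m)(A_1-I)\alpha_2 \equiv 0 \pmod{\Z^r}$, making both discrepancies vanish for all $(n,m)\in \Z^2$.

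Finally, the identity rewrites the generating tuple of $\QQ_{T_1,T_2}(X)$ as $(x, T_1^n x, T_2^m x, T_1^n x + T_2^m x - x)$, so taking closure gives the claimed description. Since the map $(a,b,c) \mapsto b + c - a$ from $(\T^r)^3$ to $\T^r$ is continuous, every point of $\QQ_{T_1,T_2}(X)$ has its fourth coordinate determined by the first three, and symmetry (together with solvability of $b+c-a=d$ for any one of $a,b,c$ given the other two) shows that any coordinate is determined by the remaining three. Hence two cubes in $\QQ_{T_1,T_2}(X)$ sharing three coordinates must coincide, yielding the unique closing parallelepiped property.

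The main bookkeeping obstacle will be handling negative exponents and the mod-$\Z^r$ reductions carefully: in particular, justifying that commutativity of $T_1,T_2$ on $\T^r$ passes to the equality $A_1 A_2 = A_2 A_1$ as \emph{integer} matrices (so that the factorisation-and-sum argument in the converse direction is clean), and extending $S_i(n)$ consistently to $n<0$ so that the identity $A_i^n - I = (A_i - I) S_i(n)$ remains valid throughout.
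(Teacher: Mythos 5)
The paper deliberately omits the proof of this lemma (referring the reader to \cite{cabezas2018}), so there is no argument to compare against; but your computation is exactly the elementary direct verification one expects, and it is correct. The decomposition of the discrepancy into $(A_1^n-I)(A_2^m-I)x + (A_1^n-I)S_2(m)\alpha_2$, the reduction to $n=m=1$ via the observation that an integer matrix inducing the zero endomorphism of $\T^r$ is the zero matrix, the use of $T_1T_2=T_2T_1$ to get $A_1A_2=A_2A_1$ over $\Z$ and $(A_1-I)\alpha_2=(A_2-I)\alpha_1$ in $\T^r$, and the converse via $A_i^n-I=(A_i-I)S_i(n)$ (with $S_i(n)$ an integer matrix for $n<0$ because $\det A_i=1$ by unipotence) all check out; the final step, that the closed affine relation $x_{11}=x_{10}+x_{01}-x_{00}$ persists under closure and lets any coordinate be solved from the other three, correctly yields the cube description and the unique closing parallelepiped property.
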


\begin{example}
	Consider the following matrices,
	\smallskip
	
	{\footnotesize $$A_{1}=\left(\begin{array}{cccccc}
	1 & 0 & 0 & 1 & 0 & 2\\
	0 & 1 & 0 & 3 & 1 & 4\\
	0 & 0 & 1 & 6 & 3 & 6\\
	0 & 0 & 0 & 1 & 0 & 0\\
	0 & 0 & 0 & 0 & 1 & 0\\
	0 & 0 & 0 & 0 & 0 & 1
	\end{array}\right),\quad A_{2}=\left(\begin{array}{cccccc}
	1 & 0 & 0 & 1 & 1 & 2\\
	0 & 1 & 0 & 2 & 2 & 4\\
	0 & 0 & 1 & 1 & 2 & 3\\
	0 & 0 & 0 & 1 & 0 & 0\\
	0 & 0 & 0 & 0 & 1 & 0\\
	0 & 0 & 0 & 0 & 0 & 1
	\end{array}\right).$$}
	
	For the first matrix, the eigenspace (associated to the unique eigenvalue $1$) is given by, 
	{\footnotesize $$W_{1}(A_{1})=\left\langle\left\{\left(\begin{array}{cccccc}
	1 \\
	0 \\
	0 \\
	0 \\
	0 \\
	0 
	\end{array}\right),\left(\begin{array}{cccccc}
	0 \\
	1 \\
	0 \\
	0 \\
	0 \\
	0 
	\end{array}\right),\left(\begin{array}{cccccc}
	0 \\
	0 \\
	1 \\
	0 \\
	0 \\
	0 
	\end{array}\right),\left(\begin{array}{cccccc}
	0 \\
	0 \\
	0 \\
	-2 \\
	2 \\
	1 
	\end{array}\right)\right\}\right\rangle.$$}
	
	For the second one, the eigenspace is given by,

{\footnotesize	$$W_{1}(A_{2})=\left\langle\left\{\left(\begin{array}{cccccc}
	1 \\
	0 \\
	0 \\
	0 \\
	0 \\
	0 
	\end{array}\right),\left(\begin{array}{cccccc}
	0 \\
	1 \\
	0 \\
	0 \\
	0 \\
	0 
	\end{array}\right),\left(\begin{array}{cccccc}
	0 \\
	0 \\
	1 \\
	0 \\
	0 \\
	0 
	\end{array}\right),\left(\begin{array}{cccccc}
	0 \\
	0 \\
	0 \\
	-1 \\
	-1 \\
	1 
	\end{array}\right)\right\}\right\rangle.$$}

It is easy to see that $(A_{1}-I)(A_{2}-I)=0$ and we can choose $\alpha_{1}\in W_{1}(A_{2})$ and $\alpha_{2}\in W_{1}(A_{1})$ such that $(\T^{6},T_{1},T_{2})$ has the unique closing parallelepiped property.
\end{example}

Conditions \eqref{MatCond1} and \eqref{MatCond2} can be generalized as follows. Again, the simple but tedious proof of this result can be found in \cite{cabezas2018}.

\begin{lemma}\label{Affined}
	Let $(\mathbb{T}^r,T_1,\ldots,T_{d})$ be an affine nilsystem with $d$ commuting transformations such that $T_{i}x=A_{i}x+\alpha_{i}$ for $i\in \{1,\ldots,d\}$. Then, for every $(n_{1},\ldots,n_{d})\in \Z^{d}$,
	$$T_{1}^{n_{1}}\cdots T_{d}^{n_{d}}x = (-1)^{d}\sum\limits_{i=0}^{d-1}(-1)^{i+1}\sum\limits_{\substack{I\subseteq [d]\\ |I|=i}}\mathop{\bigcirc}\limits_{k\in I}T_{k}^{n_{k}}x$$
	\noindent if and only if the following conditions hold
	\begin{equation}
	\prod\limits_{i=1}^{d}(A_{i}-I)=0,
	\label{MatCond3}
	\end{equation}
	\begin{equation}
	\forall j\in \{1,\ldots,d\},\ \prod\limits_{\substack{i=1\\ i\neq j}}^{d}(A_{i}-I)\alpha_{j}=0.
	\label{MatCond4}
	\end{equation}
	In particular, if conditions \eqref{MatCond3} and \eqref{MatCond4} hold the system $(\mathbb{T}^r,T_1,\ldots,T_{d})$ has the unique closing parallelepiped property.	
\end{lemma}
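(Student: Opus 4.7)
The plan is to recast the stated identity as the vanishing of a single product of operators, carry out a block-matrix computation to match this vanishing with conditions \eqref{MatCond3} and \eqref{MatCond4}, and then deduce the unique closing parallelepiped property using the face-permutation invariance of the directional cubes.

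First I would rewrite the displayed identity as the single inclusion--exclusion relation
$$\sum_{I\subseteq[d]}(-1)^{d-|I|}\prod_{k\in I}T_k^{n_k}x=0,$$
and then lift each affine map $T_i$ to the linear operator $\widetilde{T}_i$ on $\mathbb{R}^{r+1}$ given by the block matrix $\widetilde{A}_i=\left(\begin{smallmatrix}A_i & \alpha_i\\ 0 & 1\end{smallmatrix}\right)$, evaluated on vectors of the form $(x,1)$. Since the $T_i$'s commute, so do the $\widetilde{T}_i$'s, and the identity becomes
$$\prod_{i=1}^{d}(\widetilde{T}_i^{n_i}-I)(x,1)=0\qquad\text{for every }x\in\T^r,\ \mathbf{n}\in\Z^d,$$
where the product is now a genuine product of commuting linear operators and is therefore symmetric in the index $i$.

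Next, writing $c_i=\sum_{k=0}^{n_i-1}A_i^k\alpha_i$ so that $\widetilde{T}_i^{n_i}-I=\left(\begin{smallmatrix}A_i^{n_i}-I & c_i\\ 0 & 0\end{smallmatrix}\right)$, a short induction yields
$$\prod_{i=1}^{d}(\widetilde{T}_i^{n_i}-I)=\begin{pmatrix}\prod_{i=1}^{d}(A_i^{n_i}-I) & \prod_{i=1}^{d-1}(A_i^{n_i}-I)\,c_d\\ 0 & 0\end{pmatrix},$$
and reordering the commuting factors isolates any chosen $c_j$ in the analogous position. Setting $B_i=A_i-I$ and using the unipotency of $A_i$ to write $A_i^{n_i}-I=B_i\,P_i(B_i,n_i)$ and $c_i=Q_i(B_i,n_i)\,\alpha_i$ for integer polynomials $P_i,Q_i$, I would then use the fact that the $B_i$'s mutually commute (so any polynomial in them commutes past every $B_j$ and every $\alpha_j$) to show that the vanishing of both blocks for every $\mathbf{n}$ is equivalent to $\prod_{i=1}^{d}B_i=0$ together with $\prod_{i\neq j}B_i\,\alpha_j=0$ for every $j$. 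These are exactly conditions \eqref{MatCond3} and \eqref{MatCond4}; one direction follows by specializing $n_i=1$, the other by factoring out the polynomials from the vanishing factor.

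Finally, once the identity is established, by continuity every cube $\mathbf{y}=(y_\varepsilon)_{\varepsilon\in\{0,1\}^d}\in\QQ_{T_1,\ldots,T_d}(X)$ satisfies
$$y_{[d]}=(-1)^d\sum_{i=0}^{d-1}(-1)^{i+1}\sum_{|I|=i}y_I,$$
which determines the $[d]$-coordinate from the remaining $2^d-1$ coordinates. To extend this uniqueness to a cube with distinguished coordinate $\varepsilon^{\ast}\neq[d]$, I would apply the composition of the reflections $\Phi_{j\ast}$ for $j\notin\varepsilon^{\ast}$, which preserves $\QQ_{T_1,\ldots,T_d}(X)$ by Lemma \ref{lem:goodpermutations} and carries $\varepsilon^{\ast}$ to $[d]$, reducing to the previous case. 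The main technical obstacle will be the clean execution of Step 2: verifying that the polynomial substitutions $A_i^{n_i}-I\mapsto B_i$ and $c_j\mapsto\alpha_j$ preserve the vanishing conditions in both directions. This rests on commuting unipotent matrices generating a commutative subalgebra, so polynomials in the $B_i$'s can be freely permuted past each other and past each $\alpha_j$; the weaker relation $(A_i-I)\alpha_j=(A_j-I)\alpha_i$ forced by $T_iT_j=T_jT_i$ is used only to justify the symmetrization of the matrix product and must not be confused with the stronger hypothesis \eqref{MatCond4}.
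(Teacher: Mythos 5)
Your argument is correct, and it is worth noting that the paper itself gives no proof of this lemma: it is explicitly deferred to the thesis \cite{cabezas2018} as ``simple but tedious,'' so there is nothing in the text to compare against line by line. Your organization of the computation is a clean one. The rewriting of the identity as $\sum_{I\subseteq[d]}(-1)^{d-|I|}\bigcirc_{k\in I}T_k^{n_k}x=0$ is sign-correct, the block-triangular product formula $\bigl(\begin{smallmatrix}M_1&v_1\\0&0\end{smallmatrix}\bigr)\bigl(\begin{smallmatrix}M_2&v_2\\0&0\end{smallmatrix}\bigr)=\bigl(\begin{smallmatrix}M_1M_2&M_1v_2\\0&0\end{smallmatrix}\bigr)$ does isolate $\prod_{i\neq j}(A_i^{n_i}-I)\,c_j$ for any chosen $j$, and the factorizations $A_i^{n_i}-I=(A_i-I)P_i$ and $c_j(n_j)=Q_j\alpha_j$ with $P_i,Q_j$ integer polynomials in $A_i^{\pm1}$ (valid for negative exponents too, since the $A_i$ are unipotent and hence invertible over $\Z$) give both directions of the equivalence, with $n_i=1$ supplying the ``only if.'' One wrinkle you should make explicit: the lifts $\widetilde{T}_i$ do \emph{not} commute as real matrices, since $T_iT_j=T_jT_i$ only forces $(A_i-I)\alpha_j=(A_j-I)\alpha_i$ modulo $\Z^r$; they commute only as affine maps of $\T^r\times\{1\}$. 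This is harmless because every reordering you invoke is performed on the value at $(x,1)$ with $x\in\T^r$, which is order-independent, and you flag exactly this point at the end — but the phrase ``so do the $\widetilde{T}_i$'s'' should be read in that weaker sense. Finally, your use of the reflections $\Phi_{j\ast}$ (Lemma \ref{lem:goodpermutations}) to pass from ``the $[d]$-coordinate is determined'' to the full unique closing parallelepiped property as defined in the paper (any $2^d-1$ coordinates determine the last) is correct and in fact more careful than the corresponding step in the paper's own proof of Theorem \ref{StructThm}, which only argues the last coordinate.
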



\end{document}